\documentclass[11pt]{amsart}
\usepackage{amssymb}
\usepackage{amsmath,amsthm,geometry,enumerate}
\usepackage{psfrag}
\usepackage{amscd}
\usepackage{xspace}
\usepackage[all]{xy}
\usepackage{tikz}
\usetikzlibrary{trees}

\tikzstyle{every picture}=[scale=.5,inner sep=10]
\tikzstyle{every node}=[draw,circle,inner sep=1]

\normalfont
\emergencystretch=3em

\numberwithin{equation}{section} \makeatletter
 \makeatother


\let\emptyset\varnothing

\DeclareMathOperator{\rk}{rk}

\DeclareMathOperator{\Aut}{Aut}

\DeclareMathOperator{\id}{id}

\DeclareMathOperator{\codim}{codim}
\DeclareMathOperator{\coker}{Coker}


\newtheorem{conjecture}[equation]{Conjecture}
\newtheorem{construction}[equation]{Construction}
\newtheorem{proposition}[equation]{Proposition}
\newtheorem{propositionstar}[equation]{Proposition$^*$}
\newtheorem{theorem}[equation]{Theorem}
\newtheorem{theoremstar}[equation]{Theorem$^*$}
\newtheorem{corollary}[equation]{Corollary}
\newtheorem{corollarystar}[equation]{Corollary$^*$}
\newtheorem{lemma}[equation]{Lemma}

\theoremstyle{definition}
\newtheorem{definition}[equation]{Definition}
\newtheorem{definitionstar}[equation]{Definition$^*$}

\newtheorem{remark}[equation]{\textbf{Remark}}
\newtheorem{notation}[equation]{\textbf{Notation}}

\newtheorem{example}[equation]{\textbf{Example}}

\newcommand{\chom}{{{\mathcal{H}}om}}
\newcommand{\cext}{{{\mathcal{E}}xt}}

\newcommand{\mb}[2]{$\overline{\mathcal{M}}_{#1,#2}$}

\newcommand{\mgn}{$\mathcal{M}_{g,n}$}
\newcommand{\mbgn}{$\overline{\mathcal{M}}_{g,n}$}
\newcommand{\mun}{$\mathcal{M}_{1,n}$}
\newcommand{\mbun}{$\overline{\mathcal{M}}_{1,n}$}

\newcommand{\mdnrt}{$\mathcal{M}_{2,n}^{rt}$}

\newcommand{\mbdn}{$\overline{\mathcal{M}}_{2,n}$}

\newcommand{\virg}[1]{\textquotedblleft#1\textquotedblright}

\begin{document}

\title{\textbf{The Chen--Ruan cohomology of moduli of curves of genus $2$ with marked points}}
\subjclass{14H10 14N35}

\author{Nicola Pagani}

\begin{abstract}In this work we describe the Chen--Ruan cohomology of the moduli stack of smooth and stable genus $2$ curves with marked points. In the first half of the paper we compute the additive structure of the Chen--Ruan cohomology ring for the moduli stack of stable $n$-pointed genus $2$ curves, describing it as a rationally graded vector space. In the second part we give generators for the even Chen--Ruan cohomology ring as an algebra on the ordinary cohomology.
\end{abstract}
\maketitle

\setcounter{tocdepth}{2}
\tableofcontents


\section {Introduction}

Chen--Ruan cohomology, also known as orbifold cohomology, was recently developed as part of the project of extending Gromov--Witten theory to orbifolds. The first definition was given in the language of differential geometry in the seminal paper \cite{chenruan}, making precise some ideas from theoretical and mathematical physics that appeared in the nineties. In the algebraic category, it was introduced by Abramovich--Graber--Vistoli in the papers \cite{agv1} and \cite{agv2}, and it is called stringy Chow ring. This cohomology ring recovers as a subalgebra the ordinary rational cohomology ring of the topological space that underlies the orbifold. As a vector space, the Chen--Ruan cohomology of $X$ is the cohomology of the inertia stack of $X$. If $X$ is an orbifold, its \emph{inertia stack} $I(X)$ is constructed as the disjoint union, for $g$ in the stabilizer of some point $x$ of $X$, of the locus stabilized by $g$ in $X$ (see Definition \ref{sectiondefinertia}). For example, the orbifold $X$ itself appears as a connected component of $I(X)$: indeed it is the locus fixed by the identity, which is trivially contained in the stabilizer group of every point. All the other connected components of the inertia $I(X)$ are usually called \emph{twisted sectors}. In this paper we work in the algebraic category, and whenever the word \virg{orbifold} is mentioned, it stands for smooth Deligne--Mumford stack. Moreover we only deal with cohomology and Chow groups with coefficients in the field $\mathbb{Q}$ of rational numbers.

The moduli spaces $\mathcal{M}_{g,n}$ and their Deligne--Mumford compactifications $\overline{\mathcal{M}}_{g,n}$ are among the very first orbifolds studied in algebraic geometry. In particular, the study of their cohomology rings has been pursued with techniques coming from algebraic geometry and algebraic topology, starting from the eighties with the works of Harer \cite{harer}, and Mumford \cite{mumford}. In the last twenty years, our knowledge about the structure of such rings has tremendously increased.

The Chen--Ruan cohomology of the moduli spaces of curves with marked points has been studied so far for the case of genus $1$ in \cite{pagani1}. The stringy Chow ring of $\mathcal{M}_2$ has been studied by Spencer \cite{spencer}, although the result is slightly incomplete as two twisted sectors are missing from his picture. Some computations in the stringy Chow ring of $\overline{\mathcal{M}}_2$ have been also approached in \cite{spencer2}. The Chen--Ruan cohomology of moduli of smooth hyperelliptic curves is additively known after \cite{paganihyper}.

In this paper, we investigate the Chen--Ruan cohomology ring of the moduli space of smooth genus $2$ curves with marked points, and of the moduli space of stable genus $2$ curves with marked points. The problem splits naturally into two parts. The first part is the study of the inertia $I(\overline{\mathcal{M}}_{2,n})$: we work this out in Sections \ref{Inertia}, \ref{cohomology} and \ref{grading}. The second part is the investigation of the Chen--Ruan cohomology of $\overline{\mathcal{M}}_{2,n}$ as a ring, which consists of computing a virtual fundamental class and then computing pull-back of classes among moduli spaces: this is studied in Sections \ref{stringyproduct} and \ref{algebra}. In this picture, it is not more difficult to state and prove some of the intermediate results more generally for all $g$ and $n$. A very convenient step is usually to find results for $\mathcal{M}_{g,n}^{rt}$, the partial compactification of $\mathcal{M}_{g,n}$ made of stable curves with a smooth irreducible component of genus $g$ (and, possibly, rational tails).

The main results of this paper are the following. In sections \ref{inertia} and \ref{rationaltails} we solve the problem of identifying the connected components of the inertia stack of $\mathcal{M}_2$, and of $\mathcal{M}_{g,n}$ and $\mathcal{M}_{g,n}^{rt}$ for $n \geq 1$ or $g =2$: see Definition \ref{2admissible} and Propositions \ref{instack2}
, \ref{twratcor}. The twisted sectors of $\mathcal{M}_{g,n}$, $n \geq1$ or $g=2$ are given a modular interpretation in terms of moduli of smooth pointed curves with lower genus $g'$, see Definition \ref{settoretwistato}\footnote{The case of the twisted sectors of $\mathcal{M}_g$, $g\geq3$ is more delicate and requires a further analysis.}. In Theorem \ref{samuel2stab}, we compute the dimensions of the Chen--Ruan cohomologies of $\overline{\mathcal{M}}_{2,n}$ in terms of the dimensions of the ordinary cohomologies of $\overline{\mathcal{M}}_{g,n}$ for $g \leq 2$. In Theorem \ref{poincare2} we write down explicitly the orbifold Poincar\'e polynomial for $\overline{\mathcal{M}}_2$. In Theorem \ref{positivo} we see how the algebra $H^*_{CR}(\overline{\mathcal{M}}_{2,n})$ can be generated as an algebra on $H^*(\overline{\mathcal{M}}_{2,n})$ by the fundamental classes of the twisted sectors and suitably defined classes $\mathcal{S}^{I_1,I_2}$, for $I_1, I_2$ that vary among all the non-empty partitions of $[n]$. Finally, in Definition \ref{chenruantautolo} we advance a proposal for an orbifold tautological ring, a well-behaved subring of the even Chen--Ruan cohomology, for stable genus $2$ curves (for stable genus $1$ curves, this was done in \cite{pagani1}).

The main techniques used in this paper are: abelian cyclic coverings of curves (see \cite{pardini}, \cite{cornalba}), their moduli spaces and their compactifications by means of admissible coverings, and some elementary deformation theory of nodal curves. In the last section we take advantage of the existing technology of intersection theory in the tautological ring of moduli spaces of curves.

To conclude this introduction, we make a couple of observations that relate the paper to the general project of studying the Chen--Ruan cohomology of $\overline{\mathcal{M}}_{g,n}$ for general $g$ and $n$. The case of genus $2$ can be seen as the simplest case for which non-trivial phenomena occur. The issues that make the description more difficult than in genus $1$ are: the automorphism groups of genus $2$ curves can be non-abelian, the twisted sectors of moduli of genus $2$ curves are not necessarily closed substacks of the original moduli stack, not all twisted sectors can be constructed by adding rational tails to the compactification of the twisted sectors of $\mathcal{M}_{2,n}$, the cohomology of a twisted sector is not always generated by its divisor classes and finally, the Chen--Ruan cohomology ring is not generated as an $H^*$-algebra by the fundamental classes of the twisted sectors. 

We believe that the solutions that we propose to overcome these difficulties in genus $2$ can potentially be used for general genus $g$, once an explicit stratification of the moduli space by automorphism groups is given (the latter task seems to be very hard for higher genus). Finally, we expect that the simple description of multiplicative generation that we can give for genera $1$ and $2$ would become far too complicated for higher genus. 


\subsection{Summary of the results}

Here we provide more specific details on the results that we obtain in this paper.

In \underline{Section \ref{Inertia}}, we study the inertia stack of $\overline{\mathcal{M}}_{2,n}$. In Section \ref{inertia} we deal with the inertia stack of $\mathcal{M}_{g,n}$ and its compactification by means of admissible coverings. For $n\geq1$ or $g=2$, we prove that the twisted sectors of $\mathcal{M}_{g,n}$ correspond to moduli of cyclic coverings in Proposition \ref{instack2}, and in Corollary \ref{compsmooth} that the compactifications of the latter correspond to the twisted sectors of $\overline{\mathcal{M}}_{g,n}$ that do not come from the boundary (see \cite[Section 2.b]{paganitommasi} for an extension of this description to the case of $\mathcal{M}_g$, $g \geq 3$). 

The cohomology of the cyclic coverings that cover curves of genus $0$ is known, as observed in Corollary \ref{corollariocadman}, so we focus on studying the geometry of the moduli space $II$ of bielliptic genus $2$ curves with a distinguished bielliptic involution (we call $II_1$ the same moduli space with an ordering of the ramification points). In Propositions \ref{duezero}, \ref{aggiunta} and in Corollary \ref{aggiunta2} we compute the cohomology groups of these moduli spaces of bielliptic curves and of their compactifications. We conclude the section by proving, in Theorem \ref{generatodivisori}, that the cohomology of the twisted sectors of $\overline{\mathcal{M}}_{2,n}$ that do not come from the boundary coincides with the Chow group, and that it is multiplicatively generated by the divisors.

In Section \ref{rationaltails}, we study what happens when rational tails are added to the twisted sectors of $\overline{\mathcal{M}}_{g,n}$. In particular, in Proposition \ref{twratcor}, $I(\mathcal{M}_{g,n}^{rt})$ is explicitly described in terms of $I(\mathcal{M}_{g,k})$ for $k \leq \min(2g+2,n)$. 

In Section \ref{dalbordo}, we solve the combinatorial problem of identifying the twisted sectors coming from the boundary of $\overline{\mathcal{M}}_{2,n}$. We give pictures of these twisted sectors using stable graphs, and some twisted sectors of moduli of curves of genus lower than $2$.

In \underline{Section \ref{cohomology}} we are then able, as a consequence of the computations of the previous section, to give the generating series of the dimensions of the Chen--Ruan cohomology of $\mathcal{M}_{2,n}^{rt}$ in Theorem \ref{samuel2thm} and of $\overline{\mathcal{M}}_{2,n}$ in Theorem \ref{samuel2stabthm}. These generating series are given in terms of the generating series for the dimensions of the respective ordinary cohomologies.

In \underline{Section \ref{grading}} we investigate the Chen--Ruan cohomology of $\mathcal{M}_{g,n}$, $\mathcal{M}_{g,n}^{rt}$ and $\overline{\mathcal{M}}_{2,n}$ as a graded vector space. Assuming knowledge of the age of the twisted sectors of $\mathcal{M}_g$ (which we know for $g=2$), in Lemma \ref{etalisci} (resp. Corollary \ref{agerational}), we give a formula to compute the age of the twisted sectors of $\mathcal{M}_{g,n}$ (resp. $\mathcal{M}_{g,n}^{rt}$). We explicitly write the orbifold Poincar\'e polynomial for $\mathcal{M}_{2,n}$ in Theorem \ref{poincare2smooth} and for $\overline{\mathcal{M}}_2$ in Theorem \ref{poincare2}. 

In \underline{Section \ref{stringyproduct}} we study the Chen--Ruan cohomology as a ring. This amounts to computing the virtual fundamental class for the moduli space of stable maps of degree $0$ having target $\overline{\mathcal{M}}_{2,n}$, from a source curve of genus $0$ with $3$ marked points. This virtual class can be expressed as the top Chern class of a certain orbifold excess intersection bundle. 

In Section \ref{secondinertia} we study, in Propositions \ref{terzoterzo}, \ref{quartoquarto}, and in Corollaries \ref{terzoterzo1}, \ref{quartoquarto1}, some of the components of the second inertia stack of $\overline{\mathcal{M}}_{2,n}$. It will be clear in the subsequent section that these components are all those where the virtual class is not $1$ nor $0$.

In Section \ref{excessintersection} we compute all these virtual classes and we express them in terms of $\psi$-classes on moduli stacks of stable genus $0$ or genus $1$ pointed curves. To pursue this, we first study, in Lemma \ref{lemmadecomp}, the normal bundle of the double twisted sectors of $\mathcal{M}_{g,n}$ to $\mathcal{M}_{g,n}$ itself, as a representation of the group that defines the twisted sector. In Corollary \ref{corollariozero} we prove that the virtual classes over the double twisted sectors that are obtained from zero-dimensional twisted sectors of $\mathcal{M}_g$ by adding marked points are always $0$ or $1$. The virtual class is computed in the remaining cases in Propositions \ref{treuno}, \ref{trequattro}, and Corollary \ref{tredue} for the double twisted sectors with smooth general element, and in Proposition \ref{tretre} by reduction to lower genus for the double twisted sectors whose general element does not contain a smooth irreducible genus $2$ curve. 

Finally, in \underline{Section \ref{algebra}} we see how the Chen--Ruan cohomology ring $H^{ev}_{CR}(\overline{\mathcal{M}}_{2,n})$ can be generated as an algebra on $H^{ev}(\overline{\mathcal{M}}_{2,n})$. 
To obtain this, we have to analyze the pull-back in cohomology of the natural map from the twisted sectors to $\overline{\mathcal{M}}_{2,n}$. In Proposition \ref{suriettivita1} we prove that this pull-back is surjective for all the twisted sectors with smooth general element apart from the moduli of bielliptic curves, and for all the twisted sectors coming from the boundary, apart from those whose general element has an irreducible component of genus $1$. The surjectivity of the pull-back map in the latter case is obtained in Corollary \ref{corollariogetzler} for the even cohomology. The proof of Corollary \ref{corollariogetzler} is conditional to Getzler's conjecture \ref{getzlerremark}.

So it remains to study a little more of the geometry of the compactified moduli spaces of bielliptic curves. In Proposition \ref{generazione}, we identify three geometric generators for the Picard groups of these moduli spaces, and then in Proposition \ref{corollariocoker} we see that one of these three classes, which we call $\mathcal{S}$, can be chosen as a generator for the one-dimensional cokernel of the pull-back map. We can thus conclude, conditionally Getzler's conjecture \ref{getzlerremark}, the main Theorem \ref{positivo} on the generation of $H^{ev}_{CR}(\overline{\mathcal{M}}_{2,n})$ as an $H^{ev}(\overline{\mathcal{M}}_{2,n})$-algebra, by marking the ramification points of $\mathcal{S}$ and gluing rational tails. Then in Definition \ref{chenruantautolo} we construct a subring of the even cohomology, which we call orbifold tautological ring.

In \underline{Appendix A}, we collect some useful facts on the inertia stacks of $[\mathcal{M}_{0,n}/S_2]$ and $[\mathcal{M}_{1,n}/S_2]$ that are used in Section \ref{dalbordo}.


\section {The inertia stacks}
\label{Inertia}


\subsection{Definition of the inertia stack}

\label{sectiondefinertia}
In this section we recollect some basic notions concerning the inertia stack. For a more detailed study of this topic, we address the reader to \cite[Section 3]{agv2}. We introduce the following natural stack associated to a stack $X$, which points to where $X$ fails to be an algebraic space.

\begin{definition} \label{definertia} (See \cite[4.4]{agv1}, \cite[Definition 3.1.1]{agv2}.) Let $X$ be an algebraic stack. The \emph{inertia stack} $I(X)$ of $X$ is defined as:
$$
I(X) := \coprod_{N \in \mathbb{N}} I_N(X),
$$
where $I_N(X)(S)$ is the following groupoid.
\begin{enumerate}
\item The objects are pairs $(\xi, \alpha)$, where $\xi$ is an object of $X$ over $S$, and $\alpha: \mu_N \to \Aut(\xi)$ is an injective homomorphism,
\item The morphisms are the morphisms $g: \xi \to \xi'$ of the groupoid $X(S)$, such that $g \cdot \alpha(1)= \alpha'(1) \cdot g$.  
\end{enumerate}
We also define $I_{TW}(X):= \coprod_{N \in \mathbb{N}, N \neq 1}I_N(X)$, in such a way that: $$I(X)=I_1(X) \coprod I_{TW}(X).$$ The connected components of $I_{TW}(X)$ are called \emph{twisted sectors} of the inertia stack of $X$, or also twisted sectors of $X$. The inertia stack comes with a natural forgetful map $f:I(X) \to X$.

\end {definition}

The Chen--Ruan cohomology group, and respectively the stringy Chow group of a stack $X$ (as first defined in \cite{chenruan} and \cite{agv1}) are simply the ordinary rational cohomology group, resp. the rational Chow group of the inertia stack $I(X)$. The Chen--Ruan cohomology is then given an unconventional grading over the rational numbers, as we shall see in Section \ref{grading}. For this section, we give a preliminary definition.

\begin{definition} \label{defcoomorb1} Let $X$ be a Deligne--Mumford stack. The \emph{Chen--Ruan cohomology} of $X$ is defined as a vector space as:
$$
H^*_{CR}(X):= H^*(I(X), \mathbb{Q}).
$$
The \emph{stringy Chow group} of $X$ is defined as a vector space as:
$$
A^*_{st}(X):=A^*_{\mathbb{Q}}(I(X)).
$$
\end{definition}

We observe that, by our very definition, $I_N(X)$ is an open and closed substack of $I(X)$, but it rarely happens  that it is connected. One special case is when $N$ is equal to $1$: in this case the map $f$ restricted to $I_1(X)$ induces an isomorphism of the latter with $X$. The connected component $I_1(X)$ will be referred to as the \emph{untwisted sector}.

 We also observe that given a choice of a primitive generator of $\mu_N$, we obtain an isomorphism of $I(X)$ with $I'(X)$, where the latter is defined as the ($2$-)fiber product $X \times_{X \times X} X$ where both morphisms $X \rightarrow X \times X$ are the diagonals.

\begin{remark} \label{mappaiota} There is an involution $\iota: I_N(X) \to I_N(X)$, which is induced by the map $\iota': \mu_N \to \mu_N$, that is $\iota'(\zeta):= \zeta^{-1}$.
\end{remark}

\begin{proposition} \label{liscezza1} (See \cite[Corollary 3.1.4]{agv2}.) Let $X$ be a smooth algebraic stack. Then the stacks $I_N(X)$ (and therefore $I(X)$) are smooth.
\end{proposition}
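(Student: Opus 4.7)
The plan is to reduce to a local computation using the standard étale local structure of smooth Deligne--Mumford stacks. Since $X$ is smooth and DM, around any geometric point we can find an étale chart of the form $[U/G] \to X$, where $U$ is a smooth scheme and $G$ is a finite group acting on $U$ (fixing the chosen point). Because the inertia construction commutes with representable smooth (in particular étale) morphisms, it suffices to prove the statement for $[U/G]$.

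Next, I would unwind the definition of the inertia stack for a global quotient. Using the alternative description $I(X) \simeq X \times_{X \times X} X$ (with both legs the diagonal) together with the groupoid presentation $U \times G \rightrightarrows U$ of $[U/G]$, one gets
\[
I([U/G]) \;\simeq\; \left[\,\{(u,g)\in U\times G : g\cdot u = u\}\,/\,G\,\right],
\]
where $G$ acts by simultaneous conjugation and translation. Decomposing by conjugacy class of $g$, this becomes $\coprod_{[g]}\,[U^g / C_G(g)]$, where $U^g\subset U$ is the fixed locus and $C_G(g)$ the centralizer. For the $N$-truncated version $I_N$ one simply restricts to those $g$ of order exactly $N$. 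Thus proving smoothness of $I_N(X)$ reduces to proving smoothness of each fixed locus $U^g$, since taking a quotient by the finite group $C_G(g)$ preserves smoothness at the level of stacks.

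The core point is therefore: \emph{if a finite group acts on a smooth scheme over a field of characteristic zero, then the fixed locus of any element is smooth.} I would deduce this from linearization of the action at a fixed point: since we work over $\mathbb{Q}$, the order of $g$ is invertible, so by Cartan's lemma (or equivalently, by averaging to produce a $\langle g\rangle$-equivariant isomorphism between a formal/étale neighborhood of a fixed point and its tangent space), the action of $g$ is locally linearizable. The fixed locus of a linear action of a finite cyclic group on an affine space is a linear subspace, hence smooth, and the tangent space at the fixed point is $(T_xU)^g$, a direct summand of $T_xU$ because $\langle g\rangle$ is linearly reductive in characteristic zero.

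I expect the main conceptual hurdle to be nothing computational but rather the local-structure setup: making sure that the étale chart $[U/G]\to X$ really does compute $I(X)$ étale-locally (which uses that inertia commutes with étale base change and that $I$ is functorial for representable morphisms). Once that is in place, the smoothness of fixed loci in characteristic zero is classical, and the claim for $I(X)=\coprod_N I_N(X)$ follows by taking the disjoint union.
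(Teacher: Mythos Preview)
The paper does not supply a proof of this proposition: it is stated with a reference to \cite[Corollary 3.1.4]{agv2} and nothing more. So there is no paper-internal argument to compare against.

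Your argument is the standard one and is correct. The reduction to an \'etale-local quotient $[U/G]$ with $G$ finite is exactly what the paper's standing convention licenses (the introduction declares that ``orbifold'' means smooth Deligne--Mumford stack), the identification $I([U/G])\simeq\coprod_{[g]}[U^g/C_G(g)]$ is standard, and smoothness of the fixed locus $U^g$ via linearization at a fixed point (Cartan's lemma, using that the order of $g$ is invertible in characteristic zero) is the right ingredient. One small remark: you could streamline slightly by noting that $I(X)\to X$ is representable and unramified (hence the diagonal of $I(X)$ is as good as that of $X$), so the only content is smoothness of the coarse/\'etale-local fixed loci, which is what you prove.
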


Now we study the behaviour of the inertia stack under arbitrary morphisms of stacks. 

\begin {definition}\label{pullinertia} Let $f: X \to Y$ be a morphism of stacks. We define $f^*(I(Y))$ as the stack that makes the following diagram $2$-Cartesian $$
\xymatrix{f^*(I(Y)) \ar[r]^{I(f)} \ar[d] \ar@{}|{\square}[dr] & I(Y) \ar[d] \\
X \ar[r]^{f} & Y 
}$$
and $I(f)$ as the map that lifts $f$ in the diagram.
\end {definition}
\noindent There is an induced map that we call $I'(f)$, which maps $I(X) \to f^*(I(Y))$.
There is a necessary and sufficient condition for $I(X)$ to coincide with $f^*(I(Y))$.

\begin {proposition} \label{strongrap}(folklore) Let $f: X \to Y$ be a morphism of stacks. Then $I(X)$ coincides with $f^*(I(Y))$ if and only if the map $f$ induces an isomorphism on the automorphism groups of the objects.
\end {proposition}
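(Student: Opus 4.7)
The plan is to work in the convenient description $I(X) \simeq X \times_{X\times X} X$ recorded just before the proposition, in which an object of $I(X)(S)$ is a pair $(\xi, \phi)$ with $\xi$ an $S$-point of $X$ and $\phi \in \Aut_{X(S)}(\xi)$; likewise, an object of $f^*(I(Y))(S) = (X \times_Y I(Y))(S)$ is, up to natural equivalence, a pair $(\xi, \psi)$ with $\psi \in \Aut_{Y(S)}(f(\xi))$. The natural comparison morphism $I'(f)$ sends $(\xi, \phi) \mapsto (\xi, f(\phi))$ and acts by the identity on underlying morphisms, so the statement becomes the assertion that $I'(f)$ is an equivalence of stacks if and only if the homomorphism $f_\xi: \Aut_X(\xi) \to \Aut_Y(f(\xi))$ is bijective for every object $\xi$ of $X$.

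For the ``if'' direction I would verify that $I'(f)$ is essentially surjective and fully faithful. Essential surjectivity is immediate: given $(\xi, \psi)$ in $f^*(I(Y))$, surjectivity of $f_\xi$ produces a $\phi \in \Aut_X(\xi)$ with $f(\phi) = \psi$, and $(\xi,\phi)$ is then a preimage. For full faithfulness, the induced map on Hom-sets between $(\xi, \phi)$ and $(\xi', \phi')$ is the tautological inclusion
\[
\{\, g : \xi \to \xi' \text{ in } X \mid g\phi = \phi' g \,\} \hookrightarrow \{\, g : \xi \to \xi' \text{ in } X \mid f(g)\,f(\phi) = f(\phi')\,f(g) \,\}.
\]
This inclusion is a bijection as soon as $f$ is faithful on $\Hom_X(\xi, \xi')$, and that follows at once from injectivity of $f_\xi$: when nonempty, $\Hom_X(\xi, \xi')$ is a torsor under $\Aut_X(\xi)$, so any two morphisms differ by an element of $\Aut_X(\xi)$, and equality after applying $f$ propagates back upstairs.

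For the ``only if'' direction I would recover both halves of the bijectivity of $f_\xi$ from the assumed equivalence. Surjectivity: given $\psi \in \Aut_Y(f(\xi))$, the object $(\xi, \psi)$ of $f^*(I(Y))$ is isomorphic to some $I'(f)(\xi', \phi') = (\xi', f(\phi'))$ via a morphism $g: \xi' \to \xi$ in $X$ satisfying $f(g)\,f(\phi') = \psi\,f(g)$, whence $\psi = f(g\phi'g^{-1})$, exhibiting $\psi$ in the image of $f_\xi$. Injectivity: if $\phi \in \Aut_X(\xi)$ lies in the kernel of $f_\xi$, then $(\xi, \phi)$ and $(\xi, \id)$ are mapped to the same object $(\xi, \id)$ in $f^*(I(Y))$; since an equivalence of stacks is a bijection on isomorphism classes, $(\xi, \phi) \cong (\xi, \id)$ in $I(X)$, and this furnishes a $g \in \Aut_X(\xi)$ with $g\phi = g$, forcing $\phi = \id$.

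There is no essential obstacle here: the argument is a routine unpacking of definitions once the objects of both stacks are put in the uniform pair form. The only care needed is the bookkeeping in the $2$-fibre product $X \times_Y I(Y)$, so that the natural map of groupoids $I(X) \to f^*(I(Y))$ is written down unambiguously before the two implications are checked.
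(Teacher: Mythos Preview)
The paper states this proposition as folklore and gives no proof, so there is nothing to compare against. Your argument is correct: the identification of objects of $I(X)$ and $f^*(I(Y))$ as pairs $(\xi,\phi)$ and $(\xi,\psi)$ is the standard one, the description of morphisms in $f^*(I(Y))$ as maps $g:\xi\to\xi'$ in $X$ with $f(g)\psi=\psi'f(g)$ is exactly what the fibre product gives, and both implications go through as written. The torsor step is legitimate because $X(S)$ is a groupoid, so every $g_0\in\Hom_X(\xi,\xi')$ is an isomorphism and $f(g_0)$ can be cancelled.
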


We now present our strategy to describe the twisted sectors of $\overline{\mathcal{M}}_{2,n}$, the moduli stack of stable curves of genus $2$. We consider the filtration: 
$$
\mathcal{M}_{g,n} \subset \mathcal{M}_{g,n}^{rt} \subset \overline{\mathcal{M}}_{g,n},
$$
where $\mathcal{M}_{g,n}^{rt}$ is the moduli stack of stable curves of genus $g$ and $n$ marked points \emph{with rational tails} whose objects are stable genus $g$, $n$-pointed curves with one irreducible component that is smooth of genus $g$. As we showed in \cite[Theorem 1.1]{pagani1} (\emph{cf.} also \cite[Corollary 5.19]{paganitesi}), the inertia stack of $\mathcal{M}_{1,n}^{rt}$ is dense in the inertia stack of \mbun. This is no longer true in the genus $2$ case. As we shall see in Section \ref{dalbordo}, there are several twisted sectors of $\overline{\mathcal{M}}_{2,n}$ whose canonical image in $\overline{\mathcal{M}}_{2,n}$ is strictly contained in $\overline{\mathcal{M}}_{2,n} \setminus \mathcal{M}_{2,n}^{rt}$.

 
\subsection {The inertia stack of moduli of smooth curves of genus $2$}
\label{inertia}

In this section, we study the geometry of the connected components of the inertia stack of $\mathcal{M}_{g,n}$, focusing in particular on the case $g=2$.

  The approach we follow is inspired by Fantechi \cite{fantechi}. This approach gives a technology to identify the twisted sectors of the inertia stack of $\mathcal{M}_{g,n}$, by means of some discrete numerical data. The enumeration of the twisted sectors is thus reduced to the combinatorial problem of finding all the admissible $(g,n)$-data. This approach also gives a modular-theoretic interpretation of the twisted sectors of $\mathcal{M}_{g,n}$, in terms of moduli spaces of smooth pointed curves of lower genus $g'$. We are able to accomplish this program in the cases when $n\geq1$ or $g=2$; our approach can be extended to $\mathcal{M}_g$ for $g>2$ as soon as one can prove the irreducibility of the moduli spaces of cyclic covers with given numerical data and total space a connected curve of genus $g$.

We study 
\label{inertia2} the moduli stacks of ramified cyclic $\mathbb{Z}_N$-coverings of curves of a given genus $g'$ with fixed branching datum. We use for this the description due to Pardini of cyclic abelian coverings, and in particular \cite[Proposition 2.1]{pardini}. A key role in her description is played by the set of pairs $(H, \psi)$, where $H$ is a subgroup of $\mathbb{Z}_N$ and $\psi$ is an injective character of $H$. Since we work over the complex numbers, we can identify this set with the set $\{1, \ldots, N-1\}$, via the following bijection:
\begin{equation} \label{quasicanonica}
\mu :\{1, \ldots, N-1 \} \to \{(H, \psi)| \ \{\id\} \neq H<G,  \ \psi \in H^{\vee} \textrm{ is an injective character} \}
\end{equation}
given by $\mu(m)=\left( \langle m \rangle < \mathbb{Z}_N, m \to  e^{{2 \pi i} \frac{\gcd(m,N)}{N} }\right)$. 

\begin{definition} \label{2admissible} A $(g,n)$-admissible datum will be a tuple $A=(g',N,d_1,\ldots,d_{N-1}, a_1,\ldots, a_{N-1})$ of non-negative integers with $N \geq 2$ and $g'\geq 0$, satisfying the following conditions:
\begin{enumerate}
\item the Riemann-Hurwitz formula holds: $$2g-2=N (2 g'-2)+  \left(\sum d_i \ \gcd(i,N)\left(\frac{N}{\gcd(i,N)}-1\right)  \right);$$
\item $\sum i \ d_i= 0 \mod N$, this is formula \cite[2.14]{pardini} written in the case of cyclic coverings, after the identification \ref{quasicanonica};
\item $\sum a_i=n$;
\item for all $i$, $a_i \leq d_i$;
\item if $\gcd(i,N) \neq 1$, then $a_i=0$.
\end{enumerate}
\end{definition}
\noindent The points $3,4,5$ correspond to the choice of $n$ marked points among the points of total branching (or, equivalently, of total ramification) for the covering. Note that the set of $(g,n)$-admissible data $A$ is finite, thanks to conditions $1$ and $3$.

To each $(g,n)$-admissible datum, we associate the integer $d= \sum d_i$, a disjoint union decomposition $\{1,\ldots, d\}= \coprod_{i=1}^{N-1} J_i$, where:
$$
J_i:= \left\{j | \ \sum_{l<i} d_l < j \leq \sum_{l \leq i} d_l \right\}
$$
and subsets $A_i$ consisting of the first $a_i$ elements of each $J_i$. Moreover, we define $S_A$ to be the subgroup of $S_d$ (the symmetric group on $d$ elements):
\begin{equation} \label{sa} S_A := \left\{ \sigma| \ \forall i \ \sigma (J_i)=(J_i), \forall a \in \sqcup A_i, \sigma(a)=a \right\}.\end{equation}
We now come to the central object of our construction.

\begin{definition} (See also \cite[Definitions 4.22, 4.35]{paganitesi} for an expanded version of this definition.) \label{settoretwistato} Let $A$ be a $(g,n)$-admissible datum. We define the stack $\mathcal{M}_A$ to be the stack parametrizing tuples $(C',D_1, \ldots, D_{N-1},p_1, \ldots, p_n, L, \phi)$, where $C'$ is a smooth genus $g'$ curve, $D_i$ are disjoint smooth effective divisors in $C'$ of degree $d_i$, $L$ is a line bundle on $C'$, $\phi:L^{\otimes N} \to \mathcal{O}_{C'} (\sum  i D_i)$ is an isomorphism, and the $p_j$ are pairwise distinct points of $C'$. The point $p_j$ belongs to $D_i$ for the unique $i$ such that $\sum_{l <i} a_l <j \leq \sum_{l \leq i} a_l$ and $\gcd(i,N)=1$.
\end{definition}

\noindent  Let $\overline{\mathcal{M}}_{g',d}(B \mu_N)$ be the moduli stack of stable maps with value in $B \mu_N$ defined in \cite{abvis2} (see also \cite{acv}, where $\overline{\mathcal{M}}_{g',d}(B \mu_N)$ is called $\mathcal{B}_{g',d}(\mu_N)$). Let moreover $\mathcal{M}_{g',d}(B \mu_N)$ be the open substack consisting of smooth source curves. The stack $\mathcal{M}_A$ we have just defined, is an open and closed substack of $[\mathcal{M}_{g',d}(B \mu_N)/S_A]$ prescribed by the assignment of $d_i$ points of ramification type $i$ (under convention \ref{quasicanonica}).
 
 \begin{example} Let us consider the $(2,1)$-admissible datum $(0,6;d_1=2,d_4=1;a_1=1)$. This corresponds to the moduli space of $\mathbb{Z}_6$-coverings $f:C \to C'$ with $g(C')=0$ and three branch points $s_1,s_2,s_3$. The first two points are of total branching and $s_1$, or equivalently $f^{-1}(s_1)$, is marked: thus $p_1:=s_1$. The generator $\overline{1} \in \mathbb{Z}_6$ acts on the cotangent spaces $T^{\vee}f^{-1}(s_{1,2})$ by multiplication by $e^{\frac{2 \pi i}{6}}$. There are two fibers in $f^{-1}(s_3)$, and $\overline{4} \in \mathbb{Z}_6$ acts on each of the two cotangent spaces over $s_3$ as the multiplication by $e^{\frac{2 \pi i}{3}}$. According to the notation that will be established in Notation \ref{notazionemg}, this moduli space corresponds to the twisted sector called $V.1_1$.
 \end{example}
 
\begin{proposition} \label{connessione} Let $A$ be a $(g,n)$-admissible datum with $n \geq1$ or $g=2$, then the stack $\mathcal{M}_A$ is connected.
\end{proposition}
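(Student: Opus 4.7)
My plan is to realize $\mathcal{M}_A$ as the top of a short tower of finite étale covers and check connectedness layer by layer. Let $A_0$ be the admissible datum obtained from $A$ by setting all $a_i=0$, let $\mathcal{M}_{A_0}$ be the stack attached to $A_0$ by Definition \ref{settoretwistato}, and let $\mathcal{N}_A$ denote the moduli stack of tuples $(C',D_1,\ldots,D_{N-1})$ where $C'$ is a smooth curve of genus $g'$ and the $D_i$ are pairwise disjoint smooth effective divisors of degree $d_i$. The natural forgetful morphisms fit into
\[
\mathcal{M}_A \longrightarrow \mathcal{M}_{A_0} \longrightarrow \mathcal{N}_A \longrightarrow \mathcal{M}_{g'},
\]
and I would show that each arrow preserves connectedness, starting from the right.

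The map $\mathcal{N}_A \to \mathcal{M}_{g'}$ is étale-locally trivial with fibers an open subscheme of the connected variety $\prod_i \mathrm{Sym}^{d_i}(C')$, and $\mathcal{M}_{g'}$ is itself connected. Admissibility together with the hypothesis $n\geq 1$ or $g=2$ rules out the low-$d$ configurations in which $\mathcal{N}_A$ might be empty; in particular, the $g'=0$ case of the Riemann--Hurwitz condition~(1) forces $d=\sum d_i\geq 3$, so the fibers are genuinely nonempty. Hence $\mathcal{N}_A$ is connected.

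The morphism $\mathcal{M}_{A_0}\to \mathcal{N}_A$ is finite étale, its fiber over $(C',D_i)$ being a torsor under $\pic(C')[N]\cong(\mathbb{Z}/N)^{2g'}$: the fiber parametrizes isomorphism classes of $(L,\phi)$ with $L^{\otimes N}\cong \mathcal{O}(\sum iD_i)$, and such line bundles exist because multiplication by $N$ is surjective on $\pic^{0}(C')$. For $g'=0$ this torsor is a single point. For $g'\geq 1$, I would combine two sources of monodromy acting on the fiber: first, the symplectic monodromy $\pi_1(\mathcal{M}_{g'})\twoheadrightarrow \mathrm{Sp}(2g',\mathbb{Z}/N)$ afforded by Deligne--Mumford's theorem on moduli of curves with level-$N$ structure, which acts transitively on nonzero vectors in $(\mathbb{Z}/N)^{2g'}$; second, the Picard translations obtained by sliding a point of $D_i$ along a nontrivial $1$-cycle in $C'$, whose net effect is to modify the base class $\mathcal{O}(\sum iD_i)$ by an $N$-torsion element and hence to translate the torsor. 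Together these generate a transitive action, so $\mathcal{M}_{A_0}$ is connected.

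Finally, the map $\mathcal{M}_A\to \mathcal{M}_{A_0}$ is finite étale with fibers $S_{A_0}/S_A$, where $S_{A_0}=\prod_i S_{d_i}$ is the supergroup of $S_A$ permuting all points inside each $D_i$. The fibers parametrize ordered selections of $a_i$ marked points inside each $D_i$; braiding two points within a single divisor $D_i$ realizes the full symmetric group $S_{d_i}$ in the monodromy, and the resulting action on $S_{A_0}/S_A$ is transitive. The main obstacle in this plan is the transitivity argument on the Picard torsor, where one must combine a \emph{linear} symplectic action with \emph{affine} Picard translations in a compatible way. In the special case $g=2$, one could bypass this step by appealing directly to the classical classification of automorphisms of smooth genus $2$ curves, which lists the finitely many possible types and verifies irreducibility case by case.
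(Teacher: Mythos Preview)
Your approach differs substantially from the paper's. The paper simply invokes Cornalba \cite{cornalba}, who proved connectedness of $\mathcal{M}_A$ when $N$ is prime; his argument extends verbatim to composite $N$ as soon as there is a point of total ramification, i.e.\ some $i$ with $\gcd(i,N)=1$ and $d_i>0$. The paper then checks that the hypotheses guarantee either $g'=0$ (where $\mathcal{M}_A\cong[\mathcal{M}_{0,d}/S_A]$ is visibly connected) or the existence of such an $i$: if $n\geq 1$, conditions (3)--(5) of Definition~\ref{2admissible} give it directly; if $g=2$ and $g'=1$, Riemann--Hurwitz forces $N=2$, which is prime.

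Your tower-of-covers strategy is more self-contained and essentially sound, but the step you flag as the ``main obstacle'' is not one under the stated hypotheses, and the symplectic monodromy is unnecessary. By the same case analysis, either $g'=0$ (the Picard torsor is a single point) or there exists $i$ with $\gcd(i,N)=1$ and $d_i>0$. Sliding a point of that $D_i$ along a closed loop $\gamma\subset C'$ brings the divisor, and hence the base class $\mathcal{O}(\sum iD_i)$, back to itself, while transporting a chosen $N$-th root $L$ by the class of $i[\gamma]$ in $H_1(C',\mathbb{Z}/N)\cong\pic(C')[N]$. Since $i$ is a unit modulo $N$, these translations already fill the whole group, so the torsor cover $\mathcal{M}_{A_0}\to\mathcal{N}_A$ is connected fiberwise over each fixed curve $C'$; no linear action is needed, and there is nothing to ``combine''. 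Two minor points: your phrase ``modify the base class $\mathcal{O}(\sum iD_i)$ by an $N$-torsion element'' is inaccurate---the base class returns unchanged, it is the chosen root that shifts---and your tower begins at $\mathcal{M}_{g'}$, which is not a Deligne--Mumford stack for $g'\leq 1$; for $g'=0$ one should argue directly, and for $g'=1$ the fiberwise translation argument just given already yields connectedness without reference to the base.
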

\begin{proof}
Cornalba has proved this in \cite[p.3]{cornalba} when the order $N$ of the cyclic group is a prime number. His proof extends to the case of general $N$, provided that there is a point of total ramification. In terms of a $(g,n)$-admissible datum $A$ this translates into the fact that there exists $i$ such that $\gcd(i,N)=1$ and $d_i>0$. Another set of cases where the connectedness of $\mathcal{M}_A$ is easy to prove is when $g'$ equals $0$: in this case $\mathcal{M}_A \cong [\mathcal{M}_{0,d}/S_A]$.

Now if $n\geq1$, by the very definition of a $(g,n)$-admissible datum, we are ruling out all the moduli spaces that do not have a point of total ramification. If $g$ equals $2$, then $g'$ can be equal to $0$ or $1$. In the second case, the Riemann-Hurwitz formula forces $N$ to equal $2$, and so this case is covered by Cornalba's proof.
\end{proof}

\begin{proposition} \label{instack2} Let $A$ be a $(g,n)$-admissible datum such that $n \geq 1$ or $g=2$. Then, any assignment $\alpha: \{1,\ldots,n\} \to \mathbb{Z}_N^*$ such that $|\alpha^{-1}(i)|=a_i$ induces an isomorphism of $\mathcal{M}_A$ with a connected component of $I_N(\mathcal{M}_{g,n})$. Conversely, to any connected component $X$ of $I_N(\mathcal{M}_{g,n})$ such that $n \geq 1$ or $g=2$, one can associate a $(g,n)$-admissible datum $A$ such that $X \cong \mathcal{M}_A$. 
\end{proposition}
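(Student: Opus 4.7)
I would prove this by constructing an explicit forward map $\Phi_\alpha\colon \mathcal{M}_A\to I_N(\mathcal{M}_{g,n})$ using Pardini's reconstruction of cyclic coverings, verifying it is an isomorphism onto a connected component, and then for the converse extracting an admissible datum from any given component via the quotient construction. For the forward map, given an object $(C',D_1,\ldots,D_{N-1},p_1,\ldots,p_n,L,\phi)$ of $\mathcal{M}_A$ over a base $S$, I would invoke \cite[Proposition 2.1]{pardini} together with the bijection \ref{quasicanonica} to reconstruct a cyclic $\mathbb{Z}_N$-cover $f\colon C\to C'$ whose branch divisor of ramification type $\mu(i)$ is $D_i$. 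Condition (1) of Definition \ref{2admissible} is the Riemann--Hurwitz formula, ensuring $C$ is smooth of genus $g$; condition (2) is the Pardini compatibility \cite[2.14]{pardini}; conditions (4)--(5) force each $p_j$ to be a total branch point, hence to admit a unique preimage $q_j\in C$ fixed by the $\mathbb{Z}_N$-action. The injection $\mu_N\hookrightarrow\Aut(C,q_1,\ldots,q_n)$ is then specified by declaring that a chosen primitive $\zeta\in\mu_N$ act on the cotangent line $T^\vee_{q_j}C$ by $\exp(2\pi i\gcd(\alpha(j),N)/N)$, which via \ref{quasicanonica} pins down a single generator of the $\mathbb{Z}_N$-action on $C$. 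The construction is functorial in the base, giving $\Phi_\alpha$.

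To see that $\Phi_\alpha$ identifies $\mathcal{M}_A$ with a connected component, I would invoke Proposition \ref{connessione} so that $\mathcal{M}_A$ is connected and its image lies in a single component $X_A\subset I_N(\mathcal{M}_{g,n})$. Both source and target are smooth (the latter by Proposition \ref{liscezza1}), and an equivariant deformation-theory argument at each closed point shows they have the same tangent dimension: $\mu_N$-equivariant first-order deformations of $(C,q_*)$ are naturally equivalent, via Pardini's theory, to deformations of the building data $(C',D_*,L,\phi)$. An inverse to $\Phi_\alpha$ on isomorphism classes is then the quotient construction $C':=C/\langle\sigma\rangle$, with $D_i$ read off from the branch points of type $\mu(i)$ and $(L,\phi)$ from the character eigensheaves of $f_*\mathcal{O}_C$. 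Together these imply that $\Phi_\alpha$ is an open and closed immersion of the correct dimension, hence an isomorphism onto $X_A$.

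For the converse, given any connected component $X\subset I_N(\mathcal{M}_{g,n})$ (with $n\geq 1$ or $g=2$), I would pick a geometric point $(C,p_1,\ldots,p_n,\beta)$, set $\sigma:=\beta(\zeta)$, and form $C':=C/\langle\sigma\rangle$. The admissible datum comes out directly: $g'$ is the genus of $C'$; $d_i$ counts the branch points of $f\colon C\to C'$ of ramification type $\mu(i)$; and $a_i:=|\alpha_*^{-1}(i)|$, where $\alpha_*(j)\in\mathbb{Z}_N^*$ is determined by the character by which $\sigma$ acts on $T^\vee_{p_j}C$ under \ref{quasicanonica}. The five conditions of Definition \ref{2admissible} now follow from, respectively, Riemann--Hurwitz, Pardini's obstruction, the count $\sum a_i=n$, the definition of $a_i$, and the fact that the marked points are fixed by all of $\mathbb{Z}_N$ (so $\gcd(\alpha_*(j),N)=1$). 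The forward construction applied with $\alpha_*$ then yields a component containing the original point, which by connectedness must equal $X$.

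The subtlest step is establishing full faithfulness of $\Phi_\alpha$ on automorphism groups, i.e.\ that automorphisms of $(C',D_*,p_*,L,\phi)$ in $\mathcal{M}_A$ correspond bijectively to automorphisms of $(C,q_*)$ that commute with $\sigma$. This requires carefully unpacking the $S_A$-quotient presentation $\mathcal{M}_A\subset[\mathcal{M}_{g',d}(B\mu_N)/S_A]$, so that permutations in $S_A$ are matched with reorderings of the labeled preimages on $C$ which descend to the same configuration on $C'$, and identifying the centralizer of $\sigma$ in $\Aut(C,q_*)$ with the automorphism group of the Pardini building data. Once this groupoid bookkeeping is in hand, the rest of the argument is essentially the dictionary between cyclic covers and their building data.
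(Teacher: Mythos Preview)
Your overall architecture—Pardini's correspondence forward to build the cover, and the quotient construction backward to extract the datum—is the same as the paper's. However, you have skipped the one step the paper treats as the genuine content of the proof: verifying that the total space $C$ of the Pardini cover is \emph{connected}. You write that condition (1) ``ensur[es] $C$ is smooth of genus $g$,'' but Riemann--Hurwitz only computes the Euler characteristic; it yields a connected genus-$g$ curve only once connectedness is already known. Pardini's \cite[Proposition 2.1]{pardini} does not guarantee this, and without it your map $\Phi_\alpha$ does not land in $I_N(\mathcal{M}_{g,n})$ at all.

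This is exactly where the hypothesis ``$n\geq 1$ or $g=2$'' is used a second time (beyond Proposition~\ref{connessione}, which concerns connectedness of the \emph{moduli space} $\mathcal{M}_A$, not of the curve $C$). For $n\geq 1$, conditions (3)--(5) of Definition~\ref{2admissible} force some $d_i>0$ with $\gcd(i,N)=1$, i.e.\ a point of total ramification, and this makes the monodromy surjective, hence $C$ connected. For $n=0$, $g=2$, the paper argues case by case: if $g'=1$ one is forced into $N=2$, $d_1=2$ (again total ramification); if $g'=0$ one checks the numerical criterion $\gcd\bigl(N,\{i\mid d_i\neq 0\}\bigr)=1$ directly against the finite list of $(2,0)$-admissible data. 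You should insert this argument before invoking Proposition~\ref{connessione}.

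By contrast, the deformation-theoretic matching of tangent spaces and the careful groupoid bookkeeping on automorphisms that you flag as ``the subtlest step'' are not where the difficulty lies here; the paper simply relies on Pardini's result being a categorical equivalence between covers and building data, so that the stack-level isomorphism is automatic once the correspondence on objects is set up.
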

\begin{proof}
Using the result \cite[Proposition 2.1]{pardini}, to any object of the stack $\mathcal{M}_A$, we associate a $\mathbb{Z}_N$-covering $\pi: C \to C'$, branched at the divisors $D_i$. 
The marked points $x_1, \ldots, x_n$ are chosen as the fibers of $p_1, \ldots, p_n$ under the covering map in such a way that $x_{\alpha(i)} \in D_i$. 

Proposition 2.1 of \cite{pardini} does not guarantee that the resulting covering space is connected, therefore we need to prove that $C$ is connected. If $n \geq 1$, then the covering $\pi$ must have a point of total ramification, which indeed implies that $C$ is connected. So let $n=0$. If $g'$ is equal to $1$, one can see that the only possibility if $g=2$ is that $N=2$ and $d_1=2$, thus there is a point of total ramification. If $g'$ is equal to $0$, the connectedness of $C$ is equivalent to the following numerical condition on the $(g,0)$-admissible datum:
$$
\gcd \left(N, \left\{i| \ d_i \neq 0\right\}\right)=1.
$$
This condition happens to be satisfied by all $(2,0)$-admissible data.

In the cases $n \geq 1$ or $g=2$, the stack $\mathcal{M}_A$ is connected due to Proposition \ref{connessione}; therefore its image under this correspondence is a connected component of $I(\mathcal{M}_{g,n})$.

Conversely, if $X$ is a connected component of $I_N(\mathcal{M}_{g,n})$, after quotienting by the group generated by the automorphism, the objects of $X$ are families of $\mathbb{Z}_N$-coverings. By invoking \cite[Proposition 2.1]{pardini} again in the opposite direction, we find the discrete datum $A$ and then the connected moduli space $\mathcal{M}_A$.
\end{proof}
\begin{remark} The map $\iota$ on the inertia stack, which we described in Remark \ref{mappaiota}, corresponds to the map of admissible data $(g',N,d_1, \ldots, d_{N-1},a_1, \ldots, a_{N-1}) \to (g',N,d_{N-1}, \ldots, d_1, \ldots, a_{N-1}, \ldots, a_1)$.
\end{remark}

There are seventeen $(2,0)$-admissible data, twenty-four data with $n=1$, twenty-six data with $n=2$, twenty-one with $n=3$, seven with $n=4$, and only one for $n=5,6$. We now list in \ref{tabellona} the $(2,0)$-admissible data, and we propose a name for each of them. Our names coincide with those given by Spencer \cite{spencer2} in the overlapping cases, we have only invented a new notation for $V.1$ and $V.2$.
We do not list the admissible data with $n>0$, since they can easily be determined starting from the ones with $n=0$. The complete list of them (and therefore, of the twisted sectors of $\mathcal{M}_{2,n}$) can be found in \cite[5.2]{paganitesi}.

\begin{notation} \label{notazionemg} In view of Proposition \ref{instack2}, if $A$ is a set of $2-$admissible data, it determines a twisted sector $X(A)$ of $\mathcal{M}_2$. Now if $\alpha: \{1,\ldots,n\} \to \mathbb{Z}_N^*$ is a function such that $|\alpha^{-1}(i)|=a_i$ (as in Proposition \ref{instack2}), we call $X(A)_{\alpha(1), \ldots, \alpha(n)}$ the corresponding twisted sector of $\mathcal{M}_{2,n}$. So for instance $III_{1122}$ and $III_{1212}$ are two distinct twisted sectors of the Inertia Stack $I(\mathcal{M}_{2,4})$. 
\end{notation}


\subsubsection{The compactification of the inertia stack of smooth genus $2$ curves}

Let $X$ be a twisted sector of $\mathcal{M}_{g,n}$. As we have already seen in Proposition \ref{instack2}, $X \cong \mathcal{M}_A$ for a certain $(g,n)$-admissible datum $A$ when $g$ equals $2$ or $n \geq 1$. In this section, we construct a compactification $\overline{\mathcal{M}}_A$. We follow the approach of the compactification of the moduli stack of stable maps to an orbifold, developed in \cite{abvis2} (see also \cite{agv2} and \cite{acv}). After that, we study the geometry of $\overline{\mathcal{M}}_A$, and in particular we study its cohomology and Chow groups. 

\begin{equation} \begin{tabular}{|c|c|c||c|c|}\hline \label{tabellona}
$g'$&$N$&$(d_1, \ldots, d_{N-1})$&Coarse Moduli Space& Name in \cite{spencer2}\\
\hline
&&&&\\
$0$&$2$&$(6)$& $\mathcal{M}_{0,6}/S_6$ & $(\tau)$ \\
$0$&$3$&$(2,2)$&  $\mathcal{M}_{0,4}/(S_2 \times S_2)$ & $(III)$ \\
$0$&$4$&$(1,2,1)$&  $\mathcal{M}_{0,4}/S_2$ & $(IV)$ \\
$0$&$5$&$(2,0,1,0)$ & $\mathcal{M}_{0,3}/S_2$ & $(X.4)$\\
$0$&$5$&$(0,1,0,2)$ &  $\mathcal{M}_{0,3}/S_2$ & $(X.6)$\\
$0$&$5$&$(1,2,0,0)$ & $\mathcal{M}_{0,3}/S_2$ & $(X.2)$\\
$0$&$5$&$(0,0,2,1)$ & $\mathcal{M}_{0,3}/S_2$ & $(X.8)$\\
$0$&$6$&$(2,0,0,1,0)$& $\mathcal{M}_{0,3}/S_2$ & $(V.1)$ \\
$0$&$6$&$(0,1,0,0,2)$&  $\mathcal{M}_{0,3}/S_2$ & $(V.2)$ \\
$0$&$6$&$(0,1,2,1,0)$&  $\mathcal{M}_{0,4}/S_2$ & $(VI)$ \\
$0$&$8$&$(1,0,1,1,0,0,0)$&$\mathcal{M}_{0,3}$ & $(VIII.1)$ \\
$0$&$8$&$(0,0,0,1,1,0,1)$&  $\mathcal{M}_{0,3}$ & $(VIII.2)$ \\
$0$&$10$&$(0,1,1,0,1,0,0,0,0)$&  $\mathcal{M}_{0,3}$ & $(X.7)$ \\
$0$&$10$&$(0,0,0,0,1,0,1,1,0)$&  $\mathcal{M}_{0,3}$ & $(X.3)$ \\
$0$&$10$&$(1,0,0,1,1,0,0,0,0)$&  $\mathcal{M}_{0,3}$ & $(X.1)$ \\
$0$&$10$&$(0,0,0,0,1,1,0,0,1)$&  $\mathcal{M}_{0,3}$ & $(X.9)$ \\
$1$&$2$&$(2)$& $4:1$ covering on $\mathcal{M}_{1,2}/S_2$ & $(II)$ \\
&&&&\\
\hline
\end{tabular}
\end{equation}

Let $A=(g',N, d_1, \ldots, d_{N-1}, a_1, \ldots, a_{N-1})$ be $(g,n)$-admissible datum, and $d:= \sum d_i$ and $n= \sum a_i$ as usual.

\begin{definition} \label{compactifiedtwisted} Let $\overline{\mathcal{M}}_{g',d}(B \mu_N)$ be the moduli stack of stable maps to $B \mu_N$ (see \cite{abvis2}, \cite{acv}). We define $\overline{\mathcal{M}}_A$ as the Zariski closure of $\mathcal{M}_A$ inside $[\overline{\mathcal{M}}_{g',d}(B \mu_N)/S_A]$, ($S_A$ defined in \ref{sa}).
\end{definition}

\noindent As $\mathcal{M}_A$ is connected by Proposition \ref{connessione}, it follows that $\overline{\mathcal{M}}_A$ is also connected.

\begin {definition} Let $X$ be a twisted sector of the inertia stack of $\overline{\mathcal{M}}_{g,n}$. We define $\partial X$ as the fiber product illustrated below:
$$
\xymatrix{\partial X \ar[r] \ar[d] \ar@{}|{\square}[dr]& X \ar[d]\\
\partial \overline{\mathcal{M}}_{g,n} \ar[r] & \overline{\mathcal{M}}_{g,n}.}
$$
\noindent We say that the twisted sector $X$ \emph{comes from the boundary} if $\partial X$ is equal to $X$ or, equivalently, if the image of $X$ under the canonical projection of the inertia stack is contained in the boundary of the moduli stack $\overline{\mathcal{M}}_{g,n}$, or, again equivalently, if its general element is not smooth. \label{bordo}
\end {definition}
\noindent If $X$ is a twisted sector of $I(\partial \overline{\mathcal{M}}_{g,n})$, we will need later in this paper a simple criterium to determine whether it comes from the boundary.
\begin{remark} \label{lisciabile} Let $X$ be a twisted sector of $I(\partial \overline{\mathcal{M}}_{g,n})$, whose general element is a couple $(C, \phi)$, where $C$ is a (family of) genus $g$, $n$-pointed, stable curves, and $\phi$ is an automorphism of $C$. Then $\phi$ induces a linear endomorphism $\phi'$ on the first order deformation vector space: $$\textrm{Ext}^1\left(\Omega_C( \sum x_i), \mathcal{O}_C \right)=H^1(C, \chom\left(\Omega_C(\sum x_i), \mathcal{O}_C)\right) \bigoplus H^0\left(C, \cext^1(\Omega_C(\sum x_i), \mathcal{O}_C) \right),$$ which respects the direct sum. Then the twisted sector $X$ comes from the boundary if and only if the invariant part of $ H^0\left(C, \cext^1(\Omega_C(\sum x_i), \mathcal{O}_C)\right)$ under the action of ${\phi'}$ is zero. If this is the case, we can also say that the twisted sector $X$ is non-smoothable, or that its general element is singular.
\end{remark}

Let us now go back to the compactification of the twisted sectors of $I(\mathcal{M}_{2,n})$. As a consequence of Proposition \ref{instack2}, we have:

\begin{corollary} \label{compsmooth} Let $A$ be a $(g,n)$-admissible datum, in the range $n\geq1$ or $g =2$. Then, any assignment $\alpha: \{1,\ldots,n\} \to \mathbb{Z}_N^*$ such that $|\alpha^{-1}(i)|=a_i$ induces an isomorphism of $\overline{\mathcal{M}}_A$ with a connected component of $I_N(\overline{\mathcal{M}}_{g,n})$. Conversely, to any connected component $X$ of $I_N(\overline{\mathcal{M}}_{g,n})$ \emph{that does not come from the boundary} (Definition \ref{bordo}), one can associate a $(g,n)$-admissible datum $A$, such that $X \cong \overline{\mathcal{M}}_A$.  
\end{corollary}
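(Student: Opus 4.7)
The plan is to reduce the corollary to Proposition \ref{instack2} by taking Zariski closures and exploiting the smoothness of the inertia stack. First, I would observe that by Proposition \ref{liscezza1} the stack $I_N(\overline{\mathcal{M}}_{g,n})$ is smooth, hence each of its connected components is irreducible. Since $I_N(\mathcal{M}_{g,n})$ is an open substack of $I_N(\overline{\mathcal{M}}_{g,n})$, every connected component $Y$ of the former is the intersection with $I_N(\mathcal{M}_{g,n})$ of a unique connected component $\overline{Y}$ of the latter, and $\overline{Y}$ is the Zariski closure of $Y$ in $I_N(\overline{\mathcal{M}}_{g,n})$.

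For the forward direction, given a $(g,n)$-admissible datum $A$ and an assignment $\alpha$ as in the statement, Proposition \ref{instack2} already supplies an isomorphism $\phi_\alpha\colon \mathcal{M}_A \xrightarrow{\sim} Y$ onto a connected component of $I_N(\mathcal{M}_{g,n})$. To extend this to the compactification, I would invoke the modular description of $\overline{\mathcal{M}}_{g',d}(B\mu_N)$ from \cite{abvis2,acv}: its universal object carries an admissible $\mu_N$-cover which, after coarsifying the source away from the twisted nodes, produces a family of stable $n$-pointed genus $g$ nodal curves equipped with a distinguished automorphism of order $N$. This assignment is $S_A$-equivariant, hence it factors through a morphism $[\overline{\mathcal{M}}_{g',d}(B\mu_N)/S_A] \to I_N(\overline{\mathcal{M}}_{g,n})$. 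Restricting to the closed substack $\overline{\mathcal{M}}_A$ of Definition \ref{compactifiedtwisted} yields a morphism $\overline{\phi}_\alpha\colon \overline{\mathcal{M}}_A \to I_N(\overline{\mathcal{M}}_{g,n})$ extending $\phi_\alpha$.

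To finish the forward direction, I would argue that $\overline{\phi}_\alpha$ is an isomorphism onto $\overline{Y}$. The image is dense (it contains $Y$) and closed (as $\overline{\mathcal{M}}_A$ is proper, being a closed substack of the proper $[\overline{\mathcal{M}}_{g',d}(B\mu_N)/S_A]$), so it equals $\overline{Y}$. Both source and target are then integral smooth Deligne--Mumford stacks sharing the dense open $\mathcal{M}_A \cong Y$, on which $\overline{\phi}_\alpha$ restricts to the isomorphism $\phi_\alpha$. Moreover $\overline{\phi}_\alpha$ is representable and quasi-finite, because an admissible $\mu_N$-cover is uniquely determined by the induced pair (nodal curve, automorphism of order $N$) up to the labeling of ramification data already killed by the $S_A$-quotient. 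Zariski's main theorem in the Deligne--Mumford setting then forces $\overline{\phi}_\alpha$ to be an isomorphism.

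For the converse, let $X$ be a connected component of $I_N(\overline{\mathcal{M}}_{g,n})$ that does not come from the boundary in the sense of Definition \ref{bordo}. Then $X \cap I_N(\mathcal{M}_{g,n})$ is a nonempty open substack of $X$; by irreducibility of $X$ it is dense, and it is itself a connected component of $I_N(\mathcal{M}_{g,n})$. Proposition \ref{instack2} identifies it with $\mathcal{M}_A$ for some $(g,n)$-admissible datum $A$, and applying the forward direction to $A$ gives $X = \overline{Y} \cong \overline{\mathcal{M}}_A$. The main obstacle is the construction and verification of $\overline{\phi}_\alpha$ at the boundary: one has to check that coarsifying admissible $\mu_N$-covers matches the intrinsic modular description of the inertia stack at nodal curves, and that the resulting morphism is representable and quasi-finite; this ultimately rests on the good behavior of the moduli of twisted stable maps developed in \cite{abvis2,acv}.
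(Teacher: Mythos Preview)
Your proposal is correct in outline and follows the same strategy as the paper, which simply says ``adapt the proof of Proposition~\ref{instack2}; when the covering curve $C$ turns out to be unstable, apply the usual stabilization procedure.'' You have supplied the details the paper omits: smoothness of $I_N(\overline{\mathcal{M}}_{g,n})$, the closure argument, and the appeal to Zariski's main theorem.

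One point to tighten. You write that coarsifying the universal admissible $\mu_N$-cover ``produces a family of stable $n$-pointed genus $g$ nodal curves''. In general it does not: the coarse total space of a twisted stable map to $B\mu_N$ can contain semistable rational components (for instance a $\mathbb{P}^1$ meeting the rest of the curve in two nodes and carrying no marked point), so one must compose with the stabilization morphism to land in $\overline{\mathcal{M}}_{g,n}$. This is exactly the step the paper flags. Once stabilization is inserted, your quasi-finiteness justification needs a small addendum: you must argue that only finitely many admissible covers (with the fixed discrete datum $A$) can stabilize to a given pair $(C,\phi)$. This is standard---the destabilization is governed by the quotient $C/\langle\phi\rangle$ and its branch divisor, which determine the base of the admissible cover up to finite ambiguity---but it is the substantive content hidden behind the paper's phrase ``the usual stabilization procedure''. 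With that adjustment your Zariski's-main-theorem argument (finite birational onto a normal target) goes through.
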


\begin{proof} The proof of the corollary follows by adapting the proof of Proposition \ref{instack2}. In the case when the covering curve $C$ turns out to be unstable, one applies the usual stabilization procedure.
\end{proof}

\noindent In other words, the $\overline{\mathcal{M}}_A$ of Definition \ref{compactifiedtwisted}, are all the twisted sectors that do not come from the boundary. 

\begin{notation} \label{notazionecompsmooth} Following Notation \ref{notazionemg}, we shall call the latters compactified twisted sectors $\overline{III}, \overline{II}_{11}, \ldots$. 
\end{notation}

We need to investigate the moduli stack $\overline{\mathcal{M}}_A$ a little more, in order to determine its cohomology groups. In particular, in the following, we will reduce the problem of computing the cohomology groups of the twisted sectors to the problem of computing the equivariant cohomology of $\overline{\mathcal{M}}_{0,n}$ under the action of the symmetric group $S_n$, which is then known (see \cite[5.8]{getzleroperads}). For this purpose, note that the twisted sectors of $\overline{\mathcal{M}}_{g,n}$ that do not come from the boundary split into two different classes: those whose general object is a covering of a genus $0$ curve, and those whose general object covers genus $g'$ curves with $g'>0$. The large majority of cases fall into the first class, and their cohomology is easily determined. On the contrary, the latter cases are fewer, but they require more work. 

\begin{theorem} (\emph{Cf.} \cite[p.2]{bayercadman}.) \label{bayercadman} Let $A$ be a $(g,n)$-admissible datum (see Definition \ref{2admissible}), with $g'$ equal to $0$. The space $\overline{\mathcal{M}}_A$ is then a $\mu_N$-gerbe over the quotient stack $[X / S_A]$, where $X$ is the stack constructed starting from $\overline{\mathcal{M}}_{0,\sum d_i}$ by successively applying the root construction (see \cite[Section 2]{bayercadman}).
\end {theorem}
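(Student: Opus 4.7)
The plan is to reduce the statement to the result of Bayer and Cadman \cite{bayercadman} by identifying $\overline{\mathcal{M}}_A$ with the quotient, by $S_A$, of a single connected component of $\overline{\mathcal{M}}_{0,d}(B\mu_N)$ prescribed by the monodromy data of $A$.

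First I would recall that $\overline{\mathcal{M}}_{0,d}(B\mu_N)$ decomposes as a disjoint union of open and closed substacks indexed by monodromy vectors $(i_1,\ldots,i_d) \in (\mathbb{Z}/N\mathbb{Z})^d$ satisfying $\sum i_j \equiv 0 \bmod N$ (a condition guaranteed by point (2) of Definition \ref{2admissible}): on the component labelled by $(i_1,\ldots,i_d)$, the $j$-th marked point carries a stacky structure of order $N/\gcd(i_j,N)$ with action given by $i_j$. The Bayer--Cadman result then identifies this component as a $\mu_N$-gerbe over the iterated root stack of $\overline{\mathcal{M}}_{0,d}$ along the $d$ universal sections, where the $j$-th section is rooted with index $N/\gcd(i_j,N)$. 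After arranging the sections so that the first $d_1$ carry monodromy $1$, the next $d_2$ carry monodromy $2$, and so on, this iterated root stack is exactly the stack $X$ of the statement.

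Next I would identify $\overline{\mathcal{M}}_A$ inside $[\overline{\mathcal{M}}_{0,d}(B\mu_N)/S_A]$. By Definition \ref{compactifiedtwisted}, $\overline{\mathcal{M}}_A$ is the Zariski closure of $\mathcal{M}_A$, which corresponds to the smooth-source locus inside the single connected component $\mathcal{Z}$ of $\overline{\mathcal{M}}_{0,d}(B\mu_N)$ singled out above. By Proposition \ref{connessione} this smooth locus is connected and dense in $\mathcal{Z}$, so the closure exhausts $\mathcal{Z}$; hence $\overline{\mathcal{M}}_A$ coincides with $[\mathcal{Z}/S_A]$.

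Finally I would transport the Bayer--Cadman gerbe structure from $\mathcal{Z}$ to the quotient. Because $S_A$, as defined in (\ref{sa}), only permutes sections sharing the same monodromy index and hence the same root order, the root construction producing $X$ is $S_A$-equivariant, and the $\mu_N$-gerbe $\mathcal{Z} \to X$ descends to a $\mu_N$-gerbe $\overline{\mathcal{M}}_A = [\mathcal{Z}/S_A] \to [X/S_A]$, which is the claimed description. The main obstacle here is verifying this equivariance: one must check that the divisors and the chosen $N$-th roots of line bundles used in the Bayer--Cadman construction are permuted consistently under $S_A$, but this is automatic from the very definition of $S_A$, which preserves the partition of the $d$ sections into blocks of constant monodromy type.
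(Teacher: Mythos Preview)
The paper does not give a proof of this theorem: it is stated with a citation to Bayer--Cadman and no argument is supplied. Your proposal is therefore not competing against an in-paper proof, but rather filling in the reduction that the citation implies, and it does so correctly: identify the relevant monodromy component $\mathcal{Z}$ of $\overline{\mathcal{M}}_{0,d}(B\mu_N)$, invoke the Bayer--Cadman description of that component as a $\mu_N$-gerbe over an iterated root stack $X$ of $\overline{\mathcal{M}}_{0,d}$, and then pass to the $S_A$-quotient using that $S_A$ preserves the monodromy partition.

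One small remark: when you invoke Proposition~\ref{connessione} to get connectedness, note that its hypotheses are $n\geq 1$ or $g=2$, whereas the present theorem assumes only $g'=0$. This is harmless, since the proof of Proposition~\ref{connessione} explicitly treats $g'=0$ separately (via $\mathcal{M}_A\cong[\mathcal{M}_{0,d}/S_A]$), but it would be cleaner to appeal directly to that observation, or simply to the fact that each monodromy component $\mathcal{Z}\subset\overline{\mathcal{M}}_{0,d}(B\mu_N)$ is irreducible so the smooth locus is automatically dense.
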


\noindent The stack $X_{D,r}$, called the \emph{root of a line bundle with a section}, where $X$ is a scheme, $D$ is an effective Cartier divisor, and $r$ is a natural number, was introduced first in \cite{cadman} and \cite{agv2}. The only thing we shall need in the context of our computations is the following result:

\begin{proposition} (See \cite[Corollary 2.3.7]{cadman}.) Let $X$ be a scheme. If $X_{D,r}$ is obtained from $X$ by applying the root construction, the canonical map $X_{D,r} \to X$ exhibits $X$ as the coarse moduli space of $X_{D,r}$.
\end{proposition}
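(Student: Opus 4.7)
The plan is to prove this statement by reducing to a local computation, since both the root construction and the formation of coarse moduli spaces commute with étale base change on $X$. So the first step is to choose an étale cover of $X$ on which $\mathcal{O}(D)$ is trivialized, so that $D$ is locally the vanishing locus of a regular function $t \in \mathcal{O}_X$. It then suffices to establish the claim on such an affine chart $U = \spec R$ and then glue.

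On such a chart, I would unpack the definition of the root construction to identify $U_{D,r}$ explicitly with the quotient stack $[V/\mu_r]$, where
\[
V \;=\; \spec\bigl(R[y]/(y^r - t)\bigr)
\]
and $\mu_r$ acts by $\zeta \cdot y = \zeta y$. This is essentially the standard presentation: giving an $r$-th root of the pair $(\mathcal{O}(D), s)$ on a test scheme $T$ amounts to giving a section $y$ on a $\mu_r$-torsor whose $r$-th power pulls back to $t$, which is exactly an object of $[V/\mu_r](T)$.

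The next step is to compute the coarse moduli space of $[V/\mu_r]$. Since the $\mu_r$-action is linear with $y$ of weight $1$, the ring of invariants $R[y]^{\mu_r}/(y^r-t)^{\mu_r}$ is generated by $y^r$, and the relation $y^r = t$ identifies this with $R$ itself. Hence the categorical (and, since $\mu_r$ is linearly reductive, geometric) quotient $V/\mu_r$ is $U$, and the canonical map $[V/\mu_r] \to V/\mu_r$ exhibits $U$ as the coarse moduli space of $U_{D,r}$, compatibly with the canonical projection $U_{D,r} \to U$.

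Finally, I would check that these local identifications patch: the local quotient presentations are canonically determined by $(D,r)$ up to the $\mu_r$-action, so the descent datum for $X_{D,r}$ matches the trivial descent datum for $X$ on the locus away from $D$, while along $D$ the $\mu_r$-gerbe structure descends automatically. The only subtle point — and the step I expect to require most care — is verifying that the universal property of the coarse moduli space is preserved under gluing, i.e.\ that any morphism from $X_{D,r}$ to a scheme factors uniquely through $X$. This follows because the property of being a coarse moduli space is étale local on the target for tame Deligne--Mumford stacks with finite inertia, which applies here since $\mu_r$ is linearly reductive and the inertia of $X_{D,r}$ is supported on $D$ with stabilizer $\mu_r$. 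Invoking this descent principle completes the proof.
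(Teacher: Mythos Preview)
The paper does not supply its own proof of this proposition: it is stated with a citation to Cadman \cite[Corollary 2.3.7]{cadman} and used as a black box. So there is no paper proof to compare against; your proposal is a self-contained argument for a result the paper merely quotes.

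Your argument is correct and is essentially the standard one (and indeed is close to how Cadman establishes it). A couple of minor remarks. First, a Zariski cover already suffices to trivialize $\mathcal{O}(D)$, so you need not invoke étale base change at that step; this slightly simplifies the gluing. Second, in the invariant computation it is cleaner to observe directly that the $\mu_r$-weight decomposition of $R[y]/(y^r-t)$ as an $R$-module is $\bigoplus_{i=0}^{r-1} R\cdot y^i$, with $y^i$ in weight $i$, so the invariant subring is exactly $R$; the phrase ``$(y^r-t)^{\mu_r}$'' is a bit awkward since the ideal is already $\mu_r$-stable. Finally, the descent of the coarse-moduli-space property is part of the Keel--Mori package (or can be checked by hand here since the map $X_{D,r}\to X$ is proper, quasi-finite, and a bijection on geometric points with the correct universal property locally); your invocation of tameness is correct but slightly heavier than necessary in the characteristic-zero setting of the paper.
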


\begin{corollary} \label{corollariocadman} If $A$ is a $(g,n)$-admissible datum with $g'=0$, then the stack $\overline{\mathcal{M}}_A$ has the same rational Chow groups and rational cohomology groups as $\big[\overline{\mathcal{M}}_{0,\sum d_i}\big/ S_A \big]$.
\end{corollary}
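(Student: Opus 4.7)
My plan is to realize the corollary through a short chain of morphisms, each of which I will argue induces isomorphisms on both rational Chow and rational cohomology. The backbone is Theorem \ref{bayercadman} together with the fact that, over the rationals, a smooth Deligne--Mumford stack has the same Chow and cohomology groups as its coarse moduli space.

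First, by Theorem \ref{bayercadman}, the natural morphism $\overline{\mathcal{M}}_A \to [X/S_A]$ is a $\mu_N$-gerbe. I would then invoke the standard fact that a $\mu_N$-gerbe induces isomorphisms on rational cohomology and rational Chow: since $\mu_N$ is finite, $H^{>0}(B\mu_N,\mathbb{Q})=0$ and $A^{>0}(B\mu_N)_{\mathbb{Q}}=0$, so the Leray spectral sequence of the gerbe (in either theory) degenerates at $E_2$ and pullback is an isomorphism.

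Next, I would compare $[X/S_A]$ with $[\overline{\mathcal{M}}_{0,\sum d_i}/S_A]$ via their coarse moduli spaces. By iteratively applying Cadman's Corollary 2.3.7 (cited above), the coarse moduli space of $X$ is that of $\overline{\mathcal{M}}_{0,\sum d_i}$, which is the scheme $\overline{M}_{0,\sum d_i}$ (already a scheme, since $\sum d_i\geq 3$). The whole construction is manifestly $S_A$-equivariant: the root constructions are carried out along the boundary/branching divisors tracked by $A$, and $S_A$ permutes these divisors by definition. Taking quotients, $[X/S_A]$ and $[\overline{\mathcal{M}}_{0,\sum d_i}/S_A]$ both admit $\overline{M}_{0,\sum d_i}/S_A$ as coarse moduli space. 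Since the rational Chow groups and rational cohomology of a smooth DM stack agree with those of its coarse moduli space, the two quotient stacks share the same rational invariants. Chaining this with the gerbe isomorphism from the first step yields the corollary.

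The only real subtlety, and the point I would be most careful to justify, is the $S_A$-equivariance of the coarse-moduli identification of $X$ with $\overline{\mathcal{M}}_{0,\sum d_i}$ in the middle step: this requires that the iterated root construction be performed along $S_A$-stable divisors so that the two sides carry compatible $S_A$-actions and the quotient stacks can be meaningfully compared. This is, however, built into the setup of Theorem \ref{bayercadman}, since $S_A$ was defined in \eqref{sa} precisely as the subgroup of $S_d$ that respects the decomposition $\{1,\ldots,d\}=\coprod J_i$ indexing the root-construction divisors. Everything else reduces to standard facts about rational cohomology of gerbes and of coarse moduli spaces.
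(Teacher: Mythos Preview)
Your argument is correct and is precisely the intended one: the paper does not write out a proof, but states the corollary as an immediate consequence of Theorem~\ref{bayercadman} (the $\mu_N$-gerbe description) and the Cadman coarse-moduli-space result, which is exactly the chain you spell out. Your added care about $S_A$-equivariance of the root construction is implicit in the paper's setup and is justified for the reason you give.
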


There are only three twisted sectors in ${\mathcal{M}}_{2,n}$ whose general object is a covering of a genus $1$ curve. They are the space that we called $II$ in \ref{tabellona}, and the spaces obtained from it by marking the two points of total ramification of its general element. We call $II_1$ the space obtained by marking one point of total ramification and $II_{11}$ the space obtained by marking both the points of total ramification. They are moduli stacks of bielliptic curves, together with a choice of a bielliptic involution, and possibly marked points. The remainder of this section is devoted to investigate their geometry, in order to compute their rational cohomology and Chow groups. The following construction was discovered independently by Dan Petersen (see \cite[Section 2.3]{petersen}).

\begin{proposition} \label{duezero} The stack $II$ has the same coarse moduli space as $[\mathcal{M}_{0,5}/S_3]$. In the same way, $\overline{II}$  has the same coarse space as $[\overline{\mathcal{M}}_{0,5}/S_3]$.
\end{proposition}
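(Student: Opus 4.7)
The plan is to prove both statements simultaneously by exhibiting an explicit isomorphism of coarse moduli spaces, first in the smooth case via a two-step double cover construction, and then extending to the compactification via admissible covers.

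First I would describe the forward map $II \to [\mathcal{M}_{0,5}/S_3]$. A generic point of $II$ is a bielliptic genus $2$ curve $C$ equipped with the choice of a bielliptic involution $\iota$. Since the hyperelliptic involution $h$ is central in $\Aut(C)$, the group $V := \langle h,\iota\rangle\cong(\mathbb{Z}/2)^2$ acts on $C$ with quotient $C/V\cong\mathbb{P}^1$, and the induced degree-$4$ cover is branched over five points: three images of the six Weierstrass points of $h$ (which $\iota$ pairs up generically, giving three points on $\mathbb{P}^1$ with stabilizer $\langle h\rangle$), one image of the two $\iota$-fixed points (stabilizer $\langle\iota\rangle$), and one image of the two $(h\iota)$-fixed points (stabilizer $\langle h\iota\rangle$). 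The three points of the first type are interchangeable, while the distinction between the remaining two is precisely the datum of the choice of $\iota$ versus $h\iota$. This defines the map, with $S_3$ acting on the three unordered points.

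Next I would construct the inverse at the level of coarse spaces. Given $(\mathbb{P}^1;\{q_1,q_2,q_3\},a,b)$, take the double cover $E\to\mathbb{P}^1$ branched over $\{q_1,q_2,q_3,b\}$; by Riemann--Hurwitz, $E$ is smooth of genus $1$. Then take the double cover $C\to E$ branched over the two preimages of $a$ in $E$; Riemann--Hurwitz gives $g(C)=2$, and the deck involution of $C\to E$ is the required bielliptic involution $\iota$. A direct check shows that applying the forward construction to $(C,\iota)$ recovers $(\mathbb{P}^1;\{q_1,q_2,q_3\},a,b)$, which establishes an isomorphism of coarse spaces $II\cong\mathcal{M}_{0,5}/S_3$.

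To extend the isomorphism to $\overline{II}$, I would work with admissible $V$-covers in the sense of Harris--Mumford. Boundary points of $\overline{II}$ correspond to admissible double covers where either $C$ acquires nodes respecting a $V$-action, or the intermediate elliptic target $E$ degenerates. Their $V$-quotients (after stabilization) are stable $5$-pointed rational curves, and conversely the two-step admissible double cover construction lifts any boundary point of $\overline{\mathcal{M}}_{0,5}$ to a nodal bielliptic genus $2$ configuration in $\overline{II}$. Properness of both coarse moduli spaces then upgrades the birational isomorphism to a genuine isomorphism of coarse spaces.

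The main technical obstacle is the boundary analysis: one must classify the $V$-equivariant stable degenerations of $C$, check that the $V$-quotient together with the branch data descends to a stable $5$-pointed rational curve, and handle the stabilization step after taking the quotient. The fact that $II$ carries nontrivial generic stabilizer isomorphic to $V$, while $[\mathcal{M}_{0,5}/S_3]$ has trivial generic stabilizer, is an essential feature that distinguishes the two stacks, but is invisible at the level of coarse moduli and hence does not obstruct the claim.
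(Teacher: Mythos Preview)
Your proposal is correct and follows essentially the same approach as the paper: both exploit the Klein four-group $V=\langle h,\iota\rangle$ acting on $C$ to identify $C/V\cong\mathbb{P}^1$ with five branch points, three of them (the images of the Weierstrass points) unordered under $S_3$, and both extend to the boundary via admissible covers. The only cosmetic difference is that for the inverse construction the paper factors through the hyperelliptic quotient $C\to C/\langle h\rangle=\mathbb{P}^1$ (building $C$ as a double cover branched at the six preimages of $q_1,q_2,q_3$), whereas you factor through the bielliptic quotient $C\to E$; these are just two different intermediate levels of the same $V$-tower.
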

\begin{proof}
We exhibit a morphism from $II$ to $[\mathcal{M}_{0,5}/S_3]$, which induces a bijection on objects. Let $C$ be a genus $2$ curve with an automorphism $\phi$ of order $2$ such that $E:= C/ \langle \phi \rangle$ is an elliptic curve and $C \to E$ is ramified at two points. Let $\pi_C: C \to \mathbb{P}^1$ be the double covering that induces the hyperelliptic structure on $C$. Since the hyperelliptic involution commutes with all automorphisms, there exists exactly one elliptic structure $\pi_E:E \to \mathbb{P}^1$, and a double covering $\mathbb{P}^1 \to \mathbb{P}^1$, branched in two points, such that the following diagram commutes:
$$
\xymatrix{C \ar[r] \ar[d]^{\pi_C} & E \ar[d]^{\pi_E}\\
\mathbb{P}^1 \ar[r] &\mathbb{P}^1.}
$$
If we call $0, 1, \infty, \lambda$ the branching points of $E$ on $\mathbb{P}^1$, and $p_1, p_2$ the branching points of the projection $C \to E$, then one between $\pi_E(p_1)$ and $\pi_E(p_2)$ must coincide with one among $0,1, \infty, \lambda$. Without loss of generality, we assume $\lambda= \pi_E(p_1)$. Therefore, we obtain the datum of $5$ points on $\mathbb{P}^1$: $0, 1, \infty, \lambda, q$. The map just defined from $II$ to $\mathcal{M}_{0,5}$ induces a map on $[\mathcal{M}_{0,5}/S_3]$ by composition with the quotient map. 

Let us now assume that $S_3$ acts on the first three points of $\mathcal{M}_{0,5}$. The inverse morphism from $[\mathcal{M}_{0,5}/S_3]$ to $II$ is obtained as follows. We can construct first a double covering $\gamma$ branched at the last two points and then a genus $2$ curve as a double covering whose branching points are the fibers of the three undistinguished marked points under $\gamma$. The genus $2$ curve thus constructed is bielliptic in two ways: an elliptic curve can be constructed by taking the double covering of the original genus $0$ curve, branched at the three undistinguished points and at one of the remaining $2$ points. The branch points of the bielliptic quotient are the two fibers in the elliptic curve of the remaining fifth point.

Finally, this construction extends to the boundary.
\end {proof}


\begin{corollary} \label{aggiunta2} The dimension of $A^1(\overline{II})=H^2(\overline{II})$ is three, and trivial otherwise. The cohomology of $II$ is one-dimensional in degrees $0,1,2$ and zero otherwise. \end{corollary}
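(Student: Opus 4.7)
By Proposition \ref{duezero}, the coarse moduli spaces of $\overline{II}$ and $II$ agree respectively with those of $[\overline{\mathcal{M}}_{0,5}/S_3]$ and $[\mathcal{M}_{0,5}/S_3]$, where $S_3$ permutes the first three marked points. Since rational cohomology and rational Chow groups of a smooth Deligne--Mumford stack agree with those of its coarse space, and since $H^*([X/G];\mathbb{Q}) = H^*(X;\mathbb{Q})^G$ for a finite group $G$ (and similarly for rational Chow), the problem reduces to computing the $S_3$-invariant parts of the (co)homology of $\overline{\mathcal{M}}_{0,5}$ and $\mathcal{M}_{0,5}$. Moreover, on the smooth projective rational surface $\overline{\mathcal{M}}_{0,5}$ the cycle class map is an isomorphism $A^{*} \cong H^{2*}$, so it is enough to treat $H^{*}$.

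\textbf{Step 1 ($\overline{II}$).} The del Pezzo $\overline{\mathcal{M}}_{0,5}$ has $H^0 = H^4 = \mathbb{Q}$, vanishing odd cohomology, and $H^2 \cong \mathrm{Pic}\otimes\mathbb{Q}$ of rank $5$, generated by the ten boundary divisors $D_S$ (with $S \subset \{1,\ldots,5\}$, $|S|=2$) modulo the Keel relations. Under $S_3$, these ten divisors split into four orbits with invariant sums $A = D_{\{1,2\}}+D_{\{1,3\}}+D_{\{2,3\}}$, $B = D_{\{1,4\}}+D_{\{2,4\}}+D_{\{3,4\}}$, $C = D_{\{1,5\}}+D_{\{2,5\}}+D_{\{3,5\}}$, $D = D_{\{4,5\}}$, spanning the $4$-dimensional invariant subspace of the boundary-divisor representation. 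A direct enumeration shows that averaging Keel relations over $S_3$ yields only the trivial identity for the $4$-subsets $\{1,2,3,4\}$ and $\{1,2,3,5\}$, while each of $\{1,2,4,5\}$, $\{1,3,4,5\}$, $\{2,3,4,5\}$ produces the single relation $A - B - C + 3D = 0$. Hence $\dim H^2(\overline{II})^{S_3} = 4 - 1 = 3$, as required.

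\textbf{Step 2 ($II$).} I use the $S_3$-equivariant Leray spectral sequence for the forgetful morphism $\pi \colon \mathcal{M}_{0,5} \to \mathcal{M}_{0,4}$ (dropping the fifth point). Since $S_3 \subset S_4$ permuting $\{1,2,3\}$ meets the Klein four-group $V_4 \subset S_4$ trivially, the composition $S_3 \to S_4/V_4 \cong S_3$ is an isomorphism identifying our $S_3$ with the full permutation action on the three punctures of $\mathcal{M}_{0,4} = \mathbb{P}^1 \setminus \{0,1,\infty\}$. The residue description then gives $H^1(\mathcal{M}_{0,4}) = \mathrm{st}$ (the $2$-dimensional standard $S_3$-representation), and $H^1$ of the fiber $\mathbb{P}^1 \setminus \{q_1,q_2,q_3,q_4\}$ (with $S_3$ acting on $\{q_1,q_2,q_3\}$ and fixing $q_4$) equals $\mathrm{tr} + \mathrm{st}$. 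Because the marked points of $\pi$ are globally labeled, the local system $R^1\pi_*\mathbb{Q}$ is trivial, and the spectral sequence degenerates at $E_2$ for dimension reasons. Reading off trivial-representation multiplicities in the $E_\infty$-page gives $1$-dimensional contributions to $H^0$, $H^1$, and $H^2$ of $\mathcal{M}_{0,5}$, and zero in higher degree.

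\textbf{Main difficulty.} The delicate step is the Keel-averaging enumeration in Step $1$: one must systematically check, for each of the five $4$-subsets of $\{1,\ldots,5\}$ and each of the two independent Keel relations attached to it, that the $S_3$-average collapses either to the single nontrivial identity $A - B - C + 3D = 0$ or to a tautology, leaving a $1$-dimensional invariant subspace of Keel relations. Once this is verified, Step $2$ is a routine application of Leray and finite-group representation theory.
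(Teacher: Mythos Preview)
Your proof is correct. The paper does not spell out a proof of this corollary; it is meant to follow at once from Proposition~\ref{duezero} together with the known $S_n$-equivariant cohomology of $\overline{\mathcal{M}}_{0,n}$ and $\mathcal{M}_{0,n}$, for which the paper simply cites Getzler \cite[5.8]{getzleroperads}. Your argument is therefore more explicit and self-contained: in Step~1 you recover by hand, via orbit-averaging of Keel's relations, the single $S_3$-invariant relation that the paper only writes down later as equation~\eqref{relazione}; in Step~2 you replace an appeal to Getzler's general machinery by a short Leray computation for the forgetful map $\mathcal{M}_{0,5}\to\mathcal{M}_{0,4}$, using only the elementary decomposition $\mathrm{std}\otimes\mathrm{std}=\mathrm{triv}\oplus\mathrm{sgn}\oplus\mathrm{std}$ for $S_3$. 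Both routes give the same answer; yours has the virtue of being elementary and of making transparent why the invariant relation is one-dimensional, while the paper's route is quicker if one is willing to quote the equivariant Betti numbers from the literature.
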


We now deal with the two remaining twisted sectors we are interested in, which are $\overline{II}_1$ and $\overline{II}_{11}$. They are compactified moduli stacks of bielliptic curves, with a choice of a bielliptic involution and an ordering of the points of ramification.

\begin{proposition} \label{aggiunta} The stacks $\overline{II}_1$ and $\overline{II}_{11}$ are isomorphic, and the natural forgetful maps $\overline{II}_1 \to \overline{II}$ and $II_1 \to II$ induce isomorphisms in the rational Chow groups and in rational cohomology. 
\end{proposition}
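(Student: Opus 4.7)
The plan is to address the two assertions in sequence.

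For the first, $\overline{II}_1 \cong \overline{II}_{11}$, I would exhibit mutually inverse equivalences of categories. Given an object $(C, \phi, p_1) \in \overline{II}_1$, where $p_1$ is one of the two canonical total-ramification sections of the admissible $\mathbb{Z}/2$-cover $C \to C/\phi$, the other section $p_2$ is canonically determined as the complementary ramification section. Hence the assignment $(C, \phi, p_1) \mapsto (C, \phi, p_1, p_2)$ defines a functor to $\overline{II}_{11}$, with inverse given by forgetting $p_2$; the same argument works verbatim on the open parts, yielding $II_1 \cong II_{11}$.

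For the second assertion, I would combine Part~I with the natural quotient presentation $II \cong [II_{11}/S_2]$, where $S_2$ swaps the ordered pair of ramification points: this identification holds because $[II_{11}/S_2]$ parametrizes $(C, \phi, \{p_1, p_2\})$ with unordered ramification pair, which is the same datum as $(C, \phi) \in II$. Consequently $H^*(II; \mathbb{Q}) = H^*(II_{11}; \mathbb{Q})^{S_2} = H^*(II_1; \mathbb{Q})^{S_2}$, and analogously for rational Chow groups and for the compactifications. It therefore suffices to prove that the $S_2$-action on $H^*(II_1; \mathbb{Q})$ is trivial. On the generic open stratum where the ramification points $p_1, p_2$ are not Weierstrass (the Case~A configuration from the proof of Proposition~\ref{duezero}), the $S_2$-swap $p_1 \leftrightarrow p_2$ is realized by the universal hyperelliptic involution $\sigma_1 \in \Aut(C)$, which commutes with $\phi$ and generically exchanges $p_1$ and $p_2$; since $\sigma_1$ is a universal inner automorphism of the family, it acts trivially on cohomology, and so does the $S_2$-swap on the generic stratum.

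The main technical point is to extend this triviality across the codimension-one closed stratum where both $p_1$ and $p_2$ coincide with Weierstrass points and $\sigma_1$ fixes them rather than swapping them. I would handle this either by a Gysin/excision argument comparing the open and closed strata, or, more cleanly, by giving an explicit modular description of $II_1$ as the \'etale double cover of $\mathcal{M}_{0,5}/S_3$ determined by a square root of $q(q-1)(q-\lambda)$ (whose sign encodes the choice of preimage in $E$ of the fifth point $q$ from the proof of Proposition~\ref{duezero}), and then computing $H^*(II_1; \mathbb{Q})$ directly and comparing with $H^*(II; \mathbb{Q}) = \mathbb{Q}^{\oplus 3}$ from Corollary~\ref{aggiunta2}. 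The identical arguments extend verbatim to the compactifications $\overline{II}_1 \to \overline{II}$ using the admissible $\mathbb{Z}/2$-cover interpretation on the boundary; the handling of the codimension-one Weierstrass stratum is the principal obstacle in the entire proof.
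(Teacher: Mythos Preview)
Your argument for $\overline{II}_1 \cong \overline{II}_{11}$ is correct and is the natural one; the paper does not spell this out.

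For the second assertion your approach differs from the paper's and, once a misconception is removed, is actually cleaner. The paper argues by an explicit dimension count: it realises the coarse space of $II_1$ as an open substack of $\mathcal{M}_{1,2}$ complementary to the modular curve $X_0(2)$, computes the Betti numbers of $II_1$ and then of $\overline{II}_1$ via the exact sequences of compactly supported cohomology, and finally observes that since $h^2(\overline{II}_1) = h^2(\overline{II}) = 3$, the inclusion of $S_2$-invariants must be an isomorphism. Chow groups are handled separately via localisation sequences and the affineness of $II_1$.

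Your idea of realising the $S_2$-swap by the hyperelliptic involution $\sigma$ is sound, but the ``principal obstacle'' you identify does not exist: the Weierstrass stratum you worry about is empty. A smooth point $r$ of $C$ cannot be fixed by both $\sigma$ and the bielliptic involution $\phi$, because the stabiliser of a smooth point acts faithfully on the one-dimensional tangent space $T_rC$ and is therefore cyclic, whereas $\langle \sigma, \phi \rangle \cong (\mathbb{Z}/2)^2$. Since $\sigma$ commutes with $\phi$ it permutes the fixed-point set $\{r_1, r_2\}$ of $\phi$; as it can fix neither point, it swaps them --- always. The same argument applies over the boundary of $\overline{II}_{11}$, since the marked ramification points remain smooth points of the admissible cover. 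Hence $\sigma$ furnishes a global $2$-isomorphism between the $S_2$-swap and the identity functor on $\overline{II}_{11}$, so the $S_2$-action is trivial on rational cohomology and Chow groups with no further work. Your direct-computation fallback is therefore unnecessary, although that fallback is essentially the route the paper itself takes.
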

\begin{proof} 
A preliminary step is to construct a map $II_1 \to \mathcal{M}_{1,2}$ that is an open embedding at the level of coarse moduli spaces, with complement $X_0(2)$: the locus where the second point is of two-torsion for the group structure on the genus $1$ having the first point as origin (see \cite[Chapter 3]{diamond} for $X_0(2)$ and other modular curves). In particular, $II_1$ is affine. We sketch the construction of this map. The space $II_1$ parametrizes smooth genus $1$ curves $C$, two distinct points $x_1,x_2$ on it, and a line bundle $L$ such that $2 L \equiv x_1+x_2$. Choosing $x_1$ as origin, $L$ corresponds to a point $x$ on $C$, such that $x \neq x_1, x \neq x_2$, and the three points satisfy the linear equivalence $2x \equiv x_1+x_2$. The point $x_2$ can be reconstructed from $x_1$ and $x$: the points $x_1$ and $x_2$ are distinct exactly when $2x \not \equiv 2 x_1$. 

 We start by proving the statement for the forgetful map $\overline{II}_1 \to \overline{II}$.  The first step is to show that $H^1(\overline{II}_1)=H^3(\overline{II}_{11})=0$ and $h^2(\overline{II}_1)=3$. The stack $II_1$ is isomorphic to the complement in $\mathcal{M}_{1,2}$ of the locus where the second point $x_2$ is of two-torsion. The space $X_0(2)$ is isomorphic to $\mathbb{P}^1$ minus $2$ points, while it is well known that $\mathcal{M}_{1,2}$ has trivial rational cohomology groups. From this, by the preliminary step, Poincar\'e duality, and the exact sequence of cohomology with compact support, we deduce that the cohomology of $II_1$ is one-dimensional in degrees $0,1$ and $2$. There are four irreducible divisors added to $II_1$ in its compactification $\overline{II}_1$ (Definition \ref{compactifiedtwisted}): by using again the exact sequence of compactly supported cohomology and the fact that $\overline{II}_1$ is a proper smooth stack, one gets the desired results on the Betti numbers of $\overline{II}_1$. 

The second step is to observe that $\overline{II}_1 \to \overline{II}$ describes the latter as the stack quotient of $\overline{II}_1$ by $S_2$ via the action that symmetrizes the two points of ramification. So the map $H^2(\overline{II}) \to H^2(\overline{II}_1)$ is the injection of the $S_2$-invariant part, and as the two vector spaces have the same dimension (by the first step and Corollary \ref{aggiunta2}), it is an isomorphism.

In the last step, we show that the map $\overline{II}_1 \to \overline{II}$ is an isomorphism also at the level of the Chow groups. Consider the two localization sequences for the inclusion of the boundary $\partial {II}$ in $\overline{II}$ (and resp. the same for $\overline{II}_1)$. By using the fact that $II_1$ and $II$ have trivial rational Chow groups, they become:
$$
\xymatrix{A^0 (\partial II) \ar[r]\ar[d]^{\partial t} &A^1 (\overline{II}) \ar[d]^{t} \ar[r]& 0 \\
A^0 (\partial II_1) \ar[r] &A^1 (\overline{II}_1) \ar[r]& 0.
}$$
The map $t$ is the inclusion of the $S_2$-invariant part. Since the map $\partial t$ is an isomorphism, the map $t$ must be an isomorphism too. As we already know that the cycle map $A^1(\overline{II}) \to H^2(\overline{II})$ (Corollary \ref{aggiunta2}), and that the map $H^2(\overline{II}) \to H^2(\overline{II}_1)$ (second step of this proof) are isomorphisms, we conclude that the cycle map $A^1(\overline{II}_1) \to H^2(\overline{II}_1)$ is also an isomorphism.

Let us now study the maps induced in cohomology and Chow groups by the forgetful map on the open parts: $II_1 \to II$. On the level of rational Chow groups, the map is trivial as both the two stacks have affine coarse moduli space. To see that the map induces an isomorphism in rational cohomology, by Poincar\'e duality, is the same as proving that it induces isomorphisms in compactly supported rational cohomology. This follows then by using the $5$-lemma on the two exact sequences of compactly supported cohomology involving respectively $II_1, \overline{II}_1, \partial II_1$ and $II, \overline{II}, \partial II$.
\end{proof}

\noindent The following result is then a consequence of Corollaries \ref{corollariocadman}, Corollary \ref{aggiunta2} and Proposition \ref{aggiunta}.

\begin{theorem} \label{generatodivisori} Let $X$ be a twisted sector of $\mathcal{M}_{2,k}$, and $\overline{X}$ its compactification according to Definition \ref{compactifiedtwisted}. Then the cycle map $A^*(\overline{X}) \to H^{2*}(\overline{X})$ is an isomorphism of graded vector spaces. Moreover, the Chow ring $A^*(\overline{X})$ is generated by the divisor classes.
\end{theorem}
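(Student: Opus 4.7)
The plan is to split according to the target genus $g'$ of the cyclic cover associated to $\overline{X}$. Condition $(1)$ of Definition \ref{2admissible}, together with $g=2$, forces $g' \in \{0,1\}$, and the argument in the proof of Proposition \ref{connessione} shows that the case $g'=1$ in turn forces $N=2$ and $d_1=2$. In particular, the $g'=1$ twisted sectors of $\mathcal{M}_{2,k}$ are exhausted by $\overline{II}$, $\overline{II}_1$, $\overline{II}_{11}$, while every other $\overline{X}$ falls under $g'=0$.

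In the $g'=0$ case, Corollary \ref{corollariocadman} identifies $A^*(\overline{X})$ and $H^{2*}(\overline{X})$ with the rational Chow and cohomology of $[\overline{\mathcal{M}}_{0,d}/S_A]$, where $d=\sum d_i$. By Keel's theorem, $\overline{\mathcal{M}}_{0,d}$ has no odd rational cohomology, the cycle map is an isomorphism of graded rings, and both rings are generated by boundary divisor classes. Passing to rational $S_A$-invariants, which computes the Chow and cohomology of the quotient stack, preserves the vanishing in odd degrees and the cycle-map isomorphism. For divisor generation on the quotient, one inspects the finite list of $(2,k)$-admissible data with $g'=0$: most entries reduce to quotients of $\overline{\mathcal{M}}_{0,3}$ (a point, hence trivial), the quotients of $\overline{\mathcal{M}}_{0,4}\cong\mathbb{P}^1$ are divisor-generated for dimensional reasons, and for $\overline{\mathcal{M}}_{0,5}$ and $\overline{\mathcal{M}}_{0,6}$ one uses Keel's quadratic relations to rewrite every $S_A$-invariant monomial in the boundary divisors as a polynomial in $S_A$-invariant divisor classes.

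In the $g'=1$ case, Proposition \ref{aggiunta} gives that the forgetful maps $\overline{II}_{11}\to\overline{II}_1\to\overline{II}$ induce isomorphisms on rational Chow and rational cohomology, so the statement reduces to $\overline{X}=\overline{II}$. For this, Corollary \ref{aggiunta2} provides the remaining input: it determines $A^*(\overline{II})$ and $H^{2*}(\overline{II})$ explicitly, shows that the cycle map is an isomorphism, and makes divisor generation immediate from the computation.

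The only delicate point in the plan is the divisor-generation statement when $g'=0$, since rings of invariants of divisor-generated rings are not themselves divisor-generated in general (the standard example of $\mathbb{Q}[x,y]^{S_2}$ requiring $xy$ illustrates the obstruction). The resolution here relies on the smallness of the groups $S_A$ listed in \ref{tabellona}, together with the fact that Keel's presentation of $A^*(\overline{\mathcal{M}}_{0,d})$ involves only quadratic relations, which together make the verification tractable on the finite list of twisted sectors.
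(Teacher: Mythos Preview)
Your approach is essentially the same as the paper's: both split into the cases $g'=0$ and $g'=1$, invoking Corollary~\ref{corollariocadman} (together with Keel's description of $A^*(\overline{\mathcal{M}}_{0,d})$) for the former and Proposition~\ref{aggiunta} plus Corollary~\ref{aggiunta2} for the latter. The paper's own proof is a one-line citation of exactly these three results, so your write-up is in fact more detailed than the original, and you are right to flag the divisor-generation subtlety for the $S_A$-invariants, which the paper leaves implicit. One small inaccuracy: the case $d=5$ does not actually occur among the $g'=0$ twisted sectors of $\mathcal{M}_{2,k}$ (only $d\in\{3,4,6\}$ arise, as one reads off from Table~\ref{tabellona}), so the only nontrivial verification is for quotients of $\overline{\mathcal{M}}_{0,6}$ by subgroups of $S_6$.
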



\subsection{The inertia stack of moduli of curves with rational tails}
\label{rationaltails}
In this section, we reduce the study of the inertia stack of moduli of $n$-pointed curves with rational tails to the study of the inertia stack of moduli of smooth, $k$-pointed curves ($\forall k \leq n$). The latter were identified in the previous section. Then we exploit the same trick used in \cite{pagani1} of \virg{forgetting the rational tails} to study the inertia stack of the whole moduli stack of stable, $n$-pointed curves. 

\begin {definition} \label {wrt} If $(C,x_1, \ldots, x_n)$ is a stable curve, a \textit{rational tail} is a proper genus $0$ subcurve, which meets the closure of the complement at exactly $1$ point.  A stable curve will be said to be \textit{without rational tails} if it does not contain any rational tail. We will call the moduli stack of stable curves without rational tails $\overline{\mathcal{M}}_{g,n}^{NR}$. The moduli space of curves with rational tails $\mathcal{M}_{g,n}^{rt}$ is the moduli space of stable curves that have an irreducible smooth component of genus $g$ (and therefore the other components must be rational tails).
\end {definition} 

Let $I_1 \sqcup \ldots \sqcup I_k$ be a partition of $[n]$ made of non-empty subsets.  On the set of indices
 $$\mathcal{A}_{k,n}:= \{ (I_1, \ldots, I_k)| \ I_i \neq \emptyset, \ \sqcup I_i = [n]\},$$
 there is a natural action of the symmetric group: if $\sigma \in S_k$, $\sigma(I_1, \ldots, I_k):= (I_{\sigma(1)}, \ldots, I_{\sigma(k)})$. Then we let $\mathcal{A}_{k,n}/S_k$ be the quotient set, whose elements are a choice of a representative for every equivalence class. We define the map $j_g$:

\begin{equation} \label{disgiuntoprima}
j_g: \coprod_{k=1}^n \coprod_{(I_1, \ldots, I_k) \in \mathcal{A}_{k,n}/S_k} \mathcal{M}_{g,k} \times \overline{\mathcal{M}}_{0,I_1+1} \times \ldots \times \overline{\mathcal{M}}_{0,I_k+1} \to \mathcal{M}_{g,n}^{rt}.
\end{equation}
This map simply glues the genus $0$ curves (at the $+1$ points) together with the genus $g$ curves (at the points $1, \ldots, k$), see Fig. \ref{figureaddrat}. 

The map $j_g$ describes $\mathcal{M}_{g,n}^{rt}$ as a disjoint union of locally closed substacks. Moreover, $j_g$ induces an isomorphism on the automorphism groups of the objects. Thanks to Proposition \ref{strongrap}, we have: 
\begin{equation} \label{disgiunto}j_g^*\left(I(\mathcal{M}_{g,n}^{rt})\right) \cong \coprod_{k=1}^n \coprod_{(I_1, \ldots, I_k) \in \mathcal{A}_{k,n}/S_k} I(\mathcal{M}_{g,k}) \times \overline{\mathcal{M}}_{0,I_1+1} \times \ldots \times \overline{\mathcal{M}}_{0,I_k+1}.\end{equation}

We now compare $I(\mathcal{M}_{g,n}^{rt})$ and its pull-back $j_g^*(I(\mathcal{M}_{g,n}^{rt}))$ (see Definition \ref{pullinertia}). If $Y$ is a connected component of $I(\mathcal{M}_{g,n}^{rt})$, $j_g^*(Y)$ could potentially be a partition in locally closed substacks of $\mathcal{M}_{g,n}^{rt}$. But this is not the case if $Y$ is a twisted sector, by the following argument. Let $f:I(\mathcal{M}_{g,n}^{rt}) \to \mathcal{M}_{g,n}^{rt}$ be the canonical map from the inertia stack to the original Stack. Then $f(Y)$ is all contained in the image under $j_g$ of exactly one element in the disjoint union of \ref{disgiuntoprima}. This last argument does not apply when $Y$ is the untwisted sector, $I_1(\mathcal{M}_{g,n}^{rt})$. Indeed, $j_g^*(I_1(\mathcal{M}_{g,n}^{rt}))$ is a disjoint union of connected components, while $I_1(\mathcal{M}_{g,n}^{rt})$ is connected. We also observe that whenever $k>2g+2$, $I_{TW}(\mathcal{M}_{g,k})= \emptyset$. We have thus proved the following formula for the inertia stack of $\mathcal{M}_{g,n}^{rt}$.

 \begin{proposition} \label{twratcor} If $n>1$, the inertia stack of $\mathcal{M}_{g,n}^{rt}$ is isomorphic to:
 $$
I(\mathcal{M}_{g,n}^{rt})= (\mathcal{M}_{g,n}^{rt}, 1) \sqcup \coprod_{k=1}^{\min(n,2g+2)} I_{TW}(\mathcal{M}_{g,k}) \times \coprod_{(I_1, \ldots, I_k) \in \mathcal{A}_{k,n}/S_k}  \overline{\mathcal{M}}_{0,I_1+1} \times \ldots \times \overline{\mathcal{M}}_{0,I_k+1},
 $$
 where we make the convention that $\overline{\mathcal{M}}_{0,2}$ is a point.
 \end{proposition}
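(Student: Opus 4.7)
The plan is to assemble the ingredients already laid out in the discussion preceding the proposition. The starting point is the morphism $j_g$ of \eqref{disgiuntoprima}, which presents $\mathcal{M}_{g,n}^{rt}$ as a stratification indexed by the topological type of the rational-tail decoration. To apply Proposition~\ref{strongrap} I would first verify that $j_g$ induces an isomorphism on the automorphism groups of objects: each rational tail is a stable $\mathbb{P}^1$ with at least three special points (the attaching node together with $\geq 2$ marked points), so has no non-trivial automorphism fixing them, and the automorphism group of any curve in the image of $j_g$ therefore coincides with that of its smooth genus $g$ component equipped with the $k$ attaching/marked points. This yields formula \eqref{disgiunto}.

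Next I would split the right-hand side of \eqref{disgiunto} according to $I(\mathcal{M}_{g,k}) = I_1(\mathcal{M}_{g,k}) \sqcup I_{TW}(\mathcal{M}_{g,k})$. The untwisted summands $I_1(\mathcal{M}_{g,k}) \times \overline{\mathcal{M}}_{0,I_1+1}\times \cdots \times \overline{\mathcal{M}}_{0,I_k+1}$ reassemble, by the very stratification just described, into the single connected untwisted sector $(\mathcal{M}_{g,n}^{rt},1)$. For the twisted summands, I need to check that $j_g^\ast$ does not further split any twisted sector: given a connected component $Y$ of $I_{TW}(\mathcal{M}_{g,n}^{rt})$, the topological type of the curve underlying a pair $(C,\phi) \in Y$ is locally constant, and connectedness of $Y$ forces the image $f(Y)$ to lie in exactly one of the strata of $\mathcal{M}_{g,n}^{rt}$ indexed by \eqref{disgiuntoprima}. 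Hence each twisted sector of $\mathcal{M}_{g,n}^{rt}$ corresponds to a unique pair consisting of a twisted sector of some $\mathcal{M}_{g,k}$ and a partition $(I_1,\ldots,I_k)\in \mathcal{A}_{k,n}/S_k$.

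Finally, the truncation at $k \leq \min(n,2g+2)$ follows from Riemann--Hurwitz: a non-trivial automorphism of a smooth genus $g$ curve fixes at most $2g+2$ points (with equality for the hyperelliptic involution), and since the $k$ points of the genus $g$ component are labelled and must be fixed individually by the inertial automorphism, one has $I_{TW}(\mathcal{M}_{g,k})=\emptyset$ for $k>2g+2$. The one technical point that needs real care is the automorphism-isomorphism underlying \eqref{disgiunto}: this is what lets Proposition~\ref{strongrap} apply, and simultaneously rules out any double-counting when matching $I_{TW}(\mathcal{M}_{g,n}^{rt})$ with the twisted components on the right-hand side. Everything else is bookkeeping of the stratification and connectedness.
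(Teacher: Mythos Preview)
Your proposal is correct and follows essentially the same route as the paper: use the stratification $j_g$, invoke Proposition~\ref{strongrap} via the automorphism-isomorphism property to obtain \eqref{disgiunto}, then argue that each twisted component of $I(\mathcal{M}_{g,n}^{rt})$ lands in a single stratum while the untwisted pieces reassemble into $(\mathcal{M}_{g,n}^{rt},1)$, and finally truncate at $2g+2$. One small imprecision: the \emph{topological type} (dual graph) of $C$ is not locally constant on a twisted sector, since rational tails may degenerate inside the factors $\overline{\mathcal{M}}_{0,I_j+1}$; what is locally constant is the stratum index $(k,(I_1,\ldots,I_k))$, and your argument goes through once phrased that way.
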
 
 \begin{figure}[ht]
\centering
\psfrag{1}{$1$} 
\psfrag{2}{$2$}
\psfrag{3}{$3$}
\psfrag{4}{$4$}
\psfrag{A}{$I_1$} 
\psfrag{B}{$I_2$}
\psfrag{C}{$I_3$}
\psfrag{D}{$I_4$}
\psfrag{E}{$I_{TW}(\mathcal{M}_{g,4})$}
\includegraphics[scale=0.4]{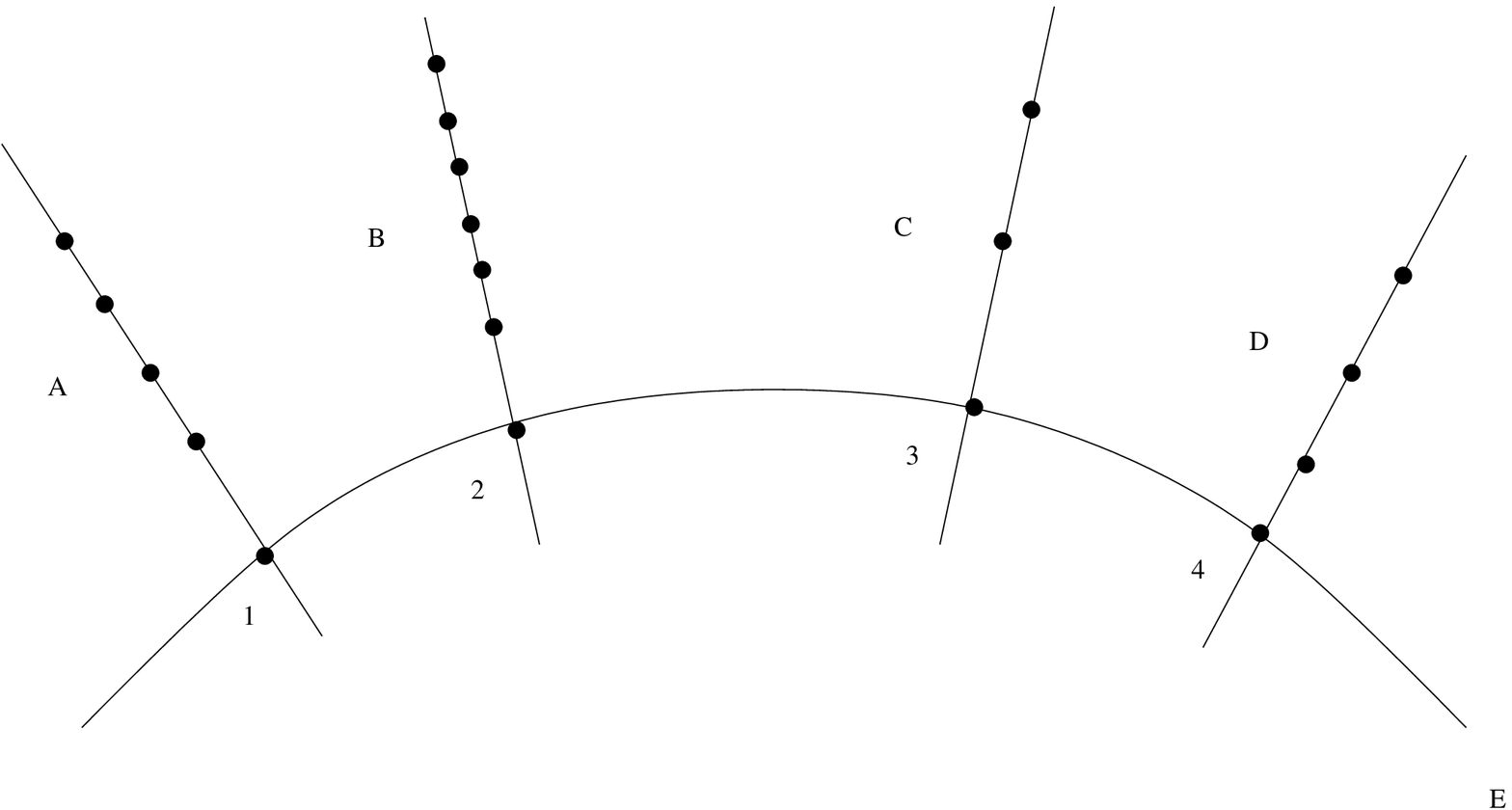}
\caption{Adding rational tails to $I_{TW}({\mathcal{M}}_{g,4})$ to obtain $I_{TW}(\mathcal{M}_{g,15}^{rt})$ with decomposition $[15]=I_1\sqcup I_2 \sqcup I_3 \sqcup I_4$ }
\label{figureaddrat}
\end{figure}


To complete this section, we make the analogous arguments for $\overline{\mathcal{M}}_{g,n}$. We then define the map $\overline{j}_g$ that glues the genus $0$ curves and the genus $g$ curves as above:

$$
\overline{j}_g: \coprod_{k=1}^n \coprod_{(I_1, \ldots, I_k) \in \mathcal{A}_{k,n}/S_k} \overline{\mathcal{M}}_{g,k}^{NR} \times \overline{\mathcal{M}}_{0,I_1+1} \times \ldots \times \overline{\mathcal{M}}_{0,I_k+1} \to \overline{\mathcal{M}}_{g,n}.
$$

\begin{remark} \label{lisciabile2} Here we want to point out that the process of gluing rational tails at marked points of twisted sectors produces new twisted sectors only for those marked points where the action of the automorphism is non-trivial. Let us be more precise. Let $X$ be a twisted sector of $\overline{\mathcal{M}}_{g,k}^{NR}$, whose general element is a couple $((C,x_1,\ldots, x_k), \phi)$, where $(C,x_1,\ldots,x_k)$ is a (family of) pointed curve(s), and $\phi$ is an automorphism of it. Then $\phi$ induces an action on $T_{x_l} C$, for $1 \leq l \leq k$. Suppose that $\bullet$ is a marked point on $C$ such that the induced action of $\phi$ on $T_{\bullet} C$ is trivial. Then gluing a genus $0$, marked curve at $\bullet$ produces a pointed curve $C'$ and an automorphism $\phi'$ on it, which is certainly not the general element of a twisted sector of $\overline{\mathcal{M}}_{g,n}$. Indeed, $\phi$ induces an action on $H^0\left(C', \cext^1(\Omega_C'(\sum x_i), \mathcal{O}_C')\right)$ whose invariant part is one-dimensional (cf. Remark \ref{lisciabile}), and therefore the corresponding node can be smoothed preserving the automorphism $\phi$. This produces a more general element of the same twisted sector.

An automorphism $\phi$ of a smooth curve $C$ with $g(C) \geq 2$ has only finitely many fixed points. Therefore, if $X$ is a twisted sector of $\overline{\mathcal{M}}_{g,k}^{NR}$ \emph{whose general element is a smooth curve}, the automorphism acts non-trivially on all the fixed points $x_1, \ldots, x_k$.
\end{remark} 

\noindent Let then $S(X)=S \subset [k]$ be the subset of the marked points where the action of $\phi$ is non-trivial. We define the generalization of $\mathcal{A}_{k,n}/S_k$:
$$
\mathcal{A}_{S,n}:= \{\{I_s\}_{s \in S}| \ \sqcup_{s \in S} I_s= [n], \ I_s \neq \emptyset \}.
$$
Let also $T_{g,k}$ be the set of twisted sectors of $I(\overline{\mathcal{M}}_{g,k}^{NR})$ (Definition \ref{wrt}). As a consequence of the analysis made in this section, we have:
\begin{proposition} \label{pirttheorem} If $n>1$, the inertia stack of $\overline{\mathcal{M}}_{g,n}$ is isomorphic to:
$$
I(\overline{\mathcal{M}}_{g,n})= \left( \overline{\mathcal{M}}_{g,n}, 1 \right)\sqcup \coprod_{k=1}^{n} \coprod_{X \in T_{g,k}} X \times \coprod_{\{I_s\} \in \mathcal{A}_{S(X),n}} \prod_{s \in S(X)} \overline{\mathcal{M}}_{0,I_s+1} .
$$ 
 \end{proposition}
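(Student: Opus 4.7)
The plan is to mimic the argument used for Proposition \ref{twratcor}, applied now to the gluing map $\overline{j}_g$, and then to invoke Remark \ref{lisciabile2} to control the redundancies at marked points whose tangent action is trivial.

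First I would verify that $\overline{j}_g$ induces an isomorphism on automorphism groups: an automorphism of a stable curve obtained by gluing a genus $g$ no-rational-tails core to rational trees at its marked points is the same as the product of an automorphism of the core (fixing its marked points) and automorphisms of each tree (fixing its attachment point and its own marked points). This is because the gluing nodes are the only nodes joining the core to the trees and must therefore be preserved, forcing automorphisms to split componentwise. Applying Proposition \ref{strongrap}, and using that $I(\overline{\mathcal{M}}_{0,m}) = \overline{\mathcal{M}}_{0,m}$ for $m\geq 3$ together with the convention $\overline{\mathcal{M}}_{0,2}=\mathrm{pt}$, one obtains
$$
\overline{j}_g^* I(\overline{\mathcal{M}}_{g,n}) \cong \coprod_{k=1}^n \coprod_{(I_1,\ldots,I_k) \in \mathcal{A}_{k,n}/S_k} I(\overline{\mathcal{M}}_{g,k}^{NR}) \times \prod_{l=1}^k \overline{\mathcal{M}}_{0,I_l+1}.
$$

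Exactly as in the proof of Proposition \ref{twratcor}, the canonical image in $\overline{\mathcal{M}}_{g,n}$ of any \emph{twisted} sector $Y$ of $\overline{\mathcal{M}}_{g,n}$ is irreducible, hence contained in the image under $\overline{j}_g$ of a single stratum. Thus every twisted sector of $\overline{\mathcal{M}}_{g,n}$ arises from a pair $(X,(I_1,\ldots,I_k))$ with $X \in T_{g,k}$, while the untwisted sector $(\overline{\mathcal{M}}_{g,n},1)$ is singled out separately (its pullback is the disjoint union of all strata, but the untwisted sector itself is connected and equals $\overline{\mathcal{M}}_{g,n}$).

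The new ingredient, absent in the rational-tails setting, is Remark \ref{lisciabile2}: if $l \in [k]\setminus S(X)$, then $\phi$ acts trivially on $T_{x_l}C_0$, so the induced action on the deformation space of the node obtained by gluing any rational tree at $x_l$ is trivial; the node can then be smoothed compatibly with $\phi$, and the resulting configuration is a proper specialization of a more general element of the same twisted sector. Consequently distinct pairs $(X,(I_1,\ldots,I_k))$ that differ only by tree attachments at points outside $S(X)$ produce the \emph{same} twisted sector of $\overline{\mathcal{M}}_{g,n}$. Choosing one representative per twisted sector then amounts to restricting to partitions of $[n]$ indexed by $S(X)$, that is $\{I_s\}\in \mathcal{A}_{S(X),n}$, giving the claimed formula. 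The main obstacle here is precisely this last bookkeeping step: one must check that the restriction to $\mathcal{A}_{S(X),n}$ cancels exactly the right redundancies and that no twisted sector of $\overline{\mathcal{M}}_{g,n}$ is lost, by verifying that each such sector does admit a representative whose core automorphism acts non-trivially at every core marked point that carries an attached tree.
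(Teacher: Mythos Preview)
Your proposal is correct and follows essentially the same approach as the paper, which presents the proposition simply ``as a consequence of the analysis made in this section'': namely, applying to $\overline{j}_g$ the same stratification argument used for Proposition~\ref{twratcor}, combined with Remark~\ref{lisciabile2} to handle marked points where the automorphism acts trivially. Your write-up is in fact more detailed than the paper's, and your explicit identification of the final bookkeeping step (verifying that restricting to $\mathcal{A}_{S(X),n}$ removes exactly the right redundancies) is a fair description of the only point requiring care.
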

 \begin{notation} \label{notazionemgnrt} Following Notation \ref{notazionemg} and \ref{notazionecompsmooth}, if $X_{\alpha(1), \ldots, \alpha(k)}$ is a twisted sector of $\mathcal{M}_{2,k}$, and $I_1, \ldots, I_k$ is a partition of $[n]$ in non-empty subsets, we shall call $X_{\alpha(1), \ldots, \alpha(k)}^{I_1, \ldots, I_k}$ the corresponding twisted sector of $\mathcal{M}_{2,n}^{rt}$ obtained by gluing the rational tails with marked points in $I_1, \ldots, I_k$. Similarly if  $\overline{X}_{\alpha(1), \ldots, \alpha(k)}$ is a twisted sector of $\overline{\mathcal{M}}_{2,k}$.
 \end{notation}

 While we have a description of the inertia stack $I(\mathcal{M}_{2,k})$ and its compactification from the previous sections, we have not studied the whole inertia stack $I(\overline{\mathcal{M}}_{2,k}^{NR})$ yet. We will complete the study of the latter in the following section.
 
\subsection{The inertia stack of moduli of stable curves of genus $2$}

\label{dalbordo}

In this section, we complete the description of the inertia stack of $\overline{\mathcal{M}}_{2,n}$. 
Using Proposition \ref{pirttheorem}, to describe $I(\overline{\mathcal{M}}_{2,n})$ it is enough to provide all the twisted sectors of $\overline{\mathcal{M}}_{2,k}^{NR}$ (Definition \ref{wrt}), and to show which marked points among the $k$ are suitable for attaching rational tails \emph{i.e.} to describe which are the marked points where the automorphism under consideration acts non-trivially (\emph{cf.} Remark \ref{lisciabile2}). As we have already described the twisted sectors of $I(\overline{\mathcal{M}}_{2,k}^{NR})$ whose general element is smooth, we have to study the twisted sectors of $I(\partial \overline{\mathcal{M}}_{2,k}^{NR})$ that come from the boundary, according to Definition \ref{bordo} (\emph{cf.} Remark \ref{lisciabile}).

We start with the case $k=0$. Let us consider the following two stable genus $2$ unmarked graphs:

\begin{equation} \label{grafigenere2}
    \begin{tikzpicture}[baseline]
      \path(0,0) ellipse (2 and 1);
      \node (A0) at (0:1) {$\scriptstyle{{\hspace{0.08cm}}_1^{\hspace{0.2cm} }}$};

      \draw (A0) .. controls +(-15:1.2) and +(15:1.2) .. (A0);
    \end{tikzpicture}
    \begin{tikzpicture}[baseline]
      \path(0,0) ellipse (2 and 1);
      \node (A0) at (0:1) {$\scriptstyle{{\hspace{0.08cm}}_1^{\hspace{0.2cm} }}$};
      \node (A1) at (180:1) {$\scriptstyle{{\hspace{0.08cm}}_1^{\hspace{0.2cm} }}$};

      \path (A0) edge [bend left=0] (A1);
    \end{tikzpicture}
\end {equation}

\noindent These two graphs correspond to two divisors in $\overline{\mathcal{M}}_{2}$. We call the two graphs respectively $\Gamma_0$ and $\Gamma_1$, and the closed substacks they correspond to are usually referred to as $\Delta_0$ and $\Delta_1$ (see \cite[Part III]{mumford}). 

We begin by studying the case when $C$ is a curve whose corresponding dual graph is $\Gamma_1$. Then suppose that the automorphism $\phi$ does not exchange the two irreducible components of the curve. The twisted sectors of $\partial \overline{\mathcal{M}}_2$ that correspond to this case are in bijection with the twisted sectors of $\overline{\mathcal{M}}_{1,1}\times \overline{\mathcal{M}}_{1,1}$, or, in other words, to couples of connected components $(X_1, \phi_1), (X_2, \phi_2)$ of $I($\mb{1}{1}$)$ (see \cite[Section 3]{pagani1} for $I$(\mb{1}{1}$)$). To find out if this is a new twisted sector of $\overline{\mathcal{M}}_2$ (\emph{i.e.}, if it comes from the boundary, see Definition \ref{bordo}), it is enough to check if the resulting node is smoothable (\emph{cf.} Remark \ref{lisciabile}). 

One can check that there are $31$ twisted sectors of $\overline{\mathcal{M}}_2$ that correspond to the latter description. One is two-dimensional and its moduli space is $\mathbb{P}^1 \times \mathbb{P}^1$, $12$ have moduli space isomorphic to $\mathbb{P}^1$, and then there are $18$ stacky points (see the first set of figures of \cite[Construction 5.25]{paganitesi} for more details).

Next, if $C$ is a curve corresponding to the graph $\Gamma_1$, we consider the case when the automorphism $\phi$ exchanges the two irreducible components. In this case, the twisted sectors correspond simply to the twisted sectors of \mb{1}{1} ($1$ has moduli space isomorphic to $\mathbb{P}^1$, and then there are $6$ stacky points). See the last two sets of figures of 
\cite[Construction 5.25]{paganitesi}.

Finally, we deal with those twisted sectors of $\overline{\mathcal{M}}_2$ whose general element is a curve whose dual graph is $\Gamma_0$. Along the same lines, one can see that there are $8$ twisted sectors, and that they are all stacky points. We refer again to \cite[Construction 5.27]{paganitesi} for more details.

We now study the twisted sectors of $\overline{\mathcal{M}}_{2,k}^{NR}$ that are contained in the boundary ($k \geq 1$). If $X$ is such a twisted sector, its general element is a marked curve with a distinguished automorphism $(C,x_1, \ldots, x_k, \phi)$. The resulting couple $(\tilde{C}, \tilde{\phi})$, obtained by forgetting the marked points and then stabilizing, can be of four different types (as we have just seen in the paragraphs above):
\begin{enumerate}
\item $\tilde{C}$ has dual graph $\Gamma_1$ and $\tilde{\phi}$ acts fixing the two components of genus $1$;
\item $\tilde{C}$ has dual graph $\Gamma_1$ and $\tilde{\phi}$ acts exchanging the two components of genus $1$;
\item $\tilde{C}$ has dual graph $\Gamma_0$ and $\tilde{\phi}$ acts fixing the two branches of the node;
\item $\tilde{C}$ has dual graph $\Gamma_0$ and $\tilde{\phi}$ acts exchanging the two branches of the node.
\end{enumerate}

We are now going to list in detail all the twisted sectors of $\partial \overline{\mathcal{M}}_{2,k}^{NR}$ that are non-smoothable (Definition \ref{bordo}, Remark \ref{lisciabile}), dividing them according to the four cases above. The marked points $p$ such that the induced action of the automorphism is non-trivial on the tangent space\footnote{And therefore, those suitable for gluing rational tails in such a way that the resulting operation is non-smoothable, see Proposition \ref{pirttheorem} and Remark \ref{lisciabile2}.} to the curve in $p$ are displayed with a dot $\bullet$ in their end points. 

We will use the notation $T_i, T_i', \tilde{T}_i, \tilde{T}_i', T^{\rho}_i$ for certain subsets of the sets of compactified twisted sectors of quotients of the kind $[\mathcal{M}_{1,i}/S]$ that are briefly introduced in the next few lines, and thoroughly discussed in \ref{appendicea}. We will plug these compactified twisted sectors in stable $n$-pointed graphs of genus $2$ (these graphs and their pictures come from \cite{mp}). 

\begin{example} Case $1$. The automorphism of every twisted sector induces the identity automorphism on the corresponding dual graph: \label{case1}

\begin{center}
\begin{tabular}{c@{}cc@{}cc@{}cc@{}c}
\begin{tikzpicture}[baseline]
      \path(0,0) ellipse (2 and 1);
      \tikzstyle{level 1}=[counterclockwise from=-60,level distance=9mm,sibling angle=120]
      \node (A0) at (0:1) {$\scriptstyle{T_1}$};
      \tikzstyle{level 1}=[counterclockwise from=120,level distance=9mm,sibling angle=20]
      \node (A1) at (180:1) {$\scriptstyle{1_n}$} child child child child child child child;

      \path (A0) edge [bend left=0] (A1);
    \end{tikzpicture}

\begin{tikzpicture}[baseline]
      \path(0,0) ellipse (2 and 1);
      \tikzstyle{level 1}=[counterclockwise from=0,level distance=9mm,sibling angle=120]
      \node (A0) at (0:1) {$\scriptstyle{T_2}$} child{[fill] circle (2pt)};
      \tikzstyle{level 1}=[counterclockwise from=120,level distance=9mm,sibling angle=20]
      \node (A1) at (180:1) {$\scriptstyle{1_n}$} child child child child child child child;

      \path (A0) edge [bend left=0] (A1);
    \end{tikzpicture}

\begin{tikzpicture}[baseline]
      \path(0,0) ellipse (2 and 1);
      \tikzstyle{level 1}=[counterclockwise from=-60,level distance=9mm,sibling angle=120]
      \node (A0) at (0:1) {$\scriptstyle{T_3}$} child{[fill] circle (2pt)} child{[fill] circle (2pt)};
      \tikzstyle{level 1}=[counterclockwise from=120,level distance=9mm,sibling angle=20]
      \node (A1) at (180:1) {$\scriptstyle{1_n}$} child child child child child child child;

      \path (A0) edge [bend left=0] (A1);
    \end{tikzpicture}
      \begin{tikzpicture}[baseline]
      \path(0,0) ellipse (2 and 1);
      \tikzstyle{level 1}=[counterclockwise from=-60,level distance=9mm,sibling angle=60]
      \node (A0) at (0:1) {$\scriptstyle{T_4}$} child{[fill] circle (2pt)} child{[fill] circle (2pt)} child{[fill] circle (2pt)};
      \tikzstyle{level 1}=[counterclockwise from=120,level distance=9mm,sibling angle=20]
      \node (A1) at (180:1) {$\scriptstyle{1_{n}}$} child child child child child child child;

      \path (A0) edge [bend left=0] (A1);
    \end{tikzpicture}
\\

 \begin{tikzpicture}[baseline]
      \path(0,0) ellipse (2 and 2);
      \node (A0) at (0:1) {$\scriptstyle{T_1}$};
      \node (A1) at (240:1) {$\scriptstyle{T_1}$};
      \tikzstyle{level 1}=[counterclockwise from=75,level distance=9mm,sibling angle=15]
\node (A2) at (120:1) {$\scriptstyle{0_{n}}$} child child child child child child;
      
      \path (A0) edge [bend left=0] (A2);
      \path (A1) edge [bend left=0] (A2);
    \end{tikzpicture}

 \begin{tikzpicture}[baseline]
      \path(0,0) ellipse (2 and 2);
      \tikzstyle{level 1}=[counterclockwise from=-30,level distance=9mm,sibling angle=15]
      \node (A0) at (0:1) {$\scriptstyle{T_2}$} child{[fill] circle (2pt)};
      \node (A1) at (240:1) {$\scriptstyle{T_1}$};
      \tikzstyle{level 1}=[counterclockwise from=75,level distance=9mm,sibling angle=15]
\node (A2) at (120:1) {$\scriptstyle{0_{n}}$} child child child child child child;
      
      \path (A0) edge [bend left=0] (A2);
      \path (A1) edge [bend left=0] (A2);
    \end{tikzpicture}

 \begin{tikzpicture}[baseline]
      \path(0,0) ellipse (2 and 2);
      
      \tikzstyle{level 1}=[counterclockwise from=-45,level distance=9mm,sibling angle=30]
      \node (A0) at (0:1) {$\scriptstyle{T_3}$} child{[fill] circle (2pt)} child{[fill] circle (2pt)};
      \node (A1) at (240:1) {$\scriptstyle{T_1}$};
      \tikzstyle{level 1}=[counterclockwise from=75,level distance=9mm,sibling angle=15]
\node (A2) at (120:1) {$\scriptstyle{0_{n}}$} child child child child child child;
      
      \path (A0) edge [bend left=0] (A2);
      \path (A1) edge [bend left=0] (A2);
    \end{tikzpicture}

 \begin{tikzpicture}[baseline]
      \path(0,0) ellipse (2 and 2);
      \tikzstyle{level 1}=[counterclockwise from=-60,level distance=9mm,sibling angle=30]
      \node (A0) at (0:1) {$\scriptstyle{T_4}$}child{[fill] circle (2pt)} child{[fill] circle (2pt)} child{[fill] circle (2pt)};
      \node (A1) at (240:1) {$\scriptstyle{T_1}$};
      \tikzstyle{level 1}=[counterclockwise from=75,level distance=9mm,sibling angle=15]
\node (A2) at (120:1) {$\scriptstyle{0_{n}}$} child child child child child child;
      
      \path (A0) edge [bend left=0] (A2);
      \path (A1) edge [bend left=0] (A2);
    \end{tikzpicture}

 \begin{tikzpicture}[baseline]
      \path(0,0) ellipse (2 and 2);
      
      \tikzstyle{level 1}=[counterclockwise from=-30,level distance=9mm,sibling angle=15]
      \node (A0) at (0:1) {$\scriptstyle{T_2}$}child{[fill] circle (2pt)};
      \tikzstyle{level 1}=[counterclockwise from=-90,level distance=9mm,sibling angle=30]
      \node (A1) at (240:1) {$\scriptstyle{T_2}$}child{[fill] circle (2pt)};
      \tikzstyle{level 1}=[counterclockwise from=75,level distance=9mm,sibling angle=15]
\node (A2) at (120:1) {$\scriptstyle{0_{n}}$} child child child child child child;
      
      \path (A0) edge [bend left=0] (A2);
      \path (A1) edge [bend left=0] (A2);
    \end{tikzpicture}
    \\ 
 \begin{tikzpicture}[baseline]
      \path(0,0) ellipse (2 and 2);
      \tikzstyle{level 1}=[counterclockwise from=-45,level distance=9mm,sibling angle=30]
      \node (A0) at (0:1) {$\scriptstyle{T_3}$}child{[fill] circle (2pt)} child{[fill] circle (2pt)};
      
      \tikzstyle{level 1}=[counterclockwise from=-90,level distance=9mm,sibling angle=30]
      \node (A1) at (240:1) {$\scriptstyle{T_2}$}child{[fill] circle (2pt)};
      \tikzstyle{level 1}=[counterclockwise from=75,level distance=9mm,sibling angle=15]
\node (A2) at (120:1) {$\scriptstyle{0_{n}}$} child child child child child child;
      
      \path (A0) edge [bend left=0] (A2);
      \path (A1) edge [bend left=0] (A2);
    \end{tikzpicture}

 \begin{tikzpicture}[baseline]
      \path(0,0) ellipse (2 and 2);
      \tikzstyle{level 1}=[counterclockwise from=-60,level distance=9mm,sibling angle=30]
      \node (A0) at (0:1) {$\scriptstyle{T_4}$}child{[fill] circle (2pt)} child{[fill] circle (2pt)} child{[fill] circle (2pt)};
      
      \tikzstyle{level 1}=[counterclockwise from=-90,level distance=9mm,sibling angle=30]
      \node (A1) at (240:1) {$\scriptstyle{T_2}$}child{[fill] circle (2pt)};
      \tikzstyle{level 1}=[counterclockwise from=75,level distance=9mm,sibling angle=15]
\node (A2) at (120:1) {$\scriptstyle{0_{n}}$} child child child child child child;
      
      \path (A0) edge [bend left=0] (A2);
      \path (A1) edge [bend left=0] (A2);
    \end{tikzpicture}

 \begin{tikzpicture}[baseline]
      \path(0,0) ellipse (2 and 2);
      \tikzstyle{level 1}=[counterclockwise from=-45,level distance=9mm,sibling angle=30]
      \node (A0) at (0:1) {$\scriptstyle{T_3}$}child{[fill] circle (2pt)} child{[fill] circle (2pt)};
      
      \tikzstyle{level 1}=[counterclockwise from=-105,level distance=9mm,sibling angle=30]
      \node (A1) at (240:1) {$\scriptstyle{T_3}$}child{[fill] circle (2pt)} child{[fill] circle (2pt)};
      \tikzstyle{level 1}=[counterclockwise from=75,level distance=9mm,sibling angle=15]
\node (A2) at (120:1) {$\scriptstyle{0_{n}}$} child child child child child child;
      
      \path (A0) edge [bend left=0] (A2);
      \path (A1) edge [bend left=0] (A2);
    \end{tikzpicture}

 \begin{tikzpicture}[baseline]
      \path(0,0) ellipse (2 and 2);
      \tikzstyle{level 1}=[counterclockwise from=-60,level distance=9mm,sibling angle=30]
      \node (A0) at (0:1) {$\scriptstyle{T_4}$}child{[fill] circle (2pt)} child{[fill] circle (2pt)} child{[fill] circle (2pt)};
      
      \tikzstyle{level 1}=[counterclockwise from=-105,level distance=9mm,sibling angle=30]
      \node (A1) at (240:1) {$\scriptstyle{T_3}$}child{[fill] circle (2pt)} child{[fill] circle (2pt)};
      \tikzstyle{level 1}=[counterclockwise from=75,level distance=9mm,sibling angle=15]
\node (A2) at (120:1) {$\scriptstyle{0_{n}}$} child child child child child child;
      
      \path (A0) edge [bend left=0] (A2);
      \path (A1) edge [bend left=0] (A2);
    \end{tikzpicture}

 \begin{tikzpicture}[baseline]
      \path(0,0) ellipse (2 and 2);
      \tikzstyle{level 1}=[counterclockwise from=-60,level distance=9mm,sibling angle=30]
      \node (A0) at (0:1) {$\scriptstyle{T_4}$}child{[fill] circle (2pt)} child{[fill] circle (2pt)} child{[fill] circle (2pt)};
      
      \tikzstyle{level 1}=[counterclockwise from=-120,level distance=9mm,sibling angle=30]
      \node (A1) at (240:1) {$\scriptstyle{T_4}$}child{[fill] circle (2pt)} child{[fill] circle (2pt)} child{[fill] circle (2pt)};
      \tikzstyle{level 1}=[counterclockwise from=75,level distance=9mm,sibling angle=15]
\node (A2) at (120:1) {$\scriptstyle{0_{n}}$} child child child child child child;
      
      \path (A0) edge [bend left=0] (A2);
      \path (A1) edge [bend left=0] (A2);
    \end{tikzpicture}
    
    \end {tabular}\end{center}

     \noindent Here $T_i$ is the set of compactified twisted sectors of $\mathcal{M}_{1,i}$ (a list of those is in \cite[Section 3]{pagani1}). Note that all these sectors are automatically non-smoothable (Remark \ref{lisciabile}) if the number of marked points $n$ on the genus $0$ component is greater than $0$. When this number is zero, the genus $0$ component contracts, and some of the elements in the list might be smoothable. These cases are therefore quite special and must be described separately. See the first set of figures in \cite[Construction 5.25]{paganitesi}. 
     
    \end{example}
    
    \begin{example} \label{case2} Case $2$. The automorphism induced on the stable graph associated to every curve exchanges the two components of genus $1$:
 
 \begin{center}   
\begin{tabular}{c@{}cc@{}cc@{}cc@{}c}

\begin{tikzpicture}[baseline]
      \path(0,0) ellipse (2 and 1);
      \tikzstyle{level 1}=[counterclockwise from=-60,level distance=9mm,sibling angle=120]
      \node (A0) at (0:1) {$\scriptstyle{T_1}$};
      \tikzstyle{level 1}=[counterclockwise from=120,level distance=9mm,sibling angle=20]
      \node (A1) at (180:1) {$\scriptstyle{T_1}$};

      \path (A0) edge [bend left=0] (A1);
    \end{tikzpicture}
        
    \begin{tikzpicture}[baseline]
      \path(0,0) ellipse (2 and 2);
      \node (A0) at (0:1) {$\scriptstyle{T_1'}$};
      \node (A1) at (240:1) {$\scriptstyle{ T_1' }$};
      \tikzstyle{level 1}=[counterclockwise from=120,level distance=9mm,sibling angle=60]
\node (A2) at (120:1) {$\scriptstyle{0_{1}^*}$} child{[fill] circle (2pt)};
      
      \path (A0) edge [bend left=0] (A2);
      \path (A1) edge [bend left=0] (A2);
    \end{tikzpicture}
    
    \begin{tikzpicture}[baseline]
      \path(0,0) ellipse (2 and 2);
      \node (A0) at (0:1) {$\scriptstyle{T_1'}$};
      \node (A1) at (240:1) {$\scriptstyle{T_1' }$};
      \tikzstyle{level 1}=[counterclockwise from=90,level distance=9mm,sibling angle=60]
\node (A2) at (120:1) {$\scriptstyle{0_{2}^*}$} child{[fill] circle (2pt)} child{[fill] circle (2pt)};
      
      \path (A0) edge [bend left=0] (A2);
      \path (A1) edge [bend left=0] (A2);
    \end{tikzpicture}

\end{tabular} \end{center}

  \noindent The twisted sectors in the same graph must be the same, in order to preserve the automorphism $\rho$. Here $T_1'$ is the set of compactified twisted sectors of $\mathcal{M}_{1,1}$ (see again \cite[Section 3]{pagani1}), whose corresponding automorphism is not $-1$ (because having the involutive twisted sector repeated twice produces a smoothable twisted sector).   
  The vertices $0_1^*$ and $0_2^*$ correspond to the twisted sectors of $\big[\overline{\mathcal{M}}_{0,3}\big/ S_2 \big]$ and $\big[\overline{\mathcal{M}}_{0,4}\big/ S_2 \big]$ (see Definition \ref{inerziazero}).    
\end{example}

\begin{example} \label{case3} Case $3$. The automorphism induced on the graph by the automorphism of the twisted sector is the identity:

 \begin{center}\begin{tabular}{c@{}c|c@{}c|c@{}c|c@{}c}
 
 \begin{tikzpicture}[baseline]
      \path(0,0) ellipse (2 and 1);
      \tikzstyle{level 1}=[counterclockwise from=180,level distance=9mm,sibling angle=60]
      
      \node (A0) at (0:1) {$\scriptstyle{\tilde{T_2'}}$};

      \draw (A0) .. controls +(-15:1.2) and +(15:1.2) .. (A0);
    \end{tikzpicture}
     
 \begin{tikzpicture}[baseline]
      \path(0,0) ellipse (2 and 1);
      \tikzstyle{level 1}=[counterclockwise from=180,level distance=9mm,sibling angle=60]
      
      \node (A0) at (0:1) {$\scriptstyle{\tilde{T_3'}}$} child{[fill] circle (2pt)};

      \draw (A0) .. controls +(-15:1.2) and +(15:1.2) .. (A0);
    \end{tikzpicture}
    
 \begin{tikzpicture}[baseline]
      \path(0,0) ellipse (2 and 1);
      \tikzstyle{level 1}=[counterclockwise from=150,level distance=9mm,sibling angle=60]
      
      \node (A0) at (0:1) {$\scriptstyle{\tilde{T_4'}}$}child{[fill] circle (2pt)} child{[fill] circle (2pt)};

      \draw (A0) .. controls +(-15:1.2) and +(15:1.2) .. (A0);
    \end{tikzpicture}
    \\   
     \begin{tikzpicture}[baseline]
      \path(0,0) ellipse (2 and 1);
      \node (A0) at (0:1) {$\scriptstyle{\tilde{T_2}}$};
      \tikzstyle{level 1}=[counterclockwise from=120,level distance=9mm,sibling angle=20]
      \node (A1) at (180:1) {$\scriptstyle{0_n}$} child child child child child child child;
      \path (A0) edge [bend left=-15] (A1);
      \path (A0) edge [bend left=15] (A1);
    \end{tikzpicture}
    
     \begin{tikzpicture}[baseline]
      \path(0,0) ellipse (2 and 1);
      \tikzstyle{level 1}=[counterclockwise from=0,level distance=9mm,sibling angle=20]
      
      \node (A0) at (0:1) {$\scriptstyle{\tilde{T_3}}$} child{[fill] circle (2pt)};
      \tikzstyle{level 1}=[counterclockwise from=120,level distance=9mm,sibling angle=20]
      \node (A1) at (180:1) {$\scriptstyle{0_n}$} child child child child child child child;
      \path (A0) edge [bend left=-15] (A1);
      \path (A0) edge [bend left=15] (A1);
    \end{tikzpicture}
    
     \begin{tikzpicture}[baseline]
      \path(0,0) ellipse (2 and 1);
      \tikzstyle{level 1}=[counterclockwise from=-15,level distance=9mm,sibling angle=30]
      
      \node (A0) at (0:1) {$\scriptstyle{\tilde{T_4}}$} child{[fill] circle (2pt)} child{[fill] circle (2pt)};
      \tikzstyle{level 1}=[counterclockwise from=120,level distance=9mm,sibling angle=20]
      \node (A1) at (180:1) {$\scriptstyle{0_n}$} child child child child child child child;
      \path (A0) edge [bend left=-15] (A1);
      \path (A0) edge [bend left=15] (A1);
    \end{tikzpicture}
 \end{tabular} \end{center}

    \noindent Here $\tilde{T_i}$ is the set of compactified twisted sectors (Definition \ref{compactifiedtwisted}) of $[\mathcal{M}_{1,i}/S_2]$ such that the distinguished automorphism fixes the two marked points symmetrized by the $S_2$ action. The set $\tilde{T_i'}$ is the set of twisted sectors in $\tilde{T_i}$ where the automorphism is not an involution. See \ref{appendicea}, and especially its last paragraph.
 \end{example}

\begin{example} \label{case4} Case $4$. The automorphism induced on the associated stable graph by the automorphism of the twisted sector exchanges the two edges (or the two branches of the same edge):

\begin{center}
 \begin{tabular}{c@{}c|c@{}c|c@{}c|c@{}c}

 \begin{tikzpicture}[baseline]
      \path(0,0) ellipse (2 and 1);
      \tikzstyle{level 1}=[counterclockwise from=180,level distance=9mm,sibling angle=60]
      
      \node (A0) at (0:1) {$\scriptstyle{{T_2^{\rho}}}$};

      \draw (A0) .. controls +(-15:1.2) and +(15:1.2) .. (A0);
    \end{tikzpicture}
    
 \begin{tikzpicture}[baseline]
      \path(0,0) ellipse (2 and 1);
      \tikzstyle{level 1}=[counterclockwise from=180,level distance=9mm,sibling angle=60]
      
      \node (A0) at (0:1) {$\scriptstyle{{T_3^{\rho}}}$} child{[fill] circle (2pt)};

      \draw (A0) .. controls +(-15:1.2) and +(15:1.2) .. (A0);
    \end{tikzpicture}
    
 \begin{tikzpicture}[baseline]
      \path(0,0) ellipse (2 and 1);
      \tikzstyle{level 1}=[counterclockwise from=150,level distance=9mm,sibling angle=60]
      
      \node (A0) at (0:1) {$\scriptstyle{{T_4^{\rho}}}$}child{[fill] circle (2pt)} child{[fill] circle (2pt)};

      \draw (A0) .. controls +(-15:1.2) and +(15:1.2) .. (A0);
    \end{tikzpicture}
    \\   
     \begin{tikzpicture}[baseline]
      \path(0,0) ellipse (2 and 1);
      \node (A0) at (0:1) {$\scriptstyle{{T_2^{\rho}}}$};
      \tikzstyle{level 1}=[counterclockwise from=180,level distance=9mm,sibling angle=60]
      \node (A1) at (180:1) {$\scriptstyle{0_1^*}$} child{[fill] circle (2pt)};
      \path (A0) edge [bend left=-15] (A1);
      \path (A0) edge [bend left=15] (A1);
    \end{tikzpicture}
    \begin{tikzpicture}[baseline]
      \path(0,0) ellipse (2 and 1);
      \tikzstyle{level 1}=[counterclockwise from=0,level distance=9mm,sibling angle=20]
      
      \node (A0) at (0:1) {$\scriptstyle{{T_3^{\rho}}}$} child{[fill] circle (2pt)};
      \tikzstyle{level 1}=[counterclockwise from=180,level distance=9mm,sibling angle=60]
      \node (A1) at (180:1) {$\scriptstyle{0_1^*}$} child{[fill] circle (2pt)};
      \path (A0) edge [bend left=-15] (A1);
      \path (A0) edge [bend left=15] (A1);
    \end{tikzpicture}

     \begin{tikzpicture}[baseline]
      \path(0,0) ellipse (2 and 1);
      \tikzstyle{level 1}=[counterclockwise from=-15,level distance=9mm,sibling angle=30]
      
      \node (A0) at (0:1) {$\scriptstyle{{T_4^{\rho}}}$} child{[fill] circle (2pt)} child{[fill] circle (2pt)};
      \tikzstyle{level 1}=[counterclockwise from=180,level distance=9mm,sibling angle=60]
      \node (A1) at (180:1) {$\scriptstyle{0_1^*}$} child{[fill] circle (2pt)};
      \path (A0) edge [bend left=-15] (A1);
      \path (A0) edge [bend left=15] (A1);
    \end{tikzpicture}
     \begin{tikzpicture}[baseline]
      \path(0,0) ellipse (2 and 1);
      \node (A0) at (0:1) {$\scriptstyle{{T_2^{\rho}}}$};
      \tikzstyle{level 1}=[counterclockwise from=150,level distance=9mm,sibling angle=60]
      \node (A1) at (180:1) {$\scriptstyle{0_2^*}$} child{[fill] circle (2pt)} child{[fill] circle (2pt)};
      \path (A0) edge [bend left=-15] (A1);
      \path (A0) edge [bend left=15] (A1);
    \end{tikzpicture}
    \begin{tikzpicture}[baseline]
      \path(0,0) ellipse (2 and 1);
      \tikzstyle{level 1}=[counterclockwise from=0,level distance=9mm,sibling angle=20]
      
      \node (A0) at (0:1) {$\scriptstyle{{T_3^{\rho}}}$} child{[fill] circle (2pt)};
      \tikzstyle{level 1}=[counterclockwise from=150,level distance=9mm,sibling angle=60]
      \node (A1) at (180:1) {$\scriptstyle{0_2^*}$} child{[fill] circle (2pt)} child{[fill] circle (2pt)};
      \path (A0) edge [bend left=-15] (A1);
      \path (A0) edge [bend left=15] (A1);
    \end{tikzpicture}

     \begin{tikzpicture}[baseline]
      \path(0,0) ellipse (2 and 1);
      \tikzstyle{level 1}=[counterclockwise from=-15,level distance=9mm,sibling angle=30]
      
      \node (A0) at (0:1) {$\scriptstyle{{T_4^{\rho}}}$} child{[fill] circle (2pt)} child{[fill] circle (2pt)};
      \tikzstyle{level 1}=[counterclockwise from=150,level distance=9mm,sibling angle=60]
      \node (A1) at (180:1) {$\scriptstyle{0_2^*}$} child{[fill] circle (2pt)} child{[fill] circle (2pt)};
      \path (A0) edge [bend left=-15] (A1);
      \path (A0) edge [bend left=15] (A1);
    \end{tikzpicture}
\end{tabular} \end{center}

\noindent See Definition \ref{inerziazero} for the notation on the twisted sectors $0_1^*$ and $0_2^*$. The set ${T_i^{\rho}}$ has as elements the twisted sectors of $I([\mathcal{M}_{1,i}/S_2])$ such that the distinguished automorphism of the twisted sectors exchanges two of the marked points, and such that the result is non-smoothable (so that they do actually come from the boundary, see Definition \ref{bordo}). See the last paragraph of \ref{appendicea} for a list of them.
   
   \end{example}
   
   \begin {remark} \label{nongenerdivis} Note that, as a consequence of this analysis, it is clear that in general if $\overline{X}$ is a twisted sector of $\overline{\mathcal{M}}_{2,n}$, its Chow group is not necessarily isomorphic to its cohomology group, and moreover neither its Chow ring nor its cohomology ring are generated by the divisor classes. In other words, the analogue of Theorem \ref{generatodivisori} is not true for the twisted sectors that come from the boundary. An example of this is the first graph in Example \ref{case1}. Some of these twisted sectors have $\overline{\mathcal{M}}_{1,n}$ as moduli space, and it is well known that the latter contains odd cohomology if $n \geq 11$.
   \end{remark}


\section{The cohomology of the inertia stacks of moduli of curves of genus $2$}
\label{cohomology}

In the previous section we have studied enough geometry of the inertia stacks of $\mathcal{M}_{2,n}$, $\mathcal{M}_{2,n}^{rt}$, and $\overline{\mathcal{M}}_{2,n}$ to compute the dimensions of the Chen--Ruan cohomology and stringy Chow vector spaces (Definition \ref{defcoomorb1}). The remainder of the section is devoted to writing down this information in a convenient, compact way.


\subsection{The dimension of the cohomology $H^*_{CR}(\mathcal{M}_{2,n})$ and $H^*_{CR}($\mdnrt$)$}

The rational Chow groups of the twisted sectors of $\mathcal{M}_{2,n}$ are all trivial. Indeed, we have proved in Section \ref{inertia2} that coarse space of each twisted sector of $\mathcal{M}_{2,n}$ is a quotient of a certain $\mathcal{M}_{0,k}$ by the action of a subgroup of the symmetric group $S_k$. 

Thus we have the following formula:
$$
\dim A^*_{st}(\mathcal{M}_{2,n}, \mathbb{Q})= \dim A^*(\mathcal{M}_{2,n}, \mathbb{Q}) + \textrm{ number of twisted sectors of } I(\mathcal{M}_{2,n}).
$$
The number on the right is equal to zero whenever $n\geq 7$, as we have already seen, and is equal to $(17,24,26,21,7,1,1)$ in the remaining seven cases.

The correction factor
$$
\tilde{h}_2(n):=\dim H^*_{CR}(\mathcal{M}_{2,n}, \mathbb{Q})- \dim H^*(\mathcal{M}_{2,n}, \mathbb{Q})
$$ 
corresponds to computing the invariant cohomology $H^*(\mathcal{M}_{0,k})^S$ for some $S < S_k$. Here we have the first seven values of $\tilde{h}_2(n)$ (they are zero afterwards): $(22,30,39,43,51,60,60)$.

Following \cite[Section 3]{pagani1}, we define the generating series of the dimensions of the cohomology vector spaces:

\begin{eqnarray}  \label{serietotale2} P_0(s):=\sum_{n=0}^{\infty}\frac{Q_0(n)}{n!} \ s^n, \\ P_{2,rt}(s):=\sum_{n=0}^{\infty}\frac{Q_{2,rt}(n)}{n!} \ s^n,  \\ P_{2,rt}^{CR}(s):=\sum_{n=0}^{\infty}\frac{Q_{2,rt}^{CR}(n)}{n!} \ s^n,
\end{eqnarray}

\noindent where:

\begin{eqnarray*}
Q_0(n):=\dim H^*(\overline{\mathcal{M}}_{0,n+1})=h(n), \\ 
Q_{2,rt} (n):=\dim H^*({\mathcal{M}}_{2,n}^{rt}), \\ 
Q_{2,rt}^{CR}(n):=\dim H^*_{CR}({\mathcal{M}}_{2,n}^{rt}),
\end{eqnarray*}

\noindent with the convention that $Q_0(0)=0$ and $Q_0(1)=1$.

Our Proposition \ref{twratcor}, together with the computation of the cohomology of the twisted sectors that we outlined in Section \ref{inertia}, gives the result below.

\begin {theorem} \label{samuel2thm} The following equality between power series relates the dimensions of the cohomology groups of $\overline{\mathcal{M}}_{0,n}$ and ${\mathcal{M}}_{2,n}^{rt}$ with the dimensions of the Chen--Ruan cohomology groups of ${\mathcal{M}}_{2,n}^{rt}$.
\begin {equation} \label {samuel2}
 P_{2,rt}^{CR}(s)=P_{2,rt}(s)+22+30P_0(s)+\frac{39}{2!} P_0(s)^2+\frac{43}{3!}P_0(s)^3+\frac{51}{4!}P_0(s)^4+\frac{60}{5!}P_0(s)^5+\frac{60}{6!}P_0(s)^6.
\end {equation} 
\end {theorem}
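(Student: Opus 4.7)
The strategy is essentially a bookkeeping translation of the inertia-stack decomposition already established for $\mathcal{M}_{g,n}^{rt}$ into an identity of exponential generating series. Applying Proposition \ref{twratcor} with $g=2$ (so $2g+2=6$) gives
$$
I(\mathcal{M}_{2,n}^{rt}) = (\mathcal{M}_{2,n}^{rt},1) \sqcup \coprod_{k=1}^{\min(n,6)} I_{TW}(\mathcal{M}_{2,k}) \times \coprod_{(I_1,\ldots,I_k)\in\mathcal{A}_{k,n}/S_k} \overline{\mathcal{M}}_{0,|I_1|+1}\times\cdots\times\overline{\mathcal{M}}_{0,|I_k|+1}.
$$
Taking total dimension of rational cohomology and applying Künneth in each product factor, together with the identification $\dim H^*(\overline{\mathcal{M}}_{0,|I_i|+1})=Q_0(|I_i|)$, produces
$$
Q_{2,rt}^{CR}(n)-Q_{2,rt}(n) = \sum_{k=0}^{\min(n,6)} \tilde h_2(k)\sum_{(I_1,\ldots,I_k)\in\mathcal{A}_{k,n}/S_k}\prod_{i=1}^{k} Q_0(|I_i|),
$$
where $\tilde h_2(k)$ is the dimension of the twisted-sector cohomology of $\mathcal{M}_{2,k}$. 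The numerical values $(\tilde h_2(k))_{k=0}^{6}=(22,30,39,43,51,60,60)$, with $\tilde h_2(k)=0$ for $k\ge 7$, were tabulated just before the theorem and provide exactly the coefficients that appear in (\ref{samuel2}).

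Next I convert the previous identity into the claimed identity of EGFs. The subsets in an element of $\mathcal{A}_{k,n}/S_k$ are pairwise disjoint and nonempty, so $S_k$ acts freely on the corresponding set of ordered partitions, and the standard exponential-formula identity gives
$$
\frac{P_0(s)^k}{k!} \;=\; \sum_{n\ge 0}\frac{s^n}{n!}\sum_{(I_1,\ldots,I_k)\in\mathcal{A}_{k,n}/S_k}\prod_{i=1}^{k} Q_0(|I_i|),
$$
where the requirement $Q_0(0)=0$ in the definition of $P_0$ automatically restricts the sum to partitions of $[n]$ into nonempty blocks. Multiplying by $\tilde h_2(k)$ and summing over $k=0,\ldots,6$, with the convention $P_0(s)^0/0!=1$ contributing the constant term $22$ (accounting for $n=0$), yields exactly the right-hand side of (\ref{samuel2}).

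For the edge cases $n=0,1$ the hypothesis $n>1$ of Proposition \ref{twratcor} fails, but there $\mathcal{M}_{2,n}^{rt}=\mathcal{M}_{2,n}$ because one marked point admits no stable rational tails, so the inertia decomposition reduces tautologically to $\mathcal{M}_{2,n}\sqcup I_{TW}(\mathcal{M}_{2,n})$, in agreement with the formula on the coefficients of $s^0$ and $s^1/1!$. There is really no hard step: all of the geometric work has been done in Sections \ref{inertia} and \ref{rationaltails}, and the only remaining point—matching the combinatorics of $\mathcal{A}_{k,n}/S_k$ with the $k$-th divided power of $P_0$—is the free-action observation above. If one were to point to a delicate spot, it is the consistency check that the $k$-truncation in Proposition \ref{twratcor} matches the vanishing $\tilde h_2(k)=0$ for $k\ge 7$; but both are governed by the same bound $k\le 2g+2$ coming from the maximum number of fixed points of a nontrivial automorphism of a smooth genus $2$ curve.
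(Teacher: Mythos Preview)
Your argument is correct and is exactly the approach the paper intends: the paper simply asserts that the theorem follows from Proposition~\ref{twratcor} together with the tabulated values $\tilde h_2(k)=(22,30,39,43,51,60,60)$, and you have written out the bookkeeping that makes this explicit, including the standard EGF identity matching $\mathcal{A}_{k,n}/S_k$ with $P_0(s)^k/k!$ and the verification of the degenerate cases $n=0,1$.
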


\begin{remark} A similar formula, with coefficients $(17,24,26,21,7,1,1)$, holds for the case of the stringy Chow group.
\end{remark}


\subsection{The dimension of the cohomology $H^*_{CR}($\mbdn$)$}

Here we want to write a formula similar to the one obtained in Equation \ref{samuel2} for the case of stable genus $2$ curves. Let us define the generating series of the dimensions of the cohomology groups:
\begin{eqnarray*}
Q_1(n):=\dim H^*(\overline{\mathcal{M}}_{1,n}), \\
Q_{2} (n):=\dim H^*(\overline{\mathcal{M}}_{2,n}), \\ 
Q_{2}^{CR}(n):=\dim H^*_{CR}(\overline{\mathcal{M}}_{2,n}),
\end{eqnarray*}
and then:
\begin{eqnarray}  \label{serietotale2stable} P_0'(s):= \sum_{n=0}^{\infty}\frac{Q_0(n+1)}{n!} \ {s^n}, \quad
P_1'(s):=\sum_{n=0}^{\infty}\frac{Q_1(n+1)}{n!} \ {s^n}, \\ P_{2}(s):=\sum_{n=0}^{\infty}\frac{Q_{2}(n)}{n!} \ s^n,  \quad \overline{P}_{2}^{CR}(s):=\sum_{n=0}^{\infty}\frac{Q_{2}^{CR}(n)}{n!} \ s^n.
\end{eqnarray}
Note that, with our convention, the degree zero term of $P_0'$ is $1$. The degree zero term of $P_1'$ is $2$. The power series $P_0'$ and $P_1'$ are just the total derivatives of $P_0$ and $P_1$.

\begin{theorem} \label{samuel2stabthm} The following equality between power series relates the dimensions of the cohomology groups of $\overline{\mathcal{M}}_{0,n}$, $\overline{\mathcal{M}}_{1,n}$, $\overline{\mathcal{M}}_{2,n}$ with the dimensions of the Chen--Ruan cohomology groups of $\overline{\mathcal{M}}_{2,n}$
\end{theorem}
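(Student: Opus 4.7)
The plan is to parallel the proof of Theorem \ref{samuel2thm}, using Proposition \ref{pirttheorem} in place of Proposition \ref{twratcor} and supplementing with the boundary contributions catalogued in Section \ref{dalbordo}. By Proposition \ref{pirttheorem}, the inertia stack $I(\overline{\mathcal{M}}_{2,n})$ decomposes as the untwisted sector $\overline{\mathcal{M}}_{2,n}$ together with, for each twisted sector $X \in T_{2,k}$ of $\overline{\mathcal{M}}_{2,k}^{NR}$, a piece $X \times \coprod_{\{I_s\} \in \mathcal{A}_{S(X),n}} \prod_{s \in S(X)} \overline{\mathcal{M}}_{0,I_s+1}$. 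By the Künneth formula, the dimension of cohomology of each such piece is a product, and the standard exponential generating function identity converts the sum over ordered partitions of $[n]$ into $|S(X)|$ non-empty subsets into a factor $\tfrac{1}{|S(X)|!}\,P_0'(s)^{|S(X)|}$ in the generating series.

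The second step is to enumerate the twisted sectors of $\overline{\mathcal{M}}_{2,k}^{NR}$ and compute the dimension of their cohomology. The sectors split into two families:
\begin{enumerate}
\item the \emph{smooth} twisted sectors, already handled in Theorem \ref{samuel2thm} via Corollaries \ref{corollariocadman}, \ref{aggiunta2} and Proposition \ref{aggiunta} --- these produce exactly the polynomial in $P_0'(s)$ analogous to the right-hand side of \eqref{samuel2} (with $P_0'$ replacing $P_0$ because the rational tails now carry an additional node-marking);
\item the non-smooth ones, classified into the four cases of Examples \ref{case1}--\ref{case4} by looking at the stable graph $\Gamma_0$ or $\Gamma_1$ of the underlying curve and whether the automorphism fixes or exchanges components/branches.
\end{enumerate}
For each family, the moduli space of the twisted sector is a product (or $S_2$-quotient of a product) of compactified twisted sectors of $\overline{\mathcal{M}}_{1,i}$ and of moduli/twisted sectors of $[\overline{\mathcal{M}}_{0,n}/S_2]$ discussed in Appendix \ref{appendicea}. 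By invoking the known cohomology of these factors and applying Künneth, the contribution of each boundary twisted sector can be expressed as a product of dimensions of ordinary cohomologies of $\overline{\mathcal{M}}_{1,\bullet}$ and $\overline{\mathcal{M}}_{0,\bullet}$.

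For the third step, I would package each of the four boundary cases as a finite polynomial in the generating functions $P_0'(s)$ and $P_1'(s)$ (with rational coefficients coming from the number of twisted sectors in each family and from dimensions of cohomology of stacky points): Case \ref{case1} gives terms quadratic in $P_1'(s)$ (one factor per genus $1$ component, both fixed by the automorphism), Case \ref{case2} gives terms whose $P_1'(s)$-content is linear (the automorphism swaps the two elliptic components, so one $P_1'$-factor survives after $S_2$-symmetrization) multiplied by data from $[\overline{\mathcal{M}}_{0,n}/S_2]$, and Cases \ref{case3}--\ref{case4} involve an irreducible genus $2$ nodal curve whose normalization is genus $1$, contributing further terms linear in $P_1'(s)$. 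Summing all these pieces on top of $P_2(s)$ and the smooth-sector polynomial in $P_0'(s)$ yields the claimed formula.

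The main obstacle I anticipate is the combinatorial bookkeeping: correctly weighting the $S_2$-quotient contributions in Cases \ref{case2} and \ref{case4} (where irreducible components or branches are exchanged and one must take invariant cohomology, possibly with coefficients twisted by Appendix \ref{appendicea}), correctly applying the non-smoothability criterion of Remark \ref{lisciabile2} to decide which marked points receive rational tails (thus determining the exponent of $P_0'(s)$ for each sector), and making sure that the "stacky point" sectors (which occur with $n$ below the threshold where the associated moduli space becomes positive-dimensional) enter with the correct low-order coefficients in the generating series, exactly as happened with the coefficients $22, 30, 39, 43, 51, 60, 60$ in Theorem \ref{samuel2thm}.
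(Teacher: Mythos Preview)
Your overall plan---split into smooth-compactified sectors plus the four boundary cases, then sum the generating series---matches the paper's proof. But your identification of which generating functions appear, and where, is wrong in several places, and this is a conceptual error rather than mere bookkeeping.

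First, the smooth twisted sectors (your item (1)) do \emph{not} give a polynomial in $P_0'$. They give a polynomial in $P_0$, exactly as in Theorem \ref{samuel2thm}: the rational tails in Proposition \ref{pirttheorem} are still $\overline{\mathcal{M}}_{0,I_s+1}$ with $|I_s|$ marks, and $P_0$ already encodes the ``$+1$'' shift by its definition $Q_0(n)=\dim H^*(\overline{\mathcal{M}}_{0,n+1})$. In the paper this contribution is the polynomial $\overline{P}_{2,rt}^{CR}$ in $P_0$ (equation \eqref{parziale1}).

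Second, you have misread how the $n$-dependence enters in the four boundary cases. In Case~\ref{case1}, the two genus $1$ vertices are \emph{fixed} twisted sectors of $\overline{\mathcal{M}}_{1,i}$ for $i\le 4$; they do not vary with $n$ and do not contribute $P_1'$ factors. The $n$ marked points sit on the remaining vertex, which is genus $1$ in the first row of graphs (contributing a factor of $P_1'$, linearly) and genus $0$ in the other rows (contributing $P_0'$). So $U_1$ is linear in $P_1'$ and linear in $P_0'$, never quadratic in $P_1'$. In Case~\ref{case2} the swapped genus $1$ components are again fixed twisted sectors of $\overline{\mathcal{M}}_{1,1}$; the $n$-dependence enters only through rational tails glued at the $\bullet$-points of the central $0^*_1$ or $0^*_2$ component, so $U_2$ is a polynomial in $P_0$ alone---no $P_1'$ at all. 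Similarly $U_3$ involves $P_0$ and $P_0'$ (not $P_1'$), and $U_4$ is a polynomial in $P_0$.

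The upshot is that $P_1'$ appears in exactly one place (the first row of Case~\ref{case1}), linearly, and $P_0'$ appears in Cases~\ref{case1} and~\ref{case3}; everything else is in $P_0$. Once you reassign the generating functions correctly, the sum $\overline{P}_{2,rt}^{CR}+U_1+U_2+U_3+U_4$ collapses to the displayed formula \eqref{samuel2stab}.
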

\begin {equation} \label {samuel2stab}\begin{split}
 \overline{P}_{2}^{CR}= &P_2+32+43P_0+47\frac{P_0^2}{2!} +38\frac{P_0^3}{3!}+30\frac{P_0^4}{4!}+30\frac{P_0^5}{5!}+30 \frac{P_0^6}{6!}+  \\& +P_0'\left(43+52P_0+ 72 \frac{P_0^2}{2!}+ 40 \frac{P_0^3}{3!}+ 28 \frac{P_0^4}{4!} + 8 \frac{P_0^5}{5!} + 4 \frac{P_0^6}{6!} \right)+ \\ & +P_1' \left( 8+6 P_0+4\frac{P_0^2}{2!}+2 \frac{P_0^3}{3!}\right).
 \end{split}
 \end {equation}
\begin{proof}
The result is a sum of two contributions. The first one has the same form as Equation \ref{samuel2}. It is the cohomology of the compactification (see Definition \ref{compactifiedtwisted}) of the twisted sectors of $\mathcal{M}_{2,n}^{rt}$:
\begin {equation} \label {parziale1}
 \overline{P}_{2,rt}^{CR}:=P_2+29+39P_0+47\frac{P_0^2}{2!} +42\frac{P_0^3}{3!}+38\frac{P_0^4}{4!}+34\frac{P_0^5}{5!}+34\frac{P_0^6}{6!}.
\end {equation}
The second term is the cohomology of the twisted sectors of $\overline{\mathcal{M}}_{2,n}$ that come from the boundary (see Definition \ref{bordo}). We divide this second term into four terms, each one corresponding to the cohomology of the twisted sectors of one among the examples \ref{case1}, \ref{case2}, \ref{case3} and \ref{case4}.
The cohomology corresponding to the twisted sectors of Example \ref{case1} is given by:
\begin {equation} \label {parziale21} \begin{split}
 {U_1}:=&P_0'\left(37+48P_0+ 68 \frac{P_0^2}{2!}+ 40 \frac{P_0^3}{3!}+ 28 \frac{P_0^4}{4!} + 8 \frac{P_0^5}{5!} + 4 \frac{P_0^6}{6!} \right) + P_1' \left( 8+6 P_0+4\frac{P_0^2}{2!}+2 \frac{P_0^3}{3!}\right) + \\ & 
-\left(7+8P_0+ 14 \frac{P_0^2}{2!}+ 10 \frac{P_0^3}{3!}+ 10 \frac{P_0^4}{4!} + 4 \frac{P_0^5}{5!} + 4 \frac{P_0^6}{6!} \right).
\end{split}
\end {equation}
The cohomology corresponding to the twisted sectors of Example \ref{case2} is:
\begin {equation} \label {parziale22}
 {U_2}:=   8+ 6 P_0 + 6 \frac{P_0^2}{2!} .
\end {equation}
The cohomology corresponding to the twisted sectors of Example \ref{case3} is:
\begin {equation} \label {parziale23}
 {U_3}:= -2-2 P_0 -2 \frac{P_0^2}{2!} + P_0' \left(6+ 4 P_0 + 4\frac{P_0^2}{2!}\right).
\end {equation}
And, finally, the cohomology corresponding to the twisted sectors of Example \ref{case4} is:
\begin {equation} \label {parziale24}
 {U_4}:= 4+ 8 P_0 + 10\frac{P_0^2}{2!}+ 6\frac{P_0^3}{3!}+ 2\frac{P_0^4}{4!}.
\end {equation}
\noindent Summing everything, one obtains the desired result
$$
\overline{P}_2^{CR}= \overline{P}_{2,rt}^{CR} + U_1 + U_2 +U_3 +U_4.
$$
\end{proof}

\begin{remark} Equation \ref{samuel2stab} holds true after substituting $P_2^{CR}$ with the generating series of the dimensions of the rational stringy Chow groups, then modifying $P_1^{CR}$ in the same way, and replacing $P_2$ with the generating series of the dimensions of the rational Chow groups of $\overline{\mathcal{M}}_{2,n}$.
\end{remark}


\section {The age grading}
\label{grading}

In this section we define the grading on the Chen--Ruan cohomology groups. The Chen-Ruan cohomology turns out to be a Poincar\'e duality ring if the ordinary grading on the cohomology of the twisted sectors of the inertia stack is shifted by a suitable rational number (one for each twisted sector). This number is called  \emph{degree shifting number}, or \emph{fermionic shift}, or \emph{age}. In this section we define the age, and study it for the twisted sectors of $\mathcal{M}_{g,n}$ and $\mathcal{M}_{g,n}^{rt}$, assuming that it is known for the twisted sectors of $\mathcal{M}_g$. Then we write some explicit results for the case of genus $2$, pointed curves.


\subsection {Definition of Chen--Ruan degree}

We define the degree shifting number for the twisted sectors of the inertia stack of a smooth stack $X$. We denote the representation ring of $\mu_N$ by $R{\mu_N}$, and $\zeta_N$ is a choice of a generator for the group of the $N$-th roots of $1$. 

\begin {definition} (See \cite[Section 7.1]{agv2}.) A group homomorphism $\rho:\mu_N \to \mathbb{C}^*$ is determined by an integer $0 \leq k \leq N-1$ as $\rho( \zeta_N)= \zeta_N^k$. We define a function \emph{age}:
$$
\textrm{age}(\rho)=k/N.
$$
This function extends to a unique group homomorphism:
$$
\textrm{age}: R \mu_N \to \mathbb{Q}.
$$
\end {definition}

\noindent We now define the age of a twisted sector $Y$.
\begin{definition} \label{definitionage} (See \cite[Section 3.2]{chenruan}, \cite[Definition 7.1.1]{agv2}.) Let $Y$ be a twisted sector and $p$ a point of $Y$. It induces a morphism $p \to {I}(X)$, which is, according to Definition \ref{definertia}, a representable morphism $g:B \mu_N \to X$. Then the pull-back via $g$ of the tangent sheaf, $g^*(T_X)$, is a representation of $\mu_N$. 
 We define: $$a(Y):= \textrm{age}(g^*(T_X)).$$

\end{definition}

\noindent We can then define the orbifold, or Chen--Ruan, degree.

\begin{definition} \label{defcoomorb2} (See \cite[Definition 3.2.2]{chenruan}.) We define the \emph{$d$-th degree} Chen--Ruan cohomology group as follows:
$$
H^d_{CR}(X, \mathbb{Q}):= \bigoplus_i  H^{d-2 a(X_i,g_i)}(X_i, \mathbb{Q}),
$$
where the sum is over all twisted sectors. Analogously, the same shift is introduced in the stringy Chow ring (Definition \ref{defcoomorb1}).
\end{definition}

\begin {proposition} (See \cite[Lemma 3.2.1]{chenruan}, \cite[Theorem 7.4.1]{agv2}.) \label{codimension}Let $X_{(g)}$, $X_{(g^{-1})}$ be two connected components of the inertia stack of $X$ that are exchanged by the involution (Remark \ref{mappaiota}) of the inertia stack. Then if $c=\codim (X_{(g)},X)$, the following holds:
$$
a(X_{(g)})+ a(X_{(g^{-1})})= c.
$$
\end {proposition}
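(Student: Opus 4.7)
The plan is to unravel the definition of age in terms of the eigenspace decomposition of the tangent sheaf pulled back to a point of the twisted sector, and then observe that passing from $g$ to $g^{-1}$ swaps the character $k$ with $N-k$, so the two ages sum to the number of nonzero eigenspaces (with multiplicity), which is precisely the codimension.

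First I would fix a point $p$ of $X_{(g)}$ and the associated representable morphism $g:B\mu_N\to X$ of Definition \ref{definitionage}. The pulled-back tangent sheaf decomposes as a representation of $\mu_N$ into isotypical summands
\[
g^*(T_X)=\bigoplus_{k=0}^{N-1} V_k,
\]
where $\mu_N$ acts on $V_k$ through the character $\zeta_N\mapsto \zeta_N^k$. The key geometric input is that the tangent space to the fixed locus $X_{(g)}$ at $p$ is exactly the invariant part $V_0$, while the normal bundle $N_{X_{(g)}/X}$ at $p$ is $\bigoplus_{k=1}^{N-1} V_k$. Consequently $\sum_{k=1}^{N-1}\dim V_k=c$, and by the definition of age as a group homomorphism on $R\mu_N$ we get
\[
a(X_{(g)})=\sum_{k=1}^{N-1}\frac{k}{N}\dim V_k.
\]

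Next I would analyze $X_{(g^{-1})}$. The component $X_{(g^{-1})}$ is the image of $X_{(g)}$ under the involution $\iota$ of Remark \ref{mappaiota}, so the same point $p$ corresponds to the representable morphism $g\circ\iota':B\mu_N\to X$ obtained by precomposing with $\iota'(\zeta)=\zeta^{-1}$. Pulling back $T_X$ along this modified morphism replaces each eigenspace $V_k$ by the same underlying vector space on which $\mu_N$ now acts through the character $\zeta_N\mapsto \zeta_N^{-k}$; in the convention $0\le k\le N-1$ this is the character with label $N-k$ when $k\neq 0$, and remains $0$ when $k=0$. Therefore
\[
a(X_{(g^{-1})})=\sum_{k=1}^{N-1}\frac{N-k}{N}\dim V_k.
\]

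Finally I would just add the two formulas:
\[
a(X_{(g)})+a(X_{(g^{-1})})=\sum_{k=1}^{N-1}\frac{k+(N-k)}{N}\dim V_k=\sum_{k=1}^{N-1}\dim V_k=c.
\]
The only mildly delicate point, which I would justify carefully, is the identification of the normal bundle of $X_{(g)}$ with the sum of the nontrivial characters in $g^*(T_X)$; this uses smoothness of $X$ (and hence of $I(X)$ by Proposition \ref{liscezza1}) together with the fact that $g:B\mu_N\to X$ is representable, so no character $\zeta_N\mapsto \zeta_N^k$ with $k\neq 0$ can contribute to the tangent space of the fixed locus. Everything else is bookkeeping with the age homomorphism.
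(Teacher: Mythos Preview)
Your argument is correct and is exactly the standard proof of this fact. Note, however, that the paper does not supply its own proof of this proposition: it simply records the statement and cites \cite[Lemma 3.2.1]{chenruan} and \cite[Theorem 7.4.1]{agv2} for the argument. Your eigenspace decomposition of $g^*(T_X)$, the identification of the $k=0$ piece with the tangent to the fixed locus (hence the $k\neq 0$ pieces with the normal bundle, which is precisely the content of Remark \ref{solonormale}), and the observation that $\iota$ replaces the character $k$ by $N-k$ is exactly the computation carried out in those references.
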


\begin{remark} \label{solonormale} If $Y$ is a twisted sector of the inertia stack of $X$, and $f:Y \to X$ is the restriction to $Y$ of the natural map $I(X) \to X$, then we have the following exact sequence:
$$
0 \to T_Y \to f^*(T_X) \to N_Y(X) \to 0
$$ 
that defines the normal bundle $N_Y(X)$. It follows from the definition of twisted sector that $a(Y)$ as defined in Definition \ref{definitionage} is equal to the age of the representation of $\mu_N$ on $N_Y(X)$.
\end{remark}


\subsection {Age of twisted sectors of \mgn \ and $\mathcal{M}_{g,n}^{rt}$}

The age for the twisted sectors of $\mathcal{M}_2$ can be computed using the fact that there is an explicit description of the fibers of the tangent bundle to the moduli stack of hyperelliptic curves. This is written down explicitly in \cite{spencer} and \cite{spencer2}, see also \cite{paganihyper} for the two missing twisted sectors $V.1$ and $V.2$.

We now establish two simple lemmas that allow the computation of the age for all the twisted sectors of $\mathcal{M}_{g,n}^{rt}$, assuming knowledge of the age of the twisted sectors of $\mathcal{M}_g$. A formula for the age of the twisted sectors of $\mathcal{M}_g$ is given in \cite{paganitommasi}.

\begin {lemma} \label{etalisci} Let $Y$ be a twisted sector of $\mathcal{M}_g$. If $Y(a_1,\ldots,a_{N-1})$ is a twisted sector of $\mathcal{M}_{g,n}$, obtained by adding marked points to $Y$ (\emph{cf.} Definition \ref{2admissible}), then the following relation holds between the ages of the two sectors:
\begin{equation}\label{legameeta}
a(Y(a_1, \ldots, a_{N-1})) = a(Y) + \frac{1}{N} \sum_{i=1}^{N-1}  \ \lambda(i) a_i,
\end{equation}
where $\lambda(i)$ is the inverse of $i$ in the group $\mathbb{Z}_N^*$.
\end{lemma}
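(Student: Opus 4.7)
The strategy is to reduce the formula to two ingredients: additivity of the age function under direct-sum decompositions of $\mu_N$-representations, and Pardini's explicit recipe for the $\mu_N$-action on tangent spaces at ramification points of a cyclic cover. A general element of $Y(a_1,\ldots,a_{N-1})$ is a pair $((C,x_1,\ldots,x_n),\phi)$ with $\phi$ of order $N$, and by the construction of the admissible datum the marked point $x_j$ is a fixed point of $\phi$ of Pardini type $i$ precisely when $j$ lies in the set $A_i$ from Section~\ref{inertia2}. Forgetting the marked points recovers the general element $(C,\phi)$ of $Y$.

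The first step is deformation theoretic. Setting $D:=\sum_j x_j$, the short exact sequence
$$0\to T_C(-D)\to T_C \to T_C|_D \to 0$$
is $\langle\phi\rangle$-equivariant. Taking cohomology and using that $H^0(C,T_C)=0$ for $g\geq 2$ (and $H^1(T_C|_D)=0$), one obtains a short exact sequence of $\mu_N$-representations
$$0\to \bigoplus_{j=1}^n T_{x_j}C \to T_{(C,\underline{x})}\mathcal{M}_{g,n} \to T_C\mathcal{M}_g \to 0$$
which splits over $\mathbb{Q}$. Since age is an additive group homomorphism $R\mu_N\to\mathbb{Q}$, this yields immediately
$$a\bigl(Y(a_1,\ldots,a_{N-1})\bigr) = a(Y) + \sum_{j=1}^n a(T_{x_j}C).$$

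The second step is the single-point computation $a(T_{x_j}C)$ when $x_j$ is a fixed point of $\phi$ of type $i$ (so $\gcd(i,N)=1$). Here the description of cyclic covers recalled from \cite[Proposition~2.1]{pardini}, combined with the specific identification \ref{quasicanonica}, tells us that the character of $\langle\phi\rangle\cong\mathbb{Z}_N$ on $T_{x_j}C$ sends the generator to $e^{2\pi i\,\lambda(i)/N}$, where $\lambda(i)\in\{1,\ldots,N-1\}$ is the multiplicative inverse of $i$ mod $N$. Under the chosen identification $\mu_N\cong\langle\phi\rangle$ used in Definition \ref{definitionage}, this one-dimensional representation has age exactly $\lambda(i)/N$. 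Summing over $j$ and grouping the points by type produces the claimed contribution $\frac{1}{N}\sum_i \lambda(i)\,a_i$.

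The only delicate point is bookkeeping. One must check that the convention under which \ref{quasicanonica} assigns the character $\psi_i$ to type $i$ is consistent with the age convention of Definition \ref{definitionage}, so that $\lambda(i)$ appears in the final formula rather than $N-\lambda(i)$. This is not a conceptual obstacle but it is the step that is easiest to get wrong, so I would fix the convention once by checking a single test case (e.g.\ the twisted sector $V.1_1$ discussed after Definition \ref{settoretwistato}) and then apply it uniformly.
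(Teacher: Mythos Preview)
Your proposal is correct and follows essentially the same approach as the paper: both arguments reduce the difference $a(Y(a_1,\ldots,a_{N-1}))-a(Y)$ to the age of the $\mu_N$-representation on $\bigoplus_j T_{x_j}C$, and then compute the weight at a type-$i$ ramification point as $\lambda(i)$ via the Pardini convention \ref{quasicanonica}. Your version is simply more explicit than the paper's, which asserts the decomposition in a single sentence without writing out the exact sequence $0\to \bigoplus_j T_{x_j}C\to T_{(C,\underline x)}\mathcal{M}_{g,n}\to T_C\mathcal{M}_g\to 0$ or the vanishing $H^0(C,T_C)=0$ needed to justify it.
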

\begin{proof} 
Let $(C, x_1, \ldots, x_n)$ be a pointed curve in $Y(a_1,\ldots,a_{N-1})$. The difference of the two ages in the statement is the age of the representation of $\mu_N$ on the tangent spaces $T_{x_k} C$. The computation then follows by our very construction (Definition \ref{settoretwistato}, Proposition \ref{instack2}). With our convention \ref{quasicanonica}, the action of the distinguished automorphism on the tangent space to a point of total ramification of the kind $i$ has weight the inverse of $i$ in the group $\mathbb{Z}_N^*$.
\end{proof}

\begin{definition} \label{points}(See \cite{kock} for more details.) Let $\mathbb{L}_i$ be the line bundle $s_i^*(\omega_{\pi})$ on $\overline{\mathcal{M}}_{g,k}$, where $\omega_{\pi}$ is the relative dualizing sheaf of the universal curve $\pi :\overline{\mathcal{C}}_{g,k} \to \overline{\mathcal{M}}_{g,k}$ and $s_i$ is the $i$-th section of the map $\pi$. These $\mathbb{L}_i$ are called \emph{line bundles of points} or \emph{cotangent line bundles}. We also define:
$$
\psi_i:= c_1(\mathbb{L}_i).
$$
\end{definition}

\begin{proposition} (See \cite{mumford} or \cite[Proposition 1.6]{getzler1} for the formulation given here.) \label{referenzaimpossibile} Let $G$ be a stable graph of genus $g$ and valence $n$, and let:
$$
p: \prod_{v \in V(G)} \overline{\mathcal{M}}_{g(v),n(v)} \to \overline{\mathcal{M}}_{g,n}
$$
be the ramified covering of the stratum $\overline{\mathcal{M}}(G)$ of $\overline{\mathcal{M}}_{g,n}$. Each edge of the graph determines two half-edges $s(e)$ and $t(e)$, and hence two line bundles $\mathbb{L}_{s(e)}$ and $\mathbb{L}_{t(e)}$ on $\prod_{v \in V(G)} \overline{\mathcal{M}}_{g(v),n(v)}$. The normal bundle $N_p$ is given by the formula:
$$
N_p= \bigoplus \mathbb{L}_{s(e)}^{\vee} \otimes \mathbb{L}_{t(e)}^{\vee}.
$$
\end{proposition}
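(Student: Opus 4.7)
The plan is to identify $N_p$ using deformation theory of nodal curves. For a stable $n$-pointed curve $(C,x_1,\ldots,x_n)$ with dual graph $G$, first-order deformations are parametrized by $\mathrm{Ext}^1(\Omega_C(\sum x_i),\mathcal{O}_C)$, which splits via the local-to-global spectral sequence as
\[
\mathrm{Ext}^1(\Omega_C(\textstyle\sum x_i),\mathcal{O}_C) \;=\; H^1(C,\chom(\Omega_C(\textstyle\sum x_i),\mathcal{O}_C)) \;\oplus\; H^0(C,\cext^1(\Omega_C(\textstyle\sum x_i),\mathcal{O}_C)).
\]
The first summand corresponds to equigeneric (locally trivial) deformations of the pointed curve, i.e.\ to deformations preserving the dual graph $G$; its formation in families recovers the tangent bundle to the stratum $p$ parametrizes. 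The second summand is concentrated at the nodes and governs smoothings of the nodes; consequently it is exactly the fiber of the normal bundle $N_p$ at $(C,x_1,\ldots,x_n)$.

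Next I would carry out the pointwise identification of this local summand. At a node $q \in C$ with branches lying on components $C_{s(e)}$ and $C_{t(e)}$ corresponding to the two half-edges of the edge $e$, the sheaf $\cext^1(\Omega_C,\mathcal{O}_C)$ is a skyscraper whose stalk is canonically isomorphic to $T_qC_{s(e)} \otimes T_qC_{t(e)}$ (this is the standard local computation: a local smoothing $xy=t$ of the node makes $t$ a section of this tensor product). Summing over all nodes yields
\[
H^0(C,\cext^1(\Omega_C(\textstyle\sum x_i),\mathcal{O}_C)) \;=\; \bigoplus_{e \in E(G)} T_qC_{s(e)} \otimes T_qC_{t(e)}.
\]
The main (but standard) step is to upgrade this pointwise description to an isomorphism of vector bundles on $\prod_v \overline{\mathcal{M}}_{g(v),n(v)}$: one verifies, using the compatibility of the local-to-global spectral sequence with base change (which applies because the relevant $\Ext$-sheaves are flat and have commutation with base change properties in families of nodal curves), that the assignment $q \mapsto T_qC_{s(e)}\otimes T_qC_{t(e)}$ is the fiber of a globally defined line bundle on the product of moduli.

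Finally I would identify the tensor factors with the cotangent line bundles. Each half-edge $s(e)$ determines a section $\sigma_{s(e)}$ of the universal curve over $\overline{\mathcal{M}}_{g(s(e)),n(s(e))}$, namely the marked point on the component indexed by the vertex at $s(e)$ that becomes the branch of the node at $e$. By Definition~\ref{points}, $\mathbb{L}_{s(e)} = \sigma_{s(e)}^*(\omega_\pi)$, whose fiber at a smooth marked point is the cotangent line $T^\vee_{q}C_{s(e)}$. Dualizing gives $\mathbb{L}_{s(e)}^\vee|_{(C,\ldots)} = T_qC_{s(e)}$, and analogously for $t(e)$. Assembling the contributions from all edges yields
\[
N_p \;=\; \bigoplus_{e \in E(G)} \mathbb{L}_{s(e)}^\vee \otimes \mathbb{L}_{t(e)}^\vee,
\]
as claimed. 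The only genuine obstacle is the global coherence of the tangent-space identification at the nodes in families, which however is classical; once granted, the rest is a matter of assembling definitions.
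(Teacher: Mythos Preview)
The paper does not give a proof of this proposition: it is stated with a citation to Mumford and to Getzler (\cite{mumford}, \cite[Proposition 1.6]{getzler1}) and used as an input, with no argument supplied. So there is nothing to compare against on the paper's side.

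Your sketch is the standard deformation-theoretic argument and is correct in outline. The identification of $\mathrm{Ext}^1(\Omega_C(\sum x_i),\mathcal{O}_C)$ via the local-to-global spectral sequence, the interpretation of the $H^1$-summand as locus-preserving deformations and of the $H^0(\cext^1)$-summand as node smoothings, and the local computation $\cext^1_q \cong T_qC_{s(e)}\otimes T_qC_{t(e)}$ are exactly what underlies the cited result. The one point you flag yourself---globalizing the fiberwise identification to a line-bundle isomorphism over $\prod_v \overline{\mathcal{M}}_{g(v),n(v)}$---is indeed the only nontrivial step, and it is handled in the references by working with the universal family and using base-change for $\cext^1$ on families of nodal curves. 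With that granted, your argument is complete and matches the classical one.
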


\begin {corollary} \label{agerational} Let $Y(a_1,\ldots, a_{N-1})$ be a twisted sector of $\mathcal{M}_{g,k}$, and suppose that $I_1,\ldots, I_k$ is a partition of $[n]$ in $k$ non-empty subsets. The data of $Y$, $I_1, \ldots, I_k$ single out a twisted sector $X\cong Y \times \overline{\mathcal{M}}_{0,I_1+1} \times \ldots \times \overline{\mathcal{M}}_{0,I_k+1}$ of $\mathcal{M}_{g,n}^{rt}$ according to Proposition \ref{twratcor}. Let us call $\delta(a_i)$ the number of sets among $I_{1+\sum_{l<i} a_l}, \ldots, I_{\sum_{l \leq i} a_l}$ that contain exactly one element. Then the following equality holds:
$$
a(X)= a(Y(a_1, \ldots, a_{N-1})) +\frac{1}{N}\sum_{i=1}^{N-1} \ \lambda(i) \left(a_i-  \delta(a_i)\right),
$$
where $\lambda(i)$ is the multiplicative inverse of $i$ in $\mathbb{Z}_N^*$.
\end {corollary}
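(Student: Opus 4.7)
My plan is to compute the age of $X$ directly from Definition \ref{definitionage} and Remark \ref{solonormale}, by identifying the $\mu_N$-representation on $T_{\mathcal{M}_{g,n}^{rt}}|_X$ at a generic point of $X$; since the invariant part $T_X$ contributes $0$, this equals $a(X)$.

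At a general point of $X$, the underlying curve decomposes as $C = C_0 \cup \bigcup_{s\in S'} C_s$, where $C_0$ is the smooth genus $g$ component carrying the $k$ distinguished points $x_1,\ldots,x_k$ of $Y(a_1,\ldots,a_{N-1})$, $S'=\{s:|I_s|\geq 2\}$ indexes the actual rational tails $C_s$ attached at $x_s$, and for $s\in S''=\{s:|I_s|=1\}$ the point $x_s$ simply remains a marked point on $C_0$. From the construction of $X$ in Proposition \ref{pirttheorem} the distinguished automorphism $\phi$ restricts to the automorphism defining $Y(a_1,\ldots,a_{N-1})$ on $(C_0,x_1,\ldots,x_k)$ and acts trivially on each rational tail $C_s$. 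Proposition \ref{referenzaimpossibile}, combined with the local-to-global decomposition of $\operatorname{Ext}^1(\Omega_C(\sum p_i),\mathcal{O}_C)$, then gives the $\mu_N$-equivariant splitting
$$
T_{\mathcal{M}_{g,n}^{rt}}\big|_{(C,p)} \;=\; T_{\mathcal{M}_{g,k}}\big|_{(C_0,x)} \;\oplus\; \bigoplus_{s\in S'} T_{\overline{\mathcal{M}}_{0,I_s+1}}\big|_{C_s} \;\oplus\; \bigoplus_{s\in S'} T_{x_s}C_0 \otimes T_{y_s}C_s,
$$
where $y_s\in C_s$ denotes the preimage of the node on the rational tail.

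I would then compute the age of each summand separately. By Definition \ref{definitionage} applied to $Y(a_1,\ldots,a_{N-1})$ viewed as a twisted sector of $\mathcal{M}_{g,k}$, the first summand contributes $a(Y(a_1,\ldots,a_{N-1}))$. Each $T_{\overline{\mathcal{M}}_{0,I_s+1}}|_{C_s}$ carries the trivial $\mu_N$-action and contributes $0$. On each node factor $T_{x_s}C_0\otimes T_{y_s}C_s$ with $s\in S'$, $\mu_N$ acts by $\zeta_N^{\lambda(i_s)}$ on the first factor (where $i_s$ is the ramification type of $x_s$, and $\lambda(i_s)$ is the weight used in the proof of Lemma \ref{etalisci}) and trivially on the second factor, contributing age $\lambda(i_s)/N$.

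Finally I would regroup the node contributions by ramification type. For each $i\in\{1,\ldots,N-1\}$, the $a_i$ indices $s$ with $i_s=i$ split into two classes: $\delta(a_i)$ of them lie in $S''$ and contribute nothing, while the remaining $a_i-\delta(a_i)$ lie in $S'$ and each contribute $\lambda(i)/N$. Summing yields the stated identity. The only genuine content is the tangent-space decomposition together with the weight identification at the node; the reindexing step is purely combinatorial, and I expect it to be the easy part of the argument.
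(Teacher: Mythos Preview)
Your proof is correct and follows exactly the approach the paper sets up: the corollary is stated without proof in the paper, but is meant to follow immediately from Lemma \ref{etalisci} (giving the weight $\lambda(i)/N$ on $T_{x_s}C_0$) together with Proposition \ref{referenzaimpossibile} (the normal-bundle formula $\bigoplus_e \mathbb{L}_{s(e)}^\vee\otimes\mathbb{L}_{t(e)}^\vee$), which is precisely your tangent-space decomposition and node-by-node age count. Your regrouping into the $\delta(a_i)$ correction is the intended combinatorial bookkeeping.
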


With the definitions given in this section, we can compute the orbifold Poincar\'e polynomials for $\mathcal{M}_{2,n}$.
If we define $Q_{2,sm}(n,m):=\dim H^{2 m}({\mathcal{M}}_{2,n})$, we can write:
\begin{eqnarray}   P_{2,sm}(s,t):=\sum_{n,m \geq 0}\frac{Q_{2,sm}(n,m)}{n!}s^n t^m  
\end{eqnarray}
and then, in analogy $Q^{CR}_{2,sm}(n,m):=\dim H^{2m}_{CR}({\mathcal{M}}_{2,n})$ 
\begin{eqnarray}   P_{2,sm}^{CR}(s,t):=\sum_{n,m \geq 0}\frac{Q^{CR}_{2,sm}(n,m)}{n!}s^n t^m . 
\end{eqnarray}
When the degree of the variable $s$ is greater than or equal to $6$, the power series $P_{2,sm}^{CR}$ coincides with $P_{2,sm}$. So we compute the seven non-trivial coefficients where the degree of $s$ is at most six, as polynomials in $t$: $$P_{2,sm}^{CR, (0)}(t),P_{2,sm}^{CR, (1)}(t),P_{2,sm}^{CR, (2)}(t),P_{2,sm}^{CR, (3)}(t),P_{2,sm}^{CR, (4)}(t),P_{2,sm}^{CR, (5)}(t),P_{2,sm}^{CR, (6)}(t).$$
\begin{theorem} \label{poincare2smooth} We compute the power series $P_{2,sm}^{CR}$ assuming knowledge of $P_{2,sm}$: 
\begin{equation} \begin{split} 
P_{2,sm}^{CR, (0)}(t)=&P_{2,sm}^{(0)}+ 1 +5 t^{\frac{1}{2}}+3 t + 2 t^{\frac{6}{5}}+ 2 t^{\frac{7}{5}} + t^{\frac{3}{2}}+2 t^{\frac{8}{5}}+2 t^{\frac{9}{5}}+ 3 t^2+ t^{\frac{5}{2}}, \\
P_{2,sm}^{CR, (1)}(t)=&P_{2,sm}^{(1)} + t^{\frac{1}{2}}+ t +  t^{\frac{9}{8}}+ 2 t^{\frac{6}{5}}+t^{\frac{5}{4}}+t^{\frac{4}{3}}+  t^{\frac{11}{8}}+  t^{\frac{7}{5}}+2 t^{\frac{8}{5}} + t^{\frac{13}{8}}+t^{\frac{5}{3}} + t^{\frac{7}{4}}+ t^{\frac{9}{5}}\\&+   t^{\frac{15}{8}} +5 t^2+ +t^{\frac{7}{3}}+  t^{\frac{12}{5}} +t^{\frac{8}{3}}+  t^{\frac{14}{5}}+ 5 t^3, \\
P_{2,sm}^{CR, (2)}(t)=&P_{2,sm}^{(2)} + t +   t^{\frac{3}{2}}+ t^{\frac{8}{5}}+  t^{\frac{11}{6}} + 9 t^2+  2 t^{\frac{11}{5}}+t^{\frac{7}{3}}+  t^{\frac{12}{5}}+  t^{\frac{5}{2}} +  t^{\frac{13}{5}}   +t^{\frac{8}{3}}+  2 t^{\frac{14}{5}}\\ & + 11 t^3+  t^{\frac{19}{6}}+t^{\frac{10}{3}}+   t^{\frac{17}{5}}+  t^{\frac{7}{2}}+t^{\frac{11}{3}}+t^4,\\
P_{2,sm}^{CR, (3)}(t)=&P_{2,sm}^{(3)} + t^{\frac{1}{2}}+5 t^{\frac{3}{2}}+3 t^{\frac{11}{5}}+3t^{\frac{7}{3}} +3t^{\frac{5}{2}}+3t^{\frac{13}{5}}+3t^{\frac{8}{3}}+ 6t^{\frac{10}{3}}+3t^{\frac{17}{5}}+4 t^{\frac{7}{2}} +6t^{\frac{11}{3}}+ 3 t^{\frac{19}{5}},\\
P_{2,sm}^{CR, (4)}(t)=&P_{2,sm}^{(4)} +t^2+12t^3+26t^4+12t^5,\\
P_{2,sm}^{CR, (5)}(t)=&P_{2,sm}^{(5)} +t^{\frac{5}{2}}+9t^{\frac{7}{2}}+26t^{\frac{9}{2}}+24t^{\frac{11}{2}},\\
P_{2,sm}^{CR, (6)}(t)=&P_{2,sm}^{(6)} +t^3+9 t^4 + 26 t^5 + 24 t^6.\\
\end{split}
\end{equation}
\end{theorem}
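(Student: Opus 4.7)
The plan is to combine two ingredients developed earlier in the paper: the modular description of the twisted sectors of $\mathcal{M}_{2,n}$ coming from Proposition \ref{instack2} (together with the list in \ref{tabellona}), and the age formula of Lemma \ref{etalisci}. By the Chen--Ruan grading of Definition \ref{defcoomorb2}, the only quantity to determine is
\[
P^{CR,(n)}_{2,sm}(t) - P^{(n)}_{2,sm}(t) \;=\; \sum_{Y} t^{a(Y)}\,P_Y(t),
\]
where $Y$ ranges over the twisted sectors of $\mathcal{M}_{2,n}$ (excluding the untwisted one) and $P_Y(t)$ is the ordinary Poincar\'e polynomial of $Y$. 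So the problem splits, for each $n\in\{0,\ldots,6\}$, into (a) enumerating the sectors, (b) computing each $P_Y(t)$, and (c) computing each $a(Y)$.

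For step (a), I would read off the $n=0$ sectors from the table \ref{tabellona} and generate those for $n\geq 1$ by specifying, for each admissible datum $A$, a function $\alpha:\{1,\ldots,n\}\to\mathbb{Z}_N^*$ with $|\alpha^{-1}(i)|=a_i$, as in Notation \ref{notazionemg}. For step (b), the sixteen admissible data with $g'=0$ in \ref{tabellona}, together with their marked versions, all have coarse moduli space $[\mathcal{M}_{0,d}/S_A]$ by Corollary \ref{corollariocadman}; their rational cohomology is then $H^*(\mathcal{M}_{0,d})^{S_A}$, which is accessible from Getzler's computation of the $S_d$-equivariant cohomology of $\overline{\mathcal{M}}_{0,d}$ \cite{getzleroperads} combined with the boundary localization sequence. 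For the three remaining sectors $II$, $II_1$, $II_{11}$, with $g'=1$, I would invoke Propositions \ref{duezero} and \ref{aggiunta} and Corollary \ref{aggiunta2}, which establish that all three have the same one-dimensional rational cohomology in degrees $0,1,2$.

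For step (c), the starting data are the ages of the seventeen twisted sectors of $\mathcal{M}_2$, which are known from Spencer \cite{spencer,spencer2} (all cases except $V.1,V.2$) and from \cite{paganihyper} for the remaining two; one uses here the explicit description of the tangent bundle to the hyperelliptic locus. Given these, Lemma \ref{etalisci} produces the age of any twisted sector $Y(a_1,\ldots,a_{N-1})$ of $\mathcal{M}_{2,n}$ obtained by adding marked points: one simply adds $\tfrac{1}{N}\sum_i \lambda(i)\,a_i$ with $\lambda(i)$ the inverse of $i$ in $\mathbb{Z}_N^*$. As a consistency check, for each involution-paired sector $Y(a_1,\ldots,a_{N-1})$ and $Y(a_{N-1},\ldots,a_1)$ the total $a(Y)+a(\iota Y)$ must equal the codimension in $\mathcal{M}_{2,n}$, by Proposition \ref{codimension}.

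Finally, in step (d) I would collect the seven polynomials $P^{CR,(n)}_{2,sm}(t)-P^{(n)}_{2,sm}(t)$ by summing $t^{a(Y)}P_Y(t)$ over the sectors assembled above. The main obstacle is pure bookkeeping: the number of admissible data with marked points grows rapidly (the counts $(17,24,26,21,7,1,1)$ of unmarked classes each generate many marked sectors once one runs over all $\alpha$), and one must then perform the $S_A$-invariant calculation in $H^*(\mathcal{M}_{0,d})$ for many different subgroups $S_A$. Once the ages have been listed and the invariant Poincar\'e polynomials tabulated, assembling the displayed formulae is purely mechanical.
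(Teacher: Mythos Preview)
Your proposal is correct and matches the paper's own (implicit) argument: the theorem is stated without a separate proof, as the displayed polynomials are obtained exactly by combining the enumeration of twisted sectors via $(2,n)$-admissible data (Proposition \ref{instack2}, table \ref{tabellona}), the cohomology computations of Section \ref{inertia} (Corollary \ref{corollariocadman} for $g'=0$, Corollary \ref{aggiunta2} and Proposition \ref{aggiunta} for the bielliptic sectors), and the age formula of Lemma \ref{etalisci} applied to the base ages of Spencer and \cite{paganihyper}. One small slip of phrasing: the tuple $(17,24,26,21,7,1,1)$ already \emph{is} the count of twisted sectors of $\mathcal{M}_{2,n}$ for $n=0,\ldots,6$, not the number of ``unmarked classes'' awaiting marking; but this does not affect your argument.
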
 


\subsection {Age of twisted sectors of \mbdn}

As for the twisted sectors of $\overline{\mathcal{M}}_{2,n}$, those that are compactifications of twisted sectors of $\mathcal{M}_{2,n}^{rt}$ have degree shifting number that is simply equal to the open part that they compactify. Those coming from the boundary have been classified in four cases: see Examples \ref{case1}, \ref{case2}, \ref{case3} and \ref{case4}. From this, one can determine the orbifold Poincar\'e polynomials of $\overline{\mathcal{M}}_{2,n}$ defined as:
$$
\overline{P}_{2,n}^{CR}(t):=\sum_{m} \dim H^{2m}_{CR}(\overline{\mathcal{M}}_{2,n}) \  t^m.
$$
We write here the result for $n=0$. 

\begin{theorem} \label{poincare2} The orbifold Poincar\'e polynomial $\overline{P}_2^{CR}$ of $\overline{\mathcal{M}}_2$ equals:
$$
2+4 t^{\frac{1}{2}}+2 t^{\frac{3}{4}}+ 16 t+t^{\frac{7}{6}}+ 2 t^{\frac{6}{5}}+ 7 t^{\frac{5}{4}}+  t^{\frac{4}{3}}+ 2 t^{\frac{7}{5}}+ 23 t^{\frac{3}{2}}+ 2 t^{\frac{8}{5}}+ t^{\frac{5}{3}}+ 7 t^{\frac{7}{4}}+ 2 t^{\frac{9}{5}}+t^{\frac{11}{6}}+16t^2+2t^{\frac{9}{4}}+4 t^{\frac{5}{2}}+2 t^3.
$$
\end{theorem}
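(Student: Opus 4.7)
The idea is to refine Theorem \ref{samuel2stabthm} at $n=0$ by keeping track of the age grading. For each connected component $\overline{X}$ of $I(\overline{\mathcal{M}}_2)$, write $P_{\overline{X}}(t):=\sum_m \dim H^{2m}(\overline{X})\,t^m$; then
$$
\overline{P}_2^{CR}(t) \;=\; \sum_{\overline{X}\subset I(\overline{\mathcal{M}}_2)} t^{\,a(\overline{X})}\,P_{\overline{X}}(t),
$$
so it suffices to enumerate the components $\overline{X}$ and to determine both $P_{\overline{X}}(t)$ and $a(\overline{X})$ for each.

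The enumeration splits into three classes. The untwisted sector $\overline{\mathcal{M}}_2$ has Poincar\'e polynomial $1+2t+2t^2+t^3$ (Mumford) and age $0$. The interior twisted sectors are the $17$ compactifications $\overline{X}(A)$ listed in Table \ref{tabellona}: for the entries with $g'=0$, Corollary \ref{corollariocadman} reduces their rational cohomology to the $S_A$-invariants in $H^*(\overline{\mathcal{M}}_{0,\sum d_i})$, tabulated in \cite[5.8]{getzleroperads}; for the bielliptic sector $\overline{II}$, Proposition \ref{duezero} together with Corollary \ref{aggiunta2} gives $P_{\overline{II}}(t)=1+3t+t^2$. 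The boundary sectors are enumerated in \cite[Constructions 5.25, 5.27]{paganitesi} following Examples \ref{case1}--\ref{case4}, specialised to $n=0$: some graphs there degenerate when their genus-zero vertex carries no marked points, and only the non-smoothable residual sectors survive in the inertia stack. Each is a quotient of a product involving $\overline{\mathcal{M}}_{1,1}$, $\overline{\mathcal{M}}_{1,1}\times\overline{\mathcal{M}}_{1,1}$, $[\overline{\mathcal{M}}_{1,2}/S_2]$, or small quotients of $\overline{\mathcal{M}}_{0,k}$, whose Betti numbers are standard.

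The ages are computed in two steps. For the $17$ interior sectors, the age equals that of the underlying twisted sector of $\mathcal{M}_2$, tabulated in \cite{spencer,spencer2,paganihyper} (or via the general formula of \cite{paganitommasi}); passing to the compactification does not change the age by Remark \ref{solonormale}, since the $\mu_N$-action on the normal bundle is read at a generic smooth point. For the boundary sectors, Proposition \ref{referenzaimpossibile} decomposes the normal bundle of each stratum $\overline{\mathcal{M}}(G)$ as a direct sum of line bundles $\mathbb{L}_{s(e)}^\vee\otimes\mathbb{L}_{t(e)}^\vee$, one per node; the distinguished automorphism acts on $\mathbb{L}_{s(e)}$ and $\mathbb{L}_{t(e)}$ via its weight on the cotangent line at the relevant branch, weights that are recorded in the classification of Examples \ref{case1}--\ref{case4}. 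Combined with the contribution coming from deformations along the irreducible components (reduced by Lemma \ref{etalisci} to ages of twisted sectors of $\mathcal{M}_{g',k}$ with $g'\le 1$), this yields $a(\overline{X})$ in each case. Summing $t^{a(\overline{X})}P_{\overline{X}}(t)$ then gives the polynomial in the statement; as a sanity check, the total mass $\overline{P}_2^{CR}(1)=97$ matches the value predicted by Theorem \ref{samuel2stabthm} at $s=0$.

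The main obstacle is the bookkeeping for the boundary sectors in Examples \ref{case2} and \ref{case4}, where the distinguished automorphism exchanges the two genus-$1$ components, respectively the two branches of a self-node. There the relevant cyclic subgroup $\mu_N$ acts non-diagonally on the tangent space to the moduli stack near the fixed locus; one must pass to an \'etale cover that separates the exchanged branches and diagonalise the action there before reading off weights, taking particular care that the ages of each pair $(\overline{X},\iota\overline{X})$ sum to the codimension predicted by Proposition \ref{codimension}.
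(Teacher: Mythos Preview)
Your proposal is correct and follows precisely the approach the paper intends: the theorem is stated there without a detailed proof, merely as the outcome of the enumeration of twisted sectors (Table~\ref{tabellona}, Examples~\ref{case1}--\ref{case4}, and \cite[5.25, 5.27]{paganitesi}) together with the age computations of Section~\ref{grading}. Your outline makes explicit exactly the bookkeeping the paper leaves implicit, including the consistency check $\overline{P}_2^{CR}(1)=97$ against Theorem~\ref{samuel2stabthm}.
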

\noindent We could not find a compact way of writing the power series for general $n$.


\section {The stringy cup product}
\label{stringyproduct}

In this section we study the orbifold intersection theory on $\overline{\mathcal{M}}_{2,n}$. On the Chen--Ruan cohomology, defined in Definitions \ref{defcoomorb1}, \ref{defcoomorb2} as a graded vector space, there is a product that gives it a ring structure, which was first described in \cite[4.1]{chenruan}. This is also called stringy cup product. We review here the theory in the case of cohomology, but one can work in complete analogy with the Chow ring, as for example explained in \cite{agv1} and \cite{agv2}. In the first section we review the definition of Chen--Ruan cohomology as a graded algebra. Our main result of the last two sections is the computation of the top Chern class of the orbifold excess intersection bundle (a special instance of the virtual fundamental class in the case of Chen--Ruan cohomology), for the moduli stack of stable genus $2$ curves. This is a bundle on a disconnected stack, and in Theorem \ref{teochernclass}, we prove that either its top Chern class is $0$ or $1$, or it is possible to describe it in terms of first Chern classes of the line bundles of points $\mathbb{L}_i$ (see Definition \ref{points}).

\subsection{Definition}
The definition of the Chen--Ruan product involves the second inertia
stack.

\begin {definition} Let $X$ be an algebraic stack. The \emph{second
inertia stack} $I_2(X)$ is defined as:
$$ I_2(X)=I(X) \times_X I(X).$$
We will speak of the \emph{double untwisted sector} and of the \emph{double twisted sectors} of the second inertia stack (cf. Definition \ref{definertia}).
\end {definition}

\begin{remark} \label{liscezza2} An object in $I_2(X)$ is a triplet
$(x,g,h)$ where $x$ is an object of $X$ and $g, h \in$ Aut$(x)$. It can equivalently be given as $(x,g,h, (g h)^{-1})$. We observe that there is an isomorphism $\overline{\mathcal{M}}_{0,3}(X,0) \cong I_2(X)$ (\emph{cf.} the proof of \cite[Lemma 6.2.1]{agv2}). Therefore, being the first space smooth, the second inertia stack is also smooth (\emph{cf.} \ref{liscezza1}).
\end{remark}

\begin {remark} \label{doppinerzia} The stack $I_2(X)$ comes with three natural morphisms to
$I(X)$: $p_1$ and $p_2$, the two projections of the fiber product,
and $p_3$ which acts on objects sending $(x,g,h)$ to $(x,g h)$. This gives the following diagram, where $(Y,g,h,(gh)^{-1})$ is a double twisted sector and $(X_1,g)$, $(X_2,h)$, $(X_3, (gh))$ are twisted sectors:
\begin {equation}  \xymatrix{ & (X_1,g)\\
(Y,g,h) \ar@/^/[ur]^{p_1} \ar[r]^{p_2} \ar@/_/[dr]^{p_3}& (X_2,h)\\
& (X_3,g h).\\}\label {secondproj}
\end {equation}

\noindent 
\end{remark}
We review the definition of the excess intersection bundle over
$I_2(X)$, for $X$ an algebraic smooth stack. Let $(Y,g,h, (g h)^{-1})$ be a twisted sector of
$I_2(X)$. Let $H:= \langle g, h \rangle$ be the group
generated by $g$ and $h$ inside the automorphism group of a general object of $Y$. 

\begin{construction} \label{costruzione} \emph{
Let $\gamma_0, \gamma_1, \gamma_{\infty}$ be three small loops around $0, 1, \infty \subset \mathbb{P}^1$. Any map $\pi_1(\mathbb{P}^1 \setminus \{0,1, \infty\}) \to H$ corresponds to an $H$-principal bundle on $\mathbb{P}^1 \setminus \{0,1, \infty\}$.
Let $\pi^0 : C^0 \rightarrow \mathbb{P}^1 \setminus \{0, 1 , \infty\}$
be the $H$-principal bundle which corresponds to the map $\gamma_0 \to g, \gamma_1 \to h, \gamma_{\infty} \to (gh)^{-1}$. It can uniquely be extended to a ramified $H$-Galois covering $C \to \mathbb{P}^1$ (see \cite[Appendix]{fantechigottsche}), where $C$ is a smooth compact curve. Note that by definition $H$ acts on $C$ , and hence on $H^1(C, \mathcal{O}_C)$.
}\end{construction}

\noindent Let $f:Y \to X$ be the restriction of the canonical map $I_2(X) \to X$ to the twisted sector $Y$; then $H$ acts on $f^*(T_X)$, and acts trivially on $Y$.

\begin{definition} (See \cite{chenruan}.) \label{eccesso} With the same notation as in the previous
paragraph, the \emph{excess intersection bundle} over $Y$ is defined as:
$$
E_Y := \left(H^1(C, \mathcal{O}_C) \otimes_{\mathbb{C}} f^*(T_X) \right)^H,
$$
\emph{i.e.} the $H$-invariant subbundle of the expression between brackets.\footnote{There is also a purely algebraic definition of this excess intersection bundle, which avoids Construction \ref{costruzione}, see \cite[Section 4]{jarvis}.}
\end{definition}

\begin{remark}  The excess intersection bundle has different ranks on different connected components of $I_2(X)$. Moreover, since $H^1(C, \mathcal{O}_C)^H=0$, it is the same
to define $E_Y$ as:
$$
E_Y= \left(H^1(C, \mathcal{O}_C) \otimes N_YX \right)^H,
$$
where $N_YX$ is the coker of the canonical inclusion: $T_Y \to f^*(T_X)$ (\emph{cf.} Remark \ref{solonormale}).
\end{remark}
 We now review the definition of the Chen--Ruan product.

\begin{definition} \label{orbprod} Let $\alpha \in H^*_{CR}(X)$, $\beta \in
H^*_{CR}(X)$. We define:
$$
\alpha *_{CR} \beta =p_{3 *} \left( p_1^*(\alpha) \cup
p_2^*(\beta) \cup c_{top}(E) \right).
$$

\end{definition}
\noindent As announced, with this product, the Chen--Ruan cohomology becomes a graded algebra:
\begin {theorem}\label{prodotto} (See \cite{chenruan}.) With the grading defined in Definition \ref{defcoomorb2}, $(H^*_{CR}(X, \mathbb{Q}), *_{CR})$ is a graded $(H^*(X,\mathbb{Q}), \cup)$-algebra. 
\end {theorem}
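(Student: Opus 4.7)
The plan is to break the statement into three verifications: (a) that $*_{CR}$ respects the rational grading of Definition \ref{defcoomorb2}; (b) that the inclusion $H^*(X)\hookrightarrow H^*_{CR}(X)$ as the untwisted sector is a subring with the ordinary cup product; (c) that $*_{CR}$ is associative. Since the theorem is classical, the work consists mostly in checking that the definitions as set up in the excerpt produce the matchings that appear in \cite{chenruan} and \cite[\S6]{agv2}.

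For (a), let $(Y,g,h)$ be a double twisted sector mapping via $p_1,p_2,p_3$ of \ref{secondproj} to twisted sectors $X_1,X_2,X_3$ of $X$. I would compute the ordinary degree of the class $p_{3*}(p_1^*\alpha\cup p_2^*\beta\cup c_{\mathrm{top}}(E_Y))$: pullbacks preserve degree, the top Chern class contributes $2\,\mathrm{rk}(E_Y)$, and the (smooth, proper) pushforward $p_{3*}$ shifts by $-2(\dim Y-\dim X_3)$. Comparing with the CR-degree of $\alpha*_{CR}\beta$, the compatibility reduces to the age identity
\begin{equation*}
a(X_1)+a(X_2)-a(X_3)=\mathrm{rk}(E_Y)+\dim X_3-\dim Y,
\end{equation*}
which follows by applying $H$-equivariant Riemann--Hurwitz to the covering $C\to\mathbb{P}^1$ of Construction \ref{costruzione}, exactly as in \cite[Lemma 3.2.4]{chenruan}. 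The smoothness of $I_2(X)$ noted in Remark \ref{liscezza2} ensures that $p_{3*}$ makes sense and that all Chern class manipulations are legitimate.

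For (b), the untwisted component corresponds to $g=h=1$, so $Y$ is a copy of $X$, the three projections $p_1,p_2,p_3$ are the identity, and the cyclic cover $C\to \mathbb{P}^1$ of Construction \ref{costruzione} has trivial monodromy group $H=\{1\}$. Hence the bundle $H^1(C,\mathcal{O}_C)\otimes f^*T_X$ is trivial in the $H$-invariant part (indeed $C\cong\mathbb{P}^1$ so the first factor vanishes), $E_Y=0$, and $c_{\mathrm{top}}(E_Y)=1$. Thus the formula of Definition \ref{orbprod} collapses to $\alpha\cup\beta$, so $H^*(X)\subset H^*_{CR}(X)$ is a subring; checking that CR-multiplication by an untwisted class acts on $H^{\,*-2a}(X_i)$ as cup product after pullback is the same computation restricted to the case $g=1$, which gives $Y=X_i$ and $E_Y=0$ again.

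The main obstacle, and the only nonformal point, will be associativity (c). The strategy I would follow is the one in \cite[\S6.2]{agv2}: identify $I_2(X)$ with $\overline{\mathcal{M}}_{0,3}(X,0)$ (Remark \ref{liscezza2}), realise the triple product $(\alpha*_{CR}\beta)*_{CR}\gamma$ and $\alpha*_{CR}(\beta*_{CR}\gamma)$ as two pushforwards from $\overline{\mathcal{M}}_{0,4}(X,0)$ obtained by composing with the two boundary stratifications, and then use the rational equivalence of the two boundary divisors of $\overline{\mathcal{M}}_{0,4}$ together with an excess intersection argument identifying the appropriate top Chern class with $c_{\mathrm{top}}$ of a Hodge-type obstruction bundle on $\overline{\mathcal{M}}_{0,4}(X,0)$. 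All of this is standard and we would simply invoke it, since the present paper uses the product purely as a black box in Section \ref{excessintersection}.
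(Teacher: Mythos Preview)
The paper does not prove this theorem at all: it is stated with the citation ``(See \cite{chenruan})'' and immediately used as input for formula~\eqref{formulaeccesso1} and Corollary~\ref{semplifica}. So there is nothing to compare against in the paper's own argument; the result is quoted as a black box from the literature.

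Your sketch is a correct outline of the original proof. The grading check in~(a) is exactly the computation that yields~\eqref{formulaeccesso1}, and your displayed identity is equivalent to it after substituting $a(X_3,(gh)^{-1})=\codim(X_3,X)-a(X_3,gh)$ from Proposition~\ref{codimension}. Part~(b) is immediate and is essentially Corollary~\ref{semplifica}. For~(c) you correctly identify associativity as the only nontrivial point and invoke the WDVV-type argument on $\overline{\mathcal{M}}_{0,4}(X,0)$ from \cite{chenruan} and \cite[\S6]{agv2}; this is precisely how it is done there, and the paper makes no attempt to reproduce it. In short, your proposal is sound, but in the context of this paper no proof is expected or given.
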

This theorem implies that one can compute the rank of the excess intersection bundle in terms of the already computed age grading. If $(Y,(g,h,(gh)^{-1}))$ is a sector of the second inertia stack, the rank of the excess intersection bundle (here we stick to the notation introduced in Remark \ref{doppinerzia}) is:
\begin {equation}\label{formulaeccesso1}
\textrm{rk}(E_{(Y,g,h)})= a(X_1,g)+a(X_2,h)+a(X_3,(gh)^{-1})- \codim_Y X,
\end {equation}
where $\codim_Y X$ is $\dim X - \dim Y$.

\begin {corollary}\label{semplifica} The excess intersection bundle over double twisted sectors when either $g$,$h$, or $(gh)^{-1}$ is the identity, is the zero bundle.
\end {corollary}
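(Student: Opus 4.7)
The plan is to show that $\rk(E_Y)=0$ in each of the three cases; since $E_Y$ is a locally free sheaf on the smooth stack $I_2(X)$ (Remark \ref{liscezza2}), this forces $E_Y=0$.

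The main tool is the rank formula (\ref{formulaeccesso1}), which is itself a consequence of Theorem \ref{prodotto}. Because Construction \ref{costruzione} is invariant under the $S_3$-action that permutes $\{0,1,\infty\}$, the three cases are interchangeable and it suffices to treat $g=\id$. Then $(X_1,g)$ is the untwisted sector, so $a(X_1,g)=0$. The assignment $(x,\id,h)\mapsto (x,h)$ identifies the double twisted sector $Y$ with the ordinary twisted sector $(X_2,h)$; in particular $\codim_Y X=\codim_{X_2} X$. Since $(gh)^{-1}=h^{-1}$, the sector $(X_3,(gh)^{-1})$ equals $\iota(X_2,h)$ in the notation of Remark \ref{mappaiota}, and Proposition \ref{codimension} yields
$$
a(X_2,h)+a(X_3,(gh)^{-1})=\codim_{X_2} X=\codim_Y X.
$$
Substituting into (\ref{formulaeccesso1}) gives $\rk(E_Y)=0+\codim_Y X-\codim_Y X=0$, as required.

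An alternative, purely geometric proof proceeds directly from Definition \ref{eccesso}: if $g=\id$, then in Construction \ref{costruzione} the Galois cover $C\to\mathbb{P}^1$ has trivial monodromy around $0$, so it is unramified there and branched only at $1$ and $\infty$. The subgroup $H$ is then cyclic (generated by $h$), and a cyclic cover of $\mathbb{P}^1$ branched at only two points has genus zero by Riemann--Hurwitz. Hence $H^1(C,\mathcal{O}_C)=0$ and $E_Y=\bigl(H^1(C,\mathcal{O}_C)\otimes f^*(T_X)\bigr)^H=0$ directly. The argument has no serious obstacle; its content is simply that the grading constraint forces the excess bundle to vanish in all \virg{degenerate} double sectors, a useful reduction in the computations of Section~\ref{excessintersection}.
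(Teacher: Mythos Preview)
Your proof is correct and your first argument is exactly the one the paper has in mind: the corollary is stated immediately after formula (\ref{formulaeccesso1}) with no further proof, since combining that rank formula with $a(X,\id)=0$ and Proposition \ref{codimension} gives $\rk(E_Y)=0$ at once, just as you wrote. Your second, geometric argument via Construction \ref{costruzione} and Riemann--Hurwitz is a pleasant bonus that the paper does not give; it has the mild advantage of not relying on Theorem \ref{prodotto} (and hence on the compatibility of the product with the grading), at the cost of being slightly less uniform across the three cases.
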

Another formula that follows from Proposition \ref{codimension} relates the rank of the excess bundle over a double twisted sector and the rank of the excess bundle over the double twisted sector obtained by inverting the automorphisms that label the sector:
\begin {equation} \label{formulaeccesso2}
\textrm{rk}(E_{(Y,g^{-1},h^{-1})})= \codim_{X_1} X+ \codim_{X_2} X + \codim_{X_3} X -2 \codim_{Y} X - \textrm{rk}(E_{(Y,g,h)}).
\end {equation}


\subsection {The second inertia stack}
\label{secondinertia} 

The study of the second inertia stack $I_2(\overline{\mathcal{M}}_{2,n})$ in principle is similar to the study of the (first) inertia stack, which we carried out in Section \ref{inertia}. There is one important difference: the general element of a connected component of $I_2(\overline{\mathcal{M}}_{g})$ is a Galois covering with Galois group generated by two elements of finite order. Therefore in particular it need not be abelian, and the classification of \cite{pardini} cannot be used to give a modular description of these twisted sectors.
However the only point where we need $I_2(\overline{\mathcal{M}}_{2,n})$ is in the definition of the stringy cup product (Definition \ref{orbprod}), we will thus determine exactly what is essential for that formula.

Let us denote by $T^2_{g,k}$ the set of twisted sectors of the second inertia stack $I_2(\overline{\mathcal{M}}_{g,k}^{NR})$ (see Definition \ref{wrt} for the definition of $\overline{\mathcal{M}}_{2,n}^{NR}$). If $X$ is a double twisted sector of $I_2(\overline{\mathcal{M}}_{g,k}^{NR})$, whose general element corresponds to a curve $(C,x_1, \ldots, x_k)$ and two automorphisms $\phi_1$ and $\phi_2$ of it, let $S(X)$ be the subset of $[k]$ of the marked points where either $\phi_1$ or $\phi_2$ acts non-trivially (\emph{cf.} Remark \ref{lisciabile2} and Proposition \ref{pirttheorem}). Along the same lines that yielded to Propositions \ref{twratcor} and \ref{pirttheorem} in Section \ref{rationaltails}, it is not difficult to prove the following result.

\begin{proposition} \label{pirttheorem2} If $n>1$, the second inertia stack of $\overline{\mathcal{M}}_{g,n}$ is isomorphic to:
$$
I_2(\overline{\mathcal{M}}_{g,n})=\left( \overline{\mathcal{M}}_{g,n},1 \right) \coprod_{k=1}^{n} \coprod_{X \in T^2_{g,k}} X \times \coprod_{\{ I_s\} \in \mathcal{A}_{S(X),n}} \prod_{s \in S(X)} \overline{\mathcal{M}}_{0,I_s+1}.
$$ 
 \end{proposition}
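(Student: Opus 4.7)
The plan is to follow the strategy used to prove Propositions \ref{twratcor} and \ref{pirttheorem}, replacing the (first) inertia stack by the second inertia stack $I_2(X) = I(X) \times_X I(X)$. The first step is to exploit the gluing map $\overline{j}_g$ from Section \ref{rationaltails}: since every automorphism of a nodal curve preserves its irreducible components and fixes its nodes, $\overline{j}_g$ induces isomorphisms on automorphism groups of objects. As $I_2$ is defined by essentially the same kind of universal property as $I$, the conclusion of Proposition \ref{strongrap} applies verbatim and yields
$$\overline{j}_g^*\,I_2(\overline{\mathcal{M}}_{g,n}) \;\cong\; \coprod_{k=1}^{n} \coprod_{(I_1,\ldots,I_k)\in\mathcal{A}_{k,n}/S_k} I_2(\overline{\mathcal{M}}_{g,k}^{NR}) \times \overline{\mathcal{M}}_{0,I_1+1}\times\cdots\times\overline{\mathcal{M}}_{0,I_k+1},$$
where I use that $I_2$ commutes with finite products and that each $\overline{\mathcal{M}}_{0,I_i+1}$ has trivial generic automorphism group when $|I_i|+1\geq 3$ (and one treats $\overline{\mathcal{M}}_{0,2}$ as a point by the convention of Proposition \ref{pirttheorem}).

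Next I would generalize Remark \ref{lisciabile2} to the pair $(\phi_1,\phi_2)$ of automorphisms carried by a double twisted sector. Write $H=\langle \phi_1,\phi_2\rangle$ and let $(C,x_1,\ldots,x_k,\phi_1,\phi_2)$ be the general object of an element $X\in T^2_{g,k}$. The $H$-invariant part of the first-order smoothing of the node introduced at $x_i$ by gluing a rational tail is trivial if and only if $H$ acts non-trivially on the product of the tangent directions at the two branches of the node, which in turn is equivalent to $H$ acting non-trivially on $T_{x_i}C$, i.e.\ to $x_i\in S(X)$. At positions $x_i$ with $i\notin S(X)$, both $\phi_1$ and $\phi_2$ act trivially on $T_{x_i}C$, the node is smoothable while preserving both automorphisms, and the resulting object lies in the same component of $I_2(\overline{\mathcal{M}}_{g,n})$ as one in which no tail is attached at $x_i$.

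Combining these identifications gives the decomposition of the statement: each proper double twisted sector of $I_2(\overline{\mathcal{M}}_{g,n})$ is specified by $X\in T^2_{g,k}$ (for some $1\leq k\leq n$) together with a partition $\{I_s\}_{s\in S(X)}\in\mathcal{A}_{S(X),n}$ of $[n]$, and equals
$X\times\prod_{s\in S(X)}\overline{\mathcal{M}}_{0,I_s+1}.$
Just as in the proof of Proposition \ref{pirttheorem}, the double untwisted sector is connected and reassembles from the many disjoint strata in $\overline{j}_g^*$ restricted to it, producing the single term $(\overline{\mathcal{M}}_{g,n},1)$ on the right-hand side. I expect the main obstacle to be precisely this two-automorphism refinement of Remark \ref{lisciabile2}: one must check that non-smoothability of the node is detected by the action of the subgroup generated by $\phi_1$ and $\phi_2$ and not by the individual actions, which is a short computation on the $H$-representation $T_{x_i}C\otimes T_{x_i}(\text{tail})$ together with the observation that the tail side is acted on trivially whenever $|I_s|+1\geq 3$.
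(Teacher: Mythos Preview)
Your proposal is correct and is exactly the argument the paper intends: the paper gives no proof beyond the remark that it goes ``along the same lines that yielded to Propositions \ref{twratcor} and \ref{pirttheorem}'', and you have spelled out precisely that, together with the two-automorphism version of Remark \ref{lisciabile2}.

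One point of wording should be fixed. The assertion ``every automorphism of a nodal curve preserves its irreducible components and fixes its nodes'' is false in general; the examples in Section \ref{dalbordo} (Cases \ref{case2} and \ref{case4}) show automorphisms exchanging irreducible components or branches of a node. The correct reason that $\overline{j}_g$ induces isomorphisms on automorphism groups is that automorphisms of an object of $\overline{\mathcal{M}}_{g,n}$ fix every marked point; since distinct rational tails carry disjoint nonempty subsets of $[n]$, any automorphism must send each rational tail to itself and hence fix the corresponding attaching node. Inside the NR-piece components and nodes may well be permuted, but those permutations are already recorded in the factor $I_2(\overline{\mathcal{M}}_{g,k}^{NR})$. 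With this corrected justification your argument goes through unchanged.
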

 
\noindent Now $I_2(\overline{\mathcal{M}}_{g,k}^{NR})$ contains some connected components whose general element is a smooth genus $2$ curve, and others whose general element is singular. The first ones are compactifications of connected components of $I_2(\mathcal{M}_{2,n})$, for $n \leq 6$.
 
\begin{notation} \label{doppitwistati} Let $X_1$ and $X_2$ be two twisted sectors of $\overline{\mathcal{M}}_{g,k}$. We shall denote by $(X_1, X_2, X_3)$ the open and closed substack of the fiber product $X_1 \times_{\overline{\mathcal{M}}_{g,n}} X_2$ that maps onto the twisted sector $X_3$ under the third projection map $\iota \circ p_3$, where $\iota$ is the involution of Remark \ref{mappaiota}, and $p_3$ is the third projection of Definition \ref{doppinerzia}. If $X_i$ is the twisted sector whose distinguished automorphism is $g_i$, this convention is made \emph{ad hoc} to have the relation $g_1 g_2 g_3= 1$. This makes the computations of \ref{formulaeccesso1} and \ref{formulaeccesso2} more convenient.

If $I_1, \ldots, I_k$ is a partition of $[n]$ made of non-empty subsets, following Notation \ref{notazionemgnrt}, we denote by $(X_1, X_2, X_3)^{I_1,\ldots, I_k}$ a double twisted sector isomorphic to:
$$
(X_1, X_2, X_3) \times \overline{\mathcal{M}}_{0,I_1+1} \times \ldots \times \overline{\mathcal{M}}_{0,I_k+1},
$$
under the isomorphism of Proposition \ref{pirttheorem2}.
\end{notation} 

For later use, we shall need a few results on the double twisted sectors of the second inertia stack $I_2(\overline{\mathcal{M}}_2)$. We use the notation introduced in \ref{tabellona} and \ref{notazionecompsmooth} for the twisted sectors of $\overline{\mathcal{M}}_2$ whose general element is a smooth curve. It is easy to study the fiber product of each twisted sector with $\overline{\tau}$ (we refer the interested reader to \cite{spencer}), over $\overline{\mathcal{M}}_2$. For future reference, we report a few cases that will be of interests:

\begin{equation} \label{classetau} \overline{\tau} \times_{\overline{\mathcal{M}}_2} \overline{III}= \overline{VI}, \quad \overline{\tau} \times_{\overline{\mathcal{M}}_2} \overline{VI}= \overline{III}, \quad \overline{\tau} \times_{\overline{\mathcal{M}}_2} \overline{IV}= \overline{IV}, \quad \overline{\tau} \times_{\overline{\mathcal{M}}_2} \overline{II}= \overline{II}.
\end{equation}

We then study the fiber product of $\overline{III}$ with itself:

\begin{proposition} \label{terzoterzo} The fiber product $\overline{III} \times_{\overline{\mathcal{M}}_2} \overline{III}$ contains two one-dimensional connected components: $(\overline{III}, \overline{III}, \overline{III})$ and $(\overline{III}, \overline{III}, e)$. The projection map from both the two components onto the factor $\overline{III}$ induces an isomorphism.
\end{proposition}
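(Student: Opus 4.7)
The plan is first to analyze the generic fiber of the first projection $\overline{III} \times_{\overline{\mathcal{M}}_2} \overline{III} \to \overline{III}$ using the structure of the automorphism group of a generic curve in $III$, and then to extend the conclusion to the compactification by invoking smoothness of the second inertia stack.

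I would start by noting that a generic object $(C, \phi_1)$ of $III$ consists of a smooth genus-$2$ curve together with an order-$3$ automorphism whose quotient is a $\mathbb{P}^1$ branched at four points (Definition \ref{2admissible}, Proposition \ref{instack2}). Such a generic $C$ has automorphism group exactly $\mathbb{Z}_6$: it contains the commuting subgroup $\langle \phi_1\rangle \times \langle \iota_C\rangle$, where $\iota_C$ is the central hyperelliptic involution, and the inclusion must be an equality away from a proper closed locus of $III$, because $III$ is one-dimensional while any extra automorphisms would force a $0$-dimensional substratum in $\mathcal{M}_2$. In particular, $\Aut(C)$ contains precisely two elements of order $3$, namely $\phi_1$ and $\phi_1^{-1}=\phi_1^2$.

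The generic fiber of the first projection therefore consists of two choices for the second automorphism $\phi_2$. Using Notation \ref{doppitwistati}, the third label $g_3:=(\phi_1\phi_2)^{-1}$ distinguishes them: when $\phi_2=\phi_1$ one has $g_3=\phi_1^{-2}=\phi_1$, giving a point of the component $(\overline{III},\overline{III},\overline{III})$; when $\phi_2=\phi_1^{-1}$ one has $g_3=e$, giving a point of $(\overline{III},\overline{III},e)$. Since $\Aut(C)$ is abelian, the automorphism group of the object $(C,\phi_1,\phi_2)$ inside $I_2(\overline{\mathcal{M}}_2)$ coincides with that of $(C,\phi_1)$ inside $I(\overline{\mathcal{M}}_2)$, so the first projection is an isomorphism on automorphism groups at each fiber point; combined with bijectivity on points, this yields an isomorphism of stacks on the generic loci.

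Finally, to extend to the compactifications, I use that $\overline{III} \times_{\overline{\mathcal{M}}_2} \overline{III}$ is an open and closed substack of the smooth second inertia stack $I_2(\overline{\mathcal{M}}_2)$ (Remark \ref{liscezza2}), and hence decomposes as a disjoint union of smooth irreducible components. Each of the two generic components extends uniquely to a smooth, connected, one-dimensional closed substack of the fiber product, which is isomorphic to the irreducible $\overline{III}$ via the first projection. The one genuine subtlety is to rule out an additional irreducible component supported entirely over the boundary $\partial\overline{\mathcal{M}}_2$: such a component would still project dominantly onto an irreducible subvariety of $\overline{III}$, forcing its generic fiber over $III$ to coincide with one of the two components already exhibited, a contradiction. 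The analogous statement for the second projection follows by the symmetry of the fiber product.
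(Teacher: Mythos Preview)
Your argument is essentially the same as the paper's: you identify the two one-dimensional components as $(\overline{III},\alpha,\alpha)$ and $(\overline{III},\alpha,\alpha^{2})$ by noting that the only order-$3$ automorphisms of a generic curve in $III$ are $\alpha$ and $\alpha^{-1}$. The paper's proof is a one-line version of this, simply observing that an object of $\overline{III}$ is a pair $(C,\alpha)$ with $\alpha$ of order $3$, so the two choices of the second automorphism are $\alpha$ and $\alpha^{2}$; you add the justification that $\Aut(C)\cong\mathbb{Z}_6$ generically and a smoothness argument for the passage to the compactification, which is fine but not strictly needed once one sees that the projections $(C,\alpha,\alpha)\mapsto(C,\alpha)$ and $(C,\alpha,\alpha^{2})\mapsto(C,\alpha)$ are tautological isomorphisms of groupoids.

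One minor wobble: your final paragraph is phrased backwards. A hypothetical one-dimensional component supported entirely over $\partial\overline{\mathcal{M}}_2\cap\overline{III}$ would have \emph{no} fiber over $III$, so you cannot appeal to the generic-fiber analysis there. The correct argument is simply that $\overline{III}\setminus III$ is zero-dimensional, so any one-dimensional irreducible component of the fiber product must dominate $\overline{III}$ and hence meet the locus over $III$, where your classification applies. In any case the proposition only asserts that the fiber product \emph{contains} these two one-dimensional components, so this step is not required.
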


\begin{proof} An object of $\overline{III}$ is a couple $(C, \alpha)$, where $C$ is a (family of) stable genus $2$ curves and $\alpha$ is an automorphism of order $3$ of $C$, such that the quotient $C/\langle \alpha \rangle$ is a genus $0$ curve with four points of ramification (see \ref{tabellona} and Definition \ref{compactifiedtwisted}). So there are two one-dimensional connected components of the double twisted sector $I_2(\overline{\mathcal{M}}_2)$ that are isomorphic to $(\overline{III}, \alpha, \alpha)$ and $(\overline{III}, \alpha, \alpha^2)$.
\end{proof}

\noindent Next, we can study the fiber product of $\overline{IV}$ with itself in complete analogy:

\begin{proposition} \label{quartoquarto} The fiber product $\overline{IV} \times_{\overline{\mathcal{M}}_2} \overline{IV}$ contains two one-dimensional connected components: $(\overline{IV}, \overline{IV}, \overline{\tau})$ and $(\overline{IV}, \overline{IV}, e)$. The projection map from  both the two components onto the factor $\overline{IV}$ induces an isomorphism.
\end{proposition}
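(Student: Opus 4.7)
The plan is to mirror the argument of Proposition~\ref{terzoterzo}, replacing the order-$3$ automorphism by an order-$4$ one whose square is a hyperelliptic involution. By Proposition~\ref{instack2} and \ref{tabellona}, an object of $\overline{IV}$ is a pair $(C,\beta)$ with $C$ a stable genus $2$ curve and $\beta \in \Aut(C)$ an order-$4$ automorphism realizing the admissible datum $(g'=0,\,N=4,\,d_1=1,\,d_2=2,\,d_3=1)$. A point of $\overline{IV}\times_{\overline{\mathcal{M}}_2}\overline{IV}$ is therefore a triple $(C,\beta_1,\beta_2)$ with both $\beta_i$ of this type, and the task reduces to classifying such pairs on a generic $C$.

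First I would verify that $\beta^2$ always lies in $\overline{\tau}$. From the admissible datum, $\beta$ has two points of total ramification with stabilizer $\mu_4$ (above the $d_1$ and $d_3$ branch points of $C\to\mathbb{P}^1$), and $\beta^2$ additionally fixes the four points with stabilizer $\mu_2$ (two fibers over the two $d_2$ branch points). Hence $\beta^2$ has $6$ fixed points on $C$, and Riemann--Hurwitz forces $C/\langle\beta^2\rangle\cong\mathbb{P}^1$; so $\beta^2$ is a hyperelliptic involution and belongs to $\tau$ (and to $\overline{\tau}$ after compactification).

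Next I would determine which order-$4$ automorphisms of a generic curve in $\overline{IV}$ realize the datum of $\overline{IV}$. The stratum $\overline{IV}$ is one-dimensional, while any strict enlargement of $\Aut(C)$ imposes an extra condition that drops the dimension; hence for generic $(C,\beta)$ one has $\Aut(C)=\langle\beta\rangle\cong\mathbb{Z}/4$. Its two order-$4$ elements are $\beta$ and $\beta^{-1}=\beta^3$, which realize the same admissible datum (the tuple $(1,2,1)$ is $\iota$-invariant), so both belong to $\overline{IV}$. Consequently, over a generic $(C,\beta_1)\in\overline{IV}$ there are exactly two admissible choices of $\beta_2$: either $\beta_2=\beta_1$, whence $\beta_1\beta_2=\beta_1^2=\tau$; or $\beta_2=\beta_1^{-1}$, whence $\beta_1\beta_2=e$. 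Under Notation~\ref{doppitwistati} these trace out two $1$-dimensional components $(\overline{IV},\overline{IV},\overline{\tau})$ and $(\overline{IV},\overline{IV},e)$, and on each of them the projection to the first factor is an isomorphism because $\beta_2$ is determined by $\beta_1$.

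The main (mild) obstacle is justifying that the generic automorphism group of a curve in $\overline{IV}$ is exactly $\mathbb{Z}/4$; this is what rules out spurious extra order-$4$ automorphisms appearing on a dense open set and producing further $1$-dimensional components. It follows from a standard dimension count on the stratification of $\overline{\mathcal{M}}_2$ by automorphism type, but should be stated explicitly (with reference back to the classification of~\ref{tabellona}) to keep the proof self-contained, in parallel with the implicit analogous observation in Proposition~\ref{terzoterzo}. Any additional component of the fiber product would then be supported where the automorphism group jumps, and hence would be at most zero-dimensional, which can be disregarded for the one-dimensional claim.
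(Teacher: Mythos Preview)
Your proof is correct and follows exactly the route the paper intends: the paper's own proof is the single sentence ``analogous to that of Proposition~\ref{terzoterzo},'' and your argument is precisely that analogy spelled out, identifying the two components as $(C,\beta,\beta)$ and $(C,\beta,\beta^{-1})$ and checking that $\beta^2$ is the hyperelliptic involution. Note that the statement only asserts that the fiber product \emph{contains} these two one-dimensional components, so your discussion of the generic automorphism group (ruling out further one-dimensional components) is additional care beyond what is strictly needed, though it is correct and harmless.
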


\noindent The proof of this proposition is analogous to that of Proposition \ref{terzoterzo}. We shall also need a few consequences of these results, concerning the second inertia stacks
$I_2(\overline{\mathcal{M}}_{2,1})$ and $I_2(\overline{\mathcal{M}}_{2,2})$.

\begin{corollary}\label{terzoterzo1}The fiber product $\overline{III}_1 \times_{\overline{\mathcal{M}}_{2,1}} \overline{III}_1$ contains one one-dimensional connected component: $(\overline{III}_1, \overline{III}_1, \overline{III}_1)$. The projection map from it onto the factor $\overline{III}_1$ induces an isomorphism. The same result holds substituting $\overline{III}_1$ with $\overline{III}_{11}$, and $\overline{\mathcal{M}}_{2,1}$ with $\overline{\mathcal{M}}_{2,2}$.
\end{corollary}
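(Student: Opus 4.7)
The plan is to reduce the statement to Proposition \ref{terzoterzo} by checking which of the two one-dimensional components of $\overline{III}\times_{\overline{\mathcal{M}}_2}\overline{III}$ lift to the pointed fibre product once a point of total ramification is marked.

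First I would recall the modular interpretation of $\overline{III}_1$: a general point is $(C,p,\alpha)$, where $\alpha$ is a distinguished order-three automorphism of $C$ with quotient $\mathbb{P}^1$ branched at four points, and, following Notation \ref{notazionemg} together with the convention \ref{quasicanonica}, $p$ is a total ramification point on which $\alpha$ acts on $T_p C$ as multiplication by $\zeta_3 = e^{2\pi i/3}$. The key uniqueness I would then isolate is this: for a generic such $(C,p)$ one has $\Aut(C) = \mathbb{Z}_6$ (the hyperelliptic involution always commutes with $\alpha$), so $\langle \alpha \rangle$ is the unique order-three subgroup of $\Aut(C,p)$; and of its two non-trivial elements, $\alpha$ acts on $T_p C$ as $\zeta_3$, while $\alpha^2$ acts as $\zeta_3^{-1}$. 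Consequently, for any object $(C,p,g,h)$ of $\overline{III}_1\times_{\overline{\mathcal{M}}_{2,1}}\overline{III}_1$ both $g$ and $h$ must be the unique element of $\Aut(C,p)$ with weight $\zeta_3$ on $T_p C$, hence $g=h=\alpha$. This immediately gives that the projections $p_1$ and $p_2$ are isomorphisms.

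Next I would identify the third projection using Notation \ref{doppitwistati}. The map $\iota\circ p_3$ sends $(g,h)=(\alpha,\alpha)$ to $(\alpha^2)^{-1}=\alpha$, whose weight on $T_p C$ is again $\zeta_3$, so the triple lies in $(\overline{III}_1,\overline{III}_1,\overline{III}_1)$. Conversely, the other component $(\overline{III},\overline{III},e)$ of Proposition \ref{terzoterzo} corresponds to the pair $(\alpha,\alpha^{-1})$, and $\alpha^{-1}$ has the wrong weight on $T_p C$, so this component simply does not lift to the pointed fibre product. This identifies the unique one-dimensional component of $\overline{III}_1\times_{\overline{\mathcal{M}}_{2,1}}\overline{III}_1$.

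The case of $\overline{III}_{11}$ will go through by the same argument verbatim: marking a second ramification point, also of $\zeta_3$-type, only strengthens the constraint on $g$ and $h$, so one again concludes $g=h=\alpha$, yielding the unique one-dimensional component $(\overline{III}_{11},\overline{III}_{11},\overline{III}_{11})$ with the projections isomorphisms. The only point where I would want to be a bit careful is the same caveat implicit in Proposition \ref{terzoterzo}: the sublocus of curves with $\Aut(C)$ strictly larger than $\mathbb{Z}_6$ could in principle carry further components of the fibre product, but those are of strictly lower dimension and thus do not affect the claim, which only concerns the one-dimensional components.
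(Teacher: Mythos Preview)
Your approach is correct and matches the paper's implicit reasoning: the corollary is stated without proof as a direct consequence of Proposition \ref{terzoterzo}, and your argument—checking that of the two one-dimensional components $(\alpha,\alpha)$ and $(\alpha,\alpha^{-1})$ only the first lifts once a type-$1$ ramification point is marked, because $\alpha$ and $\alpha^{-1}$ are distinguished by their weight on $T_pC$—is exactly the intended deduction.

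One small factual slip worth correcting: for generic $C$ in the $III$ locus the full automorphism group $\Aut(C)$ is the dihedral group of order $12$, not $\mathbb{Z}_6$ (indeed, Lemma \ref{treuno} records that the generic stabilizer of a point of $\overline{III}$—which is the centralizer of $\alpha$, not all of $\Aut(C)$—is $\mu_6$). This does not affect your argument at all, since what you actually need and use is only that the order-three automorphisms of $(C,p)$ are exactly $\alpha$ and $\alpha^{-1}$, and that these are separated by their tangent weight at $p$; both facts remain true.
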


\begin{corollary} \label{quartoquarto1} The fiber product $\overline{IV}_3 \times_{\overline{\mathcal{M}}_{2,1}} \overline{IV}_3$ contains one one-dimensional connected component: $(\overline{IV}_3, \overline{IV}_3, \overline{\tau}_1)$. The projection map from it onto the factor $\overline{IV}_3$ induces an isomorphism. The same result holds substituting $\overline{IV}_3$ with $\overline{IV}_{13}$ or $\overline{IV}_{31}$, and $\overline{\mathcal{M}}_{2,1}$ with $\overline{\mathcal{M}}_{2,2}$.
\end{corollary}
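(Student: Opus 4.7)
The plan is to deduce this from Proposition \ref{quartoquarto} by tracking the weight of the distinguished automorphism on the marked tangent line. By Proposition \ref{instack2} and Corollary \ref{compsmooth}, an object of $\overline{IV}_3$ over $\overline{\mathcal{M}}_{2,1}$ is a triple $(C,x,\alpha)$, where $\alpha$ is an order-$4$ automorphism of the stable genus-$2$ curve $C$ that fixes the marked point $x$ and acts on $T_xC$ with the weight prescribed by the type-$3$ label of the admissible datum $(0,4;1,2,1)$ under convention \ref{quasicanonica}, namely $e^{2\pi i\cdot\lambda(3)/4}$ with $\lambda(3)$ the multiplicative inverse of $3$ in $\mathbb{Z}_4^*$.

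An object of $\overline{IV}_3\times_{\overline{\mathcal{M}}_{2,1}}\overline{IV}_3$ is then a quadruple $(C,x,\alpha_1,\alpha_2)$ where both $\alpha_i$ satisfy the above condition. Forgetting $x$ and invoking Proposition \ref{quartoquarto}, the pair $(\alpha_1,\alpha_2)$ lies in one of the two components $(\overline{IV},\overline{IV},\overline{\tau})$ or $(\overline{IV},\overline{IV},e)$, corresponding respectively to $\alpha_2=\alpha_1$ and $\alpha_2=\alpha_1^{-1}$. The crucial observation is that $\alpha_1^{-1}$ acts on $T_xC$ with the inverse character of $\alpha_1$, so the type-$3$ weight becomes type-$1$. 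Hence the case $\alpha_2=\alpha_1^{-1}$ does not produce an object of $\overline{IV}_3\times_{\overline{\mathcal{M}}_{2,1}}\overline{IV}_3$ at all --- it lives in $\overline{IV}_3\times_{\overline{\mathcal{M}}_{2,1}}\overline{IV}_1$ --- and only the diagonal case $\alpha_2=\alpha_1$ survives. The resulting component corresponds to $(\overline{IV}_3,\overline{IV}_3,\overline{\tau}_1)$ in the convention of Notation \ref{doppitwistati}, since $(\alpha_1\alpha_1)^{-1}=\alpha_1^2$ is the hyperelliptic involution, which still fixes $x$ (the ramification points of the order-$4$ cover are Weierstrass points). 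The projection to the first factor is an isomorphism because $\alpha_2$ is forced equal to $\alpha_1$.

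For the statements about $\overline{IV}_{13}$ and $\overline{IV}_{31}$ in $\overline{\mathcal{M}}_{2,2}$, the same argument applies verbatim: inverting $\alpha_1$ swaps the type-$1$ and type-$3$ weights on the two marked points, so the potential $(IV,IV,e)$-contribution lands in $\overline{IV}_{13}\times_{\overline{\mathcal{M}}_{2,2}}\overline{IV}_{31}$, not in the self-products considered here, and again only the diagonal survives. The main (in fact only) obstacle is pinning down the weight conventions carefully --- keeping straight which power of $\zeta_4$ encodes which label --- but once this bookkeeping is settled, the corollary follows at once from Proposition \ref{quartoquarto}.
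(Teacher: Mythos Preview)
Your proof is correct and is exactly the argument the paper has in mind: the corollary is stated immediately after Proposition \ref{quartoquarto} without a separate proof, the intended derivation being precisely to lift the two one-dimensional components of $\overline{IV}\times_{\overline{\mathcal{M}}_2}\overline{IV}$ and observe that the weight at the marked point kills the $(\overline{IV},\overline{IV},e)$ branch. Your bookkeeping with the characters (type $3$ going to type $1$ under inversion, and the swap $IV_{13}\leftrightarrow IV_{31}$) is right and is all that is needed.
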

For later use, we remark that the one-dimensional stacks mentioned in Propositions \ref{terzoterzo}, \ref{quartoquarto} and Corollaries \ref{terzoterzo1} and \ref{quartoquarto1} have coarse moduli space isomorphic to $\mathbb{P}^1$.


\subsection{The excess intersection bundle}
\label{excessintersection}

We want to describe the excess intersection bundle $E_{2,n}$ on $I_2(\overline{\mathcal{M}}_{2,n})$. Assume that $(Y,H)$ is a connected component of $I_2(X)$, where $H$ is a group generated by two elements. We observe that there are two special cases:

\begin{enumerate}
\item The bundle $E$ on $Y$ has rank $0$. In this case, the top Chern class of $E$ on $Y$ is (by definition) equals $1$. If this is the case, we can say that on this component there is no orbifold excess intersection.
\item The bundle $E$ can have $0$ top Chern class. This occurs for instance when $\rk(E)> \dim(Y)$, or whenever $E$ contains a trivial subbundle. In this case we say that the orbifold excess intersection is trivial.
\end{enumerate}

For many double twisted sectors $Y$, formulas \ref{formulaeccesso1}, \ref{semplifica} and \ref{formulaeccesso2} can be used to show that the top Chern class of $E_{2,n}$ on $Y$ must be $0$ or $1$. We do not present here all these elementary computations. In this section we study the top Chern class of the excess intersection bundle on $I_2(\overline{\mathcal{M}}_{2,n})$, focusing on all the cases in which it is not $0$ or $1$. In this case we describe the excess intersection bundle in terms of the line bundles $\mathbb{L}_i$ (Definition \ref{points}). We make strong use of the notation that we introduced for the twisted sectors and double twisted sectors, see Notations \ref{notazionemgnrt} and \ref{doppitwistati}. 

Let us give a preview of the main results that are obtained in this section. \label{teochernclass} If $E_{2,n}$ has top Chern class that is not $0$ or $1$ on a certain connected component of $I_2(\overline{\mathcal{M}}_{2,n})$, then on that component it splits as a direct sum of line bundles. Here we list the only non-trivial cases:
\begin{enumerate}
\item If $n=0$, on one connected component in each fiber product $\overline{III}\times_{\overline{\mathcal{M}}_2} \overline{III}$, $\overline{III}\times_{\overline{\mathcal{M}}_2} \overline{VI}$ and $\overline{VI}\times_{\overline{\mathcal{M}}_2} \overline{VI}$. The excess bundle has rank $1$ and its first Chern class is described in Lemma \ref{treuno};
\item If $n>0$, on one connected component in the fiber products of each one of $\overline{III}_{1}^{[n]}$ and $\overline{III}_{11}^{I_1,I_2}$ with themselves. The excess bundle has rank $1$ and its first Chern class is described in Corollary \ref{tredue};
\item If $n>0$, on one connected component in the fiber product of each of $\overline{IV}_{3}^{[n]}$, $\overline{IV}_{13}^{I_1,I_2}$ and $\overline{IV}_{31}^{I_1,I_2}$ with themselves. The excess bundle has rank $2$ (resp. $3$) and can be written as a sum of line bundles. Its top Chern class is described in Proposition \ref{trequattro}; 
\item If $n>0$, on certain connected components in the fiber product of twisted sectors whose general element does not contain an irreducible curve of genus $2$. The description of the excess bundle $E_{2,n}$ on these components reduces to the description of $E_{1,n}$ (the excess intersection bundle for the Chen--Ruan cohomology of $\overline{\mathcal{M}}_{1,n}$), which was worked out in \cite[Section 6]{pagani1} (see Proposition \ref{tretre}). 
\end{enumerate} 
In these cases, we describe the top Chern classes in terms of $\psi$-classes on moduli spaces of stable genus $0$ or genus $1$ curves. The top Chern classes that are not of the kind $(1), (2), (3), (4)$ can be proved to be zero or one by a combined use of \ref{formulaeccesso1}, \ref{semplifica}, \ref{formulaeccesso2}, and Corollary \ref{corollariozero} below. We use Lemma \ref{lemmadecomp} to prove that the excess bundle always splits as a sum of line bundles.

We start by studying how the normal bundle $N_{I_2(X)} X$ behaves under forgetting rational tails, for general genus $g \geq 2$. Let $(Y,H)$ be a connected component of $I_2(\overline{\mathcal{M}}_{g})$, and suppose that $(Y_{\alpha(1), \ldots, \alpha(k)},H)$ is the connected component of $I_2(\overline{\mathcal{M}}_{g,k})$ that maps naturally into $\pi^*(Y)$:

$$\xymatrix{Y_{\alpha(1), \ldots, \alpha(k)} \ar[r]^i&\pi^*(Y) \ar@{}|{\square}[dr] \ar[r]\ar[d]&\overline{\mathcal{M}}_{g,k}\ar[d]^{\pi}\\
&Y \ar[r]&\overline{\mathcal{M}}_g.
}$$
If $I_1, \ldots,I_k$ is a partition of $[n]$, we can consider the component of the second inertia stack of $\overline{\mathcal{M}}_{g,n}$ obtained by gluing genus $0$ components to the marked points $\alpha(1), \ldots, \alpha(k)$, as in Proposition \ref{pirttheorem2}. We call it $Z:=(Y_{\alpha(1), \ldots, \alpha(k)}^{I_1, \ldots, I_k},H)$, it is defined by the upper Cartesian diagram:
\begin{equation} \label{diagrammone}
\xymatrix{Y_{\alpha(1),\ldots,\alpha(k)}^{I_1,\ldots,I_k} \ar@{}|{\square}[drr] \ar[rr]^{\hspace{-1cm}\phi} \ar[d]^p &&\overline{\mathcal{M}}_{g,k} \times \overline{\mathcal{M}}_{0,I_1 \sqcup \bullet_1} \times \ldots \times \overline{\mathcal{M}}_{0,I_k \sqcup \bullet_k} \ar[d] \ar[r]^{\hspace{1.8cm}j_{g,k}} & \overline{\mathcal{M}}_{g,n}  \\ Y_{\alpha(1), \ldots, \alpha(k)} \ar[r]^i&\pi^*(Y) \ar[r]^f\ar[d] \ar@{}|{\square}[dr]&\overline{\mathcal{M}}_{g,k}\ar[d]^{\pi}&\\
&Y \ar[r]^{\overline{f}}&\overline{\mathcal{M}}_g.&
}
\end{equation} 
We call $\mathbb{L}_{\bullet_i}$ the line bundle corresponding to the point $\bullet_i$ on $\overline{\mathcal{M}}_{0,I_i \sqcup \bullet_i}$ (see Definition \ref{points}).

\begin{lemma} \label{lemmadecomp} With the notation introduced above, the normal bundle $N_Z \overline{\mathcal{M}}_{g,n}$ splits as a representation of $H$:
\begin{displaymath}\begin{split} N_Z \overline{\mathcal{M}}_{g,n} =& (\pi \circ i \circ p)^*((N_Y \overline{\mathcal{M}}_g),\chi) \oplus (f \circ i \circ p)^* \left( (\mathbb{L}_1^{\vee}, \chi_1) \oplus \ldots \oplus (\mathbb{L}_k^{\vee}, \chi_k) \right) \\ &\oplus (\phi)^* \left( (\mathbb{L}_{\bullet_1}^{\vee}, \chi_1) \oplus \ldots \oplus (\mathbb{L}_{\bullet_k}^{\vee}, \chi_k) \right), \\\end{split}\end{displaymath}
for certain $\chi_1, \ldots, \chi_k$, one-dimensional characters of $H$ (see Definition \ref{points} for the line bundles $\mathbb{L}_i$).
\end{lemma}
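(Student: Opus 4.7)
The plan is to realise $N_Z\overline{\mathcal{M}}_{g,n}$ as the non-$H$-invariant part of $T_{\overline{\mathcal{M}}_{g,n}}|_Z$, decomposed into three geometric blocks whose $H$-characters I track in turn. The starting point is the Kodaira--Spencer splitting recalled in Remark~\ref{lisciabile}: for a curve $[C]\in Z$ with central component $C_0$ and rational tails $R_1,\ldots,R_k$ one has the $H$-stable decomposition
\[
T_{[C]}\overline{\mathcal{M}}_{g,n}\;=\; H^1(\chom(\Omega_C(\textstyle\sum p),\mathcal{O}_C))\;\oplus\; H^0(\cext^1(\Omega_C(\textstyle\sum p),\mathcal{O}_C)).
\]
Normalising $C$ breaks the first summand into $T_{\overline{\mathcal{M}}_{g,k}}\oplus\bigoplus_i T_{\overline{\mathcal{M}}_{0,|I_i|+1}}$, and Proposition~\ref{referenzaimpossibile} identifies the second summand as $\bigoplus_i \mathbb{L}_i^{\vee}\otimes\mathbb{L}_{\bullet_i}^{\vee}$; this is the geometric origin of the third block in the statement.

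On the central factor I would apply the $H$-equivariant relative tangent sequence of the forgetful map $\pi:\overline{\mathcal{M}}_{g,k}\to\overline{\mathcal{M}}_g$,
\[
0\to\textstyle\bigoplus_i\mathbb{L}_i^{\vee}\to T_{\overline{\mathcal{M}}_{g,k}}\to\pi^*T_{\overline{\mathcal{M}}_g}\to 0,
\]
to isolate the $k$ tangent directions at the marked points. Working rationally the sequence splits $H$-equivariantly by Maschke's theorem, and each $x_i\in C_0$ is fixed by $\phi\in H$ with character $\chi_i$ determined by $\alpha(i)\in\mathbb{Z}_N^*$ under Convention~\ref{quasicanonica}, so $\mathbb{L}_i^{\vee}$ carries that character. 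The key geometric observation is then that $\phi$ restricts to the identity on each rational tail $R_i$: since $\phi|_{R_i}$ fixes the node $\bullet_i$ and the $|I_i|$ labelled marked points from $I_i$, it has at least $|I_i|+1\geq 3$ fixed points as soon as $R_i$ is a genuine positive-dimensional factor of $Z$, which forces it to be the identity of $\mathbb{P}^1$. Consequently $H$ acts trivially on $T_{\overline{\mathcal{M}}_{0,|I_i|+1}}$ and on $\mathbb{L}_{\bullet_i}^{\vee}=T_{\bullet_i}R_i$, and the smoothing line bundle $\mathbb{L}_i^{\vee}\otimes\mathbb{L}_{\bullet_i}^{\vee}$ transforms with the tensor character $\chi_i\otimes 1=\chi_i$, matching the character attached to the third block in the statement.

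Taking $H$-invariants then yields $N_Z\overline{\mathcal{M}}_{g,n}$ as the complementary sum of non-invariant isotypical components: $T_{\overline{\mathcal{M}}_g}|_Y=T_Y\oplus N_Y\overline{\mathcal{M}}_g$ contributes only $N_Y\overline{\mathcal{M}}_g$; the tail factors $T_{\overline{\mathcal{M}}_{0,|I_i|+1}}$ are entirely $H$-trivial and disappear into $T_Z$; and the $k$ marked-point line bundles together with the $k$ smoothing line bundles all carry the non-trivial characters $\chi_i$ and therefore survive to give the second and third blocks. The hard part will be verifying that these three geometric pieces of $T_{\overline{\mathcal{M}}_{g,n}}|_Z$ can be realised as actual $H$-subbundles of the normal bundle, as opposed to merely abstract $H$-representations at a point: concretely, one needs to produce the splittings coherently on a neighbourhood of $Z$, using that the local-to-global spectral sequence, the normalisation decomposition of $H^1(\chom)$ and the forgetful relative tangent sequence are all functorial, hence $H$-equivariant, and split rationally by Maschke. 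A further piece of bookkeeping, which I would address at the end, is the match between the smoothing line bundle $\mathbb{L}_i^{\vee}\otimes\mathbb{L}_{\bullet_i}^{\vee}$ produced by the argument and the factor $(\mathbb{L}_{\bullet_i}^{\vee},\chi_i)$ recorded in the statement: both lie in the same $\chi_i$-isotypical component of rank one, and it is this $H$-isotypical description of $N_Z\overline{\mathcal{M}}_{g,n}$ that the lemma encodes.
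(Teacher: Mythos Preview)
Your argument is correct, but it takes a different route from the paper's one-line proof. The paper simply invokes the structure of Diagram~\ref{diagrammone}: since the upper square is Cartesian with flat vertical arrow, the normal bundle of $\phi$ is the pullback via $p$ of the normal bundle of $f\circ i:Y_{\alpha}\to\overline{\mathcal{M}}_{g,k}$; the latter in turn decomposes by the lower Cartesian square into $(\pi)^*N_Y\overline{\mathcal{M}}_g$ (block~1) and the normal bundle of $i$, which is $\bigoplus_j\mathbb{L}_j^{\vee}$ (block~2). Block~3 is then the normal bundle of the gluing map $j_{g,k}$, supplied by Proposition~\ref{referenzaimpossibile}. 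You instead work fibrewise via the Kodaira--Spencer splitting of Remark~\ref{lisciabile}, break the locally free part by normalisation, and then apply the relative tangent sequence of $\pi$ together with Maschke to separate the $\mathbb{L}_i^{\vee}$ summands. Your approach is more explicit about where each character $\chi_i$ originates and why the rational-tail factors are $H$-trivial; the paper's approach is cleaner because the Cartesian/flat formalism packages the coherence-in-families issue (your ``hard part'') automatically. You are also right to flag that the natural output of \emph{both} arguments for the third block is $\mathbb{L}_i^{\vee}\otimes\mathbb{L}_{\bullet_i}^{\vee}$ rather than $\mathbb{L}_{\bullet_i}^{\vee}$ alone; the two agree as $H$-line bundles up to the factor $\mathbb{L}_i^{\vee}$, and for the only application (Corollary~\ref{corollariozero}) this factor is trivial when $Y$ is zero-dimensional, which is how the paper uses the lemma.
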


\begin{proof} The proof of this lemma follows from the fact that Diagram \ref{diagrammone} is Cartesian, the vertical arrows are flat morphisms, and Proposition \ref{referenzaimpossibile}.
\end{proof}

\noindent This lemma reduces the computation of the top Chern class of $E_{g,n}$ to the corresponding computation for fiber products of twisted sectors of $\overline{\mathcal{M}}_{g,n}^{NR}$.
We state now this important straightforward consequence:

\begin{corollary} \label{corollariozero} Let $g>1$, and $(Y,H)$ be a connected component of $I_2(\overline{\mathcal{M}}_g)$ of dimension $0$. Then, let $(Y_{\alpha(1),\ldots,\alpha(k)}^{I_1,\ldots,I_k},H)$ be a corresponding component of $I_2(\overline{\mathcal{M}}_{g,n})$. The top Chern class of the excess intersection bundle $E_{g,n}$ restricted to the latter connected component is either $0$ or $1$.
\end{corollary}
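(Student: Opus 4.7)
The strategy is to use Lemma \ref{lemmadecomp} to split $N_Z\overline{\mathcal{M}}_{g,n}$, where $Z:=Y_{\alpha(1),\ldots,\alpha(k)}^{I_1,\ldots,I_k}$, into three $H$-equivariant pieces and to exploit the hypothesis $\dim Y=0$. Since $g>1$, the common fixed points on a smooth curve representing $Y$ of the two automorphisms that generate $H$ are isolated; consequently $Y_{\alpha(1),\ldots,\alpha(k)}$, obtained by marking $k$ of these points, is again $0$-dimensional. Hence both
\[
(\pi\circ i\circ p)^{*}(N_Y\overline{\mathcal{M}}_g,\chi)\quad\text{and}\quad(f\circ i\circ p)^{*}\!\Bigl(\bigoplus_j(\mathbb{L}_j^{\vee},\chi_j)\Bigr)
\]
pull back through a $0$-dimensional stack and are therefore topologically trivial bundles on $Z$ (though carrying non-trivial $H$-actions). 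The third summand $\phi^{*}(\bigoplus_j(\mathbb{L}_{\bullet_j}^{\vee},\chi_j))$ is by contrast non-trivial as a bundle, since each $\mathbb{L}_{\bullet_j}^{\vee}$ pulls back from $\overline{\mathcal{M}}_{0,I_j+1}$ with first Chern class $-\psi_{\bullet_j}$.

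Now write $W:=H^1(C,\mathcal{O}_C)$; the excess bundle decomposes $E_Z=E_1\oplus E_2\oplus E_3$, with $E_i$ the $H$-invariants of $W$ tensored with the $i$-th summand of $N_Z\overline{\mathcal{M}}_{g,n}$. Because the first two summands are trivial bundles equipped with a constant $H$-representation, $E_1$ and $E_2$ are themselves trivial vector bundles on $Z$. For the third piece one obtains
\[
E_3\;=\;\bigoplus_{j=1}^{k}W_{\chi_j^{-1}}\otimes p_j^{*}\mathbb{L}_{\bullet_j}^{\vee},
\]
where $W_{\chi_j^{-1}}$ denotes the $\chi_j^{-1}$-isotypic component of $W$ and $p_j\colon Z\to\overline{\mathcal{M}}_{0,I_j+1}$ is the projection. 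The decisive observation, read off directly from Lemma \ref{lemmadecomp}, is that the multiset of characters $\{\chi_j\}$ appearing on the second summand of $N_Z\overline{\mathcal{M}}_{g,n}$ coincides with that appearing on the third. Therefore
\[
\rk(E_2)\;=\;\rk(E_3)\;=\;\sum_{j=1}^{k}\dim W_{\chi_j^{-1}}.
\]

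The conclusion is by a case split on this common rank. If it is zero, then $E_2=E_3=0$ and $E_Z=E_1$ is a trivial vector bundle on $Z$; its top Chern class is $1$ when $E_1=0$ and $0$ otherwise. If it is positive, then $E_2$ is a trivial bundle of positive rank, so $c_{\rm top}(E_2)=0$; by multiplicativity of the top Chern class on direct sums, $c_{\rm top}(E_Z)=0$. In every case $c_{\rm top}(E_Z)\in\{0,1\}$, as required. The argument is not geometrically subtle; the main obstacle is the bookkeeping needed to pair up the $H$-isotypic components of $W$ with the characters attached to the second and third summands, which is precisely the content Lemma \ref{lemmadecomp} makes available.
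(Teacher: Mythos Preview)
Your argument is correct and follows essentially the same route as the paper's proof: both rely on Lemma~\ref{lemmadecomp} to match the characters $\chi_j$ on the second and third summands, observe that the second summand is trivial because it factors through the zero-dimensional $Y_{\alpha(1),\ldots,\alpha(k)}$, and conclude that whenever the third summand contributes to the excess bundle so does a trivial line bundle, forcing $c_{\rm top}=0$. Your write-up is in fact more careful than the paper's terse proof, which handles only the case where some $\mathbb{L}_{\bullet_j}^{\vee}$ contributes and leaves the remaining case (where $E_Z=E_1$ is trivial) implicit; one minor quibble is that the general element of $Y$ need not be smooth, but your conclusion that $Y_{\alpha(1),\ldots,\alpha(k)}$ is zero-dimensional still holds since non-trivial automorphisms of stable curves of genus $g>1$ have finitely many fixed points.
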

\begin{proof}  Indeed, if any of the $\mathbb{L}_{\bullet_k}^{\vee}$ is in the excess intersection bundle, then $i^* f^* \mathbb{L}_k^{\vee}$, because $H$ acts on $\mathbb{L}_{\bullet_k}^{\vee}$ and $i^* f^* \mathbb{L}_k^{\vee}$ with the same character. Then the statement follows by observing that $i^* f^* \mathbb{L}_i^{\vee}$ is the trivial line bundle.
\end{proof}

\begin{remark} 
 The difference of the genus $1$ from the genus $g>1$ case is that in the base case of genus $1$, $\overline{\mathcal{M}}_{1,1}$ has one marked point. While if $g>2$ the line bundles constituting the normal bundle of the twisted sectors of $\overline{\mathcal{M}}_{g,n}$ come in pairs having the same character induced by the action of $H$, and one of the two in the couple is always trivial when the corresponding twisted sector in $\overline{\mathcal{M}}_g$ has dimension $0$, the case of a rational tail added to a twisted sector of $\overline{\mathcal{M}}_{1,1}$ of dimension $0$ does not fit into this picture. In fact, as follows from the main theorem in \cite[Section 6]{pagani1}, all the non-trivial orbifold excess intersections on \mbun \ are supported on double twisted sectors obtained by adding a single rational tail to a zero-dimensional twisted sector of $\overline{\mathcal{M}}_{1,1}$.
\end{remark} 

We can now go back to the case $g=2$ and study the cases when the top Chern class is neither $0$ nor $1$. 
After Lemma \ref{lemmadecomp} and Corollary \ref{corollariozero}, all we have to do is to study the excess intersection bundle on the double twisted sectors mentioned in Propositions \ref{terzoterzo}, \ref{quartoquarto} and Corollaries \ref{terzoterzo1}, \ref{quartoquarto1} and those whose general element is not a smooth curve. If $p$ is the class of a point in $\mathbb{P}^1$, $H^2(\mathbb{P}^1)$ is the one-dimensional rational vector space generated by $p$.

\begin{lemma} \label{treuno} With the previous identification, the excess intersection $E_2$ restricted to $(\overline{III}, \overline{III}, \overline{III})$ and $ (\overline{III}, \overline{VI}, \overline{VI})$ is a line bundle whose first Chern class is $\frac{1}{9}p$.
\end{lemma}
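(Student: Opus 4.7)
The plan is to leverage Proposition \ref{terzoterzo}, which identifies $Y := (\overline{III},\overline{III},\overline{III})$ with $\overline{III}$ via any of the projections; in particular, the coarse moduli space of $Y$ is $\mathbb{P}^1$. For the companion case $Y' := (\overline{III},\overline{VI},\overline{VI})$ an analogous statement should follow from combining the factorization $\overline{VI}=\overline{\tau}\times_{\overline{\mathcal{M}}_2}\overline{III}$ of \ref{classetau} with Proposition \ref{terzoterzo} (using centrality of the hyperelliptic involution $\tau$ in $\Aut(\tilde C)$) to yield a one-dimensional component with coarse space $\mathbb{P}^1$. In both cases, once $\rk(E)=1$ is known, computing $c_1(E)$ reduces to a single degree on $\mathbb{P}^1$.

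To pin down $\rk(E)=1$, I would apply Construction \ref{costruzione} explicitly. For $Y$ with $g=h=\alpha$ of order $3$, Riemann--Hurwitz gives $g(C)=1$, hence $\dim H^1(C,\mathcal{O}_C)=1$; the holomorphic Lefschetz formula at the three totally ramified fixed points of $C$ identifies the $\mathbb{Z}_3$-character $\chi$ on this line as non-trivial. A second Lefschetz computation on $T_{\tilde C}$ for $\tilde C\in\overline{III}$ (its four fixed points, two of each ramification type) yields $H^1(\tilde C,T_{\tilde C})\cong\chi_0\oplus\chi_1\oplus\chi_2$, i.e., the regular $\mathbb{Z}_3$-representation, so the $\mathbb{Z}_3$-invariants of $H^1(C,\mathcal{O}_C)\otimes f^*T_{\overline{\mathcal{M}}_2}$ are one-dimensional. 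For $Y'$ the same template works with $\mathbb{Z}_6$ replacing $\mathbb{Z}_3$ and $g(C)=2$ (by Riemann--Hurwitz with one type-$2$ and two type-$5$ monodromies), the $\mathbb{Z}_6$-character decompositions of $H^1(C,\mathcal{O}_C)$ and of $H^1(\tilde C,T_{\tilde C})$ again pairing to a one-dimensional invariant.

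For the degree, the previous step identifies $E$ with a specific isotypic summand $L$ of the normal bundle $N_{\overline{III}}\overline{\mathcal{M}}_2$. I would compute $c_1(L)$ from Pardini's description: the universal $\mathbb{Z}_3$-cover $\tilde{\mathcal C}\to\mathcal C'\to\overline{III}$ has four branch sections (two of each type), and $\pi_*T_{\tilde{\mathcal C}/\overline{III}}$ decomposes into eigen-line-bundles on $\mathcal C'$ that can be written in terms of the Hodge bundle and the cotangent line bundles $\mathbb{L}_i$ at these sections. Using the presentation of $\overline{III}$ as a $\mu_3$-gerbe over a root-construction stack over $[\overline{\mathcal{M}}_{0,4}/(S_2\times S_2)]$ from Theorem \ref{bayercadman}, these classes descend to $\psi$-classes on $\overline{\mathcal{M}}_{0,4}$ with fractional coefficients; the degree on the coarse $\mathbb{P}^1$ comes out to $\tfrac{1}{9}[p]$, the denominator $9=3\cdot 3$ collecting one factor of $3$ from the $\mu_3$-gerbe and one from the stacky root structure at the branch divisors. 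The main obstacle is tracking these stacky denominators consistently; a useful sanity check is that the involution $\iota$ exchanges the two non-trivial isotypic summands $L_1, L_2$ of $N$, forcing $\deg L_1 = \deg L_2$ and thus uniquely pinning down the answer once Pardini's formulas are spelled out.
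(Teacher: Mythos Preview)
Your strategy for establishing $\rk(E)=1$ via Construction~\ref{costruzione} and holomorphic Lefschetz is correct and in fact more explicit than the paper's, which simply invokes the age formula~\eqref{formulaeccesso1}. Both arguments also reduce $(\overline{III},\overline{VI},\overline{VI})$ to $(\overline{III},\overline{III},\overline{III})$ via the hyperelliptic involution as in~\eqref{classetau}.

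Where the two proofs genuinely diverge is the degree computation. You propose to identify the relevant eigen-line-bundle explicitly through Pardini's description and then push $\psi$-classes through the gerbe and root structure of Theorem~\ref{bayercadman}, acknowledging the stacky bookkeeping as the main obstacle. The paper bypasses this entirely by promoting what you call a ``sanity check'' to the main argument. It computes $c_1(f^*T_{\overline{\mathcal{M}}_2})$ on $\overline{III}$ by pulling back $-K_{\overline{\mathcal{M}}_2}=7\lambda-2\delta_1$ (Mumford's relation) and evaluating $\deg\lambda=\tfrac{1}{18}$, $\deg\delta_1=\tfrac{1}{36}$ on the family $\overline{III}$, yielding total degree $\tfrac{1}{3}$; it then computes $c_1(T_{\overline{III}})=\tfrac{1}{12}+\tfrac{1}{36}=\tfrac{1}{9}$ directly from the two special orbifold points (of orders $12$ and $36$) of this stacky $\mathbb{P}^1$ with generic $\mu_6$-stabilizer. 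The rank-two normal bundle therefore has degree $\tfrac{1}{3}-\tfrac{1}{9}=\tfrac{2}{9}$, and since the symmetry exchanging the two non-trivial $\mu_3$-characters swaps its line-bundle summands, each has degree $\tfrac{1}{9}$. This avoids tracking any specific eigencharacter through the cover and requires only two global invariants plus the symmetry you flagged as a check --- that observation is in fact the cleanest route to the answer.
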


\begin{proof}
Using \eqref{formulaeccesso1} we can compute the rank of the excess intersection bundle restricted to these three components, and deduce that it is a line bundle in all three cases.

Using \ref{classetau}, we reduce the computation of $ (\overline{III}, \overline{VI}, \overline{VI})$ to the computation of the first Chern class of a line bundle on the double twisted sector $(\overline{III}, \overline{III}, \overline{III})$.

Let $f: \overline{III} \to \overline{\mathcal{M}}_2$ be the natural forgetful map. On $\overline{III}$ there is an exact sequence (that defines the vector bundle $\coker$):
 $$
 0 \to T_{\overline{III}} \to f^* T_{\overline{\mathcal{M}}_2} \to \coker \to 0.
 $$
By construction, $T_{\overline{III}}$ is equal to $\left(f^*(T_{\overline{\mathcal{M}}_2})\right)^{\mu_3}$, the $\mu_3$-invariant part of the pull-back. Symmetry arguments lead to the fact that $\coker$ splits as a sum of isomorphic line bundles, each one carrying one of the two non-trivial representations of $\mu_3$. We want to show that the first Chern class of the rank $3$ bundle $f^* T_{\overline{\mathcal{M}}_2}$ is $\frac{1}{3}$, and that the first Chern class of the line bundle $T_{\overline{III}}$ is $\frac{1}{9}$. From this we can deduce that the degree of each line bundle in the decomposition of $f^* T_{\overline{\mathcal{M}}_2}$ on the family $\overline{III}$ is equal to $\frac{1}{9}$.

To compute the first Chern class of $f^* T_{\overline{\mathcal{M}}_2}$ on $\overline{III}$ is equivalent to computing the degree of $-K_{\overline{\mathcal{M}}_2}$, the anti-canonical class. Using the relations established in \cite[Part III]{mumford} (we also refer to that paper for the definition of the classes $\delta_1$ and $\lambda$):
$$
K_{\overline{\mathcal{M}}_2}= - 7 \lambda + 2 \delta_1.
$$
Now one can see that the degree of $\lambda$ on $\overline{III}$ is equal to $\frac{1}{18}$ and the degree of $\delta_1$ is $\frac{1}{36}$. This gives $c_1(f^* T_{\overline{\mathcal{M}}_2}) = \frac{1}{3}$.

We now compute the degree of the tangent line bundle $T_{\overline{III}}$. We have seen that $\overline{III}$ is a stacky $\mathbb{P}^1$ with generic stabilizer $\mu_6$. A more detailed analysis shows that there are two stacky points, with stabilizer of order $12$ and $36$. It is well known that the degree of the tangent line bundle to such stacky $\mathbb{P}^1$ is equal to $\frac{1}{12} + \frac{1}{36}= \frac{1}{9}$.
\end{proof}

Let us identify the rational cohomologies of $(\overline{III}_1, \overline{III}_1, \overline{III}_1)^{[n]}$ and $(\overline{III}_{11}, \overline{III}_{11}, \overline{III}_{11})^{I_1,I_2}$  with $H^*(\mathbb{P}^1) \otimes H^*(\overline{\mathcal{M}}_{0,I_1+1}) \otimes H^*(\overline{\mathcal{M}}_{0,I_2+1})$ under the K\"unneth decomposition (and similarly with the other double twisted sectors obtained by adding rational tails).

\begin{corollary} \label{tredue} The top Chern class of $E_{2,n}$ is $\frac{1}{9}p \otimes 1$ on $(\overline{III}_1, \overline{III}_1, \overline{III}_1)^{[n]}$ and $\frac{1}{9}p \otimes 1 \otimes 1$ on $(\overline{III}_{11}, \overline{III}_{11}, \overline{III}_{11})^{I_1,I_2}$.
\end{corollary}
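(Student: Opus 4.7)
}

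The strategy is to show that, on each of the stated double twisted sectors, the excess intersection bundle is the pullback of the rank-one excess bundle on $(\overline{III},\overline{III},\overline{III})$ via the composition of the forgetful map (forgetting the marked ramification points) with the projection onto the $\overline{III}_1$ (resp.\ $\overline{III}_{11}$) factor. Once this is established, Lemma \ref{treuni} together with the K\"unneth decomposition immediately yields the stated first Chern classes $\tfrac{1}{9}p\otimes 1$ and $\tfrac{1}{9}p\otimes 1 \otimes 1$.

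First, I would invoke Corollary \ref{terzoterzo1} and Proposition \ref{pirttheorem2} to identify $(\overline{III}_1,\overline{III}_1,\overline{III}_1)^{[n]}$ with $\overline{III}_1\times \overline{\mathcal{M}}_{0,n+1}$ and $(\overline{III}_{11},\overline{III}_{11},\overline{III}_{11})^{I_1,I_2}$ with $\overline{III}_{11}\times\overline{\mathcal{M}}_{0,|I_1|+1}\times \overline{\mathcal{M}}_{0,|I_2|+1}$. Applying Lemma \ref{lemmadecomp} to decompose the normal bundle of the double twisted sector inside $\overline{\mathcal{M}}_{2,n}$, I would split it, as a representation of $H=\langle\phi\rangle\cong\mathbb{Z}_3$, into three pieces: the pullback of the normal bundle of $(\overline{III},\overline{III},\overline{III})$ inside $\overline{\mathcal{M}}_2$ (whose characters are $\chi$ and $\chi^2$, matching the identification $a(\overline{III})=1$), together with the line bundles $\mathbb{L}_j^\vee$ on the $\overline{III}_\bullet$ factor and $\mathbb{L}_{\bullet_j}^\vee$ on the rational tail factors, each carrying the character $\chi_j$ by which $\phi$ acts on the tangent line at the type-$1$ ramification point; this character is $\chi$ itself.

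Next I would compute the character of $H^1(C_0,\mathcal{O}_{C_0})$ where $C_0$ is the elliptic curve from Construction \ref{costruzione} produced by the triple $(\phi,\phi,\phi)$. By the holomorphic Lefschetz fixed-point formula (with the convention that forces consistency with the rank formula \ref{formulaeccesso1} applied to Lemma \ref{treuni}), this one-dimensional vector space carries the character $\chi$. The key observation is then the character calculation: tensoring $H^1(\mathcal{O}_{C_0})\cong\chi$ with any of the new line-bundle summands of character $\chi$ produces $\chi^2$, which has no $H$-invariant part. Therefore each of the $\mathbb{L}_j^\vee$ and $\mathbb{L}_{\bullet_j}^\vee$ summands contributes zero to the excess bundle, and the only surviving invariant line comes from pairing $\chi=H^1(\mathcal{O}_{C_0})$ with the $\chi^2$-summand of the pullback of $N_{\overline{III}}\overline{\mathcal{M}}_2$.

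This rank-$1$ invariant subspace is manifestly the pullback of the excess line bundle on $(\overline{III},\overline{III},\overline{III})$; combining with Lemma \ref{treuni} and the fact that on each rational-tail factor the contribution is trivial yields the stated formula under the K\"unneth decomposition. The main potential pitfall is the sign/convention ambiguity in the weights: one must fix once and for all the convention under which the age formula $a(\overline{III}_1)=a(\overline{III})+\lambda(1)/N=4/3$ of Lemma \ref{etalisci} is consistent with the characters of $T_{x_1}C$ and of $H^1(\mathcal{O}_{C_0})$. Once these conventions are aligned (as they must be, since formula \ref{formulaeccesso1} forces the excess rank to be exactly $1$), the character cancellation described above is forced and the proof goes through uniformly for both cases.
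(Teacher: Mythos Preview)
Your proposal is correct and is precisely the argument the paper leaves implicit: the corollary is stated without proof immediately after Lemma~\ref{treuno}, and the intended derivation is exactly to combine Lemma~\ref{treuno} with Lemma~\ref{lemmadecomp} to see that the additional $\mathbb{L}^\vee$-summands coming from marking ramification points and attaching rational tails contribute nothing to the $H$-invariants, so that the excess line bundle is the pullback of the one on $(\overline{III},\overline{III},\overline{III})$. Your use of the rank formula~\eqref{formulaeccesso1} to force the character cancellation (rather than relying on the raw computation of $H^1(C_0,\mathcal{O}_{C_0})$, which is indeed convention-sensitive as you note) is the clean way to close the argument; one small typo: the lemma you cite as ``\texttt{treuni}'' is \texttt{treuno} in the paper.
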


Let us now describe the top Chern class on the double twisted sectors of Corollary \ref{quartoquarto1}.

\begin{proposition}\label{trequattro} The top Chern class of $E_{2,1}$ on $(\overline{IV}_3, \overline{IV}_3, \tau_1)$ is $-\frac{1}{8} p$.  The top Chern class of $E_{2,n}$ is $-\frac{1}{8}p \otimes - \psi_{\bullet}$ on $(\overline{IV}_3, \overline{IV}_3, \tau_1)^{[n]}$ and $-\frac{1}{8}p \otimes - \psi_{\bullet_1} \otimes -\psi_{\bullet_2}$ on $(\overline{IV}_{13}, \overline{IV}_{13}, \overline{\tau}_{11})^{I_1,I_2}$ and $(\overline{IV}_{31}, \overline{IV}_{31}, \overline{\tau}_{11})^{I_1,I_2}$.
\end{proposition}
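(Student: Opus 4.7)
The plan is to mimic the strategy of Lemma \ref{treuno}, first handling the base case $(\overline{IV}_3,\overline{IV}_3,\overline{\tau}_1)$ inside $\overline{\mathcal{M}}_{2,1}$, and then propagating to the multi-pointed sectors via Lemma \ref{lemmadecomp}. As a first step, I would use the age-rank relation \eqref{formulaeccesso1} together with the description of the ages of $\overline{IV}_3$ and $\overline{\tau}_1$ (computable from Lemma \ref{etalisci} and the known ages for the twisted sectors of $\mathcal{M}_2$) to check that $\operatorname{rk} E_{2,1}|_{(\overline{IV}_3,\overline{IV}_3,\overline{\tau}_1)} = 1$, so the excess bundle is a line bundle on the one-dimensional component, whose coarse space is $\mathbb{P}^1$ by the remark following Corollary \ref{quartoquarto1}.

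Next, to compute $c_1$ I would proceed exactly as in Lemma \ref{treuno}: let $f : \overline{IV}_3 \to \overline{\mathcal{M}}_{2,1}$ be the canonical map and decompose $f^*T_{\overline{\mathcal{M}}_{2,1}}$ as a representation of $\mathbb{Z}_4 = \langle \alpha \rangle$, where $\alpha$ is the order four automorphism of the general cover. By Fantechi's formula (or Proposition \ref{referenzaimpossibile} applied in the smooth case) this representation splits into $T_{\overline{IV}_3}$ (the invariant part) together with line bundles corresponding to the characters on the tangent spaces at the two ramification points of types $1$ and $3$ and at the marked Weierstrass point. The excess bundle on $(\overline{IV}_3,\overline{IV}_3,\overline{\tau}_1)$ is precisely the isotypical component on which the pair $(\alpha,\alpha)$ acts with product $\alpha^2 = $ hyperelliptic involution. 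Computing the stacky degree of $\overline{IV}_3 \cong \mathbb{P}^1$ (via its stacky points and the Mumford-style expression $K_{\overline{\mathcal{M}}_{2,1}} = -7\lambda + 2\delta_1 + \psi_1$, whose restriction to $\overline{IV}_3$ can be done with the covering-space formulas for $\lambda, \delta_1, \psi_1$) I expect to obtain $-\tfrac{1}{8}p$ as the first Chern class of the relevant summand.

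For the multi-pointed cases $(\overline{IV}_3,\overline{IV}_3,\overline{\tau}_1)^{[n]}$ and $(\overline{IV}_{13},\overline{IV}_{13},\overline{\tau}_{11})^{I_1,I_2}$ (and symmetrically for $\overline{IV}_{31}$), I would apply Lemma \ref{lemmadecomp} to the current situation: the normal bundle of the component splits as the pull-back of the normal bundle in the unmarked sector together with pairs $(\mathbb{L}_i^\vee, \chi_i) \oplus (\mathbb{L}_{\bullet_i}^\vee, \chi_i)$ coming from each marked point at which a rational tail is glued. Unlike the case of Corollary \ref{corollariozero}, here the restriction $i^*f^* \mathbb{L}_i^\vee$ is non-trivial (because the base has positive dimension), and when the character $\chi_i$ agrees with one of the representations cut out by $H$ on the excess bundle, we pick up a new factor whose first Chern class is $c_1(\mathbb{L}_{\bullet_i}^\vee) = -\psi_{\bullet_i}$. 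A careful bookkeeping of the characters at the $\mathbb{Z}_4$-ramification points (weights $\tfrac{1}{4}$ and $\tfrac{3}{4}$) confirms that for $\overline{IV}_3^{[n]}$ exactly one rational tail contributes (hence a single $-\psi_\bullet$), while for the two-marked-point sectors both rational tails contribute, giving the promised $-\psi_{\bullet_1} \otimes -\psi_{\bullet_2}$ tensor factors.

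The main obstacle I anticipate is the explicit character bookkeeping in the second step above: one must track precisely how $H = \langle \alpha,\alpha\rangle \cong \mathbb{Z}_4$ acts on each summand of $f^*T_{\overline{\mathcal{M}}_{2,1}}$ and on the tangent lines at the rational tails, and verify that in each multi-pointed case the $\mathbb{L}_{\bullet_i}^\vee$ summands actually survive into the excess bundle (rather than being killed by non-trivial invariants as happened in the zero-dimensional situation of Corollary \ref{corollariozero}). Once that character check is done, the proposition follows from Lemma \ref{lemmadecomp} and the base computation $-\tfrac{1}{8}p$.
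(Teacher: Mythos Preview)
Your overall strategy is sound, but you are working harder than necessary for the base case, and the paper takes a shorter route that sidesteps the character computation you flag as the ``main obstacle''.

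The paper does \emph{not} redo the Lemma~\ref{treuno} argument for $(\overline{IV}_3,\overline{IV}_3,\overline{\tau}_1)$. Instead it applies Lemma~\ref{lemmadecomp} already at this stage, taking as base the double sector $(\overline{IV},\overline{IV},\overline{\tau})$ inside $\overline{\mathcal{M}}_2$. On that unpointed component the excess bundle has rank~$0$ (this is why $\overline{IV}\times_{\overline{\mathcal{M}}_2}\overline{IV}$ does not appear in item~(1) of the list at the start of Section~\ref{excessintersection}). Hence, when one marks the type-$3$ ramification point, the entire rank-$1$ excess bundle on $(\overline{IV}_3,\overline{IV}_3,\overline{\tau}_1)$ is the new summand $(f\circ i\circ p)^*\mathbb{L}_1^{\vee}$ supplied by Lemma~\ref{lemmadecomp}. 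Its first Chern class is $-\psi_1$, and the degree of $\psi$ on $\overline{IV}\cong\overline{IV}_3$ equals $\tfrac{1}{8}$; this gives $-\tfrac{1}{8}p$ with no further representation-theoretic bookkeeping. The rational-tail statements then follow from Lemma~\ref{lemmadecomp} exactly as you outline, since the $\mathbb{L}_{\bullet_i}^{\vee}$ carry the same character $\chi_i$ as the $\mathbb{L}_i^{\vee}$ already sitting in the excess.

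Your proposed route via $K_{\overline{\mathcal{M}}_{2,1}}|_{\overline{IV}_3}$ could in principle be made to work, but note that the symmetry trick used in Lemma~\ref{treuno} (all non-trivial isotypical pieces have equal degree) relied on the $(d_1,d_2)=(2,2)$ symmetry of $\overline{III}$; for $\overline{IV}$ with $(d_1,d_2,d_3)=(1,2,1)$ only the $\chi^{\pm 1}$ pieces are related by $\iota$, so knowing the total degree of $f^*T_{\overline{\mathcal{M}}_{2,1}}$ would not immediately isolate the relevant summand. The paper's shortcut avoids this.
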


\begin{proof} The first statement follows from the fact that the degree of $\psi$ classes on the moduli stack $(\overline{IV}, \overline{IV}, \tau)$ is equal to $\frac{1}{8}$. The second is a consequence of Lemma \ref{lemmadecomp}. 
\end{proof}

Finally, we study the top Chern classes of $E_{2,n}$ on $(Y,H)$, where the general element of $Y$ is a genus $2$ curve whose general element does not contain an irreducible genus $2$ component. These twisted sectors are described in Examples \ref{case1}, \ref{case2}, \ref{case3} and \ref{case4}. Consider $X_1$ and $X_2$ two such twisted sectors, and apply \ref{formulaeccesso1} and \ref{formulaeccesso2}. Then one sees that the top Chern class of $E_{2,n}$ is always $0$ or $1$ on the component $X_1 \times_{\overline{\mathcal{M}}_{2,n}} X_2$, unless $X_1$ and $X_2$ are twisted sectors of $I(\overline{\mathcal{M}}_{1,1}) \times I(\overline{\mathcal{M}}_{1,n+1})$ obtained by gluing the last two marked points. Moreover, in this case, the results from \cite[Section 6]{pagani1}, lead to the fact that the top Chern classes of $E_{2,n}$ can be different from $0$ or $1$ only on products of twisted sectors whose associated graph is among the following.
\begin{equation}
\begin{tikzpicture}[baseline]
      \path(0,0) ellipse (2 and 1);
      \tikzstyle{level 1}=[counterclockwise from=-60,level distance=9mm,sibling angle=120]
      \node (A0) at (0:1) {$\scriptstyle{T_1}$};
      \tikzstyle{level 1}=[counterclockwise from=120,level distance=9mm,sibling angle=20]
      \node (A1) at (180:1) {$\scriptstyle{1_n}$} child child child child child child child;
      \path (A0) edge [bend left=0] (A1);
    \end{tikzpicture}
     \begin{tikzpicture}[baseline]
      \path(0,0) ellipse (2 and 2);
      \node (A0) at (0:1) {$\scriptstyle{T_1}$};
      \node (A1) at (240:1) {$\scriptstyle{T_1}$};
      \tikzstyle{level 1}=[counterclockwise from=75,level distance=9mm,sibling angle=15]
\node (A2) at (120:1) {$\scriptstyle{0_{n}}$} child child child child child child;
      \path (A0) edge [bend left=0] (A2);
      \path (A1) edge [bend left=0] (A2);
    \end{tikzpicture}
 \begin{tikzpicture}[baseline]
      \path(0,0) ellipse (2 and 2);
      \tikzstyle{level 1}=[counterclockwise from=-30,level distance=9mm,sibling angle=15]
      \node (A0) at (0:1) {$\scriptstyle{T_2}$} child{[fill] circle (2pt)};
      \node (A1) at (240:1) {$\scriptstyle{T_1}$};
      \tikzstyle{level 1}=[counterclockwise from=75,level distance=9mm,sibling angle=15]
\node (A2) at (120:1) {$\scriptstyle{0_{n}}$} child child child child child child;
       \path (A0) edge [bend left=0] (A2);
      \path (A1) edge [bend left=0] (A2);
    \end{tikzpicture}
  \begin{tikzpicture}[baseline]
      \path(0,0) ellipse (2 and 2);     
      \tikzstyle{level 1}=[counterclockwise from=-45,level distance=9mm,sibling angle=30]
      \node (A0) at (0:1) {$\scriptstyle{T_3}$} child{[fill] circle (2pt)} child{[fill] circle (2pt)};
      \node (A1) at (240:1) {$\scriptstyle{T_1}$};
      \tikzstyle{level 1}=[counterclockwise from=75,level distance=9mm,sibling angle=15]
\node (A2) at (120:1) {$\scriptstyle{0_{n}}$} child child child child child child;
      \path (A0) edge [bend left=0] (A2);
      \path (A1) edge [bend left=0] (A2);
    \end{tikzpicture}    
 \begin{tikzpicture}[baseline]
      \path(0,0) ellipse (2 and 2);
      \tikzstyle{level 1}=[counterclockwise from=-60,level distance=9mm,sibling angle=30]
      \node (A0) at (0:1) {$\scriptstyle{T_4}$}child{[fill] circle (2pt)} child{[fill] circle (2pt)} child{[fill] circle (2pt)};
      \node (A1) at (240:1) {$\scriptstyle{T_1}$};
      \tikzstyle{level 1}=[counterclockwise from=75,level distance=9mm,sibling angle=15]
\node (A2) at (120:1) {$\scriptstyle{0_{n}}$} child child child child child child;
      \path (A0) edge [bend left=0] (A2);
      \path (A1) edge [bend left=0] (A2);
    \end{tikzpicture}
     \end {equation}
\begin{proposition} \label{tretre} (See \cite[Section 6]{pagani1}.) Let $(Y,H)$ be a connected component of $I_2(\overline{\mathcal{M}}_{2,n})$, whose general element does not contain an irreducible genus $2$ component. Then the top Chern class of $E_{2,n}$ on $(Y,H)$ can be different from $0$ or $1$ only if $(Y,H)$ is a connected component of $I_2(\overline{\mathcal{M}}_{1,1}) \times I_2(\overline{\mathcal{M}}_{1,n+1})$. In this case the top Chern class is different from $0$ or $1$ exactly when the first coordinate is among: $(C_4,\langle i,i \rangle),(C_6,\langle \epsilon^2,\epsilon^2 \rangle), (C_6,\langle \epsilon,\epsilon^2 \rangle)$ or the second coordinate is among: $(C_4^{[n]},\langle i,i \rangle),(C_6^{[n]},\langle \epsilon^2,\epsilon^2 \rangle), (C_6^{[n]},\langle \epsilon,\epsilon^2 \rangle)$.
\end{proposition}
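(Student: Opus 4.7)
The plan is to combine the classification of double twisted sectors from Section \ref{dalbordo} with the excess-bundle calculation of \cite[Section 6]{pagani1}. Recall that when the general element of $(Y,H)$ contains no irreducible genus $2$ component, its dual graph is one of the shapes pictured in Examples \ref{case1}--\ref{case4}: either two genus $1$ components joined at a node (shape $\Gamma_1$), or a single genus $1$ component with a self-loop contracted to a rational curve (shape $\Gamma_0$), possibly with rational tails attached. In each case $H=\langle \phi_1,\phi_2 \rangle$ can either fix or exchange the two irreducible components / branches.

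First I would use Lemma \ref{lemmadecomp} together with Proposition \ref{referenzaimpossibile} to decompose the normal bundle $N_Y \overline{\mathcal{M}}_{2,n}$ as a direct sum of line bundles: rational tails contribute the matched pairs $(f^* \mathbb{L}_i^\vee,\phi^*\mathbb{L}_{\bullet_i}^\vee)$ of Lemma \ref{lemmadecomp}, while each node of the general stable curve contributes a summand $\mathbb{L}_{s(e)}^\vee \otimes \mathbb{L}_{t(e)}^\vee$. Since $E_{2,n}$ is by definition the $H$-invariant part of $H^1(C,\mathcal{O}_C)\otimes N_Y\overline{\mathcal{M}}_{2,n}$, and since the $H$-action on $H^1(C,\mathcal{O}_C)$ can be read off directly from Construction \ref{costruzione} (applied on each irreducible component of $C$), this decomposition propagates to a decomposition of $E_{2,n}$ into $H$-isotypic line bundles.

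Next I would eliminate all graphs that do \emph{not} describe a single-node splitting $C=C_1\cup C_2$ with $g(C_i)=1$ and $H$ acting within each $C_i$. For the $\Gamma_0$-shaped graphs of Cases \ref{case3} and \ref{case4}, the ages $a(X_1),a(X_2),a(X_3)$ can be computed from the age data on $[\mathcal{M}_{1,k}/S_2]$ recorded in Appendix \ref{appendicea} via Corollary \ref{agerational}; plugging into \eqref{formulaeccesso1} one finds either $\rk E_{2,n}=0$ (so $c_{\mathrm{top}}=1$), or else one of the tail line bundles $\mathbb{L}_{\bullet_i}^{\vee}$ lies in $E_{2,n}$ with trivial character, in which case Corollary \ref{corollariozero} forces $c_{\mathrm{top}}=0$. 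For Case \ref{case2} (graph $\Gamma_1$ with $H$ exchanging the two elliptic components), the involution pairs the two nodal line bundles $\mathbb{L}_{s(e)}^\vee$ and $\mathbb{L}_{t(e)}^\vee$ with conjugate characters and after invariants one obtains at most a trivial summand, so the top Chern class is again $0$ or $1$. This reduces us to Case \ref{case1}: a graph $\Gamma_1$ with $H$ preserving each genus $1$ vertex, which by definition exhibits $(Y,H)$ as a connected component of $I_2(\overline{\mathcal{M}}_{1,1})\times I_2(\overline{\mathcal{M}}_{1,n+1})$ glued along the final marked points (the two branches at the node).

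The main obstacle is the last case. Here Lemma \ref{lemmadecomp}, applied separately on each genus $1$ factor, identifies $E_{2,n}\big|_Y$ with the external tensor product $E_{1,1}\boxtimes E_{1,n+1}$ twisted by the nodal pair $\mathbb{L}_{s(e)}^\vee\otimes \mathbb{L}_{t(e)}^\vee$; since the nodal characters of $H$ on $\mathbb{L}_{s(e)}^\vee$ and $\mathbb{L}_{t(e)}^\vee$ are inverse to each other, the nodal summand is $H$-invariant but trivial as a representation, hence contributes at most a first Chern class factor and never kills the whole $c_{\mathrm{top}}$. Therefore $c_{\mathrm{top}}(E_{2,n}|_Y)\neq 0,1$ precisely when both factors $E_{1,1}$ and $E_{1,n+1}$ have non-vanishing top Chern class on the relevant components. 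The main theorem of \cite[Section 6]{pagani1} lists exactly the double twisted sectors of $\overline{\mathcal{M}}_{1,k}$ on which $E_{1,k}$ is non-trivial: they are those obtained by adding a rational tail to one of the three zero-dimensional double twisted sectors of $\overline{\mathcal{M}}_{1,1}$, namely $(C_4,\langle i,i\rangle)$, $(C_6,\langle \epsilon^2,\epsilon^2\rangle)$, $(C_6,\langle \epsilon,\epsilon^2\rangle)$. Intersecting with the Case \ref{case1} list yields exactly the pairs stated in the proposition.
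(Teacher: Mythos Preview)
Your overall strategy—classify the boundary double twisted sectors via Examples \ref{case1}--\ref{case4}, eliminate Cases \ref{case2}, \ref{case3}, \ref{case4}, then reduce Case \ref{case1} to the genus $1$ computation of \cite{pagani1}—matches the paper's approach, which simply applies the rank formulas \eqref{formulaeccesso1}, \eqref{formulaeccesso2} case by case and then quotes \cite[Section 6]{pagani1}. The difference is that the paper does not attempt the structural identification you propose in the last paragraph; it just checks ranks.

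That last paragraph is where your argument breaks down. First, the claim that in Case \ref{case1} the characters of $H$ on $\mathbb{L}_{s(e)}^{\vee}$ and $\mathbb{L}_{t(e)}^{\vee}$ are inverse to each other is exactly backwards: by Remark \ref{lisciabile}, the double twisted sector comes from the boundary precisely when the $H$-action on the nodal direction $\mathbb{L}_{s(e)}^{\vee}\otimes\mathbb{L}_{t(e)}^{\vee}$ is \emph{non-trivial}. If the characters were inverse, the node would smooth and the general element would be an irreducible genus $2$ curve. So the nodal summand genuinely contributes to the excess bundle and your dismissal of it is unjustified. Second, writing $E_{2,n}|_Y$ as an ``external tensor product $E_{1,1}\boxtimes E_{1,n+1}$'' is not correct: the excess bundle is built from the curve $C_H$ of Construction \ref{costruzione} for the group $H=\langle (g_1,g_2),(h_1,h_2)\rangle$, which is in general a proper subgroup of $\langle g_1,h_1\rangle\times\langle g_2,h_2\rangle$, so $H^1(C_H,\mathcal{O}_{C_H})$ does not factor in the way you need. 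What one actually gets (and what the paper uses implicitly via \eqref{formulaeccesso1}) is an additive decomposition of the rank, not a tensor factorisation of the bundle. Third, your final criterion ``both factors have non-vanishing top Chern class'' does not reproduce the \emph{or} condition in the statement; you would also need to argue that on every double twisted sector of $\overline{\mathcal{M}}_{1,k}$ outside the listed three, $E_{1,k}$ has rank $0$ (so $c_{\mathrm{top}}=1$, never $0$), which is true but requires saying.

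In short: the reduction to Case \ref{case1} is fine, but the analysis of Case \ref{case1} needs to be redone either by following the paper and computing ranks directly from \eqref{formulaeccesso1}, or by correctly handling the nodal summand and carefully separating the contributions of the two genus $1$ factors as a \emph{direct sum} rather than a tensor product.
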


In this last case, the top Chern class is a $-\psi$ class on the gluing point(s), as shown in \cite[Section 6]{pagani1}.


\section {The Chen--Ruan cohomology as an algebra on the ordinary cohomology}
\label{algebra}

In this last section we study the generators of the Chen--Ruan cohomology as an algebra on the ordinary cohomology ring. We accomplish this task for the even part of the orbifold cohomology.

\begin{definition} \label{evenodd} Let $X$ be a Deligne--Mumford stack. We define the \emph{even} and \emph{odd} parts of the Chen--Ruan cohomology of $X$ as: $$H^{ev}_{CR}(X):=H^{ev}(I(X)), \  H^{odd}_{CR}(X):=H^{odd}(I(X))$$
where the grading is the usual one, \emph{i.e.} the grading is \emph{not} shifted by the (age) degree shifting number.
\end{definition} 

The even Chen--Ruan cohomology $H^{ev}_{CR}(X)$ is then naturally an $H^{ev}(X)$-algebra (Definition \ref{prodotto}). The main purpose of this last section will be to study the generators of the algebra $H^{ev}_{CR}(\overline{\mathcal{M}}_{2,n})$ over the ring $H^{ev}(\overline{\mathcal{M}}_{2,n})$. The main result is Theorem \ref{positivo}, where we show that the even Chen--Ruan cohomology ring of the moduli stack of stable pointed genus $2$ curves is generated multiplicatively by the fundamental classes of the twisted sectors and some classes that we are going to define in Notation \ref{classispeciali}. This theorem depends upon Conjecture \ref{getzlerremark} by Getzler, which we now review.

We start with a brief survey on the tautological ring, as defined by Faber--Pandharipande in their paper \cite{faberpanda}:

\begin{definition} \label{tautolofaber} (See \cite[0.1]{faberpanda}.) The \emph{system of tautological rings} is defined to be the set of smallest $\mathbb{Q}$-subalgebras of the Chow rings, $$R^*(\overline{\mathcal{M}}_{g,n}) \subset A^*(\overline{\mathcal{M}}_{g,n})$$
satisfying the following two properties: \begin{enumerate} \item The system is closed under push-forward via all maps forgetting markings; \item The system is closed under push-forward via all gluing maps. \end{enumerate}
\end{definition}
We here report the part of Getzler's conjectures that will be used to prove some of the results in the following sections:
\begin{conjecture} \label{getzlerremark} (See \cite[p.2]{graberpanda}, \cite[p.1]{getzler1}.) The cycle map $R^*(\overline{\mathcal{M}}_{1,n}) \to H^{ev}(\overline{\mathcal{M}}_{1,n})$ is surjective.
\end{conjecture}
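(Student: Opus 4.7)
The plan is to prove surjectivity by combining Getzler's computation of $H^*(\mathcal{M}_{1,n})$ on the open part with an induction on $n$ that controls the contribution from the boundary. The base case $n=1$ is immediate, since $\overline{\mathcal{M}}_{1,1}$ is a stacky $\mathbb{P}^1$ whose even cohomology is one-dimensional in each of degrees $0$ and $2$ and visibly tautological.

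For the inductive step, the key input on the interior is an Eichler--Shimura-type decomposition of $H^*(\mathcal{M}_{1,n},\mathbb{Q})$ (available from Getzler's work on the modular operad of genus $1$ moduli): the non-tautological summand arises from holomorphic cusp forms of weight $\geq 12$ for $\mathrm{SL}_2(\mathbb{Z})$, and it is concentrated in \emph{odd} cohomological degrees (cusp forms contribute via $H^1$ of local systems on the punctured $j$-line, twisted up by symmetric powers of the Hodge bundle corresponding to the additional marked points). Consequently the restriction $H^{ev}(\overline{\mathcal{M}}_{1,n}) \to H^{ev}(\mathcal{M}_{1,n})$ has image contained in the tautological classes of the open part, and it suffices to handle the kernel.

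The kernel is controlled by the Gysin/long exact sequence of the pair $(\overline{\mathcal{M}}_{1,n},\partial\overline{\mathcal{M}}_{1,n})$. The boundary is stratified by stable graphs of genus $1$ with at least one edge, and each closed stratum is a finite quotient of a product $\prod_v \overline{\mathcal{M}}_{g_v,n_v}$ in which every vertex either has $g_v=0$ or, if $g_v=1$, carries strictly fewer marked points than $n$. By the inductive hypothesis together with the classical fact that $H^{ev}(\overline{\mathcal{M}}_{0,k})$ is already tautological, the even cohomology of every stratum is tautological. Because the gluing maps belong to the tautological system by Definition \ref{tautolofaber}, all Gysin push-forwards from the boundary into $\overline{\mathcal{M}}_{1,n}$ land in $R^{ev}(\overline{\mathcal{M}}_{1,n})$.

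The hard step is then to show that the connecting map cannot convert an odd non-tautological class on $\mathcal{M}_{1,n}$ into an even non-tautological class on $\overline{\mathcal{M}}_{1,n}$. This is the technical core, and the natural tool is mixed Hodge theory: the cuspidal contribution from a weight-$k$ cusp form carries a pure Hodge structure of weight $k+1$, while classes lifted from the boundary are of the wrong weight to receive such contributions in even cohomological degree. Equivalently, one can run the Leray spectral sequence for the forgetful map $\overline{\mathcal{M}}_{1,n+1}\to\overline{\mathcal{M}}_{1,n}$, show that $R^i\pi_*\mathbb{Q}$ decomposes into a tautological piece and a cuspidal local system concentrated in odd fibre degree, and conclude by degeneration of the spectral sequence and the inductive hypothesis. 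The Hodge-theoretic control of how cusp-form classes extend across the boundary divisors is the main obstacle and is what forces all non-tautological classes of $\overline{\mathcal{M}}_{1,n}$ to remain in odd total degree.
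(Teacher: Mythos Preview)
The paper does not prove this statement: it is labeled a \emph{Conjecture}, attributed to Getzler, and explicitly used as an assumption. Results that rely on it are marked with a $*$ throughout (Corollary \ref{corollariogetzler}, Theorem \ref{positivo}, Proposition \ref{factorization}, Definition \ref{chenruantautolo}). There is therefore no ``paper's own proof'' to compare your proposal against.

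As for the proposal itself, it is a reasonable outline of the expected strategy, but it is not a proof. You correctly identify the two ingredients --- Eichler--Shimura on the open part to push the non-tautological contribution into odd degree, and induction over boundary strata --- and your handling of the boundary induction is fine (the genus~$1$ vertex in any boundary stratum of $\overline{\mathcal{M}}_{1,n}$ really does carry at most $n-1$ markings, since the attached rational component needs at least two). But you explicitly flag the crux as ``the main obstacle'' and then do not resolve it: neither the weight argument nor the Leray spectral sequence sketch is carried far enough to show that the connecting homomorphism in the long exact sequence of the pair cannot produce even non-tautological classes. Saying that the cuspidal piece has the wrong Hodge weight to land in the boundary image requires actually computing (or bounding) the relevant weights on $H^*_c(\mathcal{M}_{1,n})$ and on the boundary, and matching them through the sequence; this is genuine work, not a remark. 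What you have written is a plan with the hard step named but left open --- which, given that the paper itself treats the statement as a conjecture, is perhaps the honest status.
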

We use in this paper the content of this conjecture in the proof of Corollary \ref{corollariogetzler}. We will mark with the symbol $*$ the results that depend upon Conjecture \ref{getzlerremark}.

The injectivity of the cycle map is the other part of Getzler's conjecture. The injectivity of this map follows from a more general conjecture, which goes under the name of \emph{Gorenstein conjecture}, and is due to Faber and Hain--Looijenga.
\begin{conjecture} \label{gorensteinconj} (See \cite{faberhain}) The tautological ring $R^*(\overline{\mathcal{M}}_{g,n})$ is a Poincar\'e duality ring, with socle in top degree $3g-3+n$. 
\end{conjecture}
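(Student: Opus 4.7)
The final statement is the Faber--Hain--Looijenga Gorenstein conjecture, one of the main open problems in the intersection theory of moduli of curves, so any plan is necessarily speculative. My plan would be to establish, for each $(g,n)$ and each degree $k$, a perfect pairing
\[
R^k(\overline{\mathcal{M}}_{g,n}) \times R^{3g-3+n-k}(\overline{\mathcal{M}}_{g,n}) \longrightarrow R^{3g-3+n}(\overline{\mathcal{M}}_{g,n}) \cong \mathbb{Q},
\]
the isomorphism on the right giving the socle and the pairing itself certifying Poincar\'e duality.

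The first step is to produce the socle generator. One fixes a top-degree tautological monomial --- for instance a pure $\psi$-class monomial on $\overline{\mathcal{M}}_{g,n}$ or a suitable $\lambda_g \psi$-integral --- whose evaluation is computable via Witten--Kontsevich, Mumford's formula for the Hodge classes, and the string/dilaton equations, and shows this number is non-zero. The proportionality of all other top-degree tautological classes to this generator then follows from closed formulas for tautological intersection numbers on $\overline{\mathcal{M}}_{g,n}$ (as established by Faber, Pandharipande, and others). The second step is to choose, in each intermediate degree, a finite spanning set via the Graber--Pandharipande strata algebra --- decorated stable graphs with $\psi$- and $\kappa$-monomials at vertices --- and to quotient by the known tautological relations (Pixton's $3$-spin relations, Keel/Knudsen and the boundary relations, Getzler--Ionel vanishing, and the Faber--Zagier relations on $\mathcal{M}_g$). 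The third step is to assemble the pairing matrix in each degree using topological recursion / cotangent class pushforwards, and verify invertibility, a finite calculation for each fixed $(g,n,k)$ that has been carried out by Faber in low genus.

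The main obstacle is genuine and, in my view, unreachable with present techniques: the Gorenstein property is known to \emph{fail} for some $\overline{\mathcal{M}}_{g,n}$ (the work of Petersen and of Petersen--Tommasi produces counterexamples already in genus $2$), and it is also believed to fail for $\mathcal{M}_g$ for $g$ large. So an honest plan must simultaneously identify a subclass of $(g,n)$ where the statement plausibly holds --- for instance the genus one or compact-type cases --- and supply enough tautological relations to force the pairing to be perfect there. Until the completeness of Pixton's (or an analogous) system of tautological relations is settled, the plan reduces the conjecture to the combinatorial problem of producing such relations, which is itself the hard core of the question.
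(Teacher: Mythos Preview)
The statement you are addressing is labelled in the paper as a \emph{Conjecture}, not a theorem: the paper records the Gorenstein conjecture of Faber and Hain--Looijenga as background and does not attempt to prove it. There is therefore no proof in the paper to compare your proposal against. Your opening sentence already identifies this correctly, and your closing paragraph goes further, noting that the conjecture is now known to fail for some $\overline{\mathcal{M}}_{2,n}$ by work of Petersen and Petersen--Tommasi (which post-dates this paper). Given that, a ``proof plan'' is not what is called for; the honest answer is simply that the statement is conjectural in the paper and has since been disproved in general, so no proof can exist without restricting the hypotheses.
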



\subsection{Pull-Backs of classes to the twisted sectors}

In this section, if $X$ is a twisted sector of the inertia stack of $\overline{\mathcal{M}}_{g,n}$, we call $f:X \to \overline{\mathcal{M}}_{g,n}$ the natural forgetful map, and $f^*$ the morphism induced in cohomology. If $\alpha \in H^*(\overline{\mathcal{M}}_{g,n})$, it follows from Definition \ref{prodotto} that:
$$
\alpha *_{CR} 1_X = f^*(\alpha),
$$
if $1_X$ is the fundamental class of the twisted sector $X$ inside the Chen--Ruan cohomology of $\overline{\mathcal{M}}_{g,n}$. From this, the importance of studying the surjectivity of the map $f^*$. 

If $X$ is such a twisted sector and $f$ the natural forgetful map, in the spirit of Notation \ref{notazionemgnrt}, we call $X^{I_1,\ldots,I_k}$ the twisted sector obtained by adding rational tails, and $f^{I_1, \ldots, I_k}$ the corresponding forgetful map to $\overline{\mathcal{M}}_{g,n}$.

\begin{lemma} \label{cited} Suppose that $X$ is a twisted sector of $\overline{\mathcal{M}}_{g,k}^{NR}$, and $I_1, \ldots, I_k$ is a partition of $[n]$. Then the pull-back in cohomology $f_{I_1,\ldots,I_k}^*$ is surjective iff the pull-back $f^*$ is surjective. The pull-back $f_{I_1,\ldots,I_k}^*$ surjects onto the even cohomology iff the pull-back $f^*$ surjects onto the even cohomology. 
\end{lemma}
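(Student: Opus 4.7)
The plan is to exploit the factorisation
$$
f_{I_1,\ldots,I_k} \colon X \times R \xrightarrow{\,f \times \mathrm{id}\,} \overline{\mathcal{M}}_{g,k} \times R \xrightarrow{\,g\,} \overline{\mathcal{M}}_{g,n},
$$
where $R := \prod_{i=1}^{k} \overline{\mathcal{M}}_{0, I_i+1}$ and $g$ is the gluing map attaching a rational tail at each of the $k$ marked points of the genus $g$ core. Since $(f \times \mathrm{id})^* = f^* \otimes \mathrm{id}$ under K\"unneth, we have $f_{I_1,\ldots,I_k}^* = (f^* \otimes \mathrm{id}) \circ g^*$, and the heart of the argument will be the identity
$$
\mathrm{Im}(f_{I_1,\ldots,I_k}^*) = \mathrm{Im}(f^*) \otimes_{\mathbb{Q}} H^*(R).
$$
Both statements of the lemma follow immediately from this: K\"unneth gives $H^*(X^{I_1,\ldots,I_k}) = H^*(X) \otimes H^*(R)$, so surjectivity of $f_{I_1,\ldots,I_k}^*$ is equivalent to $\mathrm{Im}(f^*) = H^*(X)$; and Keel's theorem forces $H^{\mathrm{odd}}(R) = 0$, whence $H^{\mathrm{ev}}(X^{I_1,\ldots,I_k}) = H^{\mathrm{ev}}(X) \otimes H^*(R)$ and the even-degree statement reduces to $\mathrm{Im}(f^*) \supseteq H^{\mathrm{ev}}(X)$.

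The inclusion $\subseteq$ in the displayed identity is immediate from K\"unneth, since $g^*$ lands in $H^*(\overline{\mathcal{M}}_{g,k}) \otimes H^*(R)$. For $\supseteq$ I would combine two constructions. Fix a representative $j_i \in I_i$ for each $i$ and let $\pi \colon \overline{\mathcal{M}}_{g,n} \to \overline{\mathcal{M}}_{g,k}$ be the forgetful map retaining only $\{j_1, \ldots, j_k\}$; on the image of $g$, forgetting the remaining markings makes each rational tail $2$-pointed hence unstable, so stabilisation contracts it and transports $j_i$ to the node, showing that $\pi \circ g$ is (up to relabelling) the projection $\overline{\mathcal{M}}_{g,k} \times R \to \overline{\mathcal{M}}_{g,k}$. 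Hence $g^* \circ \pi^*$ realises every class $\delta \otimes 1$ and $(f^* \otimes \mathrm{id})$ yields all of $\mathrm{Im}(f^*) \otimes 1$. On the other hand, for each $i$ and each partition $A \sqcup A^c$ of $I_i \sqcup \{\bullet_i\}$ with $|A|,|A^c| \geq 2$, the boundary divisor of $\overline{\mathcal{M}}_{g,n}$ carving off a rational bubble with exactly the markings in $A^c \subseteq I_i$ pulls back under $g$ to a non-zero multiple of $1 \otimes \cdots \otimes \beta \otimes \cdots \otimes 1$, with $\beta$ the corresponding boundary divisor on the $i$-th factor of $R$. By Keel's presentation such $\beta$'s generate $H^*(\overline{\mathcal{M}}_{0, I_i+1})$ multiplicatively, and the ring-homomorphism property of $f_{I_1,\ldots,I_k}^*$ then combines both families to produce an arbitrary class of the form $f^*(\delta) \otimes \beta_1 \otimes \cdots \otimes \beta_k$.

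The delicate step is the second construction above: one must verify, by a combinatorial match on stable dual graphs, that the boundary divisor of $\overline{\mathcal{M}}_{g,n}$ just described really pulls back to a class supported on the single factor $\overline{\mathcal{M}}_{0, I_i+1}$ of $R$, with no contribution from the other tails or from the genus $g$ factor. This amounts to checking that the one-edge refinement of the core-plus-$k$-tails dual graph that inserts a new node inside the $i$-th tail, splitting the markings according to $\{A,A^c\}$, is the unique refinement producing the boundary divisor under consideration; I expect this to be immediate from the explicit description of boundary strata of $\overline{\mathcal{M}}_{g,n}$ recalled in Proposition \ref{referenzaimpossibile}.
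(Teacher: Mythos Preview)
Your argument is correct and follows essentially the same route as the paper's (very brief) proof: apply K\"unneth to $H^*(X^{I_1,\ldots,I_k})$, then observe that boundary divisors of $\overline{\mathcal{M}}_{g,n}$ separating the markings on the $l$-th tail pull back to the standard boundary divisors of $\overline{\mathcal{M}}_{0,I_l+1}$, which by Keel generate its cohomology. Your write-up is simply more explicit---you spell out the factorisation through $g$, the forgetful-map trick to hit $\mathrm{Im}(f^*)\otimes 1$, and the ring-homomorphism combination---but the underlying mechanism is identical. The ``delicate step'' you flag (that the chosen boundary divisor on $\overline{\mathcal{M}}_{g,n}$ meets the image of $g$ transversally in the expected single stratum) is routine and the paper does not even pause on it.
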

\begin{proof} (This fact was recognized in the genus $1$ case in \cite[Section 7]{pagani1}) One applies the K\"unneth decomposition to the cohomology of $X^{I_1,\ldots,I_k}$. The surjectivity over the cohomology of the $l$-th rational tail is obtained by observing that given any partition $P_l$ of $I_l+1$ in two subsets, there is a divisor on \mbgn \, whose inverse image on $X^{I_1,\ldots,I_k}$ corresponds to separating the points of the $l$-th rational tail according to $P_l$.
\end{proof}

\begin{proposition} \label{suriettivita1} The pull-back map $f^*$ is surjective onto the cohomology of all the twisted sectors $X$ of $\overline{\mathcal{M}}^{NR}_{2,k}$, with the exception of the twisted sectors of Example \ref{case1} whose dual graph contains a vertex of genus $1$, and of the twisted sectors $\overline{II}, \overline{II}_{1}, \overline{II}_{11}$.
\end{proposition}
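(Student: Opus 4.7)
The plan is to proceed by case analysis following the classification of twisted sectors from Sections \ref{inertia} and \ref{dalbordo}. First, Lemma \ref{cited} reduces the problem to proving surjectivity of $f^{*}$ for twisted sectors $X$ of $\overline{\mathcal{M}}_{2,k}^{NR}$; surjectivity for the twisted sectors $X^{I_1,\ldots,I_k}$ obtained by gluing rational tails then follows automatically, as does the reduction to the even part in the genus-1-vertex cases.

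For the twisted sectors with smooth general element, Theorem \ref{generatodivisori} reduces the problem to expressing every divisor class on $\overline{X}$ as a pull-back. For the sixteen sectors with $g'=0$ (see \eqref{tabellona}), Corollary \ref{corollariocadman} identifies $H^{*}(\overline{X})$ with $H^{*}([\overline{\mathcal{M}}_{0,d}/S_A])$, whose divisor classes are the symmetrizations of Keel's boundary divisors $\delta_P$, indexed by partitions $P$ of the $d$ branch/ramification points. Each such $\delta_P$ corresponds to a degeneration of the base $\mathbb{P}^{1}$, and, via the admissible covering, to a degeneration of the total curve lying in the image of a boundary divisor $\Delta$ of $\overline{\mathcal{M}}_{2,n}$. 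The pull-back $f^{*}(\Delta)$ decomposes on $\overline{X}$ as $\delta_P$ together with contributions from the remaining ways in which the cover can degenerate giving the same partition of branch points; a triangular inversion of this combinatorial system then expresses each $\delta_P$ as a rational combination of pull-backs. For the bielliptic sectors $\overline{II}$, $\overline{II}_{1}$, $\overline{II}_{11}$ (the only smooth cases with $g'=1$), the three-dimensional cohomology computed in Proposition \ref{aggiunta} and Corollary \ref{aggiunta2} genuinely fails this matching, and they are left as exceptions to be handled separately via the class $\mathcal{S}$ in Proposition \ref{corollariocoker}.

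For the boundary twisted sectors of Examples \ref{case1}--\ref{case4}, the moduli space decomposes (up to a quotient by a finite group induced by the $S_A$-action) into a product whose factors are either twisted sectors of $\overline{\mathcal{M}}_{1,i}$ or $[\overline{\mathcal{M}}_{1,i}/S_2]$, or moduli of the form $\overline{\mathcal{M}}_{0,n'}$. By a K\"unneth argument it suffices to check surjectivity on each factor. The $\overline{\mathcal{M}}_{0,n'}$ factors have cohomology generated by boundary divisors (Keel), which are restrictions of boundary divisors of $\overline{\mathcal{M}}_{2,n}$ corresponding to node degenerations of the genus-$2$ curve. The factors coming from twisted sectors of $\overline{\mathcal{M}}_{1,i}$-type quotients are covered by the surjectivity results of \cite{pagani1} combined with Appendix A. The only obstruction arises precisely when the dual graph contains an untwisted vertex of type $1_n$ (first row of Example \ref{case1}), which introduces a factor of $\overline{\mathcal{M}}_{1,n+k}$ in the moduli space of the twisted sector; its cohomology contains classes (including odd ones for $n+k \geq 11$) not in the image of restriction from $\overline{\mathcal{M}}_{2,n}$, so surjectivity genuinely fails there.

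The main obstacle is the combinatorial bookkeeping in the smooth case: for each symmetrized boundary divisor $\delta_P$ on $[\overline{\mathcal{M}}_{0,d}/S_A]$, one must exhibit a boundary divisor $\Delta$ on $\overline{\mathcal{M}}_{2,n}$ whose pull-back involves $\delta_P$ with controlled coefficient, and invert the resulting triangular system. For sectors in which the cover can degenerate in several topologically distinct ways producing the same partition of the branch points (e.g.\ $\overline{IV}$, $\overline{VI}$), this inversion must be carried out case-by-case, and verifying unimodularity of the matrix involved is the delicate combinatorial step on which the whole proof hinges.
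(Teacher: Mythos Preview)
Your overall architecture is sound and would eventually succeed, but you are working much harder than necessary, and the ``delicate combinatorial step on which the whole proof hinges'' that you flag is in fact avoidable.

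The paper's proof is organised not by the source of the twisted sector (smooth vs.\ boundary) but by its \emph{dimension}. This is the key simplification you miss. Among the twisted sectors of $\overline{\mathcal{M}}_{2,k}^{NR}$ with smooth general element, all sixteen $g'=0$ sectors except the hyperelliptic $\overline{\tau}$ (and its marked versions $\overline{\tau}_1,\ldots,\overline{\tau}_{111111}$) have coarse moduli space of dimension $0$ or $1$. In dimension $0$ there is nothing to prove. In dimension $1$ the coarse space is $\mathbb{P}^1$, so $H^*$ is two-dimensional and surjectivity of $f^*$ follows from the single observation that $X$ meets $\partial\overline{\mathcal{M}}_{2,k}$ in finitely many points, giving a nonzero class in $H^2$. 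No triangular inversion is needed for $\overline{III}$, $\overline{IV}$, $\overline{VI}$, etc. For the genuinely higher-dimensional $\overline{\tau}$ sectors, the paper simply notes (via Theorem \ref{generatodivisori}) that the boundary divisors $D_I$ of $\overline{\mathcal{M}}_{2,k}$ pull back to multiples of divisor classes on $\overline{X}$ and that every divisor class on $\overline{X}$ arises this way; this is a direct matching rather than an inversion of a system.

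For the boundary sectors the same dimension shortcut applies: the $T_i$, $\tilde{T}_i$, $T_i^{\rho}$, and $0_j^*$ factors in Examples \ref{case2}--\ref{case4} are all zero-dimensional, so the only positive-dimensional factor is a quotient of some $\overline{\mathcal{M}}_{0,m}$, whose cohomology is divisor-generated and handled exactly as in Lemma \ref{cited}. Your appeal to surjectivity results from \cite{pagani1} for the genus-$1$ factors is unnecessary because those factors are points. The only boundary sectors with a genuinely higher-dimensional non-rational factor are those in the first row of Example \ref{case1}, which carry an $\overline{\mathcal{M}}_{1,m}$ factor---precisely the exceptions. So both approaches arrive at the same list of exceptions, but the paper gets there with almost no combinatorics.
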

\begin{proof} The result is trivial when the dimension $\dim(X)$ is $0$. When $\dim(X)=1$ the coarse space of ${X}$ is $\mathbb{P}^1$, and the result follows from the fact that all such $X$ intersect the boundary at a finite number of points.  

Let ${X}$ be one of the twisted sectors $\overline{\tau}, \overline{\tau}_1, \ldots, \overline{\tau}_{111111}$. According to Theorem \ref{generatodivisori}, it is enough to show that $f^*$ is surjective on the divisor classes. Each divisor $D_I$, for $I \subset [n]$, pulls-back to a multiple of a divisor class in ${X}$, and all of the divisor classes of ${X}$ are possibly multiples of such pull-backs.

So we are left with the classes that come from the boundary, discussed in Examples \ref{case1}, \ref{case2}, \ref{case3} and \ref{case4}. The twisted sectors of dimension $>1$ that are not in the first line of the first set of figures in Example \ref{case1} have cohomology generated by the divisor classes, and one can prove surjectivity in analogy with Lemma \ref{cited}.  
\end{proof}

The twisted sectors whose dual graph contains a vertex of genus $1$ are those pictured in the first line of the first set of figures in Example \ref{case1}. 
To prove the surjectivity claim onto the even part, we use a result of Belorousski. First we recall a definition.

\begin{definition} (See \cite[p.2]{getzler2}.) Let $G$ be a stable graph; we will say it is a \emph{necklace}, if it has a single circuit, all of whose vertices have genus $0$. A \emph{necklace cycle} is the class of the locus whose general element is a curve with a necklace $G$ as its dual graph.
\end{definition}

\begin{proposition} (See \cite{belo}.) \label{belor} Two sets of generators for $R^*(\overline{\mathcal{M}}_{1,n})$ are:
\begin{enumerate}
\item The boundary strata classes.
\item All products of divisor classes, and the necklace cycles.
\end{enumerate}
Moreover the cycle map $R^*($\mbun$) \to H^*($\mbun$)$ is an isomorphism when $n \leq 10$.
\end{proposition}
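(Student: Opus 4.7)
By the standard presentation of the tautological ring recalled around Definition \ref{tautolofaber}, $R^*(\overline{\mathcal{M}}_{1,n})$ is generated by push-forwards under the gluing maps of products of $\psi$- and $\kappa$-classes on the vertex factors $\overline{\mathcal{M}}_{g(v),n(v)}$ of the dual graphs. Statement (1) therefore reduces to showing that on each such factor every $\psi$- and $\kappa$-monomial is itself a boundary cycle pushed from a deeper stratum, so that the decorations can be absorbed into the stratum structure.

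For a genus $0$ vertex this is immediate from Keel's presentation of $A^*(\overline{\mathcal{M}}_{0,n})$, in which every $\psi$-class is a linear combination of boundary divisors. For a genus $1$ vertex the base case is the identity $\psi_1=\lambda_1=\tfrac{1}{12}\delta_0$ in $A^1(\overline{\mathcal{M}}_{1,1})$, a direct consequence of Mumford's relation on the Hodge bundle. Using the comparison $\psi_i=\pi^*\psi_i+D_{i,n+1}$ for the forgetful map $\pi : \overline{\mathcal{M}}_{1,n+1}\to\overline{\mathcal{M}}_{1,n}$ together with the push-forward formula $\kappa_a=\pi_*(\psi_{n+1}^{a+1})$, an induction on the number of marked points writes every $\psi_i$ and $\kappa_a$ on $\overline{\mathcal{M}}_{1,n}$ as a polynomial in boundary classes, yielding (1).

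For (2) one observes that every stable dual graph of genus $1$ is either a tree with a single vertex of positive genus, or a necklace. Tree strata are transverse intersections of boundary divisors up to corrections that are themselves trees of greater codimension, so an induction on the number of edges places them inside the subring generated by divisors. Combined with (1) this leaves necklace cycles as the only additional generators needed, and they cannot be produced from divisors alone because of the non-trivial loop; this is exactly (2).

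Finally, (3) is proved by a dimension comparison. Getzler has computed the Serre characteristic of $\overline{\mathcal{M}}_{1,n}$ for small $n$ and in particular established $H^{odd}(\overline{\mathcal{M}}_{1,n})=0$ for $n\leq 10$. On the tautological side one starts from the spanning set provided by (2) and reduces it modulo the standard boundary relations (WDVV in genus $0$, pull-backs of Mumford's relation, and Getzler's genus $1$ relation in $R^2(\overline{\mathcal{M}}_{1,4})$) until the size of the reduced set matches $\dim_{\mathbb{Q}}H^*(\overline{\mathcal{M}}_{1,n})$ rank by rank, giving surjectivity and injectivity simultaneously. The main obstacle is precisely this relation-hunting: the bound $n\leq 10$ is sharp because at $n=11$ Getzler--Looijenga exhibit odd cohomology outside the tautological image, so no uniform-in-$n$ argument is available and each case must be treated by hand.
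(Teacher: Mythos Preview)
The paper does not give a proof of this proposition: it is quoted as a result of Belorousski, with the citation ``(See \cite{belo})'' serving in lieu of an argument. So there is no ``paper's own proof'' to compare against; what you have written is a sketch of how Belorousski's thesis actually proceeds, and in broad outline it is accurate.

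That said, your argument for (2) has a gap. You assert that ``every stable dual graph of genus $1$ is either a tree with a single vertex of positive genus, or a necklace.'' This is false as stated: a genus $1$ stable graph can be a necklace with rational trees attached to some of its vertices. Your induction on trees handles the tree-type strata, but you then jump directly to ``this leaves necklace cycles as the only additional generators,'' skipping the mixed case. What is missing is the observation that a stratum whose graph is a necklace with trees attached arises, up to excess terms of the same shape in higher codimension, as the intersection of the corresponding pure necklace stratum with the boundary divisors that cut the trees; this is the same transversality-plus-correction mechanism you invoke for trees, and it completes the induction.

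Your account of (3) is honest about what actually happens: it is a finite computation, matching a reduced spanning set against Getzler's Euler characteristics case by case, with no structural shortcut. That is indeed why the statement stops at $n=10$, and your remark about odd cohomology appearing at $n=11$ is the correct explanation for the sharpness.
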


Let us now consider the two substacks $C_{n+1}$ and $D_{n+1}$ of $\overline{\mathcal{M}}_{2,n}$ whose dual graphs correspond respectively to the graphs:
$$
\begin{tikzpicture}[baseline]
      \path(0,0) ellipse (2 and 1);
      \tikzstyle{level 1}=[counterclockwise from=-60,level distance=9mm,sibling angle=120]
      \node (A0) at (0:1) {$\scriptstyle{{\hspace{0.08cm}}_1^{\hspace{0.2cm} }}$};
      \tikzstyle{level 1}=[counterclockwise from=120,level distance=9mm,sibling angle=20]
      \node (A1) at (180:1) {$\scriptstyle{1_n}$} child child child child child child child;

      \path (A0) edge [bend left=0] (A1);
    \end{tikzpicture}
    \begin{tikzpicture}[baseline]
      \path(0,0) ellipse (2 and 1);
      \tikzstyle{level 1}=[counterclockwise from=-60,level distance=9mm,sibling angle=120]
      \node (A0) at (0:1) {$\scriptstyle{{\hspace{0.08cm}}_0^{\hspace{0.2cm} }}$};
      \tikzstyle{level 1}=[counterclockwise from=120,level distance=9mm,sibling angle=20]
      \node (A1) at (180:1) {$\scriptstyle{1_n}$} child child child child child child child;

      \path (A0) edge [bend left=0] (A1);
    \draw (A0) .. controls +(-15:1.2) and +(15:1.2) .. (A0);
    \end{tikzpicture},
   $$
which we call $G_{n+1}$ and $H_{n+1}$. 

\begin{lemma} \label{getzlerlemma} The inclusion map $i: D_{n+1} \to \overline{\mathcal{M}}_{2,n}$ induces a surjective pull-back $i^*$ on the divisor and necklace cycle classes of $D_{n+1}$. The same result holds for $C_{n+1}$. 
\end{lemma}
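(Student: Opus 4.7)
The plan is to exploit the identification of $D_{n+1}$ with $\overline{\mathcal{M}}_{1,n+1}$ and of $C_{n+1}$ with $\overline{\mathcal{M}}_{1,1} \times \overline{\mathcal{M}}_{1,n+1}$ provided by the gluing morphisms associated to the graphs $H_{n+1}$ and $G_{n+1}$; these morphisms are isomorphisms up to a $\mu_2$-gerbe (coming from the swap of the two branches of the self-node in the $H_{n+1}$ case) that is invisible in rational cohomology. Under these identifications, the $(n+1)$-th marked point on the $\overline{\mathcal{M}}_{1,n+1}$-factor corresponds to the node where the extra genus-$1$-type component is attached.

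To prove the surjectivity statement for $D_{n+1}$, I would realize each divisor class and each necklace cycle of $\overline{\mathcal{M}}_{1,n+1}$ as a pullback of a tautological class on $\overline{\mathcal{M}}_{2,n}$. The divisor group is generated by the $\psi$-classes $\psi_1,\ldots,\psi_{n+1}$ together with the boundary divisors. For $i\le n$ the class $\psi_i$ on $D_{n+1}$ is simply the restriction of $\psi_i$ on $\overline{\mathcal{M}}_{2,n}$. For $\psi_{n+1}$, Proposition \ref{referenzaimpossibile} yields
$$
N_{D_{n+1}}(\overline{\mathcal{M}}_{2,n}) \;=\; \mathbb{L}_{n+1}^{\vee} \otimes \mathbb{L}_{\star}^{\vee},
$$
where $\mathbb{L}_\star$ is the cotangent line at the attaching point on the rational self-noded component; since the latter is parametrized by a stacky point whose rational Chow group is trivial, $c_1(\mathbb{L}_\star)=0$, and the self-intersection formula gives $\psi_{n+1}=-i^*[D_{n+1}]$, which is a pullback.

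For each boundary divisor or necklace cycle $\delta_\Gamma$ of $\overline{\mathcal{M}}_{1,n+1}$ indexed by a stable graph $\Gamma$, I would attach the $H_{n+1}$-configuration at the $(n+1)$-th vertex of $\Gamma$ to produce a stable graph $\Gamma'$ of genus $2$, and consider the associated boundary stratum $\Delta_{\Gamma'}\subset\overline{\mathcal{M}}_{2,n}$. Because $\Delta_{\Gamma'}\subset D_{n+1}$, the pullback $i^*[\Delta_{\Gamma'}]$ is computed by excess intersection with respect to $N_{D_{n+1}}(\overline{\mathcal{M}}_{2,n})$; combining this with the normal-bundle computation above produces $[\delta_\Gamma]$ up to a nonzero rational multiple and pullbacks of classes of strictly smaller codimension. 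A descending induction on codimension then realizes every boundary divisor and every necklace cycle as a pullback. The argument for $C_{n+1}$ is analogous, via the K\"unneth decomposition: the contributions from the $\overline{\mathcal{M}}_{1,n+1}$-factor are handled exactly as above, while the contribution from the $\overline{\mathcal{M}}_{1,1}$-factor (a one-dimensional piece, thanks to the Mumford relation $\lambda_1 = \delta_{\mathrm{irr}}/12$ which collapses $\mathrm{Pic}_{\mathbb{Q}}(\overline{\mathcal{M}}_{1,1})$) is obtained by pulling back the boundary class $\delta_1\in H^2(\overline{\mathcal{M}}_{2,n})$.

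The main obstacle is the bookkeeping required in the excess-intersection step: one must simultaneously track the rational multiplicities introduced by the $\mu_2$-gerbe structure on $D_{n+1}$, by automorphisms of $\Gamma'$ that are not already automorphisms of $\Gamma$, and (for $C_{n+1}$) by the exchange of the two genus-$1$ vertices when they are of the same numerical type, and then verify that at each inductive step the new generator is recovered with a nonzero coefficient so that no divisor class or necklace cycle is lost.
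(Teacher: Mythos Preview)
There is a genuine codimension error that breaks the argument. The graph $H_{n+1}$ has \emph{two} edges (the self-loop and the separating edge), so $D_{n+1}$ sits in codimension $2$ inside $\overline{\mathcal{M}}_{2,n}$ and $N_{D_{n+1}}(\overline{\mathcal{M}}_{2,n})$ is a rank-$2$ bundle --- a direct sum of the two line bundles coming from the two edges, not the single tensor product you wrote. Hence $i^{*}[D_{n+1}]=c_{2}(N_{D_{n+1}})\in A^{2}(D_{n+1})$ is not a divisor, and the identity $\psi_{n+1}=-i^{*}[D_{n+1}]$ is false already on degree grounds. The same mismatch recurs for the boundary and necklace classes: if $\delta_{\Gamma}$ has codimension $e$ in $\overline{\mathcal{M}}_{1,n+1}$, then your $\Gamma'$ (obtained by attaching the $H_{n+1}$-configuration at the $(n{+}1)$-th leg) has $e+2$ edges, so $[\Delta_{\Gamma'}]\in A^{e+2}(\overline{\mathcal{M}}_{2,n})$, and excess intersection gives $i^{*}[\Delta_{\Gamma'}]=j_{*}\bigl(c_{2}(N_{D_{n+1}}|_{\delta_{\Gamma}})\bigr)\in A^{e+2}(D_{n+1})$, two degrees above the target $[\delta_{\Gamma}]\in A^{e}(D_{n+1})$. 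A descending induction cannot repair this: you are producing classes in the wrong graded piece, not the desired class plus lower-order corrections.

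The paper's proof avoids this by never choosing strata contained in $D_{n+1}$. For each target class it exhibits a boundary stratum of $\overline{\mathcal{M}}_{2,n}$ of the \emph{same} codimension that meets $D_{n+1}$ generically transversally, and checks via the $(G,H)$-structure combinatorics of Graber--Pandharipande that the pullback is the desired class. Concretely, for the divisor in $D_{n+1}$ indexed by a partition $I_{1}\sqcup I_{2}=[n]$ one pulls back the codimension-$1$ boundary class in $\overline{\mathcal{M}}_{2,n}$ whose graph $G_{I_{1},I_{2}}$ has a genus-$2$ vertex carrying $I_{1}$ and a genus-$0$ vertex carrying $I_{2}$; for a necklace in $D_{n+1}$ one pulls back the class of the genus-$2$ stratum $N_{I_{1},I_{2}}$ obtained from it by contracting the separating edge. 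In each case the intersection with $D_{n+1}$ is transversal and equals the required class on the nose, with no excess term.
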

\begin{proof}

We have to show that any divisor and any necklace can be obtained by intersecting $D_{n+1}$ with certain boundary strata cycles in $\overline{\mathcal{M}}_{2,n}$. For the combinatorics of the intersection of boundary strata classes we refer to \cite[Appendix]{graberpanda} and to \cite{stephanie}. This involves the terminology of $(G,H)$-structure on a given graph, which we are about to use.

Let us consider an arbitrary divisor in $D_{n+1}$. A divisor in $D_{n+1}$ corresponds to a partition $I_1 \sqcup I_2$ of $[n]$:
 $$ \begin{tikzpicture}[baseline]
      \path(0,0) ellipse (2 and 2);
      \node (A0) at (0:1) {$\scriptstyle{{\hspace{0.08cm}}_0^{\hspace{0.2cm} }}$};
      \tikzstyle{level 1}=[counterclockwise from=90,level distance=9mm,sibling angle=60]
      \node (A1) at (120:1) {$\scriptstyle{1_{I_1}}$} child child child;
      \tikzstyle{level 1}=[counterclockwise from=180,level distance=9mm,sibling angle=60]
      \node (A2) at (240:1) {$\scriptstyle{0_{I_2}}$} child child child;

      \draw (A0) .. controls +(-15:1.2) and +(15:1.2) .. (A0);
      \path (A0) edge [bend left=0] (A1);
      \path (A1) edge [bend left=0] (A2);
    \end{tikzpicture}
 $$  
This is the only graph that admits a generic $(G_{n+1},G_{I_1,I_2})$-structure, where $G_{I_1,I_2}$ is the graph:
$$\begin{tikzpicture}[baseline]
      \path(0,0) ellipse (2 and 1);
      \tikzstyle{level 1}=[counterclockwise from=-60,level distance=9mm,sibling angle=60]
      \node (A0) at (0:1) {$\scriptstyle{0_{I_2}}$} child child child ;
      \tikzstyle{level 1}=[counterclockwise from=120,level distance=9mm,sibling angle=60]
      \node (A1) at (180:1) {$\scriptstyle{2_{I_1}}$} child child child;

      \path (A0) edge [bend left=0] (A1);
    \end{tikzpicture}.
$$
Analogously, if $B_{I_1}$ is a necklace with marked points in $I_1$, the dual graph of a necklace cycle in $D_{n+1}$ looks like:
$$
\begin{tikzpicture}[baseline]
      \path(0,0) ellipse (2 and 1);
      \tikzstyle{level 1}=[counterclockwise from=-60,level distance=9mm,sibling angle=120]
      \node (A0) at (0:1) {$\scriptstyle{B_{I_1}}$};
      \tikzstyle{level 1}=[counterclockwise from=120,level distance=9mm,sibling angle=20]
      \node (A1) at (180:1) {$\scriptstyle{1_{I_2}}$} child child child child child child child;

      \path (A0) edge [bend left=0] (A1);
    \end{tikzpicture};
$$
and this is the only graph that admits a generic $(G_{n+1}, N_{I_1,I_2})$-structure, where $N_{I_1,I_2}$ is the graph obtained from the latter graph by contracting the only edge that is represented in the picture.
\end{proof}
After this lemma and Proposition \ref{belor}, we see that the statement of Proposition \ref{suriettivita1} extends to the twisted sectors of Example \ref{case1} whose dual graph contains a vertex of genus $1$ and less than $11$ marked points. By assuming Conjecture \ref{getzlerremark}, we can extend the statement to the case of even cohomology.
\begin{corollarystar} \label{corollariogetzler} (See Conjecture \ref{getzlerremark}.) The pull-back map in cohomology $f^*$ is surjective onto the \emph{even} cohomology of all the twisted sectors of $\overline{\mathcal{M}}^R_{2,k}$, apart from $\overline{II}, \overline{II}_{1}, \overline{II}_{11}$.
\end{corollarystar}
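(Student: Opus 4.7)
The plan is to reduce to the twisted sectors whose dual graph contains a vertex of genus one on which the distinguished automorphism acts trivially, and then to handle the two K\"unneth factors separately. First I would observe that by Proposition \ref{suriettivita1} together with Lemma \ref{cited}, the only remaining cases are the twisted sectors $X$ of $\overline{\mathcal{M}}_{2,k}^{NR}$ displayed in the first row of Example \ref{case1}: one vertex labelled by a compactified twisted sector $\overline{T} \in T_i$ of $\mathcal{M}_{1,i}$, glued to a second genus one vertex on which the automorphism acts trivially and carrying some number $n$ of marked points. Such an $X$ factors as $X \cong \overline{T} \times \overline{\mathcal{M}}_{1,n+1}$, where the extra marked point of the second factor represents the node.

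Next I would apply the K\"unneth formula to $H^{ev}(X)$ and use that $\im(f^*)$ is a subring of $H^*(X)$. It then suffices to exhibit both $H^*(\overline{T}) \otimes 1$ and $1 \otimes H^{ev}(\overline{\mathcal{M}}_{1,n+1})$ inside $\im(f^*)$. The first piece is handled exactly as in Proposition \ref{suriettivita1}, since $H^*(\overline{T})$ is spanned by divisor classes, each of them obtained (up to a rational scalar) as the restriction of a suitable boundary stratum of $\overline{\mathcal{M}}_{2,k}$.

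The crux is the second piece, and this is where Getzler's Conjecture \ref{getzlerremark} enters. I would use Getzler to identify $H^{ev}(\overline{\mathcal{M}}_{1,n+1}) = R^*(\overline{\mathcal{M}}_{1,n+1})$, then apply Belorousski's Proposition \ref{belor} to generate this tautological ring as a ring by divisor classes together with necklace cycles, and finally invoke Lemma \ref{getzlerlemma} to realize each such generator as the restriction to $C_{n+1}$ or $D_{n+1}$ of a boundary stratum class on $\overline{\mathcal{M}}_{2,k}$. The surjectivity of $f^*$ on $H^{ev}(X)$ then follows from the fact that $\im(f^*)$ contains both K\"unneth factors and is closed under cup product; the extension to twisted sectors carrying rational tails is immediate from Lemma \ref{cited}.

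The main obstacle is controlling $H^{ev}(\overline{\mathcal{M}}_{1,n+1})$: without Getzler's conjecture there is no a priori identification of the even cohomology with the tautological ring, and the pull-back arguments of Lemma \ref{getzlerlemma} only produce tautological classes. This is precisely why the statement must be marked with an asterisk and remains conditional on Conjecture \ref{getzlerremark}.
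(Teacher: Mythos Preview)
Your proposal is correct and follows essentially the same route as the paper: reduce via Proposition \ref{suriettivita1} to the first row of Example \ref{case1}, then combine Belorousski's generators (Proposition \ref{belor}) with Lemma \ref{getzlerlemma} and Conjecture \ref{getzlerremark} to hit the $\overline{\mathcal{M}}_{1,n+1}$ factor. Your explicit use of the K\"unneth splitting and separate treatment of the $\overline{T}$ factor is just a fleshing-out of what the paper leaves implicit; note only that Getzler's conjecture gives surjectivity of the cycle map, not an identification $H^{ev}=R^*$, but surjectivity is all you need here.
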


Now we study the cases of the pull-back via $f$ to the twisted sectors $\overline{II}, \overline{II}_1$ and $\overline{II}_{11}$. We study in detail the case of $\overline{II}$, as the others follow similarly. 
We follow the analysis of \cite[Lemma 3.7.0.2]{spencer}. 
Note that by Lemma \ref{duezero}, the rational Chow group and the rational cohomology agree for this stack. Let us consider the quotient map $\pi: \overline{\mathcal{M}}_{0,5} \to [\overline{\mathcal{M}}_{0,5}/S_3]$. There are four cycle classes in the latter stack, which we call $\mathcal{A}, \mathcal{B}, \mathcal{C}, \mathcal{D}$ (following Spencer's notation), defined by: $$\pi^* \mathcal{A}:= 2D_{1,2}+2D_{1,3}+2D_{2,3}, \ \pi^*\mathcal{B}:= D_{1,4}+D_{2,4}+D_{3,4}, \ \pi^*\mathcal{C}:= D_{1,5}+D_{2,5}+D_{3,5}, \ \pi^*\mathcal{D}:= D_{4,5},$$
where $D_{i,j}$ is the divisor in $\overline{\mathcal{M}}_{0,5}$ whose general element is a reducible genus $0$ curve with two smooth components, one with marked points $i,j$. As the relations in $\overline{\mathcal{M}}_{0,5}$ are all known from \cite{keel}, one obtains with some linear algebra the relation:
\begin{equation} \label{relazione}
6 \mathcal{D} + \mathcal{A}= 2 (\mathcal{B} + \mathcal{C}).
\end{equation}
Thus, we have:

\begin{proposition} \label{generazione} The rational Picard group of $\overline{II}$ is freely generated by the three classes, $\mathcal{A}$, $\mathcal{D}$ and $\mathcal{B}-\mathcal{C}$. 
\end{proposition}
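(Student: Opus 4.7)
The plan is to exploit the identification from Proposition \ref{duezero} of $\overline{II}$ as having the same coarse moduli space as $[\overline{\mathcal{M}}_{0,5}/S_3]$, and then reduce the problem to one about the $S_3$-invariant part of $\pic(\overline{\mathcal{M}}_{0,5})_{\mathbb{Q}}$. Rationally, the pullback through $\pi$ identifies
\[
A^1(\overline{II})_{\mathbb{Q}} \cong A^1(\overline{\mathcal{M}}_{0,5})_{\mathbb{Q}}^{S_3},
\]
since the $\mu_N$-gerbe structure on compactified twisted sectors from Theorem \ref{bayercadman} is invisible at the level of rational Chow groups, and the rational Chow group of a global quotient by a finite group is the invariant part.

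Next I would enumerate the orbits of $S_3$ (acting on the first three marked points) on the ten boundary divisors $D_{i,j}$ of $\overline{\mathcal{M}}_{0,5}$: the orbit $\{D_{1,2},D_{1,3},D_{2,3}\}$ sums (up to the factor $2$) to $\mathcal{A}$, the orbit $\{D_{1,4},D_{2,4},D_{3,4}\}$ gives $\mathcal{B}$, the orbit $\{D_{1,5},D_{2,5},D_{3,5}\}$ gives $\mathcal{C}$, and the singleton $\{D_{4,5}\}$ gives $\mathcal{D}$. Since $\pic(\overline{\mathcal{M}}_{0,5})_{\mathbb{Q}}$ is generated by the $D_{i,j}$ via Keel's presentation, symmetrizing shows that $\mathcal{A}, \mathcal{B}, \mathcal{C}, \mathcal{D}$ span $A^1(\overline{\mathcal{M}}_{0,5})_{\mathbb{Q}}^{S_3}$.

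Then I would invoke the relation \ref{relazione}, $6\mathcal{D}+\mathcal{A}=2(\mathcal{B}+\mathcal{C})$, already produced in the excerpt from Keel's relations. This rewrites $\mathcal{B}+\mathcal{C}$ as a $\mathbb{Q}$-linear combination of $\mathcal{A}$ and $\mathcal{D}$. Consequently, $\mathcal{B}$ and $\mathcal{C}$ individually lie in the span of $\{\mathcal{A},\mathcal{D},\mathcal{B}-\mathcal{C}\}$ via the identities $\mathcal{B}=\tfrac{1}{2}((\mathcal{B}+\mathcal{C})+(\mathcal{B}-\mathcal{C}))$ and $\mathcal{C}=\tfrac{1}{2}((\mathcal{B}+\mathcal{C})-(\mathcal{B}-\mathcal{C}))$, so the three classes $\mathcal{A}, \mathcal{D}, \mathcal{B}-\mathcal{C}$ span $A^1(\overline{II})_{\mathbb{Q}}$.

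Finally, to upgrade spanning to a \emph{free} generation, I would appeal to Corollary \ref{aggiunta2}, which tells us that $\dim_{\mathbb{Q}} A^1(\overline{II}) = 3$. A spanning set of cardinality equal to the dimension is automatically a basis. The only subtle point that one might expect to be the main obstacle is verifying that relation \ref{relazione} exhausts the $S_3$-invariant Keel relations (i.e., that there is no further drop in rank), but this is bypassed cleanly by the independent dimension count in Corollary \ref{aggiunta2}: the four generators together with at least this one relation already force the rank to be at most $3$, matching the known dimension from below, so no other independent relation can exist among $\mathcal{A}, \mathcal{D}, \mathcal{B}-\mathcal{C}$.
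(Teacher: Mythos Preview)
Your proposal is correct and follows essentially the same approach as the paper, which presents the proposition with only the word ``Thus'' after relation~\eqref{relazione}: you have simply made explicit the dimension count via Corollary~\ref{aggiunta2} that the paper leaves implicit. One small inaccuracy: you invoke Theorem~\ref{bayercadman} to justify passing to the coarse moduli space, but that theorem applies only to twisted sectors with $g'=0$, whereas $\overline{II}$ has $g'=1$; the identification you need follows instead directly from Proposition~\ref{duezero} together with the general fact that rational Chow groups of a Deligne--Mumford stack coincide with those of its coarse moduli space.
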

We observe that the two classes $\mathcal{B}$ and $\mathcal{C}$ are exchanged by the action of $S_2$ on $[\overline{\mathcal{M}}_{0,5}/S_3]$, which exchanges the last two marked points. This fact will play a role in Proposition \ref{corollariocoker}. So let now $f: \overline{II} \to \overline{\mathcal{M}}_2$ be the restriction of the map from the inertia stack.

\begin{proposition} \label{corollariocoker} The class $\mathcal{B}- \mathcal{C}$ is not in the image of $f^*:A^1(\overline{\mathcal{M}}_2) \to A^1(\overline{II})$, moreover it generates the cokernel of the latter map. 
\end{proposition}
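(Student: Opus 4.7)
The proof has two parts: (a) showing $\mathcal{B} - \mathcal{C}$ lies outside $\mathrm{im}(f^*)$, and (b) showing $\mathrm{im}(f^*)$ has codimension exactly one. The key tool is the $S_2$-action on $[\overline{\mathcal{M}}_{0,5}/S_3]$ exchanging the last two marked points, highlighted in the paragraph just before the statement of the proposition.

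For (a), first I would observe that this $S_2$-action corresponds, under the identification of Proposition~\ref{duezero}, to swapping the two bielliptic involutions of a generic bielliptic genus~$2$ curve: exchanging $\lambda$ and $q$ amounts to interchanging which of $\pi_E(p_1), \pi_E(p_2)$ is designated as a branch point of the elliptic structure $\pi_E$. Since $f$ only remembers the underlying curve and not the bielliptic involution, one has $f \circ \sigma = f$ for $\sigma \in S_2$ at the level of coarse moduli, so $\mathrm{im}(f^*) \subseteq A^1(\overline{II})^{S_2}$. As $\mathcal{A}$ and $\mathcal{D}$ are invariant while $\mathcal{B}$ and $\mathcal{C}$ are swapped, relation~\eqref{relazione} implies that the invariant subspace is $\langle \mathcal{A}, \mathcal{D}\rangle$ and the anti-invariant subspace is $\langle \mathcal{B}-\mathcal{C}\rangle$. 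This gives~(a).

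For (b), since $A^1(\overline{\mathcal{M}}_2)_{\mathbb{Q}}$ has rank two by Mumford, and $\mathrm{im}(f^*)$ is contained in the two-dimensional subspace $\langle\mathcal{A}, \mathcal{D}\rangle$, the cokernel has dimension at least one, with equality if and only if $f^*$ is injective. I would prove injectivity by computing $f^*\lambda$ and $f^*\delta_1$ following Spencer~\cite[Lemma 3.7.0.2]{spencer}: lift $f$ further along the quotient map $\pi: \overline{\mathcal{M}}_{0,5} \to [\overline{\mathcal{M}}_{0,5}/S_3]$, use the explicit bielliptic construction of Proposition~\ref{duezero} to identify $\pi^* f^*\lambda$ and $\pi^* f^*\delta_1$ as rational combinations of the boundary divisors $D_{i,j}$, then descend to expressions in $\mathcal{A}$ and $\mathcal{D}$ whose linear independence can be checked directly.

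The main obstacle is the explicit bookkeeping in~(b): one must correctly identify which strata of $\partial \overline{\mathcal{M}}_2$ meet the bielliptic locus and with which multiplicities, lift these to the boundary divisors of $\overline{\mathcal{M}}_{0,5}$ through the bielliptic construction, and handle the orbifold weights carefully (since $\overline{II}$ is a $\mu_2$-gerbe over $[\overline{\mathcal{M}}_{0,5}/S_3]$, in analogy with Theorem~\ref{bayercadman}). Once these pullbacks are in hand, the injectivity of $f^*$ and the identification of the cokernel with $\mathbb{Q}\cdot(\mathcal{B} - \mathcal{C})$ follow at once.
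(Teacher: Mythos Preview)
Your part (a) is essentially the paper's argument, phrased slightly differently. The paper makes the equivariance precise by factoring $f = i \circ g$ through the moduli stack $\overline{B}_2$ of bielliptic curves (without a chosen involution), whose coarse space is $[\overline{\mathcal{M}}_{0,5}/S_3 \times S_2]$; then $g^*$ identifies with pullback along the quotient by the extra $S_2$, so its image is exactly the invariant part. Your formulation $f \circ \sigma = f$ captures the same content.

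Your part (b) takes a genuinely different route. You propose to compute $f^*\lambda$ and $f^*\delta_1$ explicitly in the basis $\mathcal{A},\mathcal{D}$ and check linear independence, which is sound but, as you note, requires careful boundary and stack-weight bookkeeping. The paper sidesteps this entirely: it introduces the map $p:\overline{II} \to [\overline{\mathcal{M}}_{1,2}/S_2]$ sending a bielliptic curve to its elliptic quotient with the two branch points, observes that both $A^1(\overline{\mathcal{M}}_2)$ and $A^1([\overline{\mathcal{M}}_{1,2}/S_2])$ are two-dimensional and generated by boundary classes, and argues that $p_* \circ f^*$ is surjective by matching boundary strata. This immediately gives $\rk(f^*)=2$ without any divisor-class computation on $\overline{\mathcal{M}}_{0,5}$. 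Your approach is more self-contained once one has Spencer's calculation in hand; the paper's is cleaner but relies on knowing $A^1$ of the auxiliary target.
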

\begin{proof} 
We start by showing that the class $\mathcal{B}- \mathcal{C}$ is not in the image of $f^*$. Let $\overline{B}_2 \subset \overline{\mathcal{M}}_2$ be the moduli stack of bielliptic curves of genus $2$. Then the map $f: \overline{II} \to \overline{\mathcal{M}}_2$ factors via the inclusion $i: \overline{B}_2 \to \overline{\mathcal{M}}_2$: $f=i \circ g$. The resulting map $g: \overline{II} \to \overline{B}_2$ forgets the bielliptic involution. A proof analogous to that of Proposition \ref{duezero} shows that $\overline{B}_2$ has the same coarse moduli space as $[\overline{\mathcal{M}}_{0,5}/S_3 \times S_2]$. So we have a commutative diagram:
$$
\xymatrix{\overline{II} \ar[d] \ar[r]^{g}& \overline{B}_2 \ar[d] \ar[r]^i& \overline{\mathcal{M}}_2 \\
[\overline{\mathcal{M}}_{0,5}/S_3] \ar[r]^{\hspace{-0.3cm} \tilde{g}} & [\overline{\mathcal{M}}_{0,5}/S_3 \times S_2]&
}
$$
where the vertical arrows induce isomorphisms in the rational Chow ring and rational cohomology, $\tilde{g}: [\overline{\mathcal{M}}_{0,5}/S_3] \to [\overline{\mathcal{M}}_{0,5}/S_3 \times S_2]$ is the quotient map, and the action of $S_2$  symmetrizes the last two marked points. The classes $\mathcal{A}$ and $\mathcal{D}$ are invariant under the action of $S_2$, while the class $\mathcal{B}- \mathcal{C}$ is alternating. This shows in particular that the class $\mathcal{B}- \mathcal{C}$ cannot be in the image of $i^*$ and in particular it cannot be in the image of $f^*$.

So the Proposition is proved if we show that the linear map $f^*$ has rank $2$. Let $p: \overline{II} \to [\overline{\mathcal{M}}_{1,2}/S_2]$ be the map that associates to each bielliptic curve of genus $2$ the corresponding genus $1$ curve with the two branch points. The Chow group of both $[\overline{\mathcal{M}}_{1,2}/S_2]$ and $\overline{\mathcal{M}}_2$ is freely generated by two boundary strata classes, and it is easy to see that the linear map $$p_*\circ f^*: A^1(\overline{\mathcal{M}}_2) \to A^1([\overline{\mathcal{M}}_{1,2}/S_2])$$ is surjective (hence an isomorphism), and in particular this shows that $f^*$ has rank 2.
\end{proof}

\noindent The class $\mathcal{B}- \mathcal{C}$ plays an important role. For this reason it deserves a special name.

\begin{notation} \label{classispeciali} We call $\mathcal{S}$ the class of $\mathcal{B}-\mathcal{C}$ in $A^1(\overline{II})=H^2(\overline{II})$. We call $\mathcal{S}_1, \mathcal{S}_{11}$ the classes in $A^1(\overline{II}_1)$ and $A^1(\overline{II}_{11})$ obtained with the isomorphism of Proposition \ref{aggiunta}. Let now $I_1$, $I_2$ be a partition of $[n]$ in non-empty subsets. After identifying the vector spaces: $$A^*(\overline{II}_{11}^{I_1,I_2})=A^*(\overline{II}_{11})\otimes A^*(\overline{\mathcal{M}}_{0,I_1+1}) \otimes A^*(\overline{\mathcal{M}}_{0,I_2+1})$$ we call $\mathcal{S}^{I_1,I_2}:= (\mathcal{S}\otimes 1\otimes1)$. Analogously we will refer to $\mathcal{S}^{[n]}$ as the class obtained by \virg{adding a rational tail onto $\mathcal{S}_1$}.
\end{notation}

We can then prove the main result of this section, which depends upon Conjecture \ref{getzlerremark} via Corollary \ref{corollariogetzler}.
\begin{theoremstar} (See Conjecture \ref{getzlerremark}.) \label{positivo}  The even Chen-Ruan cohomology ring $H^{ev}_{CR}(\overline{\mathcal{M}}_{2,n})$ is generated, as an algebra over $H^{ev}(\overline{\mathcal{M}}_{2,n})$, by the fundamental classes of the twisted sectors and by the classes $\mathcal{S}, \mathcal{S}^{[n]}, \mathcal{S}^{I_1,I_2}$ (defined in Notation \ref{classispeciali}) for all the possible partitions $\{I_1,I_2\}$ of $[n]$ in non-empty subsets.
\end{theoremstar}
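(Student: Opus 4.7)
The plan is to reduce the statement to two surjectivity results already at our disposal. By definition $H^{ev}_{CR}(\overline{\mathcal{M}}_{2,n})$ splits as $\bigoplus_X H^{ev}(X)$ over the connected components of the inertia stack, so it suffices to hit each summand $H^{ev}(X)$ by the proposed generators and their products. The key computational input is Corollary \ref{semplifica}: the excess bundle is trivial whenever any of the three automorphisms labeling a sector of the second inertia stack is the identity, so for every $\alpha\in H^{ev}(\overline{\mathcal{M}}_{2,n})$, every twisted sector $X$, and every $\gamma\in H^{ev}(X)$ one obtains
\[
\alpha *_{CR} \gamma \;=\; f_X^{*}(\alpha)\cdot\gamma \;\in\; H^{ev}(X),
\]
where $f_X\colon X\to\overline{\mathcal{M}}_{2,n}$ is the canonical forgetful map. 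In particular $\alpha *_{CR} 1_X = f_X^{*}(\alpha)$, and $\alpha *_{CR} \mathcal{S}^{I_1,I_2} = f^{*}(\alpha)\cdot \mathcal{S}^{I_1,I_2}$ is an ordinary cup product inside the bielliptic sector.

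The first part of the proof will dispose of every twisted sector $X$ outside the bielliptic families. Applying Proposition \ref{suriettivita1} and Corollary \ref{corollariogetzler} (the latter conditional on Getzler's Conjecture \ref{getzlerremark}), together with Lemma \ref{cited} to transfer the surjectivity from $\overline{\mathcal{M}}_{2,k}^{NR}$ to the rational-tail twisted sectors, one sees that $f_X^{*}$ surjects onto $H^{ev}(X)$ for every such $X$. By the display above, $H^{ev}(\overline{\mathcal{M}}_{2,n})*_{CR}1_X$ already exhausts $H^{ev}(X)$, so the fundamental classes of the twisted sectors alone handle this part of the decomposition.

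The remaining step concerns the bielliptic sectors $\overline{II}$ (when $n=0$), $\overline{II}_{1}^{[n]}$ (when $n\geq 1$), and $\overline{II}_{11}^{I_1,I_2}$ (when $n\geq 2$). Propositions \ref{generazione}, \ref{aggiunta}, and \ref{corollariocoker} show that the cokernel of $f^{*}\colon H^{ev}(\overline{\mathcal{M}}_{2,n})\to H^{ev}(\overline{II})$ (resp. onto $H^{ev}(\overline{II}_1)$ or $H^{ev}(\overline{II}_{11})$) is one-dimensional and spanned by $\mathcal{S}$ (resp. $\mathcal{S}_1\cong\mathcal{S}_{11}$), which settles the $n=0$ case immediately. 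For $n\geq 1$ one invokes the Künneth decomposition
\[
H^{ev}(\overline{II}_{11}^{I_1,I_2}) \;=\; H^{ev}(\overline{II}_{11})\otimes H^{ev}(\overline{\mathcal{M}}_{0,|I_1|+1})\otimes H^{ev}(\overline{\mathcal{M}}_{0,|I_2|+1})
\]
(and its analogue for $\overline{II}_1^{[n]}$). The pullback from $\overline{\mathcal{M}}_{2,n}$ is surjective onto the Künneth summand with the first factor lying in the image of $f^{*}_{\overline{II}_{11}}$ (using, as in the proof of Lemma \ref{cited}, the pullbacks of boundary divisors of $\overline{\mathcal{M}}_{2,n}$ that separate marked points along the rational tails), whereas the cup products $f^{*}(\alpha)\cdot\mathcal{S}^{I_1,I_2}$ sweep out the complementary summand $\mathcal{S}_{11}\otimes H^{ev}(\overline{\mathcal{M}}_{0,|I_1|+1})\otimes H^{ev}(\overline{\mathcal{M}}_{0,|I_2|+1})$ since $\mathcal{S}_{11}\in H^{2}(\overline{II}_{11})$ together with $1\in H^{0}(\overline{II}_{11})$ completes the image of $f^{*}_{\overline{II}_{11}}$ to a basis of $H^{ev}(\overline{II}_{11})$.

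The main obstruction, and the conceptual reason one must include a separate generator $\mathcal{S}^{I_1,I_2}$ for every partition of $[n]$ rather than a single class $\mathcal{S}$, is that no orbifold multiplication transports $\mathcal{S}$ from one bielliptic inertia component to another: both components are labelled by an involutive automorphism, so the Chen--Ruan product of two classes supported on order-$2$ twisted sectors must land in the untwisted sector, never on $\overline{II}_{11}^{I_1,I_2}$. Once this combinatorial point is granted, the whole theorem is just the combination of the cohomological surjectivities of Section \ref{algebra} with the Künneth argument above.
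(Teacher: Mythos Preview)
Your proof is correct and follows essentially the same approach as the paper: decompose $H^{ev}_{CR}$ over the twisted sectors, invoke Corollary \ref{corollariogetzler} (via Proposition \ref{suriettivita1} and Lemma \ref{cited}) for surjectivity of $f_X^*$ on all sectors except the bielliptic ones, and then use Proposition \ref{corollariocoker} together with the K\"unneth decomposition to handle $\overline{II}$, $\overline{II}_1^{[n]}$, $\overline{II}_{11}^{I_1,I_2}$. Your final paragraph is not part of the generation proof but rather a remark on minimality of the generating set (cf.\ Remark \ref{negativo}); it is not needed for the theorem as stated, and the argument there is looser than the rest (the paper itself relegates that point to ``lengthy computations'').
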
 

\begin{proof}  We have proved in Corollary \ref{corollariogetzler}, that for any twisted sector $X$ of $\overline{\mathcal{M}}_{2,n}$, the pull-back map:
$$
f^*:H^{ev}(\overline{\mathcal{M}}_{2,n}) \to H^{ev}(X)
$$
is surjective, unless $X$ is one among $\overline{II}, \overline{II}^{[n]}, \overline{II}^{I_1,I_2}$ . In these cases, we proved in this section, in particular in Proposition \ref{corollariocoker}, that $f^*$ is surjective onto the quotient $H^{ev}(X)/\langle \mathcal{S}^{I_1,I_2}\rangle_{ I_1 \sqcup I_2=[n] }$. As we are adding all the classes $\mathcal{S}, \mathcal{S}^{[n]}, \mathcal{S}^{I_1,I_2}$ as further generators, the theorem is then proved.
\end{proof}
We comment on the optimality of this result:
\begin{remark} \label{negativo} The Chen-Ruan cohomology ring $H^*_{CR}(\overline{\mathcal{M}}_{2,n})$ \emph{strictly} contains the algebra over $H^*(\overline{\mathcal{M}}_{2,n})$ generated by the fundamental classes of the twisted sectors. Also, the even part of the Chen--Ruan cohomology $H^{ev}_{CR}(\overline{\mathcal{M}}_{2,n})$ \emph{strictly} contains the algebra over $H^{ev}(\overline{\mathcal{M}}_{2,n})$ generated by the fundamental classes of the twisted sectors.
The classes $\mathcal{S}^{I_1,I_2}$ cannot be obtained as the Chen--Ruan product of a fundamental class of a twisted sector and a cycle in $\overline{\mathcal{M}}_{2,n}$. It is possible to show by means of lengthy computations that these classes actually do not belong to the algebra generated by the fundamental classes of the twisted sectors.
\end{remark}
We conclude with a couple of considerations. First, the relations among the generators of $H^{ev}_{CR}(\overline{\mathcal{M}}_{2,n})$ are explicitly computable as a consequence of the results of this section and the previous one. However it seems to be very hard to find a concise description of them. 

We believe that some of the odd classes are not in the algebra generated over $H^*(\overline{\mathcal{M}}_{2,n})$ by the fundamental classes. This should follow from the fact that the pull-back does not surject onto the odd cohomology of all the twisted sectors. This in turn would be a consequence of the vanishing of $H^{11}(\overline{\mathcal{M}}_{2,10})$. A proof of this vanishing is not known so far.

\label{sezionepull}


\subsection{The orbifold tautological ring}
 
In this section we give a proposal for an orbifold tautological ring of stable genus $2$ curves, in analogy with what we have proposed in \cite[Section 7]{pagani1} for genus $1$. We develop the theory in the context of Chen--Ruan cohomology. Let the \emph{cohomological tautological ring} $RH^*(\overline{\mathcal{M}}_{g,n})$ be defined as the image of $R^*(\overline{\mathcal{M}}_{g,n})$ in even cohomology under the cycle map. 

\begin{propositionstar}  (See Conjecture \ref{getzlerremark}.) \label{factorization} \label{propositionstar} Let $X$ be a twisted sector of $I(\overline{\mathcal{M}}_{2,n})$, and $f:X \to \overline{\mathcal{M}}_{2,n}$. Then the push-forward map in even cohomology factors through the cohomological tautological ring.
$$\xymatrix{H^{ev}(X) \ar[rr]^{f_*}\ar@{.>}[dr]&& H^*(\overline{\mathcal{M}}_{2,n}) \\
& RH^*(\overline{\mathcal{M}}_{2,n})\ar@{^{(}->}[ur]& }
$$
\end{propositionstar}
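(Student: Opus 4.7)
The plan is to factorize $f$ through morphisms that are known to preserve the tautological ring under push-forward, and then to check that $H^{ev}(X)$ already lies in a tautological subring of $H^{*}(X)$; once both ingredients are in place, the inclusion $f_{*}(H^{ev}(X))\subseteq RH^{*}(\overline{\mathcal{M}}_{2,n})$ is a formal consequence of the closure properties of $R^{*}$ in Definition \ref{tautolofaber}.

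First I would reduce, via Proposition \ref{pirttheorem}, to the case where $X$ is a core twisted sector of $\overline{\mathcal{M}}_{2,k}^{NR}$: the rational-tail factors $\overline{\mathcal{M}}_{0,I_{s}+1}$ contribute only Keel-tautological classes, and the canonical map to $\overline{\mathcal{M}}_{2,n}$ obtained by gluing at the attaching points is one of the generating morphisms of the tautological system. For core twisted sectors with smooth general element, I would factor $f$ through the $\mu_{N}$-gerbe of Theorem \ref{bayercadman} over $[\overline{\mathcal{M}}_{0,d}/S_{A}]$ in the case $g'=0$, or through $[\overline{\mathcal{M}}_{0,5}/S_{3}]$ in the bielliptic case $g'=1$ via Proposition \ref{duezero}, followed by the admissible-cover inclusion into the corresponding (hyper)elliptic locus, which is itself a composition of gluing and forgetful maps from moduli of pointed curves. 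For core twisted sectors coming from the boundary, $f$ is already an inclusion into a boundary stratum which is (a quotient of) the image of a gluing morphism from a product $\prod_{v}\overline{\mathcal{M}}_{g(v),n(v)}$, as described in Examples \ref{case1}--\ref{case4}. In every case $f_{*}$ sends the tautological ring of $X$ into $RH^{*}(\overline{\mathcal{M}}_{2,n})$.

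Second I would verify that $H^{ev}(X)$ sits inside this tautological subring of $X$. For sectors with smooth general element this is Theorem \ref{generatodivisori}: $H^{ev}(X)=A^{*}(X)$ is generated by divisor classes, all of which are pull-backs of $S_{A}$-invariant boundary classes on $\overline{\mathcal{M}}_{0,d}$ (or on $\overline{\mathcal{M}}_{0,5}$ in the bielliptic case), hence tautological on $X$; in particular, via Proposition \ref{generazione}, the class $\mathcal{S}=\mathcal{B}-\mathcal{C}$ is a divisor on $\overline{II}$ and needs no separate treatment. For sectors from the boundary I would K\"unneth-decompose $H^{ev}(X)$ along the vertices of the dual graph of the generic element; each factor is the even cohomology of some $\overline{\mathcal{M}}_{g',m}$ with $g'\leq 1$. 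Factors with $g'=0$ are tautological by Keel; factors with $g'=1$ and $m\leq 10$ are tautological by Belorousski's Proposition \ref{belor}. The remaining cases $g'=1$, $m\geq 11$ are tautological precisely under Getzler's Conjecture \ref{getzlerremark}: this is the unique place where the conjecture enters and the reason for the star on the statement.

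The hard part will be the second step for boundary sectors carrying a $\overline{\mathcal{M}}_{1,m}$ factor for large $m$: unconditionally one can only access the tautological part of $H^{ev}(\overline{\mathcal{M}}_{1,m})$, and without Conjecture \ref{getzlerremark} there could in principle be non-tautological even classes on $\overline{\mathcal{M}}_{1,m}$ that contribute to $H^{ev}(X)$. Everything else is available from the previous sections. As a side check on the bielliptic sectors, the $S_{2}$-alternating argument in the proof of Proposition \ref{corollariocoker} yields $f_{*}\mathcal{S}=0$ directly, and the same factorization through the quotient $[\overline{\mathcal{M}}_{0,5}/S_{3}]\to[\overline{\mathcal{M}}_{0,5}/S_{3}\times S_{2}]$ propagates through the rational-tail gluings, showing $f_{*}\mathcal{S}^{[n]}=f_{*}\mathcal{S}^{I_{1},I_{2}}=0$ and confirming the conclusion on these sectors independently of the surjectivity of $f^{*}$.
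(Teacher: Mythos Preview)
Your overall strategy is sound and quite close to the paper's, but there is one genuine gap and one structural difference worth pointing out.

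The gap is in your treatment of the core twisted sectors with smooth general element. You claim that the admissible-cover map $\overline{\mathcal{M}}_A \to \overline{\mathcal{M}}_{2,k}$, obtained by sending a cyclic cover to its total space, ``is itself a composition of gluing and forgetful maps from moduli of pointed curves''. This is not true: that map is not one of the structural morphisms of the tautological system in Definition~\ref{tautolofaber}, and the fact that admissible-cover loci are tautological is a genuine theorem (this is exactly why the paper has to invoke \cite[Proposition~1]{faberpanda} for the cases $\overline{\tau}_{11111}$, $\overline{\tau}_{111111}$). Moreover, Faber--Pandharipande gives you only that $f_*(1_X)$ is tautological; to conclude that $f_*$ of \emph{every} divisor class on $X$ is tautological you would still need to write each such class as $f^*(\beta)$ and use the projection formula, i.e.\ you would need the surjectivity of $f^*$ from Proposition~\ref{suriettivita1}. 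At that point you are essentially running the paper's argument.

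This is precisely how the paper organises the proof, and it is worth comparing. The paper does not attempt to define or verify a tautological ring on each $X$. Instead it invokes Theorem~\ref{positivo} (which already packages Corollary~\ref{corollariogetzler} and hence Getzler's conjecture) to write every even class on $X$ as $f^*(\beta)$, or $f^*(\beta)+c\,\mathcal{S}^{I_1,I_2}$ in the bielliptic case, and then reduces via the projection formula to checking only that $f_*(1_X)$ and $f_*(\mathcal{S}^{\bullet})$ are tautological. For the fundamental classes the paper then observes that (i) all of $H^*(\overline{\mathcal{M}}_{2,k})$ is tautological for $k\le 4$, so every non-$\tau$ smooth-type sector is handled for free, and (ii) for $k=5,6$ only the $\tau$-sectors remain and one cites \cite{faberpanda}. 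Your observation that $f_*\mathcal{S}=0$ via the $S_2$-alternating argument is correct and pleasant, but the paper instead just notes that $\mathcal{S},\mathcal{S}_1,\mathcal{S}_{11}$ land in $\overline{\mathcal{M}}_{2,k}$ with $k\le 2$, where everything is tautological anyway. Your identification of where Getzler's conjecture enters (the $\overline{\mathcal{M}}_{1,m}$ factors in the first row of Example~\ref{case1}) is correct and matches the paper's use of it through Theorem~\ref{positivo}.
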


\begin{proof} Using Theorem \ref{positivo}, we reduce the claim to proving that the push-forward of the fundamental classes of the twisted sectors, and of the special classes $\mathcal{S}^{I_1, I_2}$, are tautological. The cohomology of $\overline{\mathcal{M}}_{2,n}$ is completely tautological when $n\leq 4$. Indeed, this follows by comparing the Betti numbers of $\overline{\mathcal{M}}_{2,n}$ (see \cite[pp. 20, p.21]{jonas}) with the ranks of the intersection pairings (see \cite[p.11]{stephanie}).  So the push-forwards of $\mathcal{S}, \mathcal{S}_1, \mathcal{S}_{11}$ are tautological. Moreover, the push-forwards of the special classes $\mathcal{S}^{[n]}$ and $\mathcal{S}^{I_1,I_2}$ (see Notation \ref{classispeciali} for their definition) are tautological by the defining property of the tautological ring of being closed under push-forward via natural maps. Thus we are left to show that the push-forwards of the fundamental classes of all the twisted sectors of $\overline{\mathcal{M}}_{2,n}$ are tautological classes. Moreover, again by the closure under push-forward via natural maps, this reduces to showing that the push-forwards of the fundamental classes of the twisted sectors of $\overline{\mathcal{M}}_{2,n}^{NR}$ are tautological. For the twisted sectors that come from the boundary, this follows from the fact that they are constructed by gluing classes in $\overline{\mathcal{M}}_{1,n}$, $n \leq 4$, classes in $[\overline{\mathcal{M}}_{1,n}/S_2]$, $n \leq 6$, and classes in $\overline{\mathcal{M}}_{0,n}$ or $[\overline{\mathcal{M}}_{0,n}/S_2]$ (the cohomology of these spaces is all tautological, see for instance \cite{belo} or \cite{getzler2}).

Finally, if the general element of a twisted sector of $\overline{\mathcal{M}}_{2,n}^{NR}$ is smooth, then either $n \leq 4$ (and in this range we have already seen that the cycles are all tautological), or the twisted sector is either $\overline{\tau}_{11111}$ or $\overline{\tau}_{111111}$ (see Notation \ref{notazionemg}, \ref{notazionecompsmooth}). In these cases, the image is the hyperelliptic locus with $5$ or $6$ of the Weierstrass points marked. The result in this case follows from \cite[Proposition 1]{faberpanda}.
\end{proof}

\noindent This allows us to define:
\begin{definitionstar}  \label{chenruantautolo} We define the \emph{orbifold tautological ring} as:

$$RH^*_{CR}(\overline{\mathcal{M}}_{2,n}):=RH^*(\overline{\mathcal{M}}_{2,n}) \bigoplus_{X \ \textrm{twisted sector}} H^{ev}(X).$$
This is a subring of $H^{ev}_{CR}(\overline{\mathcal{M}}_{2,n})$ as a consequence of Theorem \ref{positivo} and Proposition \ref{propositionstar}.
\end{definitionstar}
 The results of the previous sections, in light of this definition, can be viewed as saying that we have studied generators (and relations) of $RH^*_{CR}(\overline{\mathcal{M}}_{2,n})$ as an algebra over $RH^*(\overline{\mathcal{M}}_{2,n})$.
\begin{corollary} 
The orbifold tautological ring $RH^*_{CR}(\overline{\mathcal{M}}_{2,n})$ is a Poincar\'e duality ring with socle in top degree $3+n$ if and only if (the ordinary) tautological ring $RH^*(\overline{\mathcal{M}}_{2,n})$ is a Poincar\'e duality ring with socle in top degree $3+n$.
\end{corollary}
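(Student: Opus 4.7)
The plan is to exploit the orthogonal block structure of the Chen--Ruan Poincar\'e pairing with respect to the decomposition of $I(\overline{\mathcal{M}}_{2,n})$ into connected components. For $\alpha \in H^*(X_g)$ and $\beta \in H^*(X_h)$, the pairing $\langle \alpha,\beta\rangle_{CR}$ vanishes unless $X_h = \iota X_g$ (Remark \ref{mappaiota}), in which case it coincides with the ordinary Poincar\'e pairing on the smooth proper stack $X_g$, after identifying $H^*(\iota X_g)$ with $H^*(X_g)$ via $\iota^*$. In particular the untwisted summand $RH^*(\overline{\mathcal{M}}_{2,n})$ pairs only with itself, and each twisted sector $X$ pairs only with $\iota X$.

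The first step is to observe that the twisted-sector block of the CR pairing on $RH^*_{CR}$ is \emph{automatically} perfect, independently of Conjecture \ref{gorensteinconj}. Indeed, by Definition \ref{chenruantautolo} this summand is the full even cohomology $\bigoplus_X H^{ev}(X)$; since each twisted sector is smooth and proper, ordinary Poincar\'e duality pairs $H^k(X)$ with $H^{2\dim X - k}(X)$, and it restricts to a perfect pairing on $H^{ev}$ (because $k$ and $2\dim X - k$ share the same parity). Identifying $H^{ev}(X)$ with $H^{ev}(\iota X)$ via $\iota^*$ then yields a perfect block pairing, which does not require any tautological reduction on the twisted sectors.

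It remains to check that the top CR degree $2(3+n)$ is attained only on the untwisted summand. A class in $H^k(X_g)$ sits in CR degree $k + 2 a(X_g)$, so to reach $2(3+n)$ one would need $\dim X_g + a(X_g) = 3+n$; by Proposition \ref{codimension} this forces $a(\iota X_g)=0$. However, for any twisted sector the normal bundle decomposes as a sum of non-trivial characters of the stabilizer $\mu_N$, each of the form $\chi^{k_i}$ with $0 < k_i < N$, so $a(\iota X_g) = \sum_i (N-k_i)/N > 0$, a contradiction. Thus the top CR-degree piece of $RH^*_{CR}$ coincides with the top piece of $RH^*(\overline{\mathcal{M}}_{2,n})$, and the one-dimensionality of the socle in top degree $3+n$ is equivalent on the two sides. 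Combining with the first step, $RH^*_{CR}(\overline{\mathcal{M}}_{2,n})$ is a Poincar\'e duality ring with socle in top degree $3+n$ if and only if the restriction of the pairing to $RH^*(\overline{\mathcal{M}}_{2,n})$ has the same property. The only delicate point is the strict age-positivity of $\iota X_g$ for every twisted sector, which is nonetheless an elementary consequence of the weight-space decomposition of the normal bundle outlined above.
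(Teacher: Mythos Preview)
The paper states this corollary without proof, treating it as an immediate consequence of Definition~\ref{chenruantautolo}. Your block-decomposition argument in the first two steps is correct and captures the essential content: the Chen--Ruan pairing couples $H^*(X_g)$ only with $H^*(\iota X_g)$, and since on every twisted sector the orbifold tautological ring takes the \emph{full} even cohomology of a smooth proper stack, those blocks are automatically perfect; hence the only possible failure of non-degeneracy lives on the untwisted block $RH^*(\overline{\mathcal{M}}_{2,n})$. This is exactly the mechanism the paper has in mind.

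There is, however, a gap in your third step. Your claim that $a(\iota X_g)>0$ rests on writing the normal bundle of $X_g$ as a sum of strictly nontrivial $\mu_N$-characters, which is valid only when that normal bundle has positive rank, i.e.\ when $\codim(X_g,\overline{\mathcal{M}}_{2,n})>0$. For $n=0$ this fails: the hyperelliptic twisted sector $\overline{\tau}$ of $\overline{\mathcal{M}}_2$ has codimension zero (every smooth genus-$2$ curve carries the hyperelliptic involution; cf.\ the first line of~\ref{tabellona}), so its normal bundle is the zero bundle and both $a(\overline{\tau})$ and $a(\iota\,\overline{\tau})$ vanish. Consequently the top Chen--Ruan degree $3$ is also attained on $\overline{\tau}$, and the degree-$3$ piece of $H^*_{CR}(\overline{\mathcal{M}}_2)$ is two-dimensional --- this is precisely the coefficient $2$ of $t^3$ (and of $t^0$) in Theorem~\ref{poincare2}. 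Thus your sentence ``the top CR degree $2(3+n)$ is attained only on the untwisted summand'' is incorrect for $n=0$; the empty sum over weights is $0$, not positive. For $n\ge 1$ the generic pointed curve has trivial automorphism group, every twisted sector has positive codimension, and your argument goes through. If one reads ``Poincar\'e duality ring'' here as the assertion that the Chen--Ruan pairing is non-degenerate (rather than insisting on a one-dimensional socle), then your first two steps already suffice for all $n$ and the third step can simply be dropped.
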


 To conclude, we make a comment on the tautological stringy Chow ring. A more natural approach to defining $RH^*_{CR}(\overline{\mathcal{M}}_{2,n})$ is by defining $RH^*(X)$ for every $X$ twisted sector of $\overline{\mathcal{M}}_{2,n}$. If $X$ is a twisted sector of $\overline{\mathcal{M}}_{2,n}$, we want to define $R^*(X)$. A possible sensible definition that agrees with our Definition \ref{chenruantautolo}, is as follows. If $X$ is a twisted sector whose general element contains a smooth, genus $2$ curve, then we declare all its rational Chow group to be tautological. If instead $X$ is a twisted sector whose general element is a nodal stable curve, it is obtained by adding rational tails to one of the twisted sectors among Examples \ref{case1}, \ref{case2}, \ref{case3}, \ref{case4}. Again we declare all its rational Chow groups to be tautological, unless $X$ is obtained by adding rational tails to one of the twisted sectors in the first line of Example \ref{case1}. In this case, the coarse moduli space of $X$ is isomorphic to: $$\overline{\mathcal{M}}_{1,k}\times \prod_{i \leq 3} \overline{\mathcal{M}}_{0,k_i}.$$
So, we pose: $$R^*(X):= R^*(\overline{\mathcal{M}}_{1,k}) \times \prod_{i \leq 3} A^*(\overline{\mathcal{M}}_{0,k_i}).$$
Along these lines, one could define $RH^*_{CR}(\overline{\mathcal{M}}_{2,n})$ as the image in the Chen--Ruan cohomology of the tautological stringy Chow ring.


\appendix \section{The inertia stack of $[{\mathcal{M}}_{0,n}/S_2]$ and $[\mathcal{M}_{1,n}/S_2]$}

This section is devoted to the study of the inertia stack (see Section \ref{sectiondefinertia}, and especially Definition \ref{definertia}) of the two stacks $[\mathcal{M}_{0,n}/S_2]$ and $[\mathcal{M}_{1,n}/S_2]$, where $S_2$ is the symmetric group, acting on the first two marked points. It turns out that some of the twisted sectors of these inertia stacks appear as building blocks of some of the twisted sectors of $\overline{\mathcal{M}}_{2,n}$ that come from the boundary (see Definition \ref{bordo}). These twisted sectors are showed in Examples \ref{case1}, \ref{case2}, \ref{case3} and \ref{case4}. Among those building blocks, there are also the compactifications (Definition \ref{compactifiedtwisted}) of some of the twisted sectors of the inertia stack of $\mathcal{M}_{1,n}$. For the inertia stack of $\mathcal{M}_{1,n}$ and $\overline{\mathcal{M}}_{1,n}$, we refer the reader to \cite[Section 3]{pagani1}.

Note first that $[\mathcal{M}_{0,3}/S_2]$ is isomorphic to $B \mu_2$, and therefore its inertia stack is simply two copies of $B \mu_2$ itself. We shall call $0_1^*$ the twisted sector of the inertia stack of $[\mathcal{M}_{0,3}/S_2]$.

\begin{lemma}\label{inertiazero} The inertia stack of $[\mathcal{M}_{0,4}/S_2]$ is:
$$
I([\mathcal{M}_{0,4}/S_2])=([\mathcal{M}_{0,4}/S_2], 1) \sqcup (0_2^*, -1)
$$
where $0_2^*$ is isomorphic to $B \mu_2$.
\end{lemma}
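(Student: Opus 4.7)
\emph{Proof plan.} For a global quotient $[X/G]$ of a scheme by a finite group, there is a standard decomposition of the inertia stack
\[
I([X/G]) \;\cong\; \coprod_{[g]} [X^g / C_G(g)],
\]
where the coproduct runs over conjugacy classes of $G$ and $C_G(g)$ is the centralizer. The plan is to apply this to $X=\mathcal{M}_{0,4}$ and $G=S_2$ (acting by swapping the first two markings), and then identify the nontrivial fixed locus explicitly.

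Since $S_2$ is abelian with two conjugacy classes, $I([\mathcal{M}_{0,4}/S_2])$ has at most two components. The identity element gives back $[\mathcal{M}_{0,4}/S_2]$, which is the untwisted sector. For the non-trivial element $\sigma \in S_2$, I would compute the fixed locus $\mathcal{M}_{0,4}^{\sigma}$ using the cross-ratio model $\mathcal{M}_{0,4} \cong \mathbb{P}^1 \setminus \{0,1,\infty\}$: with the normalization $p_3=0$, $p_4=\infty$, $p_1=1$, $p_2=\lambda$, the transposition of the first two markings sends $\lambda$ to $1/\lambda$. The equation $\lambda = 1/\lambda$ has the unique solution $\lambda = -1$ in $\mathbb{P}^1 \setminus \{0,1,\infty\}$, and at this point the curve $(\mathbb{P}^1; 1,-1,0,\infty)$ admits the involution $z \mapsto -z$ which exchanges $p_1,p_2$ and fixes $p_3,p_4$.

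Thus $\mathcal{M}_{0,4}^{\sigma}$ consists of a single reduced point and $C_{S_2}(\sigma)=S_2$, so the twisted sector of $I([\mathcal{M}_{0,4}/S_2])$ corresponding to $\sigma$ is $[\mathrm{pt}/S_2] \cong B\mu_2$. Calling this sector $0_2^*$ and tagging it with the stabilizing automorphism $-1$ gives the claimed decomposition. The only subtle point is checking that the $S_2$-action on $\mathcal{M}_{0,4}$ genuinely corresponds to $\lambda \mapsto 1/\lambda$ in the chosen cross-ratio coordinate (so that the fixed locus really reduces to $\{-1\}$ and not, say, a larger subscheme), but this is a direct computation from the definition of the cross-ratio.
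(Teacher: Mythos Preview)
Your proof is correct and follows essentially the same approach as the paper: both identify the twisted sector by finding the unique $4$-pointed $\mathbb{P}^1$ admitting an involution swapping two of the markings, via an explicit M\"obius transformation. The only cosmetic difference is the choice of coordinates (the paper normalizes the two swapped points to $0,\infty$ and uses $z\mapsto 1/z$, whereas you normalize the two fixed points to $0,\infty$ and use $z\mapsto -z$), and you make the global-quotient decomposition of the inertia stack explicit where the paper leaves it implicit.
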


\begin{proof} If $0$, $1$ and $\infty$ are three coordinates on $\mathbb{P}^1$, then there is exactly one further point that is stable under the map $\phi: z \to \frac{1}{z}$: the point $-1$. The twisted sector $0_2^*$ is this configuration of points on $\mathbb{P}^1$, with the automorphism $\phi$.
\end{proof}

Note that $S_2$ acts freely on $\mathcal{M}_{0,n}$, when $n \geq 5$, and therefore the inertia stack of $[\mathcal{M}_{0,n}/S_2]$ coincides with the untwisted sector $[\mathcal{M}_{0,n}/S_2]$ whenever $n \geq 5$. The following corollary can also be obtained as a consequence of Section \ref{rationaltails}, once Lemma \ref{inertiazero} has been established. 

\begin{corollary} The twisted sectors of the inertia stack of $\Big[\overline{\mathcal{M}}_{0,n+2}/S_2\Big]$ are isomorphic to:
$$
B \mu_2 \times \coprod_{L_1 \sqcup L_2=[n]} \overline{\mathcal{M}}_{0,L_1+1} \times \overline{\mathcal{M}}_{0,L_2+1}
$$
where the sets $L_i$ are non-empty.
\end {corollary}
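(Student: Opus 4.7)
The plan is to adapt the argument of Proposition \ref{twratcor} to this genus-zero, $S_2$-equivariant setting. First, I would introduce the analog of the rational-tail gluing map $j_g$: for each $k$ and each partition $(I_1, \ldots, I_k) \in \mathcal{A}_{k,n}/S_k$, a map
$$
[\mathcal{M}_{0, k+2}/S_2] \times \overline{\mathcal{M}}_{0, I_1+1} \times \cdots \times \overline{\mathcal{M}}_{0, I_k+1} \to [\overline{\mathcal{M}}_{0, n+2}/S_2]
$$
that attaches rational tails with marked points indexed by $I_j$ to the last $k$ marked points of a ``core'' object, with $S_2$ acting on the first two marked points of the core. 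Taking the disjoint union over all admissible $k$ exhibits $[\overline{\mathcal{M}}_{0, n+2}/S_2]$ as a union of locally closed substacks. A key check is that this gluing map induces isomorphisms on automorphism groups of objects, which follows because every tail automorphism must preserve the ``$+1$'' gluing marked point and every core automorphism (an element of $S_2$) fixes the last $k$ core marked points: compatibility at every node is then automatic.

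Next, I would apply Proposition \ref{strongrap} to pull back the inertia stack. The inertia of $[\overline{\mathcal{M}}_{0, n+2}/S_2]$ thereby decomposes as a disjoint union of products of the inertia of each $[\mathcal{M}_{0, k+2}/S_2]$ with the tail factors $\overline{\mathcal{M}}_{0, I_j+1}$. The $S_2$-action on $\mathcal{M}_{0, k+2}$ is free whenever $k+2 \geq 5$, so the only contribution to twisted sectors comes from $k = 2$; by Lemma \ref{inertiazero} the unique twisted sector of $[\mathcal{M}_{0, 4}/S_2]$ is $0_2^* \cong B\mu_2$. Substituting this in yields the desired description: each twisted sector is isomorphic to $B\mu_2 \times \overline{\mathcal{M}}_{0, L_1+1} \times \overline{\mathcal{M}}_{0, L_2+1}$, indexed by an unordered partition $\{L_1, L_2\}$ of $[n]$ into non-empty subsets, under the convention $\overline{\mathcal{M}}_{0,2} = \text{pt}$ from Proposition \ref{twratcor}.

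The main combinatorial subtlety I would need to handle carefully is verifying the $S_2$-quotient in $\mathcal{A}_{2, n}/S_2$: one must check that the two attachment points $x_3, x_4$ of the distinguished object of $0_2^*$ are genuinely identified inside the stack $[\mathcal{M}_{0, 4}/S_2]$. In the coordinates used in the proof of Lemma \ref{inertiazero}, the M\"obius transformation $z \mapsto 1/z$ provides a label-preserving isomorphism between $(1, -1, 0, \infty)$ and $(1, -1, \infty, 0)$, identifying the ordered partitions $(L_1, L_2)$ and $(L_2, L_1)$ as isomorphic objects of the source stack. This ensures that unordered rather than ordered partitions index the twisted sectors in the final description.
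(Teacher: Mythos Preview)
Your approach is essentially the paper's own (the paper's proof is a one-sentence appeal to Lemma \ref{inertiazero} and the rational-tail gluing picture), and your write-up is considerably more careful than the paper's. That said, there is a genuine gap in your reasoning. From ``the $S_2$-action on $\mathcal{M}_{0,k+2}$ is free for $k+2\ge 5$'' you conclude ``only $k=2$ contributes'', but this skips $k=1$: the paper itself notes just before Lemma \ref{inertiazero} that $[\mathcal{M}_{0,3}/S_2]\cong B\mu_2$ has a twisted sector $0_1^*$. The involution there acts by $-1$ on the tangent line at the third marked point, so by the criterion of Remark \ref{lisciabile2} gluing a single rational tail with all $n$ labels at that point produces a \emph{non-smoothable} twisted sector $B\mu_2\times\overline{\mathcal{M}}_{0,n+1}$ of $[\overline{\mathcal{M}}_{0,n+2}/S_2]$. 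This sector is not of the form in the displayed formula when both $L_i$ are required to be non-empty; so either you must argue separately why $k=1$ should be discarded (you cannot---it gives a genuine sector), or the ``non-empty'' clause in the statement is itself imprecise and your proof should flag and correct it.

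Two smaller points. First, your claim that the disjoint union of gluing maps exhibits all of $[\overline{\mathcal{M}}_{0,n+2}/S_2]$ as a union of locally closed substacks is not literally true: curves where the labels $1,2$ lie on different irreducible components are missed. This is harmless for the conclusion---any involution swapping $1,2$ has those two labels on the same component in the generic member of its sector, because a node separating a swapped pair of components is always smoothable equivariantly---but you should say so rather than assert a stratification that does not exist. Second, in your last paragraph the coordinate convention is inverted relative to Lemma \ref{inertiazero}: in the paper the $S_2$-swapped points sit at $0,\infty$ and the attachment points at $\pm 1$, so the map witnessing $(L_1,L_2)\cong(L_2,L_1)$ should be $z\mapsto -1/z$ (which swaps both pairs) or $z\mapsto -z$ (which swaps $\pm 1$ and fixes $0,\infty$), not $z\mapsto 1/z$; moreover the identification happens in the \emph{target} $[\overline{\mathcal{M}}_{0,n+2}/S_2]$ after gluing, not in the source stack where points $3,4$ are still labelled.
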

\begin{proof}
It follows from Lemma \ref{inertiazero} that the twisted sectors of the statement are all the possible ways of distributing all the marked points in two sets and gluing them on rational tails on the first and the second stabilized point. We illustrate an example in Figure \ref{figurea1}.
\begin{figure}[ht]
\centering
\psfrag{1}{$0$} 
\psfrag{2}{$1$}
\psfrag{3}{$-1$}
\psfrag{4}{$\infty$}
\includegraphics[scale=0.3]{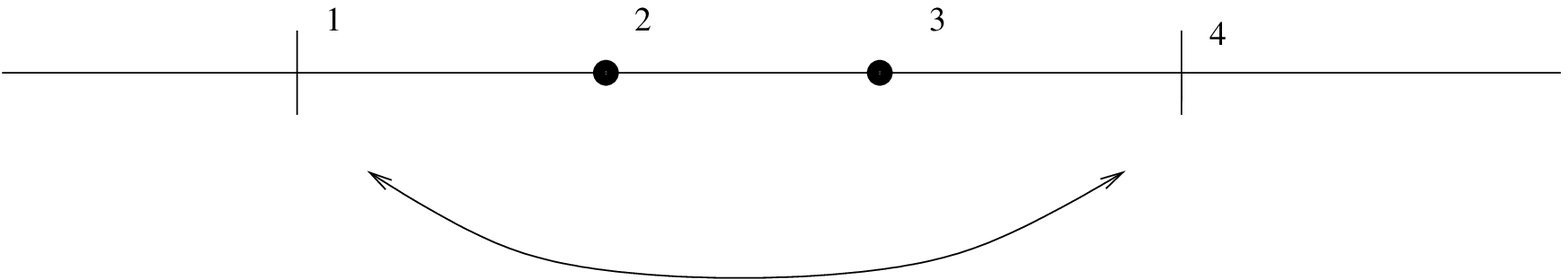} \hspace{0.5cm}
\psfrag{1}{$0$} 
\psfrag{2}{$$}
\psfrag{3}{$$}
\psfrag{4}{$\infty$}
\psfrag{5}{$1$}
\psfrag{6}{$2$} 
\psfrag{7}{$3$}
\psfrag{8}{$4$}
\psfrag{9}{$5$} 
\psfrag{A}{$6$}
\includegraphics[scale=0.3]{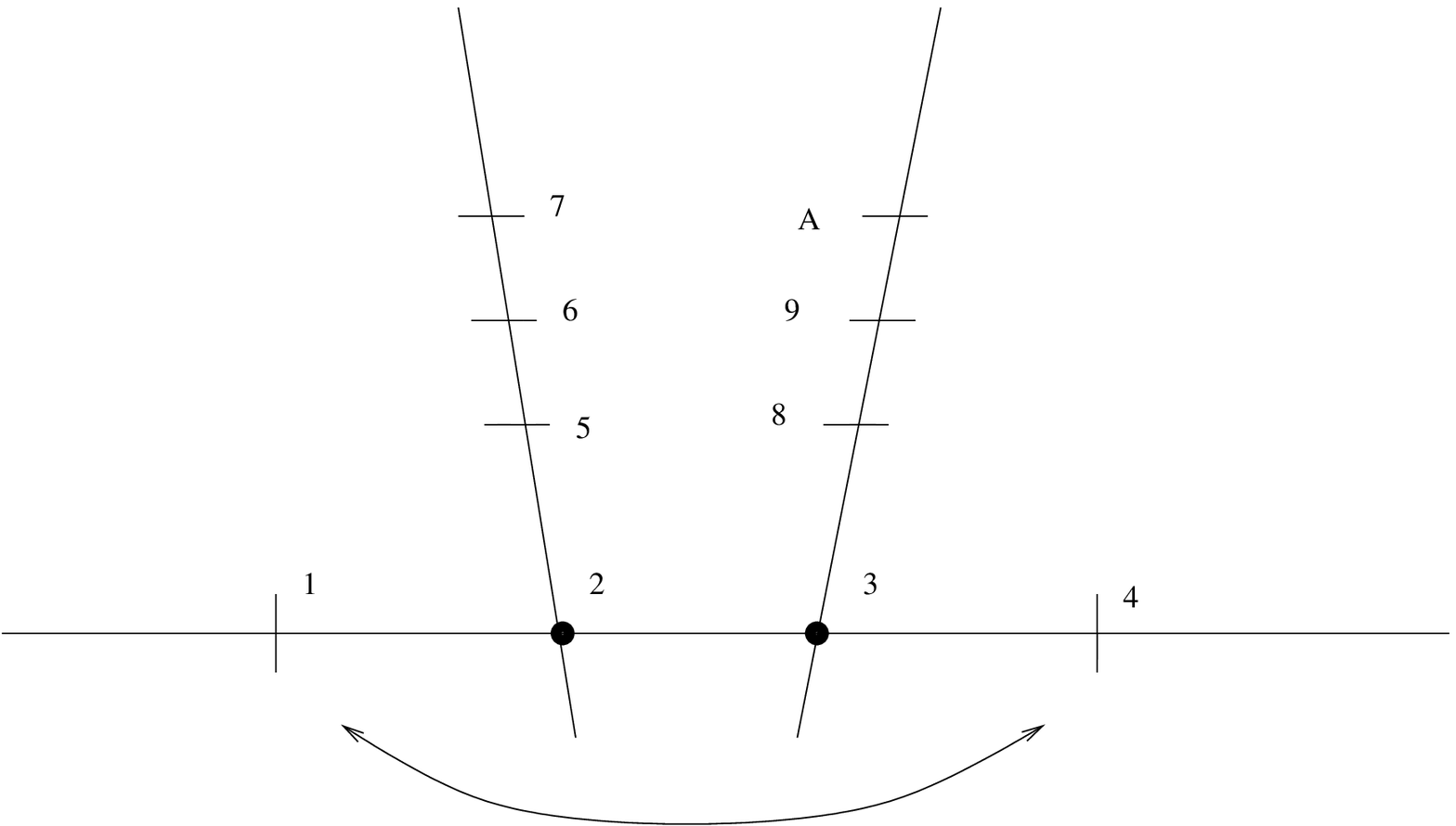}
\caption{Adding marked points on a rational curve, preserving a given automorphism of order $2$.}
\label{figurea1}
\end{figure}
\end{proof}

\begin{definition} \label{inerziazero} We call respectively $0_1^*$ and $0_2^*$ the twisted sectors of the inertia stacks of $[{\mathcal{M}}_{0,3}/S_2]$ and of $[{\mathcal{M}}_{0,4}/S_2]$.
\end{definition}

Now we study the inertia stack of $[\mathcal{M}_{1,n}/S_2]$. We start by observing that for $n \geq 7$ all the objects of the moduli stack $\mathcal{M}_{1,n}$ have trivial stabilizer group, and the action of $S_2$ is free, therefore the inertia stack is trivial. Now if $\phi$ is an automorphism of a genus $1$ curve, then it either exchanges the first two marked points, or it fixes them. Accordingly, the inertia stack splits:

\begin{equation} \label{splitting}
I([\mathcal{M}_{1,n}/S_2])=I^{fix}([\mathcal{M}_{1,n}/S_2])\sqcup I^{exch}([\mathcal{M}_{1,n}/S_2])= [I(\mathcal{M}_{1,n})/S_2] \sqcup I^{exch}([\mathcal{M}_{1,n}/S_2])
\end{equation}

\noindent where $I^{fix}$ is the disjoint union of all the twisted sectors whose distinguished automorphism fixes the two marked points, and $I^{exch}$ are the twisted sectors whose automorphism exchanges the two points. For the twisted sectors of $\mathcal{M}_{1,n}$, see \cite[Section 3]{pagani1}.

Our study of $I^{exch}$ is analogous to the one of Section \ref{inertia2}. Let $(C,x_1, \ldots, x_n)$ be a smooth genus $1$ marked curve. If $\phi$ is an automorphism of it, such that $\phi(x_{n-1})=x_n$ and $\phi(x_n)=x_{n-1}$, we can consider the cyclic covering $\pi: C \to C':= C / \langle \phi \rangle$. The points $x_1, \ldots, x_{n-2}$ correspond one-to-one to points in $C'$ of total branching for the map $\pi$. The last two marked points must be $\pi^{-1}(p)$ for a certain $p$ point of $C'$.

To state the theorem, we define some spaces that are certainly going to be twisted sectors. 

\begin{definition} We define the stack $B_i$ ($2 \leq i \leq 6$) as the moduli stack of double coverings of smooth genus $0$ curves with a reduced branch divisor $D$ of degree $4$, a choice of $1$ marked point $p \notin D$, and a choice of $i-2$ marked points $x_1, \ldots, x_{i-2} \in D$. We denote with $-1$ the automorphism of the double covering.
\end{definition}
\noindent By its very definition, the coarse moduli space of $B_i$ is the same as that of $[\mathcal{M}_{0,5}/S_{6-i}]$. Another twisted sector that we are going to deal with is the following:

\begin{definition} We define the stack $C_2$. It is the moduli stack of connected unramified double coverings of genus $1$ curves, with one marked point on the base (or equivalently, a degree $2$ effective reduced divisor on the fiber, invariant under the automorphism of the covering). In this case, we call again $-1$ the non-trivial automorphism of the covering.
\end{definition}

\noindent The moduli space of $C_2$ is an \'etale triple covering of $\mathcal{M}_{1,1}$.

Following the notation established by us in \cite[Section 3]{pagani1}, we call $(\tilde{C_4},i/-i)$ the two moduli stacks of $\mu_4$-coverings of a genus $0$ curve with a marked point at the only point of non-total branching. 
The moduli stack $(\tilde{C_4},\pm i)$ is the one where the character of the residue action of $\mu_4$ on the cotangent spaces at the two points of total ramification is $\pm i$ respectively. The moduli stacks $(\tilde{C_4'},i/-i)$ (and $(\tilde{C_4''},i/-i)$) are obtained by marking one (respectively two) more points of total branching for the $\mu_6$-covering of the genus $0$ curve. The moduli stacks $(\tilde{C_6},\epsilon/ \epsilon^5)$ and $(\tilde{C_6}',\epsilon/ \epsilon^5)$ are defined in complete analogy.

Therefore one obtains, after \ref{splitting} (see also \cite[Section 3]{pagani1}):

\begin{corollary} The inertia stack of $[\mathcal{M}_{1,n}/S_2]$ for $n \geq 2$ are:
\begin{enumerate}
\item $I\left([\mathcal{M}_{1,2}/S_2]\right)= \left[I \left(\mathcal{M}_{1,2}\right)/S_2 \right]   \coprod (B_2,-1) \coprod (C_2,-1) \coprod (\tilde{C_4}, i/-i) \coprod (\tilde{C_6}, \epsilon/\epsilon^5)$;
\item $I\left([\mathcal{M}_{1,3}/S_2]\right)= \left[I \left(\mathcal{M}_{1,3}\right)/S_2 \right]   \coprod (B_3,-1) \coprod (\tilde{C_4'}, i/-i) \coprod (\tilde{C_6'}, \epsilon/\epsilon^5)$;
\item $I\left([\mathcal{M}_{1,4}/S_2]\right)= \left[I \left(\mathcal{M}_{1,2}\right)/S_2 \right]   \coprod (B_4,-1) \coprod (\tilde{C_4''}, i/-i)$;
\item $I\left([\mathcal{M}_{1,5}/S_2]\right)= \left[I \left(\mathcal{M}_{1,5}\right)/S_2 \right]   \coprod (B_5,-1)$;
\item $I\left([\mathcal{M}_{1,6}/S_2]\right)= \left[I \left(\mathcal{M}_{1,6}\right)/S_2 \right]   \coprod (B_6,-1)$;
\item $n \geq 7$, $I\left([\mathcal{M}_{1,n}/S_2]\right) =([\mathcal{M}_{1,n}/S_2], 1)$.
\end{enumerate}
\end{corollary}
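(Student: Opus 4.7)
The plan is to build on the splitting \eqref{splitting} recorded just before the corollary. The first summand $[I(\mathcal{M}_{1,n})/S_2]$ is obtained from \cite[Section 3]{pagani1} simply by quotienting, so the substance of the proof lies in identifying the piece $I^{\textrm{exch}}([\mathcal{M}_{1,n}/S_2])$ whose general object is a pointed elliptic curve $(C,x_1,\ldots,x_n)$ together with an automorphism $\phi$ that exchanges $x_1,x_2$ and fixes $x_3,\ldots,x_n$. Since $\phi$ exchanges two points, its order is even; since it is induced by an automorphism of $(C,0)$ for an appropriate choice of origin, the possible orders are $2$, $4$ or $6$. In each case I would analyse the cyclic cover $\pi:C\to C':=C/\langle\phi\rangle$ by the Riemann--Hurwitz formula, exactly in the spirit of Section \ref{inertia2}, and identify $x_1,x_2$ as the two-point $\phi$-orbit lying above a suitable branch point of $\pi$, while $x_3,\ldots,x_n$ sit in the locus of $\phi$-fixed points on $C$.

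In detail, first I would treat the involutive case $\ord\phi=2$, splitting into $g(C')=0$ and $g(C')=1$. The first subcase corresponds to the hyperelliptic involution on an elliptic curve: $\pi$ has four branch points, and $\{x_1,x_2\}$ is the $\phi$-orbit over an additional (non-branch) marked point of $C'$, while $x_3,\ldots,x_n$ must be chosen among the four fixed points of $\phi$. This matches the definition of $B_n$ for $2\leq n\leq 6$. The subcase $g(C')=1$ forces $\pi$ to be an étale double cover; here $\phi$ has no fixed points, so one can only have $n=2$, and the datum is precisely an object of $C_2$.

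Next, $\ord\phi=4$ requires $j(C)=1728$, and Riemann--Hurwitz forces $C'\cong\mathbb{P}^1$ with two branch points of total ramification (stabilizer $\mu_4$) and one branch point with stabilizer $\mu_2$; the orbit $\{x_1,x_2\}$ must sit over the last one, and the $n-2$ extra fixed marked points must be chosen among the two points of total ramification. This gives $\tilde{C_4},\tilde{C_4'},\tilde{C_4''}$ for $n=2,3,4$, each appearing in two copies indexed by $\phi=i$ or $\phi=-i$ (the action on cotangent spaces at the totally ramified points distinguishes the two). The parallel discussion of $\ord\phi=6$ (only possible when $j(C)=0$) yields, after Riemann--Hurwitz, branching $(\mu_6,\mu_3,\mu_2)$; the orbit $\{x_1,x_2\}$ lies over the $\mu_3$-branch point, while the unique $\phi$-fixed point can at most be marked once, giving $\tilde{C_6}$ and $\tilde{C_6'}$ for $n=2,3$ with two copies indexed by $\phi=\epsilon$ or $\phi=\epsilon^5$. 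For $n\geq 5$, no order $4$ or $6$ contribution survives, and for $n\geq 7$ the $S_2$-action becomes free and $I(\mathcal{M}_{1,n})$ is trivial, so that $I([\mathcal{M}_{1,n}/S_2])$ reduces to the untwisted sector.

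The main obstacle will be the careful bookkeeping required to match, in each of the four cases above, the residual freedom in placing $x_3,\ldots,x_n$ with the definition of the target stack ($B_i$, $C_2$, $\tilde{C_4^{(\cdot)}}$, $\tilde{C_6^{(\cdot)}}$), and in particular to check that the data of the $\phi$-eigenvalue on the totally ramified cotangent spaces genuinely separates the two components denoted by $(i/-i)$ and $(\epsilon/\epsilon^5)$. A secondary verification is the connectedness of each resulting moduli space, which in the $g(C')=0$ cases reduces to Cornalba's argument from Proposition \ref{connessione}, and in the $g(C')=1$ case ($C_2$) follows from the fact that étale double covers of a fixed elliptic curve form a connected finite étale cover of $\mathcal{M}_{1,1}$.
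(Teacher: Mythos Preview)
Your proposal is correct and follows essentially the same route the paper sketches: the paper itself offers no detailed proof here, only the sentence ``Therefore one obtains, after \eqref{splitting} (see also \cite[Section~3]{pagani1})'', preceded by the description of the cyclic cover $\pi:C\to C/\langle\phi\rangle$ and the observation that the fixed marked points lie over total branch points while the swapped pair is a fibre $\pi^{-1}(p)$. Your case analysis by $\ord\phi\in\{2,4,6\}$ and by $g(C')$ fills in exactly this outline.

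One small point to tighten: your justification that $\ord\phi\in\{2,4,6\}$ via ``an appropriate choice of origin'' tacitly assumes $\phi$ has a fixed point. This is automatic when $n\geq 3$ (take $x_3$), and for $n=2$ you should note that if $\phi$ has no fixed point then its linear part is trivial (otherwise $1-\alpha$ is a surjective isogeny), so $\phi$ is a pure translation; the orbit $\{x_1,x_2\}$ having size two then forces $\ord\phi=2$, which is precisely your \'etale case $C_2$. With that remark in place the order claim is fully justified.
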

For convenience (in Section \ref{dalbordo}), we define the following subsets of the inertia stacks of $[\mathcal{M}_{1,i}/S_2]$.
$$
\tilde{T_2}:=I^{fix}([\mathcal{M}_{1,2}/S_2])= \{([A_2/S_2],-1),([C_4/S_2],i),([C_4/S_2],-i),(C_6,\epsilon^2),(C_6, \epsilon^4) \},
$$
$$
\tilde{T_3}:= I^{fix}([\mathcal{M}_{1,3}/S_2])= \{([A_3/S_2]), (C_6', \epsilon^2), (C_6', \epsilon^4)\},
$$
$$
\tilde{T_4}:=I^{fix}([\mathcal{M}_{1,4}/S_2])= \{([A_4/S_2]), (C_6', \epsilon^2), (C_6', \epsilon^4)\},
$$
$$
{T_2}^{\rho}:= \{(\tilde{C_4},i),(\tilde{C_4},-i),(\tilde{C_{6}}, \epsilon), (\tilde{C_{6}}, \epsilon^5)\} \subset  I^{exch}([\mathcal{M}_{1,3}/S_2]),
$$
$$
{T_3}^{\rho}:= \{(\tilde{C_4'},i),(\tilde{C_4'},-i),(\tilde{C'_{6}}, \epsilon), (\tilde{C'_{6}}, \epsilon^5)\} \subset  I^{exch}([\mathcal{M}_{1,3}/S_2]),
$$
$$
{T_4}^{\rho}:=  \{(\tilde{C_4''}, i), (\tilde{C_4''}, -i)\} \subset  I^{exch}([\mathcal{M}_{1,4}/S_2]),
$$
$$
\tilde{T_2'}:= \{([C_4/S_2],i),([C_4/S_2],-i),(C_6,\epsilon^2),(C_6, \epsilon^4) \} \subset  I^{fix}([\mathcal{M}_{1,2}/S_2]),
$$
$$
\tilde{T_3'}:= \{ (C_6', \epsilon^2), (C_6', \epsilon^4)\} \subset  I^{fix}([\mathcal{M}_{1,3}/S_2]),
$$
$$
\tilde{T_4'}:= \{(C_6', \epsilon^2), (C_6', \epsilon^4)\} \subset  I^{fix}([\mathcal{M}_{1,4}/S_2]).
$$

\label{appendicea}


\ \\
\ \\
\ \\
\ \\
\ \\

\begin{center}
\textsc{Acknowledgments} 
\end{center}

I would like to thank Carel Faber, Barbara Fantechi and Orsola Tommasi for fundamental suggestions and help in the preparation of this paper. Several of the ideas and results contained here were grown under their influence.

I want to thank Paola Frediani, Fabio Nironi, Dan Petersen, Pietro Pirola, Elisa Tenni, Angelo Vistoli and Stephanie Yang for several helpful conversations and help related to the topics of the present work. I am very grateful to the anonymous referee for comments that led to an improved version of this paper. I am thankful to Susha Parameswaran for useful linguistic suggestions.

During the time of the preparation of this article I was supported by KTH Royal Institute of Technology, and by the Wallenberg foundation. It is a pleasure to acknowledge KTH for the warm atmosphere and the strong support that I received.


\begin{center}
Nicola Pagani\\
Leibniz University of Hannover, Welfengarten 1\\
D-30167, Hannover, Germany.\\
e-mail: npagani@math.uni-hannover.de
\end{center}
\end{document}